\newtheorem{theorem}{Theorem}
\newtheorem{lemma}{Lemma}
\newtheorem{proposition}[lemma]{Proposition}
\newtheorem{remark}[lemma]{Remark}
\numberwithin{lemma}{section}
\newcommand{\Ez}{E_0}
\newcommand{\E}{E}
\newcommand{\Elind}{E^{(2)}_{lin}}
\newcommand{\Elint}{E^{(3)}_{lin}}
\newcommand{\End}{E^{n,(2)}}
\newcommand{\Ent}{E^{n,(3)}}
\newcommand{\Enthigh}{E^{n,(3)}_{high}}
\newcommand{\Ennfz}{E^{n}_{NF,0}}
\newcommand{\Ennf}{E^{n}_{NF}}
\newcommand{\Ennfhigh}{E^{n}_{NF,high}}
\newcommand{\Ennflow}{E^{n}_{NF,low}}
\newcommand{\M}{\mathfrak M}
\newcommand{\cM}{{\mathcal M}}
\newcommand{\cK}{{\mathcal K}}
\numberwithin{equation}{section}
\newcommand{\R}{{\mathbb R}}
\newcommand{\Z}{{\mathbb Z}}
\newcommand{\tW}{{\tilde W}}
\newcommand{\tQ}{{\tilde Q}}
\renewcommand{\H}{{\mathcal H }}
\renewcommand{\AA}{\mathbf A}
\newcommand{\WH}{{\mathcal{WH} }}
\renewcommand{\R}{\mathbf R}
\newcommand{\XX}{X}
\newcommand{\Trunc}{{{\Lambda}}}
\newcommand{\tG}{\tilde{G}}
\newcommand{\tK}{\tilde{K}}
\newcommand{\Cx}{\mathbb{C}}
\newcommand{\Ha}{\mathbb{H}}
\newcommand{\con}{\mathcal{F}}
\newcommand{\pot}{\Psi}
\newcommand{\hpot}{\Theta}
\newcommand{\Hi}{H}
\newcommand{\Id}{I}
\newcommand{\sgn}{\mathop{\mathrm{sgn}}}
\newcommand{\tw}{{\tilde w}}
\newcommand{\tr}{{\tilde r}}
\newcommand{\tR}{{\tilde {\mathbf R}}}
\newcommand{\dH}{{\dot{\mathcal H} }}
\newcommand{\W}{{\mathbf W}}
\renewcommand{\S}{{\mathbf S}}
\newcommand{\err}{\text{\bf err}}
\newcommand{\errw}{\text{\bf err}(L^2)}
\newcommand{\errr}{\text{\bf err}(\dot H^\frac12)}
\newcommand{\norm}{{\mathbf N}}
\renewcommand\R{\mathbf R}
\begin{document}

\title{Two dimensional water waves in holomorphic coordinates}

\author{John K. Hunter}
\address{ Department of Mathematics, University of California at Davis}
\thanks{The first author was partially supported by the NSF under grant number  DMS-1312342.}
\email{hunter@math.davis.edu}
\author{Mihaela Ifrim}
 \address{Department of Mathematics, University of California at Berkeley}.
\email{ifrim@math.berkeley.edu}
\thanks{The second author was supported by the National Science Foundation under Grant No. 0932078 000, while the author was in residence at the Mathematical Science Research Institute in Berkeley, California, during the Fall semester 2013.}
\author{ Daniel Tataru}
\address{Department of Mathematics, University of California at Berkeley}
 \thanks{The third author was partially supported by the NSF grant DMS-1266182
as well as by the Simons Foundation}
\email{tataru@math.berkeley.edu}

\begin{abstract}
  This article is concerned with the infinite depth water wave
  equation in two space dimensions.  We consider this problem
  expressed in position-velocity potential holomorphic coordinates.
  Viewing this problem as a quasilinear dispersive equation, we
  establish two results: (i) local well-posedness in Sobolev spaces,
  and (ii) almost global solutions for small localized data. Neither
  of these results are new; they have been recently obtained by
  Alazard-Burq-Zuily~\cite{abz}, respectively by Wu~\cite{wu} using different coordinates
  and methods. Instead our goal is improve the understanding of this
  problem by providing a single setting for both problems, 
by proving sharper versions of the above results,  as well as
presenting new,  simpler proofs. This article is self contained.

\end{abstract}

\maketitle

\section{Introduction}
We consider the two dimensional water wave equations with infinite
depth with gravity but without surface tension.  This is governed by
the incompressible Euler's equations with boundary conditions on the
water surface. Under the additional assumption that the flow is
irrotational the fluid dynamics can be expressed in terms of a
one-dimensional evolution of the water surface coupled with the trace
of the velocity potential on the surface.  

This problem was previously considered by several other authors. The
local in time existence and uniqueness of solutions was proved in
\cite{n,y, wu2}, both for finite and infinite depth.  Later, Wu~\cite{wu} proved 
almost global existence for small localized data.
Very recently, global results for small localized data were independently obtained by
Alazard $\&$ Delort \cite{ad} and by Ionescu $\&$ Pusateri
\cite{ip}. Extensive work was also done on the same problem in three
or higher space dimensions, and also on related problems with surface
tension, vorticity, finite bottom, etc. Without being exhaustive, we
list some of the more recent references \cite{abz, abz1, chs, cl,cs,
  DL, HL, o, sz, zz}.

Our goal here is to revisit this problem and to provide a new,
self-contained approach which, we hope, considerably simplifies and
improves on  many of the results mentioned above. Our analysis is based
on the use of holomorphic coordinates, which are described below.
Our results include:

(i) local well-posedness in Sobolev spaces, improving on previous regularity thresholds,
e.g. in \cite{abz},
up to the point where the transport vector field is no longer Lipschitz, and
 has merely a $BMO$ derivative.  

(ii) cubic life-span bounds for small data. These are related to the normal form 
method, but are instead proved by a modified energy method, inspired  from the authors' 
previous article \cite{BH}. 

(iii) almost global well-posedness for small localized data, refining and 
simplifying Wu's approach in \cite{wu}.

We consider both the case of the real line $\mathbb{R}$ and the periodic case
$\mathbb{S}^1$.  Our equations are expressed in coordinates $(t,\alpha)$ where
$\alpha$ corresponds to the holomorphic parametrization of the water
domain by the lower half-plane restricted to the real line.  To write
the equations we use the Hilbert transform $H$, as well as the
operator
\begin{equation*}
P= \frac12(I-iH).
\end{equation*}
Note that $P$ is a projector in $\mathbb{R}$ but not on $\mathbb{S}^1$. 

Our variables $(Z,Q)$ represent the position of the water surface,
respectively the holomorphic extension of the velocity
potential. These will be restricted to the closed subspace of
holomorphic functions within various Sobolev spaces. Here we define
holomorphic functions on $\mathbb{R}$ or on $\mathbb{S}^1$ as those
whose Fourier transform is supported in $(-\infty,0]$; equivalently,
they admit a bounded holomorphic extension into the lower
half-space. On $\mathbb{R}$ this can be described by the relation $Pf
= f$, but on $\mathbb{S}^1$ we also need to make some adjustments for
the constants.

 There is a one dimensional degree of freedom in the choice of $\alpha$, namely the horizontal translations. To fix this, in the real case we are considering waves which either decay at infinity,
\[
\lim_{|\alpha \vert \to \infty} Z(\alpha) - \alpha = 0.
\]
In the  periodic case we instead assume that $ Z(\alpha) - \alpha$  has period $2\pi$ and purely imaginary average. We can also harmlessly assume that $Q$ has real average.

In position-velocity potential holomorphic coordinates the equations have the form
\begin{equation*}
\left\{
\begin{aligned}
& Z_t + F Z_\alpha = 0, \\
& Q_t + F Q_\alpha -i (Z-\alpha) + P\left[ \frac{|Q_\alpha|^2}{J}\right]  = 0, \\
\end{aligned}
\right.
\end{equation*}
where
\begin{equation*}
 F = P\left[ \frac{Q_\alpha - \bar Q_\alpha}{J}\right] , \qquad J = |Z_\alpha|^2.
\end{equation*}

For the derivation of the above equations, we refer the reader to \emph{Appendix}~\ref{holom-eq}. In the real case these equations originate in \cite{ov}.
The changes needed for the periodic case are also described in the same \emph{Appendix}~\ref{holom-eq}. There are also other ways of expressing the equations, for instance in Cartesian coordinates using the Dirichlet to Neumann map associated to the water domain, see e.g. \cite{abz} . Here we prefer the holomorphic coordinates due to the simpler form of the equations; in particular, in these coordinates the Dirichlet to Neumann map is given in terms of the standard Hilbert transform.

It is convenient to work with a new variable, namely
\[
W = Z-\alpha .
\]
The equations become
\begin{equation}
\label{ww2d1}
\left\{
\begin{aligned}
& W_t + F (1+W_\alpha) = 0, \\
& Q_t + F Q_\alpha -i W + P\left[ \frac{|Q_\alpha|^2}{J}\right]  = 0, \\
\end{aligned}
\right.
\end{equation}
where
\begin{equation*}
 F = P\left[\frac{Q_\alpha - \bar Q_\alpha}{J}\right], \qquad J = |1+W_\alpha|^2.
 \end{equation*}
These equations are considered either in $\mathbb{R} \times \mathbb{R}$ or in $\mathbb{R} \times \mathbb{S}^1$.

As the system \eqref{ww2d1} is fully nonlinear, a standard procedure is to convert it into a quasilinear system by differentiating it. Observing that almost no undifferentiated functions appear in  \eqref{ww2d1}, one sees that by differentiation we get a self-contained  first order quasilinear system for $(W_\alpha,Q_\alpha)$. To write this system we introduce the auxiliary real function  $b$, which we call the {\em  advection velocity}, and is given by
\begin{equation*}
b = P \left[\frac{{Q}_\alpha}{J}\right] +  \bar P\left[\frac{\bar{Q}_\alpha}{J}\right].
\end{equation*}
The reason for this will be immediately apparent. Using $b$, the system \eqref{ww2d1} is written in the form
\begin{equation*}
\left\{
\begin{aligned}
&W_t + b (1+ W_\alpha) =  \frac{\bar Q_\alpha}{1+\bar W_\alpha},
\\
&Q_t +  b Q_\alpha - iW =  \bar P\left[ \frac{|Q_\alpha|^2}{J}\right],
\end{aligned}
\right.
\end{equation*}
where the terms on the right are antiholomorphic and disappear when the equations are projected onto the holomorphic space. Differentiating with respect to $\alpha$ yields a system for $(W_\alpha,Q_\alpha)$, namely
\begin{equation*}
\left\{
\begin{aligned}
  &W_{\alpha t} + b W_{\alpha \alpha} +
    \frac{1}{1+\bar W_\alpha}\left(Q_{\alpha\alpha} - \frac{Q_\alpha}{1+W_\alpha}
      W_{\alpha \alpha}\right) = - (1+W_\alpha) \bar F_\alpha
- \left[\frac{\bar Q_\alpha}{1+\bar W_\alpha}\right]_\alpha,
 \\
  &Q_{t\alpha} + b Q_{\alpha\alpha} - iW_\alpha +
    \frac{1}{1+\bar W_\alpha} \frac{Q_\alpha}{1+W_\alpha}\!\!\left(\!\!Q_{\alpha\alpha} - \frac{Q_\alpha}{1+W_\alpha}  W_{\alpha \alpha}\!\! \right)\!\! = - Q_\alpha \bar F_\alpha+ \bar P\left[ \frac{|Q_\alpha|^2}{J}\right]_\alpha.
\end{aligned}
\right.
\end{equation*}
The terms on the right are mostly antiholomorphic and can be viewed as lower order when projected on the holomorphic functions. Examining the expression on the left one easily sees that the above first order system is degenerate, and has a double speed $b$. Then it is natural to diagonalize it. This is done using the operator
\begin{equation}
\AA(w,q) := (w,q - Rw), \qquad R := \frac{Q_\alpha}{1+W_\alpha}.
\label{defR}
\end{equation}
The factor $R$ above has an intrinsic meaning, namely it is the complex velocity on the water surface. We also remark that
\[
\AA(W_\alpha,Q_\alpha) = (\W,R), \qquad \W : = W_\alpha.
\]
Thus, the pair $(\W,R)$ diagonalizes the differentiated system. Indeed, a direct computation yields the self-contained system
\begin{equation} \label{ww2d-diff}
\left\{
\begin{aligned}
 & \W_{ t} + b \W_{ \alpha} + \frac{(1+\W) R_\alpha}{1+\bar \W}   =  (1+\W)M,
\\
& R_t + bR_\alpha = i\left(\frac{\W - a}{1+\W}\right),
\end{aligned}
\right.
\end{equation}
where the real {\em frequency-shift} $a$ is given by
\begin{equation}
a := i\left(\bar P \left[\bar{R} R_\alpha\right]- P\left[R\bar{R}_\alpha\right]\right),
\label{defa}
\end{equation}
and  the auxiliary function $M$ is given by
\begin{equation}\label{M-def}
M :=  \frac{R_\alpha}{1+\bar \W}  + \frac{\bar R_\alpha}{1+ \W} -  b_\alpha =
\bar P [\bar R Y_\alpha- R_\alpha \bar Y]  + P[R \bar Y_\alpha - \bar R_\alpha Y].
\end{equation}
The function $Y$ above, given by
\[
Y := \frac{\W}{1+\W},
\]
is introduced in order to avoid rational expressions above and in many places in the sequel.  The system \eqref{ww2d-diff} governs an evolution in the space of holomorphic functions, and will be used both directly and in its projected version.

Incidentally, we note that when expressed in terms of $(Y,R)$ the water wave system becomes purely polynomial, see also \cite{zakharov2},
\begin{equation*}
\left\{
\begin{aligned}
& Y_t + b Y_\alpha + |1-Y|^2 R_\alpha = (1-Y)  M,
  \\ & R_t + b R_\alpha - i(1+a) Y  = - ia,
\end{aligned}
\right.
\end{equation*}
where $M$ is as above, and
\[
b = 2 \Re ( R - P(R \bar Y)), \qquad a = 2 \Re P(R \bar R_\alpha).
\]
However, we do not take advantage of this formulation in the present article. 

The functions $b$ and $a$ also play a fundamental role in the linearized equation which  is computed in the next section, Section~\ref{s:linearized}. The linearized variables are denoted by $(w,q)$ and, after the diagonalization, $(w, r:=q-Rw)$. The linearized equation, see \eqref{lin(wr)0}, has the form
\begin{equation}
\label{lin(wr)00}
\left\{
\begin{aligned}
& (\partial_t + b \partial_\alpha) w  +  \frac{1}{1+\bar \W} r_\alpha
+  \frac{R_{\alpha} }{1+\bar \W} w  =\ (1+\W) (P \bar m + \bar P  m),
 \\
&(\partial_t + b \partial_\alpha)  r  - i  \frac{1+a}{1+\W} w  = \  \bar P n - P \bar n,
\end{aligned}
\right.
\end{equation}
 where 
 \[ 
 m := \frac{r_\alpha +R_\alpha w}{J} + \frac{\bar R w_\alpha}{(1+\W)^2}, \qquad n := \frac{ \bar R(r_{\alpha}+R_\alpha w)}{1+\W}.
\]

In particular, we remark that the linearization of the system \eqref{ww2d-diff} around the zero solution is 
\begin{equation} \label{ww2d-0} \left\{
\begin{aligned}
 & w_{ t} +  r_\alpha   =  0,
\\
& r_t - i w = 0.
\end{aligned}
\right.
\end{equation}
The analysis of the linearized equation, carried out in Section~\ref{s:linearized}, is a key component of this paper. 

It is also useful to  further differentiate \eqref{ww2d-diff}, in order to obtain a system for $(\W_{\alpha}, R_{\alpha})$:
\[
\left\{
\begin{aligned}
 & \W_{ \alpha t} + b \W_{ \alpha \alpha}  + \frac{[(1+\W) R_\alpha]_\alpha}{1+\bar \W}   =
- b_\alpha \W_\alpha +  (1+\W) R_\alpha \bar Y_\alpha +
 \W_\alpha  M
 + (1+\W) M_\alpha,
 \\
 & R_{t\alpha} + bR_{\alpha\alpha}  =- b_\alpha R_\alpha+  i\left(\frac{(1+a)\W_\alpha}{(1+\W)^2}
-  \frac{ a_\alpha}{1+\W}\right).
\end{aligned}
\right.
\]
In order to better compare this with the linearized system we introduce the modified variable $\R := R_\alpha(1+\W)$ to get the system
\[
\left\{
\begin{aligned}
 & \W_{ \alpha t} + b \W_{ \alpha \alpha}  + \frac{\R_\alpha}{1+\bar \W}   =
- b_\alpha \W_\alpha  +   \R \bar Y_\alpha +
 \W_\alpha  M
 + (1+\W) M_\alpha ,
\\
& \R_{t} + b\R_{\alpha}  =-\left(b_\alpha+ \frac{R_\alpha}{1+\bar \W}\right)  \R +  i\left(\frac{(1+a)\W_\alpha}{1+\W}
-  a_\alpha\right) + \R M .
\end{aligned}
\right.
\]
Expanding the $b_\alpha$  terms via \eqref{M-def} this yields
\begin{equation}
\left\{
\begin{aligned}
&  \W_{ \alpha t} + b \W_{ \alpha \alpha}  + \frac{\R_\alpha}{1+\bar \W} + \frac{R_\alpha}{1+\bar \W} \W_\alpha  = G_2,
\\
&  \R_{t} + b\R_{\alpha} -   i\frac{(1+a)\W_\alpha}{1+\W}  =K_2,
\end{aligned}
\right.
\label{WR-diff}
\end{equation}
where
\begin{equation*}
\left\{
\begin{aligned}
&  G_2  = \R \bar Y_\alpha - \frac{\bar R_\alpha}{1+\W} \W_\alpha + 2M   \W_\alpha  +  (1+\W)  M_\alpha ,
\\
&  K_{2}=-2\left(\frac{\bar R_\alpha}{1+\W}+ \frac{R_\alpha}{1+\bar \W}\right)  \R  + 2 M \R + ( R_\alpha \bar R_\alpha -i a_\alpha).
\end{aligned}
\right.
\end{equation*}

Next, we define our function spaces. The system \eqref{ww2d-0} is a well-posed linear evolution in the space $\dH_0$ of holomorphic functions endowed with the $L^2 \times \dot H^{\frac12}$ norm. A conserved energy for this system is
\begin{equation}\label{E0}
\Ez (w,r) = \int \frac12 |w|^2 + \frac{1}{2i} (r \bar r_\alpha - \bar r r_\alpha) d\alpha.
\end{equation}
The nonlinear system \eqref{ww2d1} also admits a conserved energy, which has the form
\begin{equation}\label{ww-energy}
\E(W,Q) = \int \frac12 |W|^2 + \frac1{2i} (Q \bar Q_\alpha - \bar Q Q_\alpha)
- \frac{1}{4} (\bar W^2 W_\alpha + W^2 \bar W_\alpha)\, d\alpha.
\end{equation}
 As suggested by the above energy, our main function spaces for the 
 differentiated water wave system \eqref{ww2d-diff} are the
spaces $\dH_n$ endowed with the norm
\[
\| (\W,R) \|_{\dH_n}^2 := \sum_{k=0}^n
\| \partial^k_\alpha (\W,R)\|_{ L^2 \times \dot H^\frac12}^2,
\]
where $n \geq 1$. As an auxiliary step, we will also consider solutions $(\W,R)$ in the smaller space
\[
\H_n := H^n \times H^{n+\frac12},
\]
with $n \geq 2$. 

To describe the lifespan of the solutions we define the control norms
\begin{equation}\label{A-def}
A := \|\W\|_{L^\infty}+\| Y\|_{L^\infty} + \||D|^\frac12 R\|_{L^\infty \cap B^{0,\infty}_{2}},
\end{equation}
respectively
\begin{equation}\label{B-def}
B :=\||D|^\frac12 \W\|_{BMO} + \| R_\alpha\|_{BMO}.
\end{equation}
where $|D|$ represents the multiplier with symbol $|\xi|$.
Here $A$ is a scale invariant quantity, while $B$ corresponds to the
homogeneous $\dH_1$ norm of $(\W, R)$. We note that $B$ and all but
the $Y$ component of $A$ are controlled by the $\dH_1$ norm of the
solution. 


Now we are ready to state our main local well-posedness result:
\begin{theorem}
\label{baiatul}
 Let  $ n \geq 1$.  The system \eqref{ww2d-diff} is locally well-posed
for data in $\dH_n(\mathbb{R})$ so that $|\W+1| > c > 0$ .
Further, the solution can be continued for as long as $A$ and $B$ 
remain bounded. The same result holds in the periodic setting.
\end{theorem}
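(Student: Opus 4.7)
The plan is to reduce everything to energy estimates for the linearized system \eqref{lin(wr)00} and its higher-derivative analogues, and then combine these with a regularization argument for existence and a Bona-Smith style argument for continuous dependence. At the heart is the construction, for each $n \ge 0$, of a modified energy $E^n_{lin}(w,r)$ equivalent to the $\dot H^n \times \dot H^{n+1/2}$ norm on holomorphic pairs, which is quasi-conserved along \eqref{lin(wr)00} in the sense that
\[
\frac{d}{dt} E^n_{lin}(w,r) \lesssim C(A)(1+B^2)\, E^n_{lin}(w,r).
\]
The base case $n=0$ is a perturbation of $\Ez$ from \eqref{E0}, augmented by a cubic correction analogous to the one in \eqref{ww-energy} that symmetrizes the coupling $iw/(1+\W)$ using the positive factor $1+a$. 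For higher $n$, I would apply $|D|^n$ to \eqref{lin(wr)00} and control commutators with $b\partial_\alpha$, with multiplication by $1/(1+\bar\W)$, and with the projector $P$ by paraproduct and Coifman-Meyer estimates, each producing only $A$ and $B$ factors on the right-hand side. Throughout, the non-degeneracy hypothesis $|1+\W| \ge c$ keeps the multipliers $1/(1+\W)$ and $1/(1+\bar\W)$ bounded.

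With the linearized estimates in hand, the nonlinear a priori bound follows from the observation, already implicit in the derivation of \eqref{WR-diff} from \eqref{lin(wr)00}, that $(\W_\alpha,\R)$ satisfies \eqref{lin(wr)00} modulo the source terms $G_2, K_2$, which are either antiholomorphic (harmless after projection) or perturbative in $A,B$. Iterating the differentiation and applying Stage 1 at each order yields
\[
\frac{d}{dt}\|(\W,R)\|_{\dH_n}^2 \lesssim C(A)(1+B^2)\,\|(\W,R)\|_{\dH_n}^2,
\]
and Gronwall produces the a priori bound on any interval where $A$ and $B$ remain controlled. For existence I would first work in the smaller space $\H_n$ with $n \ge 2$, where $A$ and $B$ are controlled by Sobolev embedding, constructing solutions by a frequency-truncated or viscous regularization of \eqref{ww2d1} and passing to the limit with this a priori bound. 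To reach data in $\dH_n$, approximate by smooth $\H_n$ data, use uniform lifespan (depending only on $A,B$ of the initial data), and extract a limit via weak-$*$ compactness in $\dH_n$ combined with strong convergence in a weaker norm coming from an $L^2 \times \dot H^{1/2}$ difference estimate.

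Uniqueness and continuous dependence both follow from the Stage 1 estimate at $n=0$ applied to differences: two solutions differ by a pair $(w,r)$ satisfying \eqref{lin(wr)00} modulo perturbative terms, so Gronwall in $L^2 \times \dot H^{1/2}$ closes uniqueness, and combining uniform $\dH_n$ bounds with this low-regularity difference estimate via Bona-Smith gives continuous dependence in $\dH_n$. The continuation criterion is then immediate: the $\dH_n$ norm cannot blow up while $A$ and $B$ stay bounded. The periodic setting is handled identically after the adjustments for $P$ on $\mathbb{S}^1$ described earlier. I expect the main technical obstacle to be closing the Stage 1 estimate at the advertised BMO threshold: with $R_\alpha$ and $|D|^{1/2}\W$ only in BMO, the transport field $b\partial_\alpha$ is not Lipschitz and any $L^\infty$-losing commutator bound fails, so each commutator with $|D|^n$ must be realized as a paraproduct remainder controlled by $B$ rather than an $L^\infty$ norm; carefully pairing the projector $P$ with the multipliers $1/(1+\W)$ and $1/(1+\bar \W)$ to preserve the holomorphic structure through this harmonic analysis will be essential.
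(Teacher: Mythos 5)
Your overall architecture (linearized energy estimates, regularization for existence, difference estimates plus Bona--Smith for dependence on data) is in the spirit of the paper, but there is a concrete gap exactly at the advertised $BMO$ threshold, in the step where you declare the source terms of the differentiated system perturbative. When $(\W_\alpha,\R)$ is compared with the linearized flow, the right-hand sides $G_2,K_2$ of \eqref{WR-diff} contain the terms $-\frac{\bar R_\alpha}{1+\W}\,\W_\alpha$ and $-2\bigl(\frac{\bar R_\alpha}{1+\W}+\frac{R_\alpha}{1+\bar \W}\bigr)\R$ (and their analogues $\bar R_\alpha \W^{(n-1)}$, $R_\alpha \R$ at higher $n$). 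These are neither antiholomorphic (their holomorphic projections do not vanish: the high frequencies of $\W_\alpha$ against low frequencies of $\bar R_\alpha$ survive $P$) nor controllable by $A,B$: since $B$ only gives $R_\alpha\in BMO$ and $\W^{(n-1)}\in L^2$, the product is not in $L^2$, so no Gronwall inequality of the form you write can close without upgrading $B$ to an $L^\infty$ (Lipschitz) norm, which destroys the sharp threshold that is the point of the theorem. The paper removes precisely these terms before projecting, by conjugating with the real exponential weight $e^{2\phi}$, $\phi=-2\Re\log(1+\W)$ (for $n=2$), and for $n\geq 3$ by the additional quadratic substitution $\tR=\R-R_\alpha\W^{(n-2)}+(2n-1)\W_\alpha R^{(n-2)}$ to match the coefficients, only then reducing to the linearized estimates for $(Pw,Pr)$ and checking via Lemma~\ref{l:erori} that the projection loses nothing. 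Without this renormalization (or an equivalent device), your Stage~1/Stage~2 estimate does not close at $\dH_n$ with only $A,B$ on the right.

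A second, lower-order set of issues concerns the rough endpoint $n=1$. A difference of two solutions satisfies \eqref{lin(wr)00} only modulo terms such as $\M_{b_1-b_2}\partial_\alpha \W_2$, which require $\dH_2$-type control of one of the solutions; accordingly the paper proves uniqueness only in $\dH_n$, $n\geq 2$, and obtains $\dH_1$ well-posedness in the weaker sense of unique limits of smooth solutions, with continuous dependence via frequency envelopes and the linearized equation (note also that the linearization is of the $(W,Q)$ system, so defining the parameter derivatives $(w^k,r^k)$ from $(\W^k,R^k)$ needs the integration step in Section~\ref{s:lwp}). Your Gronwall-in-$\dH_0$ uniqueness claim therefore does not apply at $n=1$ as stated. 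Two further points you should not gloss over: the control norm $A$ contains $\|Y\|_{L^\infty}$, which is not dominated by the $\dH_1$ norm and must be propagated by a separate transport estimate with a carefully ordered bootstrap; and when approximating $\dH_n$ data by $\H_n$ data, the $\dot H^{1/2}$ norm of $R$ does not see constants, so weak limits require an extra normalization (the paper imposes pointwise convergence of $R_k(0)$ at a point and runs a characteristics argument) to get local uniform convergence.
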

In terms of Sobolev regularity of the data, this result improves the
thresholds in earlier results of Wu~\cite{wu2, wu} and
Alazard-Burq-Zuily~\cite{abz}. However, a direct comparison is nontrivial
due to the fact that the above two papers use different coordinate
frames, namely Lagrangian, respectively Eulerian.

As an interesting side remark, the above result makes no requirement that the 
curve $\{ Z(\alpha); \alpha \in \mathbb{R}\}$ determined by $\W$ be nonself-intersecting.
If self-intersections occur then the  physical interpretation is lost, but 
the well-posedness of the system \eqref{ww2d-diff} is not affected.

Our second goal in this article is to consider the question of
obtaining improved lifespan bounds for the small data problem. Since
the nonlinearities in our equations contain quadratic terms, the
standard result is to obtain an $O(\epsilon^{-1})$ lifespan for smooth
initial data of size $\epsilon$. However, this problem has the
additional feature that there exists a quadratic normal form
transformation which eliminates the quadratic terms in the
equation. In the setting of holomorphic coordinates considered in this
paper, this is most readily seen at the level of the system
\eqref{ww2d1}. There, the quadratically nonlinear terms may be removed
from the water-wave equations by the near-identity, normal form
transformation
\begin{equation}
\tilde W = W - 2 \M_{\Re W} W_\alpha, \qquad \tilde Q = Q - 2 \M_{\Re W} R,
\label{nft1}
\end{equation}
where the holomorphic multiplication operator $\M_f$ is given by $\M_f
g = P\left[ fg\right] $.  For a more symmetric form of this
transformation, one can replace $R$ by $Q_{\alpha}$. However, it is
more convenient to use the diagonal variable $R$. For $(\tW,\tQ)$ we
have
\begin{proposition}\label{p:normal}
The normal form variables (\ref{nft1}) satisfy equations of the form
\begin{equation}
\left\{
\begin{aligned}
&\tW_t + \tQ_\alpha = \tG,
\\
&\tQ_t - i \tW =  \tK,
\end{aligned}
\right.
\label{nft1eq}
\end{equation}
where $\tG$, $\tK$ are cubic (and higher order) functions of
$(W, \W,R, \W_{\alpha}, R_{\alpha})$, given by
\begin{equation}
\label{gk-tilde}
\left\{
\begin{aligned}
\tilde G = & \  2P[ (F - R)_\alpha \Re W + \W_{ \alpha} F\Re W + \W \Re (\W F)+F_{\alpha }\W\Re W] \\
& \ -  P[\bar \W R \bar Y - \W(P[\bar R Y]+\bar P[R \bar Y])],
\\
\tilde K = & \  P\left[ (\bar{F}(1+\bar{\W})-\bar{R})R+2iP\left[ \frac{\W^2+a}{1+\W}\right]\cdot \Re W +2P\left[ bR_{\alpha}\right]\cdot\Re W  \right]. 
\end{aligned}
\right.
\end{equation}
\end{proposition}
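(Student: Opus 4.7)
We verify the proposition by direct computation. From the definitions (\ref{nft1}) we compute $\tilde W_t + \tilde Q_\alpha$ and $\tilde Q_t - i \tilde W$, substitute the evolution equations (\ref{ww2d1}) for $W_t$ and $Q_t$ (and their $\alpha$-derivatives), and check that the quadratic terms cancel, leaving cubic remainders which we then identify with $\tilde G$, $\tilde K$ in (\ref{gk-tilde}).

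\textbf{Key steps.} Using that $P$ commutes with $\partial_t$ and $\partial_\alpha$, one expands
\[
\tilde W_t = W_t - 2P\bigl[(\Re W_t)\, W_\alpha + \Re W \cdot W_{\alpha t}\bigr], \qquad \tilde Q_\alpha = Q_\alpha - 2P\bigl[(\Re W_\alpha)\, R + \Re W \cdot R_\alpha\bigr],
\]
with analogous formulas for $\tilde Q_t$ and $i\tilde W$. After substituting $W_t=-F(1+W_\alpha)$, $Q_t=-FQ_\alpha+iW-P[|Q_\alpha|^2/J]$, and using the leading expansions $F = Q_\alpha + (\text{quadratic})$, $R = Q_\alpha + (\text{quadratic})$, $1/J = 1 + (\text{linear})$, the purely linear terms cancel: $-Q_\alpha + Q_\alpha = 0$ in the first equation, $-iW + iW = 0$ in the second. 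The quadratic contributions then arise from (i) the quadratic parts of $F$, $R$, $1/J$, and (ii) evaluating $\Re W_t \approx -\Re Q_\alpha$, $R_t \approx iW_\alpha$, $R_\alpha \approx Q_{\alpha\alpha}$, $W_{\alpha t} \approx -Q_{\alpha\alpha}$ inside the normal-form corrections. Using that $P$ is the identity on holomorphic functions and annihilates decaying anti-holomorphic ones, one checks that these two families of quadratic terms are negatives of each other and hence cancel; this mutual cancellation is the defining property of the transformation (\ref{nft1}).

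\textbf{Extracting $\tilde G$, $\tilde K$ and the main obstacle.} To put the remainder in the form (\ref{gk-tilde}) one uses the algebraic identities $R = Q_\alpha/(1+\W)$, $Y = \W/(1+\W)$, the definitions of $b$ and $a$, and the useful rearrangement $\W - (\W - a)/(1+\W) = (\W^2 + a)/(1+\W)$ which is what produces the inner $P[(\W^2+a)/(1+\W)]$ appearing in $\tilde K$. Each factor such as $F-R$, $\bar F(1+\bar \W) - \bar R$, or $\W^2 + a$ is itself at least quadratic in $(W,Q)$, so its product with a linear factor like $\Re W$, $\W_\alpha$, or $F_\alpha$ is genuinely cubic, and the formulas (\ref{gk-tilde}) just record the resulting groupings, with the outer $P$ reinstated since both sides are holomorphic. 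The main obstacle is bookkeeping: many terms appear and the cancellations rely on repeated careful use of the action of $P$ on products of holomorphic, anti-holomorphic, and real factors. Structurally, the existence of this quadratic normal form reflects the absence of resonant quadratic triads for the dispersion relation $\omega^2 = |\xi|$, and (\ref{nft1}) is the minimal symmetric realization of the cancellation in holomorphic coordinates; the periodic case proceeds identically, with only the standard minor adjustments for zero modes of $P$.
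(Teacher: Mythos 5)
Your proposal takes essentially the same route as the paper's own (sketched) proof: substitute the evolution equations \eqref{ww2d1} into $\tilde W_t+\tilde Q_\alpha$ and $\tilde Q_t-i\tilde W$, observe that the quadratic contributions from the equations and from the normal-form corrections cancel using the action of $P$ on holomorphic/antiholomorphic products (with $R-F=P[R\bar Y-\bar R Y]$ quadratic), and group the cubic remainder into $\tilde G$, $\tilde K$ — precisely the computation the paper declares straightforward and leaves to the reader. Like the paper, you stop short of the term-by-term bookkeeping that confirms the exact expressions in \eqref{gk-tilde}, but the identities you single out (in particular $\W-(\W-a)/(1+\W)=(\W^2+a)/(1+\W)$ and the elimination of $Q$-derivatives in favor of $R$) are exactly the ones that carry that computation through.
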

  The proof is straightforward; one rewrites the system \eqref{ww2d1}
  in terms of the normal form variables $(\tilde{W}, \tilde{Q})$,
  \eqref{nft}. The original variables are $(W,Q)$, but the derivatives
  of $Q$ from the perturbative terms $G$ and $K$ are expressed in terms of $R$
 and eliminated. We also make use of the identity $P+\bar{P}=I$. The details are 
left for the reader. We note that the difference $R-F$ is quadratic,
\[
R-F = P[R \bar Y- \bar R Y].
\]

Heuristically, having cubic nonlinearities yields an improved
$O(\epsilon^{-2})$ lifespan for initial data of size $\epsilon$.
However, implementing this idea directly is fraught with
difficulties. To start with, while $\tG$, $\tK$ are cubic and higher
order terms they also depend on higher-order derivatives of $(W,Q)$;
thus it is not possible to directly close energy estimates for the
normal form variables $(\tW,\tQ)$.  This is related to the fact that
the normal form transformation \eqref{nft1} is not invertible, and
further to the fact that the system \eqref{ww2d1} is fully nonlinear,
as opposed to semilinear.

There are at least two existing methods in the literature which
attempt to address this difficulty.  One such method, introduced by
Wu~\cite{wu}, is based on the idea that any transformation which agrees
quadratically with the above normal form transform will have the same
effect as the normal form transform, but perhaps one can also choose
such a transformation such that it is invertible. In Wu's work this
transformation is an implicit change of coordinates, which is further
followed by a secondary normal form transformation. A related example
where an implicit change of coordinates is fully sufficient appears in
the work \cite{hi} of the first two authors for the related
Burgers-Hilbert problem.

 A second method, which appears in the work of Shatah etc \cite{s}, is based on
 a mix of quadratic energy estimates for high derivatives of the
 solutions, combined with a normal form method for low
 derivatives. This works well for water waves in dimension three, but
 is not precise enough for the two dimensional problem.

In the present paper we propose an alternative approach for two
dimensional water waves, which seems to be both simpler and more
accurate. Precisely, rather than  attempting to modify the equations
using a normal form transform, we instead construct modified energy
functionals which have cubic accuracy. A significant advantage of this idea is
that it applies even for the leading order energy functionals, which to our knowledge is
new.  In a simpler setting, this method was first introduced by the authors
in \cite{BH} in the context of the Burgers-Hilbert problem.

Our first result is translation invariant, and yields a cubic lifespan bound.

\begin{theorem}
\label{t:cubic}
  Let $\epsilon \ll 1$.  Assume that the initial data for the equation
  \eqref{ww2d-diff} on either $\mathbb{R}$ or $\mathbb{S}^1$ satisfies
\begin{equation}
\|(\W(0), R(0))\|_{\dH_1} \leq \epsilon.
\end{equation}
Then the solution exists on an $\epsilon^{-2}$ sized time interval
$I_\epsilon = [0,T_\epsilon]$ , and satisfies a similar bound. In
addition, the estimates
\[
\sup_{t \in I_\epsilon} \| (\W(t), R(t))\|_{\dH_n} \lesssim \| (\W(0), R(0))\|_{\dH_n},
\qquad n \geq 2,
\]
hold whenever the right hand side is finite.
\end{theorem}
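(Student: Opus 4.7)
The plan is to establish the cubic lifespan by constructing, at each regularity level $n \geq 1$, a modified energy $E^{(n)} := \End + \Ent$ which is (i) equivalent to $\|(\W, R)\|_{\dH_n}^2$ under the smallness assumption, and (ii) satisfies a cubic-type differential inequality along the flow,
\[
\frac{d}{dt} E^{(n)} \lesssim (A+B)^2\, E^{(n)}.
\]
Since the control norms $A$ and $B$ from \eqref{A-def}, \eqref{B-def} are bounded (up to the smallness of $Y$) by the $\dH_1$ norm of $(\W, R)$, a bootstrap combined with Grönwall's inequality and the continuation criterion of Theorem~\ref{baiatul} will yield an $\epsilon^{-2}$-sized existence time together with the stated higher-regularity bounds.

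The base case $n = 1$ is the heart of the matter. I would begin with the natural quadratic energy $\End$ for $n=1$, modeled on the linearized-equation energy $\Ez$ from \eqref{E0} applied to the differentiated variables, using the form \eqref{WR-diff}. Differentiating this quadratic energy in time, eliminating $\partial_t$ via the equation, and integrating by parts produces a sum of contributions of degree two, three, and four (and higher) in $(\W, R)$. The degree-two contributions cancel because $\Ez$ is preserved by the linearized flow \eqref{ww2d-0}. The degree-three contributions are the obstruction to a quartic bound. The next step is to construct the explicit cubic functional $\Ent$, trilinear in $(\W, R)$ with the same total derivative weight as $\End$, whose time derivative precisely cancels these cubic obstructions modulo quartic remainders. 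The ansatz for $\Ent$ is guided by the normal form transformation of Proposition~\ref{p:normal}, but it is implemented at the level of the quadratic form rather than as a change of variables; this is what allows us to avoid the derivative loss inherent in the normal form itself.

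The higher regularity case $n \geq 2$ is handled by the same strategy at each level: one constructs $\End$ equivalent to $\|(\W, R)\|_{\dH_n}^2$ together with a cubic correction $\Ent$ absorbing the obstruction from $\frac{d}{dt} \End$. The resulting $(A + B)^2 E^{(n)}$ bound propagates with no size constraint on $\|(\W, R)\|_{\dH_n}$ itself, because $A$ and $B$ are low-regularity quantities already controlled at the $\dH_1$ level. Once the $\dH_1$ bound is closed, $A + B \lesssim \epsilon$ holds throughout $[0, T\epsilon^{-2}]$, and Grönwall applied to each $E^{(n)}$ yields the higher Sobolev bounds.

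The hard part will be the explicit algebraic construction of the cubic corrections $\Ent$, together with the verification that the remainder terms in $\frac{d}{dt} E^{(n)}$ are genuinely quartic and controlled by $(A + B)^2 E^{(n)}$ with no loss of derivatives. This requires careful use of the holomorphic projections $P$, $\bar P$ and of the cancellation structure of $a$, $b$, $M$ from \eqref{defa}, \eqref{M-def}, so that expressions which naively cost a derivative can be rewritten via commutators between $P$ and multiplication. A secondary issue is coercivity: the cubic correction $\Ent$ must be small compared with $\End$ under $\epsilon \ll 1$, so that $E^{(n)}$ is positive definite and equivalent to the $\dH_n$ norm squared. Both difficulties are analogous to those resolved in \cite{BH} for the Burgers--Hilbert equation, but are more delicate here owing to the dispersive character of the water wave system and the nonlocal operators it involves.
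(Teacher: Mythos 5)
Your proposal is correct and follows essentially the same route as the paper: the theorem is proved by a bootstrap on the $\dH_1$ norm using the normal-form-inspired cubic modified energies — Proposition~\ref{plin-long} applied to $(\W,R)$ and Proposition~\ref{t:en=small} with $n=2$ applied to $(\W_\alpha,R_\alpha)$ — whose time derivatives are $O_A(AB)$ times the energy, followed by Gr\"onwall and the continuation criterion of Theorem~\ref{baiatul}, with the higher $\dH_n$ bounds coming from Proposition~\ref{t:en=small} for $n\geq 2$. The only caveats are notational: in the paper $\Ent$ already contains the quadratic leading part (so one does not form $\End+\Ent$, whose quadratic piece would only obey a $B\cdot$energy bound), and the quartic remainder is bounded by $AB$ rather than $(A+B)^2$, but neither difference affects the argument.
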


Our second result assumes some additional localization for the initial
data, and establishes almost global existence of solutions.  This
applies only for the problem on $\mathbb{R}$, and relies on the dispersive
properties of the linear equation \eqref{ww2d-0}, whose solutions with
localized data have $t^{-\frac12}$ dispersive decay. To state the
result we need to return to the original set of variables $(W,Q)$.  We
also take advantage of the scale invariance of the water wave
equations. Precisely, it is invariant with respect to the scaling law
\[
(W(t,\alpha), Q(t,\alpha)) \to (\lambda^{-2} W(\lambda t,\lambda^2 \alpha),
 \lambda^{-3} Q(\lambda t,\lambda^2 \alpha)).
\]
This suggests that we should use the scaling vector field
\[
S = t \partial_t + 2 \alpha \partial_\alpha,
\]
and its action on the pair $(W,Q)$, namely
\[
\S(W,Q) = ((S-2)W,(S-3)Q).
\]

However, these are not the correct diagonal variables; to
diagonalize we use the notations 
\[
(w,r) =: \AA\S(W,Q).
\]
Then $(\W, R)$ solve the linearized equations \ref{lin(wr)00}
and  define the weighted energy
\begin{equation}\label{WH}
\|(W,Q)(t)\|_{\WH}^2 :=  \|(W,Q)(t)\|_{\dH_0}^2 + \|(\W,R)(t)\|_{\dH_5}^2 + \|(w,r)(t)\|_{\dH_0}^2.
\end{equation}

Then we have
\begin{theorem} \label{t:almost}
There exists $ c > 0$ so that   for each initial data
$(W(0),Q(0))$ for the system \eqref{ww2d1} satisfying
\begin{equation}\label{data}
\|(W,Q)(0)\|_{\WH}^2  \leq \epsilon \ll 1,
\end{equation}
the solution exists up to time $T_\epsilon = e^{c\epsilon ^{-2}}$
and satisfies
\begin{equation}\label{almost-e}
\|(W,Q)(t)\|_{\WH}^2  \lesssim \epsilon, \qquad |t| < T_\epsilon.
\end{equation}
as well as 
\begin{equation}\label{almost-e-point}
|W|+|W_\alpha| + ||D|^\frac12 W_{\alpha}| + |R| + |R_\alpha|    \lesssim 
\frac{\epsilon}{\langle t \rangle^\frac12}, \qquad |t| < T_\epsilon.
\end{equation}
\end{theorem}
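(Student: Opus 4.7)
\medskip

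\noindent\textbf{Proof plan for Theorem~\ref{t:almost}.}

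The plan is to run a continuity/bootstrap argument on $[0,T]$, combining a cubic energy estimate (in the spirit of Theorem~\ref{t:cubic}) with dispersive decay extracted via the scaling vector field $S$. Concretely, I assume on $[0,T]$ the bootstrap bounds
\[
\|(W,Q)(t)\|_{\WH}^2 \leq C_0\epsilon, \qquad A(t)+t^{\frac12}\cdot(\text{pointwise norms in \eqref{almost-e-point}}) \leq C_0\epsilon,
\]
with $C_0$ large but universal, and improve them by replacing $C_0$ with $C_0/2$.

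\medskip

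\noindent\emph{Step 1: Cubic energy estimate for $(\W,R)$ in $\dH_5$.} This is the modified-energy machinery of Theorem~\ref{t:cubic}. I construct a quartic correction to the symmetrized $\dH_5$ energy for the diagonalized system \eqref{ww2d-diff} so that its time derivative is quintic, of the schematic form
\[
\frac{d}{dt}E_5 \lesssim A^2\,\|(\W,R)\|_{\dH_5}^2 + \text{l.o.t.}
\]
Under the bootstrap, $A^2 \lesssim \epsilon^2/\langle t\rangle$, which is only logarithmically divergent.

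\medskip

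\noindent\emph{Step 2: Energy estimate for the scaling variables.} Because $S = t\partial_t+2\alpha\partial_\alpha$ almost commutes with the water wave flow, the pair $(w,r)=\AA\S(W,Q)$ satisfies the linearized system \eqref{lin(wr)00} modulo scale-invariant lower order inhomogeneities that are quadratic in $(\W,R)$ and linear in $(w,r)$. Using the linearized energy estimate (which, again via a cubic modification, should give $L^2\times \dot H^{1/2}$ control with a right-hand side $\lesssim A^2\|(w,r)\|_{\dH_0}^2$ plus source terms controlled by $A\cdot\|(\W,R)\|_{\dH_5}\cdot\|(w,r)\|_{\dH_0}$), I obtain
\[
\frac{d}{dt}\|(w,r)\|_{\dH_0}^2 \lesssim A^2\bigl(\|(w,r)\|_{\dH_0}^2+\|(\W,R)\|_{\dH_5}^2\bigr).
\]
The conservation of $\E(W,Q)$ handles the $\|(W,Q)\|_{\dH_0}$ piece of $\WH$ to leading order, with cubic corrections from \eqref{ww-energy}.

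\medskip

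\noindent\emph{Step 3: Dispersive decay via a Klainerman--Sobolev inequality.} This is the heart of the matter and the main obstacle. For the linear half-wave dispersion $\omega(\xi)=\sqrt{|\xi|}$ on $\R$, a stationary-phase/Littlewood--Paley analysis shows that at each dyadic frequency $|\xi|\sim 2^{2j}$, localization $|\alpha|\sim t\cdot 2^{-j}$ at the group velocity is enforced by the scaling vector field, yielding an estimate of the form
\[
\bigl\||D|^{\sigma}u\bigr\|_{L^\infty} \lesssim \frac{1}{\langle t\rangle^{1/2}}\bigl(\|u\|_{\dH_{N}} + \|\AA\S u\|_{\dH_0}\bigr)
\]
for suitable $\sigma$ and $N$ large enough (here $N=5$ suffices, motivating the choice in \eqref{WH}). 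Applying this to each of $W$, $W_\alpha$, $|D|^{1/2}W_\alpha$, $R$, $R_\alpha$ and using Steps 1--2 to bound the right-hand side by $C_0^{1/2}\epsilon^{1/2}$ improves the pointwise bootstrap. The technical subtlety is that $\AA$ and $\S$ are nonlinear and $t$-dependent; I handle this by passing through the linearized variables $(w,r)$, for which the inequality is purely linear, and absorbing the extra commutator terms into the small high-Sobolev norm.

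\medskip

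\noindent\emph{Step 4: Closing the bootstrap and Gronwall.} Feeding the improved pointwise bound $A\lesssim \epsilon/\langle t\rangle^{1/2}$ back into Steps 1--2 and integrating Gronwall's inequality, all three components of $\|(W,Q)\|_{\WH}^2$ grow at most like
\[
\epsilon \cdot \exp\!\left(C\int_0^t \frac{\epsilon^2}{\langle s\rangle}\,ds\right) \lesssim \epsilon\,\langle t\rangle^{C\epsilon^2},
\]
which stays $\leq C_0\epsilon/2$ provided $t \leq T_\epsilon := e^{c\epsilon^{-2}}$ for $c$ small enough. This improves the bootstrap and yields \eqref{almost-e}; \eqref{almost-e-point} then follows directly from Step 3. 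The main obstacle throughout is Step 3 together with the requirement that every Step 1--2 estimate be genuinely cubic (no quadratic source terms), since any residual quadratic term would degrade the exponent from $C\epsilon^2$ to $C\epsilon$ and reduce the lifespan from $e^{c\epsilon^{-2}}$ to $\epsilon^{-1}$.
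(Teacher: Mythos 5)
Your overall architecture (bootstrap on the $\WH$ norm plus the pointwise norm, cubic modified energies for the $\dH_5$ part and for the scaling variable $(w,r)=\AA\S(W,Q)$, Gronwall giving growth $\langle t\rangle^{C\epsilon^2}$, and a vector-field-to-decay step) matches the paper's Steps 1, 2 and 4 quite closely; in fact Step 2 is slightly simpler than you anticipate, since by exact scale invariance $(w,r)$ solves the linearized system \eqref{lin(wr)00} with no extra source terms, so Proposition~\ref{plin-long} applies directly. The genuine gap is in your Step 3. A Klainerman--Sobolev bound of the form $\||D|^\sigma u\|_{L^\infty}\lesssim \langle t\rangle^{-1/2}(\|u\|_{\dH_N}+\|\AA\S u\|_{\dH_0})$ cannot be used as a black box at fixed time: $\S$ contains $t\partial_t$, so to extract spatial information from the bound on $\S u$ one must substitute the evolution equation, and the price is a term $t\cdot(\text{nonlinearity})$ in the resulting fixed-time system. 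For the original system \eqref{ww2d1} the nonlinearity is quadratic, so under the bootstrap this term has size $\epsilon^2\langle t\rangle^{1/2}$, which is unbounded and cannot be ``absorbed into the small high-Sobolev norm''; no commutator estimate rescues this.

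The missing ingredient is the normal form transformation \eqref{nft1} of Proposition~\ref{p:normal}: the paper runs the decay argument on $(\tW,\tQ)$, which satisfy \eqref{nft1eq} with cubic right-hand sides \eqref{gk-tilde} bounded by $\|(\tG,\tK)\|_{\dH_0}\lesssim \epsilon^3\langle t\rangle^{-1+C_1\epsilon^2}$ (Lemma~\ref{cubic-RHS}), so that after combining $S$ with the equation one gets the fixed-time system \eqref{fixed-t} with inhomogeneities bounded in $\dH_0+\dH_{-1}$ uniformly in $t$. This also requires the transfer lemmas showing that the pointwise and energy/vector-field norms of $(W,R)$ and of $(\tW,\tQ_\alpha)$ are comparable (so the normal form can be undone at the end), and then Proposition~\ref{p:point1} is proved by a genuinely nontrivial fixed-time dyadic analysis: frequency-by-frequency one distinguishes two elliptic regimes (outer, where $\alpha\partial_\alpha$ dominates; inner, where $t|D|^{1/2}$ dominates) and a hyperbolic intermediate region $|\alpha|\approx |t|2^{-\ell/2}$ treated by a propagation/ODE estimate, with the derivative bounds in \eqref{almost-e-point} recovered only after interpolating against the $\dH_5$ energy (whence the loss $\omega(t,\alpha)$, and the reason \eqref{WH} needs no vector field on higher derivatives). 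Your appeal to stationary phase plus ``absorbing commutators'' skips both the normal form reduction and this fixed-time elliptic/hyperbolic analysis, and as written the decay step would fail on the quadratic terms.
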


This lifespan bound was originally established by Wu~\cite{wu}. Here, we
prove the same result under less restrictive assumptions, and,
hopefully, with a simpler proof. We should also mention here the recent
work of Ionescu-Pusateri~\cite{ip0},\cite{ip} and Alazard-Delort~\cite{ad}, where
global well-posedness is proved for small localized data. In a follow-up paper
we provide a simplified proof of this result as well.

While our research for this paper was largely complete by the time \cite{ip}
and \cite{ad} appeared, there is one idea from Ionescu and Pusateri's article \cite{ip}
which we adopted here in order to shorten the exposition; this is the fact 
that in order to close the estimates it suffices to use a single iteration of the scaling
vector field $S$. However, our implementation of this idea is different from 
\cite{ip}, and also more efficient, in the sense that we use no higher derivatives 
of $\S(W,Q)$. 

For the reminder of the introduction we provide a brief outline of the
paper.  The first step of the analysis is to study the linearization
of the equation \eqref{ww2d1}; this is done in
Section~\ref{s:linearized}. We begin with the diagonalisation of the
linearized equations; this in turn leads to energy estimates, which
are crucial in the proof of the local well-posedness result. The
linearized energy functional is then refined so that cubically
nonlinear estimates can be proved; this is essential in the proof of
the improved lifespan result.  We make no use of dispersive decay in
this normal form analysis, so it works also for spatially periodic
solutions. The low regularity threshold is reached by using various
bilinear Coifman-Meyer type estimates, as well as multilinear versions
thereof.


In Section~\ref{s:ee} we consider the equations for higher order
derivatives of the solution. The principal part of these equations is
closely related to the linearized equations studied in the previous
section. After some normalization, the quadratic bounds follow
directly from the ones for the linearized equation. The emphasis there
is again on obtaining cubically nonlinear estimates. The essential
idea is to construct a modified energy functional with better
estimates. Our modified energy essentially combines the
linearized energy, for the leading part, with the cubic normal form energy
for the lower order terms. This is similar to the approach  in the paper \cite{BH}
devoted to the Burgers-Hilbert problem.

Section~\ref{s:lwp} contains the proof of the local well-posedness
result.  We begin with more regular data, both in terms of low
frequencies and in terms of high frequencies. For such data, a
standard mollifier technique suffices in order to establish
well-posedness. The rough $\dH_1$ solutions are obtained as uniform
limits of smooth solutions by using the estimates for the linearized
equation.  The same construction yields their continuous dependence on
data.

In Section~\ref{s:cubic} we prove the cubic lifespan bounds for small initial data in Theorem~\ref{baiatul}.

In Section~\ref{s:decay} we provide the proof of the long time results.
The cubic lifespan result is a straightforward consequence of the
cubic energy estimates. The proof of the almost global result is slightly
more involved, as it requires, as an intermediate step, to prove
the $t^{-\frac12}$ dispersive decay for a limited number of derivatives
of $(\W,R)$. These bounds are obtained from the vector field
energy estimates,  essentially in an elliptic fashion via Sobolev type
embeddings.

Appendix~\ref{holom-eq} includes, for reader's convenience, a
complete derivation of the holomorphic water wave equations.  Finally,
Appendix~\ref{s:multilinear} contains a collection of bilinear,
multilinear and commutator estimates which are used at various places
in the paper. We are grateful to Camil Muscalu for useful conversations
pointing us in the right direction for this last section.


\section{ The linearized equation}
\label{s:linearized}

In this section we derive the linearized water wave equations,  and
prove energy estimates for  them.  We do this in three stages. First we 
prove quadratic energy estimates in $\dH_0$, which apply for the large data
problem.  Then we prove cubic energy estimates in $\dH_0$ for the small 
data problem. 
Various bilinear, multilinear and commutator estimates
which are used in this section are collected in Appendix~\ref{s:multilinear}.

\subsection{ Computing the linearization} The solutions for the
linearized water wave equation around a solution $\left( W,Q\right) $
are denoted by $(w,q)$. However, it will be more convenient to
immediately switch to diagonal variables $(w,r)$, where
\[
r := q - Rw.
\]
The linearization of $R$ is
\[
\delta R =
\dfrac{q_{\alpha}- Rw_{\alpha}}{1+\W}
= \dfrac{r_{\alpha}+ R_\alpha w}{1+\W},
\]
while the linearization of $F$ can be expressed in the form
\[
\delta F = P[ m - \bar m],
\]
where the auxiliary variable $m$ corresponds to differentiating
$F$ with respect to the holomorphic variables,
\[
m := \frac{q_\alpha - R w_\alpha}{J} + \frac{\bar R w_\alpha}{(1+\W)^2} =
 \frac{r_\alpha +R_\alpha w}{J} + \frac{\bar R w_\alpha}{(1+\W)^2}.
\]
Denoting also
\[
n := \bar R \delta R = \frac{ \bar R(r_{\alpha}+R_\alpha w)}{1+\W},
\]
the linearized water wave equations take the form
\begin{equation*}
\left\{
\begin{aligned}
&w_{t}+ F w_\alpha + (1+ \W) P[ m-\bar m] = 0, \\
&q_{t}+ F q_\alpha + Q_\alpha P[m-\bar m]  -i w +P\left[n+\bar n\right] =0.
\end{aligned}
\right.
\end{equation*}
Recalling that $b = F + \dfrac{\bar R}{1+\W}$, this becomes
\begin{equation*}
\left\{
\begin{aligned}
&(\partial_t + b \partial_\alpha) w + (1+ \W) P[ m-\bar m] =   \dfrac{\bar R w_\alpha}{1+\W} ,\\
& (\partial_t + b \partial_\alpha) q + Q_\alpha P[m-\bar m]  -i w +P\left[n+\bar n\right] = \dfrac{\bar R q_\alpha}{1+\W}.
\end{aligned}
\right.
\end{equation*}
Now, we can use the second equation in \eqref{ww2d-diff} to switch from $q$ to $r$ and obtain
\begin{equation*}
\left\{
\begin{aligned}
&(\partial_t + b \partial_\alpha) w + (1+ \W) P[ m-\bar m] =   \dfrac{\bar R w_\alpha}{1+\W},\\
& (\partial_t + b \partial_\alpha) r  -i \frac{1+a}{1+\W} w +P\left[n+\bar n\right] = \dfrac{\bar R (r_\alpha+ R_\alpha w)}{1+\W}.
\end{aligned}
\right.
\end{equation*}
Terms like $\bar P m$, $\bar Pn$ are lower order since the differentiated holomorphic variables have to be lower frequency. The
same applies to their conjugates. Moving those terms to the right
and taking advantage of algebraic cancellations we are left with
\begin{equation}\label{lin(wr)0}
\left\{
\begin{aligned}
& (\partial_t + b \partial_\alpha) w  +  \frac{1}{1+\bar \W} r_\alpha
+  \frac{R_{\alpha} }{1+\bar \W} w  = \mathcal{G}(w,r),
 \\
&(\partial_t + b \partial_\alpha)  r  - i  \frac{1+a}{1+\W} w  = \mathcal{K}(w,r),
\end{aligned}
\right.
\end{equation}
where
\begin{equation*}
\begin{aligned}
\mathcal{G}(w,r) = \ (1+\W) (P \bar m + \bar P  m), \quad \mathcal{K}(w,r) =  \  \bar P n - P \bar n.
\end{aligned}
\end{equation*}

We remark that while $(w,r)$ are holomorphic, it is not directly obvious
that the above evolution preserves the space of holomorphic states. To
remedy this one can also project the linearized equations onto the
space of holomorphic functions via the projection $P$.  Then we obtain
the equations
\begin{equation}\label{lin(wr)}
\left\{
\begin{aligned}
& (\partial_t + \M_b \partial_\alpha) w  + P \left[ \frac{1}{1+\bar \W} r_\alpha\right]
+  P \left[ \frac{R_{\alpha} }{1+\bar \W} w \right] = P \mathcal{G}(  w, r),
 \\
&(\partial_t + \M_b \partial_\alpha)  r  - i P\left[ \frac{1+a}{1+\W} w\right]  =
 P \mathcal{K}( w,r).
\end{aligned}
\right.
\end{equation}
Since the original set of equations \eqref{ww2d1} is fully
holomorphic, it follows that the two sets of equations,
\eqref{lin(wr)0} and \eqref{lin(wr)}, are algebraically equivalent.

In order to obtain cubic linearized energy  estimates it is also of interest to separate
the quadratic parts $ \mathcal{G}^{2}$ and $\mathcal{K}^{2}$
of $\mathcal{G}$ and $\mathcal{K}$. These are split into quadratic and higher
terms as shown below
\[
\begin{split}
   \mathcal{G} = &  \,  \mathcal{G}^{(2)}+  \mathcal{G}^{(3+)},
  \qquad   \mathcal{K} = \  \mathcal{K}^{(2)}+  \mathcal{K}^{(3+)}.
\end{split}
\]
For the quadratic parts we have
\[
\begin{split}
 P \mathcal{G}^{(2)}(w,r) = & \ -P \left[ \W \bar r_\alpha
  \right] + P\left[ R \bar w_\alpha\right], \qquad   P \mathcal{K}^{(2)}(w,r) =  - P \left[ R \bar r_\alpha\right],
\end{split}
\]
with $\bar P \mathcal{G}^{(2)}(w,r) = \overline{P \mathcal{G}^{(2)}(w,r)}$ and $\bar P \mathcal{K}^{(2)}(w,r) = - \overline{P \mathcal{K}^{(2)}(w,r)}$.
We can also rewrite the above expressions  in a commutator form
\begin{equation}\label{gk2}
\begin{aligned}
P \mathcal{G}^{(2)}(w,r)=-\left[ P,\W\right] \bar{r}_{\alpha}+ \left[ P,R\right] \bar{w}_{\alpha},\quad P \mathcal{K}^{(2)}(w,r)=- \left[P, R\right]  \bar r_\alpha.
\end{aligned}
\end{equation}

The cubic terms have the form
\[
\begin{split}
  \mathcal{G}^{(3+)}(w,r) =  \ P\bar m^{(3+)} + \bar P m^{(3+)} +\W( P \bar m + \bar P  m),\quad \mathcal{K}^{(3+)}(w,r) =  \   \bar P n^{(3+)} - P \bar n^{(3+)}.
\end{split}
\]
For the purpose of simplifying nonlinear estimates, it is convenient
to express $\mathcal G^{(3)}$ and $\mathcal K^{(3)}$ in a polynomial
fashion. This is done using the variable $Y=\dfrac{\W}{1+\W}$. Then we have
\[
\begin{split}
  \bar P m = & \ \bar P [w_\alpha (1-Y)^2 \bar R- (r_\alpha + R_\alpha
  w)(1-Y) \bar Y],
  \\
  \bar P m^{(3+)} = & \ \bar P [ r_\alpha (\bar \W + Y) \bar Y  -
  R_\alpha w (1-Y) \bar Y - w_\alpha (2Y-Y^2) \bar R],
  \\
  \bar P n^{(3+)} = & \ \bar P [- r_{\alpha} Y \bar R + R_\alpha w
  (1-Y) \bar R].
\end{split}
\]

\subsection{Quadratic estimates for large data.}  Our goal here is to
study the well-posedness of the system \eqref{lin(wr)} in $L^2 \times
\dot H^\frac12$.  We begin with a more general version of the system
\eqref{lin(wr)}, namely
\begin{equation}\label{lin(wr)inhom}
\left\{
\begin{aligned}
& (\partial_t + \M_b \partial_\alpha) w  + P \left[ \frac{1}{1+\bar \W} r_\alpha\right]
+  P \left[ \frac{R_{\alpha} }{1+\bar \W} w \right] = G,
 \\
&(\partial_t + \M_b \partial_\alpha)  r  - i P\left[ \frac{1+a}{1+\W} w\right]  = K,
\end{aligned}
\right.
\end{equation}
and  define the associated positive definite linear energy
\[
\Elind(w,r) = \int_{\R} (1+a) |w|^2 + \Im ( r \bar  r_\alpha)
d\alpha .
\]
We remark that, by Proposition~\ref{regularity for a}, $a$ is nonnegative and bounded,
therefore 
\[
\Elind(w,r) \approx_A \Ez(w,r)
\]
Our first result uses the control parameters $A$ and $B$ defined in
\eqref{A-def}, \eqref{B-def}:

\begin{proposition}\label{plin-short}
a) The linear equation \eqref{lin(wr)inhom} is well-posed in $\dH_0$,
and the following estimate holds:
\begin{equation}\label{lin-gen2}
  \frac{d}{dt} \Elind(w,r)  =  2 \Re \int_{\R} (1+a) \bar w \,  G -
i \bar r_\alpha \,  K \ d\alpha +  O_A(A B)  \Elind(w,r).
\end{equation}

b) The linearized equation \eqref{lin(wr)} is well-posed in $L^2 \times \dot H^\frac12$,
and the following estimate holds:
\begin{equation}\label{lin2}
\frac{d}{dt} \Elind(w,r)  \lesssim_A
 B  \Elind(w,r).
\end{equation}
\end{proposition}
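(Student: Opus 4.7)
The plan is to prove part (a) by a direct energy computation that tracks the structural cancellations in \eqref{lin(wr)inhom}, and then to deduce part (b) by applying (a) with $G = P\mathcal G(w,r)$, $K = P\mathcal K(w,r)$ and estimating $\mathcal G, \mathcal K$ using their commutator/polynomial decompositions. In both cases well-posedness follows from the energy estimate by the standard regularize-approximate-pass-to-the-limit scheme, with uniqueness and continuous dependence obtained from the same estimate applied to differences of solutions.

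For part (a), I would differentiate $\Elind(w,r) = \int(1+a)|w|^2 + \Im(r\bar r_\alpha)\,d\alpha$ in time and substitute \eqref{lin(wr)inhom}; the source pieces produce exactly the claimed term $2\Re\int(1+a)\bar w\,G - i\bar r_\alpha\,K\,d\alpha$, and three structural contributions remain to be absorbed into $O_A(AB)\Elind$. First, the transport terms $\M_b\partial_\alpha$ are skew-adjoint in $L^2$ modulo $\partial_\alpha(b(1+a))$ and commutators $[\M_b,P]$, both controlled by $\|b_\alpha\|_{BMO}$ via the Appendix commutator estimate; here $b_\alpha$ is $O_A(AB)$ from the formula $b = 2\Re(R - P[R\bar Y])$ combined with \eqref{ww2d-diff}, and $\int a_t|w|^2\,d\alpha$ is handled by the bound $\|a_t\|_{L^\infty}\lesssim_A AB$ obtained by differentiating \eqref{defa} in time and substituting from \eqref{ww2d-diff}. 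Second, the key coupling cancellation is between the two cross terms
\[
-2\Re\int(1+a)\bar w\,P\!\left[\tfrac{r_\alpha}{1+\bar\W}\right]d\alpha \quad\text{and}\quad 2\Re\int P\!\left[\tfrac{1+a}{1+\W}w\right]\bar r_\alpha\,d\alpha.
\]
Using $Pw=w$ and $Pr_\alpha = r_\alpha$ to peel off commutators via $P[\tfrac{r_\alpha}{1+\bar\W}] = \tfrac{r_\alpha}{1+\bar\W} + [P,\tfrac{1}{1+\bar\W}]r_\alpha$ and similarly for the second term, the two leading pieces are complex conjugates of each other and hence cancel in the real part, leaving residual commutators $[P,(1+\bar\W)^{-1}]r_\alpha$ and $[P,(1+a)(1+\W)^{-1}]w$ that are controlled by $\||D|^{1/2}\W\|_{BMO} + \||D|^{1/2}a\|_{BMO} = O_A(B)$ through the Coifman--Meyer bilinear estimates of the Appendix. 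Third, the lower-order term $P[R_\alpha(1+\bar\W)^{-1}w]$, when paired with $(1+a)\bar w$, produces $O_A(\|R_\alpha\|_{BMO})\Elind \lesssim B\Elind$ directly via an $L^2$-$BMO$ duality argument.

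For part (b), the right-hand side $\int(1+a)\bar w\,P\mathcal G - i\bar r_\alpha\,P\mathcal K\,d\alpha$ splits via the commutator form \eqref{gk2} and the explicit polynomial expressions for $\mathcal G^{(3+)}, \mathcal K^{(3+)}$ into bilinear commutator terms such as $\int(1+a)\bar w\,[P,\W]\bar r_\alpha\,d\alpha$ and trilinear paraproducts such as $\int(1+a)\bar w\,P[YR\bar r_\alpha]\,d\alpha$. In every such term precisely one factor can be absorbed into $B$ (either $\||D|^{1/2}\W\|_{BMO}$ or $\|R_\alpha\|_{BMO}$), the remaining $(w,r)$-factors fit into $L^2\times\dot H^{1/2}$, and the scalar multipliers $Y, R, a$ are bounded in $L^\infty$ by $A$, so the multilinear estimates of the Appendix give the bound $O_A(B)\Elind$ as required. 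The main analytical obstacle throughout is the coupling cancellation in part (a): verifying that after the $P$-commutators are extracted, every residual term sits in the precise Coifman--Meyer form where the resulting constants depend only on $A$ and $B$, rather than on stronger norms of the weights $(1+a)$ and $(1+\W)^{\pm 1}$.
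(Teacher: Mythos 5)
Your overall architecture (energy identity for \eqref{lin(wr)inhom}, leading-order cancellation between the two cross terms using that $a$ is real and $P$ is self-adjoint, then part (b) by feeding $P\mathcal G, P\mathcal K$ through Coifman--Meyer bounds) matches the paper, and your part (b) is essentially the paper's argument. But there is a genuine gap in your treatment of the zeroth-order terms in part (a). You propose to estimate $\int \partial_\alpha\bigl(b(1+a)\bigr)|w|^2\,d\alpha$ and the pairing of $P\bigl[\tfrac{R_\alpha}{1+\bar\W}w\bigr]$ against $(1+a)\bar w$ \emph{separately}, invoking $\|b_\alpha\|_{BMO}$ and an ``$L^2$--$BMO$ duality''. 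This does not work: $BMO$ pairs with the Hardy space $H^1$, not with $L^1$, and $|w|^2$ is a nonnegative $L^1$ density with no cancellation, so neither $\int b_\alpha|w|^2$ nor $\int \Re\bigl[\tfrac{R_\alpha}{1+\bar\W}\bigr]|w|^2$ is individually controlled by $B\|w\|_{L^2}^2$ under the hypotheses (only $b_\alpha, R_\alpha\in BMO$). The paper's proof hinges on combining exactly these two contributions via the identity $\tfrac{R_\alpha}{1+\bar\W}+\tfrac{\bar R_\alpha}{1+\W}-b_\alpha=M$ of \eqref{M-def}, and then using the pointwise bound $\|M\|_{L^\infty}\lesssim AB$ of Lemma~\ref{l:M}; without this structural cancellation the estimate fails. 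The same issue appears in your handling of $a_t$: the bound $\|a_t\|_{L^\infty}\lesssim AB$ you invoke is not available (only the material derivative $(\partial_t+b\partial_\alpha)a$ is bounded in $L^\infty$, see \eqref{aflow}; $a_\alpha$, hence $a_t$ alone, is not pointwise controlled by $A,B$), so you must keep the transport term together with $a_t$ so that the combination $a_t+ba_\alpha$ appears, as in the paper's $err_1$.

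A second, related shortfall: even where your commutator bookkeeping is valid, it delivers an error of size $O_A(B)\,\Elind$, not the $O_A(AB)\,\Elind$ asserted in \eqref{lin-gen2}. In the paper, the leading cancellation is organized so that every surviving term carries either the small factor $a=O(A^2)$ (e.g.\ $a\bar w\,[P,b]w_\alpha$, $a\bar w\,[\bar Y,P]r_\alpha$) or the factor $M=O_{L^\infty}(AB)$; your residual commutators $[P,(1+\bar\W)^{-1}]r_\alpha$ paired with $(1+a)\bar w$ lose this gain and only give $B$. The weaker bound would still yield part (b), but not \eqref{lin-gen2} as stated, and the $AB$ constant there is precisely what is used later in the cubic energy estimates (Proposition~\ref{plin-long} and Section~\ref{s:ee}) and ultimately in the lifespan results, so it cannot be given up. To repair the proof you should redo the bookkeeping so that (i) the $b_\alpha$ and $R_\alpha/(1+\bar\W)$ zeroth-order terms are merged into $M$, (ii) $a_t$ appears only through $(\partial_t+b\partial_\alpha)a$, and (iii) the exact cross-term cancellation is performed before extracting commutators, so that all residuals inherit a factor of $a$ or $M$.
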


\begin{proof}
a)  A direct computation yields
\[
\begin{split}
\frac{d}{dt} \int (1+a)|w|^2 d \alpha = & \ 2 \Re \!\! \int  (1+a) \bar w (\partial_t
+ \M_b \partial_\alpha  ) w +
a \bar w   [b,P] w_\alpha\,  d\alpha ,\\ & \ +  \int \left[ a_t+((1+a)b)_\alpha\right]
 |w|^2 \, d \alpha .
\end{split}
\]
A similar computation shows that
\[
\frac{d}{dt} \int \Im (  r \partial_\alpha \bar r)  \, d \alpha = 2 \Im  \int    (\partial_t
+ \M_b \partial_\alpha) r  \, \partial_\alpha \bar r \, d \alpha. \ \
\]

Adding the two and using the equations \eqref{lin(wr)inhom}, the quadratic
$\Re (w \bar r_\alpha)$ term cancels modulo another commutator term,
and we obtain
\begin{equation}\label{dE2}
\frac{d}{dt}  \Elind(w,r) = 2\Re \int (1+a) \bar w\,   G- i \bar r_\alpha \,  K\, d\alpha + err_1,
\end{equation}
where
\[
\begin{aligned}
  err_1 = & \int \left[ a_t+((1+a)b)_\alpha\right] |w|^2 d\alpha -
  2\Re \int (1+a)\frac{R_{\alpha}}{1+\bar{\W}}\vert w\vert ^2\,  d\alpha
  \\ & -2 \Re \int + a \bar w \, (\left[\bar Y, P\right] (
  r_\alpha+R_\alpha w) + [P,b] w_\alpha)\, d\alpha.
\end{aligned}
\]
Using the auxiliary function $M$ in \eqref{M-def},
we  rewrite it as
\[
\begin{aligned}
err_1  =  \int \left( a_t+ba_{\alpha}\right)  |w|^2 + M (1+a)\vert w\vert ^2 \, d\alpha
- 2 \Re  \int a \bar w \, (\left[\bar Y,P \right]  (r_\alpha+R_\alpha w)
 +   [P,b] w_\alpha) \, d\alpha.
\end{aligned}
\]
The error term is at least quartic. To conclude the proof of
\eqref{lin-gen2} it suffices to show that
\begin{equation}\label{err12}
 |err_1| \lesssim   AB   \Elind(w,r) .
\end{equation}

For the first term, by  Proposition~\ref{regularity for a} in the \emph{Appendix}~\ref{s:multilinear}, we have $\vert  a_t+ba_{\alpha}\vert \lesssim AB$.
For the second term we combine the pointwise bounds $\vert a\vert
\lesssim A^2$ in Lemma~\ref{regularity for a} together with $\|M\|_{L^\infty}
\lesssim AB$ in Lemma~\ref{l:M}.

For the last  term  it remains to estimate the
commutators in $L^2$. Two of them are obtained  using Lemma~\ref{l:com},
\[
\|\left[\bar Y ,P\right]  r_\alpha\|_{L^2} \lesssim \| |D|^\frac12 Y\|_{BMO}
\| r \|_{\dot H^\frac12}, \qquad    \| [P,b] w_\alpha\|_{L^2} \lesssim 
\|b_\alpha\|_{BMO} \|w\|_{L^2},
\]
and suffice due to the bounds for $b$ and $Y$
in Lemmas~\ref{l:b},\ref{l:Y}. For the remaining piece we write
$[\bar Y, P] (R_\alpha w)= [\bar P, \bar P [\bar Y R_\alpha]] w]$ and 
use \eqref{CM} to estimate
\[
\|  \bar P[\bar P [\bar Y R_\alpha] w]\|_{L^2} \lesssim 
\|w\|_{L^2} \|\bar P [\bar Y R_\alpha]\|_{BMO} \lesssim 
\|w\|_{L^2} \||D|^\frac12 Y\|_{BMO} \| |D|^\frac12 R\|_{BMO},
\]
where the bilinear bound in the second step follows after a bilinear
Littlewood-Paley decomposition from \eqref{bmo-bmo} and
\eqref{bmo>infty}.

b) To estimate the terms involving $ \mathcal{G}$ and $ \mathcal{K}$ we
separate the quadratic and cubic parts. It suffices to show that  the quadratic terms
satisfy
\begin{equation}\label{gk2-est}
\| \mathcal{G}^{(2)}(w, r)\|_{L^2} + \| \mathcal{K}^{(2)}(w, r)\|_{\dot H^\frac12}
\lesssim_A B (\|w\|_{L^2} + \|r\|_{\dot H^\frac12}),
\end{equation}
while  the cubic and higher terms satisfy
\begin{equation}\label{gk3-est}
\| \mathcal{G}^{(3+)}(w, r)\|_{L^2} + \| \mathcal{K}^{(3+)}(w, r)\|_{\dot H^\frac12}
\lesssim_A AB(\|w\|_{L^2} + \|r\|_{\dot H^\frac12}).
\end{equation}

In order to obtain the estimates claimed in
\eqref{gk2-est},\eqref{gk3-est} we use the Coifman-Meyer \cite{cm}
type commutator estimates described in the \emph{Appendix}~\ref{s:multilinear},
Lemma~\ref{l:com}. Precisely, for the first term in $
P\mathcal{G}^{(2)}(w,r)$ we use \eqref{first-com} with $s=0$, and
$\sigma = \frac12$ to write
\[
\| \left[ P,\W\right] \bar{r}_{\alpha}\|_{L^2} \lesssim \||D|^\frac12 \W\|_{BMO} \|r\|_{\dot H^\frac12}.
\]
For the second term in $ P\mathcal{G}^{(2)}(w,r)$ we use
\eqref{first-com} with $s=0$ and $\sigma = 1$ to obtain
\[
\|  \left[ P,R\right] \bar{w}_{\alpha}\|_{L^2} \lesssim \| R_\alpha\|_{BMO} \|w\|_{L^2},
\]
and for $P\mathcal{K}^{(2)}(w,r)$ we use \eqref{first-com} with $s = \frac12$,
and $\sigma = \frac12$, and conclude that
\[
\| \left[P, R\right]  \bar r_\alpha\|_{\dot H^\frac12} \lesssim \|R_\alpha\|_{BMO}  \|r\|_{\dot H^\frac12}.
\]
 The same estimate applies to the antiholomorphic parts of
$\mathcal{G}^{(2)}$ and $ \mathcal{K}^{(2)}$, and \eqref{gk2}
follows.

For the cubic and higher parts of $\mathcal{G}$ and
$\mathcal{K}$ we apply  the same type of commutator
estimates, as well as the $BMO$ bounds in Proposition~\ref{p:bmo},  as follows:
\[
\| \bar P [r_\alpha (1-Y) \bar Y \bar \W]\|_{L^2} \lesssim
\| r\|_{\dot H^\frac12} \| (1-Y) \bar Y \bar \W\|_{BMO^\frac12}
\lesssim_A \|Y\|_{L^\infty}  \| r\|_{\dot H^\frac12} ,
\]
using \eqref{bmo-alg} at the last step.
\[
\| \bar P[w(1-Y) R_\alpha \bar Y]\|_{L^2} 
\lesssim  \|w(1-Y)\|_{L^2} \| \bar P[R_\alpha \bar Y]\|_{BMO} 
\lesssim_A \|w\|_{L^2} \|R\|_{BMO^\frac12} \|Y\|_{BMO^\frac12}
\]
using \eqref{bmo-bmo} and \eqref{bmo>infty} at the last step.
\[
\| \bar P[ w_\alpha(2Y-Y^2) \bar R]\|_{L^2}
\lesssim \|w\|_{L^2} \| \partial_\alpha \bar P[(2Y-Y^2) \bar R]\|_{BMO},
\lesssim_A \|w\|_{L^2} \|Y\|_{L^\infty} \| R\|_{BMO}
\]
using \eqref{bmo-bmo}, and \eqref{bmo-infty} at the last step.
\[
\| |D|^\frac12 \bar P[r_\alpha Y \bar R]\|_{L^2}
\lesssim \| r\|_{\dot H^\frac12} \|  \partial_\alpha \bar P[ Y \bar R]\|_{L^2}
\lesssim_A \| r\|_{\dot H^\frac12} \|Y\|_{L^\infty} \| R\|_{BMO},
\]
again by  \eqref{bmo-bmo} and \eqref{bmo-infty}. Finally,
\[
\begin{split}
\| |D|^\frac12 \bar P[ w(1-Y) R_\alpha \bar R]\|_{L^2}
\lesssim & \ \| w(1-Y)\|_{L^2} \| |D|^\frac12 \bar P[R_\alpha \bar R]\|_{BMO}
\\
\lesssim_A & \  \|w\|_{L^2} \| |D|^\frac12 R\|_{BMO} \|R_\alpha\|_{BMO}
\end{split}
\]
follows using \eqref{bmo-bmo} and \eqref{bmo>infty}.

\end{proof}

\subsection{Cubic estimates for small data.}
For the small data problem it is of further interest to track the solution
on larger time scales.  For this  we add to the equations the
holomorphic quadratic parts $P \mathcal G^{(2)}$
and $P \mathcal K^{(2)}$ of $\mathcal G$
and $\mathcal K$ and consider the linear equations
\begin{equation}\label{lin(wr)inhom3}
\left\{
\begin{aligned}
  & (\partial_t + \M_b \partial_\alpha) w + P \left[ \frac{1}{1+\bar
      \W} r_\alpha\right] + P \left[ \frac{R_{\alpha} }{1+\bar \W} w
  \right] = -P \left[ \W \bar r_\alpha - R \bar w_\alpha\right] + G,
  \\
  &(\partial_t + \M_b \partial_\alpha) r - i P\left[ \frac{1+a}{1+\W}
    w\right] =- P \left[ R \bar r_\alpha\right]+ K.
\end{aligned}
\right.
\end{equation}
For this problem we add appropriate cubic terms and define the modified energy
\[
\Elint(w,r) = \int_{\R} (1+a) |w|^2 + \Im (r \bar  r_\alpha)
+ 2 \Im (\bar R w r_\alpha) -2\Re(\bar{\W} w^2)\, d\alpha.
\]

 Then we have:
\begin{proposition}\label{plin-long}
Assume that $A \ll1$. Then
\begin{equation}\label{elin3-eq}
 \Elint(w,r) = (1+O(A)) \Ez(w,r).
\end{equation}
In addition, the following properties hold:

a) The solutions to \eqref{lin(wr)inhom3}
 satisfy
\begin{equation}\label{elin3-dinhom}
\begin{split}
  \frac{d}{dt} \Elint(w,r) = & \ 2 \Re \int \left((1+a) \bar w - i\bar R r_\alpha - 2\bar \W w\right) \, G + i (\bar r - \bar R w) \, K_\alpha \, d\alpha \\ & + O_A(AB) \Elint(w,r).
\end{split}
\end{equation}

b) For solutions to the linearized equation \eqref{lin(wr)} we have:
\begin{equation}\label{elin3-diff}
\frac{d}{dt} \Elint(w,r)  \lesssim_A AB    \Elint(w,r).
\end{equation}
\end{proposition}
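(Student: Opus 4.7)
Write $\Elint = \Elind + \Psi$, where
\[
\Psi(w,r) := 2\Im \int \bar R\, w\, r_\alpha\, d\alpha - 2\Re \int \bar{\W}\, w^{2}\, d\alpha
\]
is the cubic correction. The plan is to handle $\Elind$ via Proposition~\ref{plin-short}(a) and to compute $\frac{d}{dt}\Psi$ directly using the equations, arranging for the worst (quadratic in $(\W,R)$) pieces of $\frac{d}{dt}\Psi$ to cancel exactly the extra forcing added to \eqref{lin(wr)inhom3}.

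For part (a), the bound $|\int \bar \W\, w^{2}| \leq \|\W\|_{L^\infty}\|w\|_{L^2}^2 \lesssim A\, \Ez(w,r)$ is immediate. For the other term I would use that $r$ is holomorphic to write $r_\alpha = -i|D|r$, integrate by parts to get
\[
\Bigl|\int \bar R\, w\, r_\alpha\, d\alpha\Bigr| \lesssim \||D|^{\frac12}(\bar R w)\|_{L^2}\,\|r\|_{\dot H^{\frac12}},
\]
and estimate the first factor by $\lesssim \||D|^{\frac12}R\|_{L^\infty \cap B^{0,\infty}_2}\|w\|_{L^2}$ via a Littlewood--Paley/paraproduct decomposition (using the multilinear bounds in the Appendix). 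With AM--GM this gives $|\Psi| \lesssim A\, \Ez$, hence $\Elint = (1+O(A))\Ez$ since $\Elind$ is already equivalent to $\Ez$.

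For part (b), Proposition~\ref{plin-short}(a) applied to \eqref{lin(wr)inhom3} (with $G$ replaced by $G - P[\W\bar r_\alpha - R\bar w_\alpha]$ and $K$ by $K - P[R\bar r_\alpha]$) yields
\[
\frac{d}{dt}\Elind = 2\Re \!\int\! (1+a)\bar w \,G - i\bar r_\alpha\, K\, d\alpha + Q_1 + O_A(AB)\Elind,
\]
where $Q_1$ collects the contributions of the two extra quadratic $P$-forcings against the test pair $((1+a)\bar w, -i\bar r_\alpha)$. To leading order, using $a=O(A^2)$, this is
\[
Q_1 = -2\Re\!\int \bar w\, P[\W \bar r_\alpha - R\bar w_\alpha]\, d\alpha + i\int \bar r_\alpha\, P[R \bar r_\alpha]\, d\alpha + O_A(AB)\Elind.
\]
Next I compute $\frac{d}{dt}\Psi$ by Leibniz, substituting (i) the equations \eqref{lin(wr)inhom3} for $w_t, r_t$ and their $\alpha$-derivative, and (ii) the equations \eqref{ww2d-diff} for $\W_t, R_t$. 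At leading order the $S^1$ of the evolution reduces to $w_t \approx -r_\alpha$, $r_{\alpha t}\approx iw_\alpha$, $\bar R_t \approx -i\bar\W$, $\bar\W_t \approx -\bar R_\alpha$, producing bilinear expressions in $(w,r)$ with coefficients that are quadratic in $(\W,R)$. After expanding the holomorphic projections using $P+\bar P = I$, integrating by parts, and discarding $\int (\text{holomorphic})\cdot(\text{holomorphic})$ integrals that vanish, the leading piece of $\frac{d}{dt}\Psi$ emerges as exactly $-Q_1$. The mechanism is the standard normal-form identity: the cubic corrections in $\Psi$ are precisely those whose time derivative kills the non-perturbative quadratic resonances produced by the $\mathcal G^{(2)}, \mathcal K^{(2)}$ pieces that were separated off in \eqref{lin(wr)inhom3}. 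Every remaining term in $\frac{d}{dt}\Psi - (-Q_1)$ carries at least one extra factor of $(\W,R)$ (coming either from $b\partial_\alpha$, from the non-constant coefficients in \eqref{lin(wr)inhom3}, from the commutators $[P,\cdot]$, or from the nonlinear remainder in $\W_t, R_t$) and is estimated by $O_A(AB)\Elint$ using Lemmas~\ref{l:com}--\ref{l:Y} and the $BMO$/Besov bilinear bounds in Appendix~\ref{s:multilinear}; the external forcing $G,K$ contributes the $\int ((1+a)\bar w - i\bar R r_\alpha - 2\bar\W w)G + i(\bar r - \bar R w)K_\alpha$ term after an integration by parts in the $K$-contribution to make it pair against $r - Rw$. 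This yields \eqref{elin3-dinhom}.

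For part (c), apply part (a) with $G = \mathcal G^{(3+)}(w,r)$ and $K = \mathcal K^{(3+)}(w,r)$, which is legitimate because the linearized system \eqref{lin(wr)} is exactly \eqref{lin(wr)inhom3} with these choices. The quadratic forcing has already been absorbed into the modified energy, and the $G,K$ contribution in \eqref{elin3-dinhom} is controlled by \eqref{gk3-est}, giving the bound $\lesssim_A AB\,\Elint$ as claimed.

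The main obstacle is the cancellation computation in part (b): organizing the several dozen quadratic-in-$(\W,R)$ bilinear forms arising from $\frac{d}{dt}\Psi$ so that they match $-Q_1$ term by term, and verifying that every surviving remainder is genuinely cubic and controlled by $AB\,\Elint$ using only the function-space norms available in $A$ and $B$. Careful use of the holomorphy of $w,r,\W,R$ (so that many $\int P f \cdot P g$ vanish when $fg$ is antiholomorphic, and vice versa) is what makes the bookkeeping tractable.
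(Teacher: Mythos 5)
Your strategy coincides with the paper's: split $\Elint=\Elind+\Psi$, run Proposition~\ref{plin-short}(a) on \eqref{lin(wr)inhom3} treating the added quadratic terms as forcing, and compute $\frac{d}{dt}\Psi$ so that its leading cubic part cancels exactly the contribution of those quadratic forcings; the cancellation you describe is indeed exact at leading order (one can check, e.g., that $-2\Re\int \bar w P[\W\bar r_\alpha]\,d\alpha$ cancels against the term produced by $\bar R_t\approx -i\bar\W$ and $w_t\approx -r_\alpha$ in $\Psi$, and similarly for the other two families). The equivalence \eqref{elin3-eq} and the reduction of part (b) to \eqref{elin3-dinhom} with $G=P\mathcal G^{(3+)}$, $K=P\mathcal K^{(3+)}$ and \eqref{gk3-est} also match the paper.

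The gap is in the last step, where you assert that "every remaining term" in $\frac{d}{dt}\Psi+Q_1$ carries an extra factor of $(\W,R)$ and "is estimated by $O_A(AB)\Elint$ using Lemmas~\ref{l:com}--\ref{l:Y}". This is not true term by term, and it is precisely where the bulk of the paper's proof lives. Two concrete obstructions: (i) the transport contributions $\int \bar R r_\alpha\, \M_b w_\alpha\,d\alpha$ and $\int \bar R w\,\partial_\alpha(\M_b r_\alpha)\,d\alpha$ cannot be bounded individually by $AB\,\Ez(w,r)$ — a paraproduct piece like $\sum_k b_{<k} w_{k,\alpha}$ has no usable $L^2$ or $\dot H^{-1/2}$ bound in terms of $\|w\|_{L^2}$ alone — and one must exploit the antisymmetric combination $\int \bar R r_\alpha \M_b w_\alpha - \bar R w_\alpha \M_b r_\alpha\,d\alpha$, in which the diagonal paraproduct pieces cancel; this is exactly the content of the separate appendix estimate \eqref{i1}. (ii) Several quadrilinear terms (e.g.\ those involving $P[\bar Y r_\alpha]$ or $P[R_\alpha(1-\bar Y)w]$ paired against $\bar P[\bar R r_\alpha]$ or $\bar P[\bar\W w]$) cannot be closed by a single Coifman--Meyer application; the paper has to split each second factor by frequency and use the first factor in both $L^2$ and $\dot H^{1/2}$, pairing the pieces via $L^2\cdot L^2$ and $\dot H^{1/2}\cdot\dot H^{-1/2}$ respectively, to land on $AB$ rather than $A^2$ or $B^2$. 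Without these devices — which are not consequences of the quoted lemmas applied naively — the claimed $O_A(AB)\Elint$ bound for the remainder does not follow, so the proposal as written does not close part (a) (and hence part (b)).
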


\begin{proof}
For \eqref{elin3-eq} we need to estimate the added cubic terms in $\Elint(w,r)$.
The second is trivially bounded, while  the first is rewritten as
\[
\Im \int w \bar P[\bar R r_\alpha] \, d\alpha.
\]
By Lemma~\ref{l:com} we have $\| P[\bar R r_\alpha]\|_{L^2} \lesssim
\||D|^\frac12 R\|_{BMO} \|r\|_{{\dot H^\frac12}}$,  hence \eqref{elin3-eq}
follows.

a) To prove the estimate \eqref{elin3-dinhom} we compute the time
derivative of the cubic component of the energy $E^3_{lin}(w,r)$,
using the projected equations for $w$ and $r$ and the unprojected
equations for $R$ and $\W$:
\begin{equation*}
\begin{aligned}
\frac{d}{dt}  \left( \Im \int   \bar{R} w r_\alpha \, d\alpha -\Re\int \bar{\W}w^2d\alpha\right) =&
 \Im \! \int \!   - i \bar \W w r_\alpha     -  \bar{R}  r_\alpha r_\alpha+ i \bar{R} w  w_\alpha
+ \bar R r_\alpha G + \bar R w K_\alpha \, d\alpha \\
 & +\Re\int \bar{R}_{\alpha}w^2+2\bar{\W} wr_{\alpha}+ 2 \bar \W w F\, d\alpha+ err_2,
 \end{aligned}
\end{equation*}
where
\begin{equation}
\label{err3}
\begin{aligned}
err_2 =  & \ \Im \int\!\! \left\lbrace   \left( i\left( \frac{\bar{\W}^2+a}{1+\bar{\W}} \right)-b\bar{R}_{\alpha} \right) wr_{\alpha}
-  \bar{R}w\partial_{\alpha}\left( \M_{b}r_{\alpha}-iP\left[ \frac{a-\W}{1+\W}w\right] + P[R\bar{r}_\alpha]\right)\right.
  \\
   &\left.   \qquad  -\bar{R}r_{\alpha} \left( \M_{b}w_{\alpha}-P\left[ \frac{\bar{\W}}{1+\bar{\W}}r_{\alpha}\right]
   +P\left[ \frac{R_{\alpha}}{1+\bar{\W}}w\right] + P[\W \bar{r}_\alpha - R\bar{w}_\alpha] \right) \right\rbrace \, d\alpha \\
&+ \Re\int \left\{ w^2\left( b \bar \W_{\alpha}+ \frac{\bar{\W}-\W}{1+\W}\bar{R}_{\alpha} - (1+\bar{\W})\bar{M} \right) \right.
\\ &  \qquad \left. + 2\bar{\W} w \left( \M_{b}w_{\alpha}-
P\left[ \frac{\bar{\W}}{1+\bar{\W}}r_{\alpha}\right] +P\left[ \frac{R_{\alpha}}{1+\bar{\W}}w \right]
+ P[\W \bar{r}_\alpha - R\bar{w}_\alpha]\right) \right\} \, d\alpha .
\end{aligned}
\end{equation}

Adding this to \eqref{lin-gen2} (but applied to solutions to \eqref{lin(wr)inhom3}) we  obtain
\begin{equation}\label{dE3}
\frac{d}{dt}  \Elint(w,r) =    \ 2 \Re \int \left((1+a) \bar w - i
    \bar R r_\alpha - 2\bar \W w\right) \, G + i (\bar r - \bar R 
  w) \, K_\alpha \, d\alpha+
 err_1+  err_3,
\end{equation}
where
\[
err_3 = 2 err_2 - 2 \Re\int a\bar{w} P\left[\W\bar{r}_\alpha - R \bar{w}_\alpha\right]\, d\alpha.
\]
Given the bound \eqref{err12} for $err_1$,
the proof of \eqref{elin3-dinhom} is concluded if we
show that
\begin{equation}\label{error3}
|err_3|  \lesssim AB \Ez(w,r) .
\end{equation}
Further, recalling the estimate \eqref{gk2-est}, which in expanded form reads
\begin{equation}\label{GK2}
 \|P\left[\W\bar{r}_\alpha - R \bar{w}_\alpha\right]\|_{L^2}+
 \|P[R \bar r_\alpha]\|_{\dot H^\frac12} \lesssim B \|(w,r)\|_{L^2 \times \dot H^\frac12} ,
\end{equation}
it suffices to estimate $err_2$,
\begin{equation}\label{error2}
|err_2|  \lesssim AB  \Ez(w,r) .
\end{equation}
For the remainder of the proof we separately estimate  several types of terms in $err_2$:
\bigskip 

{\bf A. Terms involving $b$.}  Here, we use the bounds for $b$ in
Lemma~\ref{l:b}, which give
\[
\| b_\alpha\|_{BMO} \lesssim_A B, \qquad \||D|^\frac12 b\|_{BMO} \lesssim_A A.
\]
We first collect all the terms that are
contained in the first integral in $err_2$ and include $b$,
\[
I_1 = \int  -b\bar{R}_{\alpha}  wr_{\alpha}-\bar{R}r_{\alpha}  \M_{b}w_{\alpha}-  
\bar{R}w\partial_{\alpha} (\M_{b}r_{\alpha})\, d\alpha.
\]
We claim that 
\begin{equation}\label{i1-est}
|I_1|
\lesssim (\||D|^\frac12 R\|_{BMO} \|b_\alpha\|_{BMO} + \|R_\alpha\|_{BMO}
\||D|^\frac12 b\|_{BMO}) \|w\|_{L^2} \|r\|_{\dot H^\frac12}.
\end{equation}
Integrating by parts we get $I_1 = I_2+I_3$, where
\[
I_2 =   \ \int  \bar{R}_{\alpha} w \bar P[b  r_{\alpha}] \, d\alpha, \qquad
I_3 =  \  \int -\bar{R}r_{\alpha}  \M_{b}w_{\alpha}-  \bar{R}w\partial_{\alpha} (\M_{b}r_{\alpha})\, d\alpha.
\]
The first term on the right has a commutator structure and will be estimated separately
later, see $I_5$ below.  The bound for $I_3$ is proved in the appendix, see \eqref{i1}.

 We next collect all the terms that are contained in the second integral in  $err_2$ and include $b$,
and rewrite them as
\begin{equation*}
\label{b-terms}
\begin{split}
I_4= \! \!\int w^2\partial_{\alpha}\overline{\M_{b}\W}+2\bar{\W}w\M_{b}w_{\alpha}\, d\alpha= \! \!\int
\!  -2ww_{\alpha}b\bar{\W}+2\bar{\W}w\M_{b}w_{\alpha}\, d\alpha =
\! \!\int \! -2\bar{\W}w\bar P[b w_{\alpha}] \, d\alpha.
\end{split}
\end{equation*}
The expression $\bar P[b w_{\alpha}]$ is bounded in $L^2$ using
Lemma~\ref{l:com} to obtain
\[
|I_4| \lesssim \|\W\|_{L^\infty} \|b_\alpha\|_{BMO} \|w\|_{L^2}^2.
\]

{\bf B. Quadrilinear terms bounded via both $L^2 \cdot L^2$ and $\dot H^\frac12 \cdot \dot H^{-\frac12}$ pairings. } This includes the following expressions:
\begin{equation*}
\begin{aligned}
I_5 &=  \int  \bar{R}_{\alpha} w \bar P[b  r_{\alpha}] \, d\alpha =  \int \bar P[b  r_{\alpha}] P[ \bar{R}_{\alpha} w] \, d\alpha,\\
I_6 &= \int \bar{R}r_{\alpha}P\left[ \frac{\bar{\W}}{1+\bar{\W}}r_{\alpha}\right] \, d\alpha = \int\bar P[ \bar{R}r_{\alpha}] P\left[ \bar Y r_{\alpha}\right] \, d\alpha,\\
I_7 &=  \int \bar{R}r_{\alpha}P\left[ \frac{R_{\alpha}}{1+\bar{\W}}w\right] \, d\alpha =\int \bar P [\bar{R}r_{\alpha}]  P\left[ R_{\alpha} w(1-\bar Y)\right] \, d\alpha,\\
I_8 &= \int \bar{\W} w P\left[ \frac{R_{\alpha}}{1+\bar{\W}}w\right] \, d\alpha = \int \bar P[\bar{\W} w]  P\left[ R_{\alpha}(1-\bar Y) w\right] \, d\alpha,\\
I_{9}& = \int \bar{\W} wP\left[ \frac{\bar{\W}}{1+\bar{\W}}r_{\alpha}\right] \, d\alpha = \int \bar P[\bar{\W}w] P\left[ \bar Y r_{\alpha}\right] \, d\alpha.
\end{aligned}
\end{equation*}

The strategy here is to bound the first factor in both $L^2$ and $\dot
H^\frac12$, and the second, partially in $L^2$ and partially in $\dot H^{-\frac12}$.
For the first factor we have by Lemma~\ref{l:com}:
\begin{equation*}
\begin{aligned}
&\| \bar P [b r_\alpha]\|_{L^2}+ \|\bar P[\bar R r_\alpha]\|_{L^2} \lesssim
(\||D|^\frac12 b\|_{BMO} + \||D|^\frac12 R\|_{BMO}) \|r\|_{\dot H^\frac12}
\lesssim A \|r\|_{\dot H^\frac12},\\
&\| \bar P [b r_\alpha]\|_{\dot H^\frac12}+ \|\bar P[\bar R r_\alpha]\|_{\dot H^\frac12} \lesssim
(\|b_\alpha\|_{BMO} + \| R_\alpha\|_{BMO}) \|r\|_{\dot H^\frac12}
\lesssim B \|r\|_{\dot H^\frac12},
\end{aligned}
\end{equation*}
as well as
\begin{equation*}
\begin{aligned}
&\|\bar P[\bar{\W}w]\|_{L^2}+ \||D|^\frac12 \bar P[\bar{R}w] \|_{L^2} \lesssim(
 \| {\W}\|_{BMO}+  \| |D|^\frac12 R\|_{BMO})  \|w\|_{L^2}
\lesssim A  \|w\|_{L^2},\\
&\|\bar P[\bar{\W}w]\| _{\dot H^\frac12} + \||D|^\frac12 \bar P[\bar{R}w] \| _{\dot H^\frac12}
\lesssim (\| |D|^\frac12 {\W}\|_{BMO}+ \|R_\alpha\|_{BMO}) \|w\|_{L^2}
\lesssim B  \|w\|_{L^2}.
\end{aligned}
\end{equation*}

We now consider the second factor in the above integrals. For $P[\bar{R}_{\alpha} w]$ we have
\[
\| \sum_k P[ \bar{R}_{k,\alpha} w_k]\|_{L^2} \lesssim \|R_\alpha\|_{BMO}
\|w\|_{L^2}, \quad
\| \sum_k P[ \bar{R}_{< k,\alpha} w_k]\|_{\dot H^{-\frac12}} \lesssim \||D|^\frac12 R\|_{BMO}
\|w\|_{L^2}.
\]
The same argument applies to $P\left[ R_{\alpha}(1-\bar Y) w\right]$ once we use the decomposition
\begin{equation*}
P\left[ (1-\bar{Y})R_{\alpha}w\right] = P\left[ (1-\bar{Y}) \sum_{k\in \mathbf{Z}} R_{\alpha, \geq k}w_{k}\right]  - P\left[ \sum_{k\in \mathbf{Z}} \bar{Y} _{k}R_{\alpha, <k}w_k\right] + \sum_{k\in \mathbf{Z}}(1-\bar{Y}) _{<k}R_{\alpha, <k}w_k.
\end{equation*}
The first term is easily bounded in $L^2$ by Lemma~\ref{l:com}. The
second is also in $L^2$ using \eqref{bmo-infty} for the product of the
first two factors. Finally, the third is bounded in $\dot H^{-\frac12}$ by estimating
$\| R_{\alpha,<k}\|_{L^\infty} \lesssim 2^{\frac{k}2} A$.

It remains to consider the expression
\[
P\left[ \bar Y r_{\alpha}\right] = P\left[ \sum_k \bar Y_k r_{k,\alpha}\right]
+   \sum_k \bar Y_{<k} r_{k,\alpha}.
\]
Here, the first term is estimated in $L^2$ using Lemma~\ref{l:com}, while the
second goes into $\dot H^{-\frac12}$.

{\bf C. Quadrilinear terms bounded via an  $L^2 \cdot L^2$   pairing.}
This includes the following expressions:
\begin{equation*}
\begin{aligned}
&I_9 = \int \bar{R}w\partial_{\alpha}P\left[ \frac{a-\W}{1+\W} w \right] \,d\alpha =- \int \partial_\alpha  \bar P[\bar{R}w]  P\left[ (a(1-Y) - Y) w \right]\, d\alpha,\\
&I_{10} = \int \bar R w \partial_\alpha P[R \bar r_\alpha] \, d\alpha = -  \int \partial_\alpha \bar P[\bar R w] \partial_\alpha P[R \bar r_\alpha] \, d\alpha ,\\
&I_{11} = \int \bar R r_\alpha P[\W \bar r_\alpha - R \bar w_\alpha]\, d\alpha = \int \bar P[\bar R r_\alpha] P[\W \bar r_\alpha - R \bar w_\alpha] d\alpha, \\
&I_{12} = \int \bar \W w P[\W \bar r_\alpha - R \bar w_\alpha] \, d\alpha = \int \bar P[\bar \W w] P[W \bar r_\alpha - R \bar w_\alpha] \, d\alpha .
\end{aligned}
\end{equation*}
 In all cases both factors are estimated directly in $L^2$, using Lemma~\ref{l:com}, see also \eqref{GK2}.

{\bf D. Trilinear estimates.}
This includes the terms:
\begin{equation*}
\begin{aligned}
&I_{13} = \int \frac{\bar{\W}^2+a}{1+\bar{\W}}wr_{\alpha}\, d\alpha=  \int w \bar P[\bar P f r_{\alpha}]\, d\alpha, \ \ \ \ \ \ \, \qquad f=\frac{\bar{\W}^2+a}{1+\bar{\W}},\\
&I_{14} = \int w^2\bar{P}\left[ \frac{\bar{\W}-\W}{1+\W} \bar{R}_{\alpha}\right]\, d \alpha = \int w \bar P[ \bar P g w] \, d\alpha, \quad  \ \,  g=\frac{\bar{\W}-\W}{1+\W} \bar{R}_{\alpha},\\
&I_{15} = \int w^2\bar{P}\left[ (1+\bar{\W}) M\right]\,d \alpha = \int w \bar P[ \bar P h w] \, d\alpha,\qquad h=(1+\bar{\W})M.
\end{aligned}
\end{equation*}
Using Lemma~\ref{l:com} we have
\begin{equation}\label{est-tri}
|I_{13}| \lesssim \| |D|^\frac12 \bar P f\|_{BMO} \|w\|_{L^2} \|r\|_{\dot H^\frac12},
\ 
|I_{14}| \lesssim \|  \bar P g\|_{BMO} \|w\|_{L^2}^2, \ 
|I_{15}| \lesssim \|  \bar P h\|_{BMO} \|w\|_{L^2}^2,
\end{equation}
so it suffices to show that
\[
\| |D|^\frac12  f\|_{BMO} + \|   g\|_{BMO}+ \|   h\|_{BMO}  \lesssim AB.
\]
The $f$ bound follows from the algebra property of $BMO^\frac12 \cap
L^\infty$ in \eqref{bmo-alg} in view of \eqref{a-point} and
\eqref{a-bmo+}. The $g$ bound is obtained by writing
\[
\frac{\bar{\W}-\W}{1+\W} \bar{R}_{\alpha}
= \sum_{k} P_{\leq k} \left(\frac{\bar{\W}-\W}{1+\W}\right) R_{k,\alpha}
+  \sum_{k} P_{ k} \left(\frac{\bar{\W}-\W}{1+\W}\right) R_{< k,\alpha}.
\]
For the first term we use \eqref{bmo-infty}, while for the second, \eqref{bmo>infty}.
Finally, the $h$ bound is trivial due to \eqref{M-infty}.
The proof of \eqref{elin3-dinhom} is concluded.

b) To prove the bound \eqref{elin3-diff} it suffices to  apply the estimate in
\eqref{elin3-dinhom} with 
\[
F = P \mathcal F^{3+}(w,r), \qquad G = P \mathcal G^{3+}(w,r).
\]
Given the estimate \eqref{gk3-est} for the cubic components
of $\mathcal F$ and $\mathcal G$ and  the pointwise bound \eqref{a-point}
for $a$, it remains to consider the terms
\[
  \int \bar R r_\alpha P \mathcal F^{(3+)}  \, d\alpha,
\qquad \int  \bar \W w P \mathcal F^{(3+)} \,  d\alpha,   \qquad \int   \bar R
  w) \, P\mathcal G^{(3+)} \, d\alpha.
\]
 For the first one we use  the second part of \eqref{est-tri} to get
\begin{equation}
\label{term4}
\left \vert \int  \bar{R}r_{\alpha}P  \mathcal F^{(3+)}  \, d\alpha\right \vert\lesssim \Vert |D|^{\frac{1}{2}} R\Vert _{L^{\infty}}\Vert r\Vert_{\dot{H}^{\frac{1}{2}}} \Vert  \mathcal F^{(3+)} \Vert_{L^2}\lesssim
AB \Vert (w,r)\Vert_{\dH_0}^2
\end{equation}
The second one is directly estimated as
\begin{equation}
\label{term5}
\left\vert \int \bar{R}w\partial_{\alpha}\mathcal{K}\, d\alpha\right\vert \lesssim \Vert |D|^{\frac{1}{2}}R\Vert_{L^{\infty}}\Vert w\Vert_{L^2}\Vert \mathcal{K}\Vert_{\dot{H}^{\frac{1}{2}}} \lesssim AB \Vert (w,r)\Vert_{\dH_0}^2
\end{equation}
On the last term, using the first part of \eqref{est-tri}, we get
\begin{equation}
\label{term6}
\left\vert \int \bar{R}w\partial_{\alpha}\mathcal{K}\, d\alpha\right\vert \lesssim \Vert |D|^{\frac{1}{2}}R\Vert_{L^{\infty}}\Vert w\Vert_{L^2}\Vert \mathcal{K}\Vert_{\dot{H}^{\frac{1}{2}}} \lesssim AB \Vert (w,r)\Vert_{\dH_0}^2
\end{equation}
The proof of the proposition is concluded.

\end{proof}

\section{ Higher order energy estimates}
\label{s:ee}

The main goal of this section is to establish two energy bounds for
$(\W,R)$ and their higher derivatives. The first one is a quadratic
bound which applies for all solutions. The second one is a cubic bound
which only applies for small solutions. The  large data result is as follows:

\begin{proposition}\label{t:en=large}
 For any $n \geq 1$ there exists an energy functional $\End$ with  the
following properties:
(i) Norm equivalence:
\begin{equation*}
\End(\W,R) \approx_A   \Ez(\partial^{n-1} \W, \partial^{n-1} R),
\end{equation*}

(ii) Quadratic energy estimates for solutions to \eqref{ww2d-diff}:
\begin{equation*}
\frac{d}{dt} \End(\W,R)  \lesssim_A B \End(\W,R).
\end{equation*}
\end{proposition}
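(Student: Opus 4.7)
\medskip
\noindent
\textbf{Proof plan.} The strategy is to reduce the higher-order energy estimate to an application of the linearized energy estimate \eqref{lin-gen2} in Proposition~\ref{plin-short} to a suitable pair of functions derived from $(\W,R)$ by differentiation. The crucial structural fact is already visible in the discussion leading to \eqref{WR-diff}: after the diagonalising change of unknowns $\R = R_\alpha(1+\W)$, the pair $(\W_\alpha,\R)$ satisfies an equation with \emph{exactly} the form of the (unprojected) inhomogeneous linearised system \eqref{lin(wr)inhom}, with source $(G_2,K_2)$. This pattern persists after further differentiation.

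\medskip
\noindent
\textbf{Base case $n=1$.} A direct comparison of \eqref{ww2d-diff} with \eqref{lin(wr)inhom} shows that $(\W,R)$ itself solves the linearized equation with data
\[
G = (1+\W)M, \qquad K = -ia.
\]
Set
\[
E^{1,(2)}(\W,R) := \Elind(\W,R) = \int (1+a)|\W|^2 + \Im(R\bar R_\alpha)\,d\alpha.
\]
The norm equivalence with $\Ez(\W,R)$ is immediate because $|a|\lesssim A^2$ (Proposition~\ref{regularity for a}). Applying \eqref{lin-gen2} and using the pointwise bounds $\|M\|_{L^\infty}\lesssim_A AB$ (Lemma~\ref{l:M}), $|a|\lesssim A^2$, together with the $\dot H^\frac12$-type control of $a$ from Proposition~\ref{regularity for a} to pair $\bar R_\alpha$ with $a$ via duality in $\dot H^{\pm\frac12}$, one obtains the desired quadratic estimate.

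\medskip
\noindent
\textbf{Inductive step $n\geq 2$.} Differentiate the system \eqref{WR-diff} an additional $n-2$ times in $\alpha$, and regroup the top-order terms. The resulting system for $(\partial^{n-1}\W,\partial^{n-2}\R)$ has the form
\[
\left\{
\begin{aligned}
& \partial_t\partial^{n-1}\W + b\,\partial^{n}\W + \frac{\partial^{n-1}\R_\alpha}{1+\bar\W} + \frac{R_\alpha\,\partial^{n-1}\W}{1+\bar\W} = G_{n+1},\\
& \partial_t\partial^{n-2}\R + b\,\partial^{n-1}\R - i\,\frac{(1+a)\,\partial^{n-1}\W}{1+\W} = K_{n+1},
\end{aligned}
\right.
\]
where $G_{n+1},K_{n+1}$ collect commutators $[\partial^{n-2},b]$, $[\partial^{n-2},(1+\bar\W)^{-1}]$, $[\partial^{n-2},(1+a)(1+\W)^{-1}]$ applied to the principal terms, together with the $\partial^{n-2}$-derivatives of the original inhomogeneities $G_2,K_2$. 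This is again the unprojected linearized equation in the unknowns $w=\partial^{n-1}\W$, $r=\partial^{n-2}\R$, so one may define
\[
\End(\W,R) := \Elind(\partial^{n-1}\W,\partial^{n-2}\R).
\]
The norm equivalence with $\Ez(\partial^{n-1}\W,\partial^{n-1}R)$ follows from $|a|\lesssim A^2$ and the identity $\R = R_\alpha(1+\W)$, which gives $\partial^{n-2}\R = \partial^{n-1}R + (\text{lower-order products bounded via Coifman--Meyer})$.

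\medskip
\noindent
\textbf{Source bounds.} The heart of the argument is the estimate
\[
\left| 2\Re\int(1+a)\,\overline{\partial^{n-1}\W}\cdot G_{n+1} - i\,\overline{\partial^{n-1}\R}\cdot K_{n+1}\,d\alpha \right| \lesssim_A B\,\End(\W,R).
\]
Every term in $G_{n+1},K_{n+1}$ is of the form (scale-invariant factor)$\times$(one $B$-sized factor)$\times$(one high-derivative factor), where the $B$-sized factor is either $|D|^\frac12\W$, $R_\alpha$, $b_\alpha$, $M$, $a_\alpha$, or their $BMO$-controlled analogues from Lemmas~\ref{l:b}, \ref{l:M} and Proposition~\ref{regularity for a}. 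These are handled exactly as in the proof of Proposition~\ref{plin-short}: the high-derivative factor is paired in $L^2$ (or $\dot H^{\pm\frac12}$ via integration by parts or the commutator identity), while the remaining holomorphic/antiholomorphic factors are controlled by the bilinear and commutator bounds \eqref{bmo-alg}, \eqref{bmo-bmo}, \eqref{bmo-infty}, \eqref{bmo>infty} and Lemma~\ref{l:com}.

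\medskip
\noindent
\textbf{Main obstacle.} The technical heart is not the induction itself but the bookkeeping of the source terms $G_{n+1},K_{n+1}$: one must ensure that no commutator places $n$ derivatives on a $B$-sized coefficient like $b$, $\W$ or $R$. Whenever such a configuration seems to appear, one must redistribute derivatives by exploiting the holomorphic projection $P$ (so that a pure $\partial^n\bar\W$ against a holomorphic factor vanishes), integration by parts, or the Coifman--Meyer commutator structure from Lemma~\ref{l:com}. The diagonalizing choice $\R = R_\alpha(1+\W)$ is essential here, since it removes the would-be $(1+\W)^{-1}$-type singularities from the principal symbol and confines the troublesome $\partial_\alpha$ falling on $(1+\W)$ to the perturbative part.
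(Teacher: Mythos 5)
Your base case and the overall reduction-to-the-linearized-system philosophy match the paper, but the higher-order step has a genuine gap: you treat \emph{all} of the terms generated by differentiating \eqref{WR-diff} as perturbative sources to be plugged into \eqref{lin-gen2}, and this fails. After $n-1$ derivatives the Leibniz rule produces terms such as $n\,\frac{\bar R_\alpha}{1+\W}\,\W^{(n-1)}$, $(n-1)\,\frac{R_\alpha}{1+\bar\W}\,\W^{(n-1)}$, $(n-1)\,b_\alpha\,\W^{(n-1)}$ and $n\bigl(\frac{R_\alpha}{1+\bar\W}+\frac{\bar R_\alpha}{1+\W}\bigr)\R$, i.e.\ a coefficient controlled only in $BMO$ (by $B$) multiplying the top-order $L^2$ (or $\dot H^{1/2}$) factor. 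Such products are \emph{not} in $L^2\times\dot H^{1/2}$, and their contribution to $\frac{d}{dt}\Elind$ is a trilinear integral like $\int \bar R_\alpha\,|\W^{(n-1)}|^2\,d\alpha$, which cannot be bounded by $B\,\norm_n^2$: $|\W^{(n-1)}|^2$ is holomorphic times antiholomorphic, so it carries no Hardy-space structure and $BMO$--$H^1$ duality is unavailable. In the proof of Proposition~\ref{plin-short} the analogous term is harmless only because its coefficient is exactly $1$, so that in the energy identity it combines with $((1+a)b)_\alpha$ to produce the function $M$, which \emph{is} bounded ($\|M\|_{L^\infty}\lesssim AB$). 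Once the coefficients carry the factors $n$, $n-1$ from differentiation, this algebraic cancellation is destroyed, and no redistribution of derivatives, holomorphic projection, integration by parts, or Coifman--Meyer commutator bound recovers it; your "Main obstacle" paragraph addresses a different (and easier) issue, namely too many derivatives landing on a coefficient, which is not where the difficulty lies.

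The paper's proof supplies exactly the missing device. One first corrects the second unknown by the quadratic substitution $\tR=\R-R_\alpha\W^{(n-2)}+(2n-1)\W_\alpha R^{(n-2)}$ so that the unbounded coefficients in the two equations become matched, and then conjugates by the real exponential weight $e^{n\phi}$ with $\phi=-2\Re\log(1+\W)$; since $(\partial_t+b\partial_\alpha)\phi=2\Re\frac{R_\alpha}{1+\bar\W}-2M$, this conjugation cancels precisely the non-perturbative terms listed above, at the price of making the new variables $(w,r)=(e^{n\phi}\W^{(n-1)},e^{n\phi}\tR)$ non-holomorphic. One must then project and prove that $\|(Pw,Pr)\|_{L^2\times\dot H^{1/2}}\approx_A\norm_n$ (Lemmas~\ref{en:n=2} and~\ref{l:erori}), after which the projected system really does have the form \eqref{lin(wr)inhom} with admissible sources, and $\End(\W,R):=\Elind(Pw,Pr)$ satisfies both (i) and (ii) by Proposition~\ref{plin-short}(a). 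Without the $\tR$ correction, the weight $e^{n\phi}$, the projection, and the accompanying norm-equivalence lemmas, your definition $\End=\Elind(\partial^{n-1}\W,\partial^{n-2}\R)$ gives the norm equivalence (i) but the energy estimate (ii) cannot be closed.
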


The small data result is as follows:

\begin{proposition}\label{t:en=small}
  For any $n \geq 1$ there exists an energy functional $\Ent$ which
  has the following properties as long as $A \ll 1$:

(i) Norm equivalence:
\begin{equation*}
\Ent (\W,R)= (1+ O(A)) \Ez (\partial^{n-1} \W, \partial^{n-1} R),
\end{equation*}

(ii) Cubic energy estimates:
\begin{equation*}
\frac{d}{dt} \Ent (\W,R)  \lesssim_A AB \Ent (\W,R).
\end{equation*}
\end{proposition}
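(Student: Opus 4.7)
The plan is to follow the modified-energy philosophy introduced by the authors in \cite{BH}: start from the quadratic energy of Proposition~\ref{t:en=large}, and add a quartic correction designed to cancel, after time differentiation, the resonant part of the quadratic source contributions. The structural backbone is Proposition~\ref{plin-long}, which already accomplishes this passage at the level of the linearized equation.

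\textbf{Step 1 (Reduction to the inhomogeneous linearized equation).} For $n=1$, the system \eqref{ww2d-diff} itself is of the form of the linearized system \eqref{lin(wr)0} applied to $(w,r)=(\W,R)$, with sources $((1+\W)M,-ia)$ obtained algebraically using \eqref{defa}. For $n \geq 2$, the system \eqref{WR-diff} is the inhomogeneous projected linearized equation \eqref{lin(wr)inhom3} applied to $(\W_\alpha,\R)$, with quadratic sources $(G_2,K_2)$; differentiating $n-2$ more times produces a system of the same form for $(\partial^{n-2}\W_\alpha,\partial^{n-2}\R)$ with sources $(G_n,K_n)$ that depend multilinearly on $(\W,R)$ and their derivatives, the lowest order terms being quadratic.

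\textbf{Step 2 (Ansatz for the energy).} We set
\[
\Ent(\W,R) := \Elint(\partial^{n-1}\W,\partial^{n-1}R) + \Phi_n(\W,R),
\]
with $\Phi_n$ a quartic functional to be chosen. The norm equivalence (i) follows from the corresponding property \eqref{elin3-eq} for $\Elint$, together with the elementary replacement of $(\partial^{n-2}\W_\alpha,\partial^{n-2}\R)$ by $(\partial^{n-1}\W,\partial^{n-1}R)$ (equivalent modulo $O(A)$ since $\R-R_\alpha=R_\alpha\W$), provided $|\Phi_n|\lesssim_A A^2\,\Ez(\partial^{n-1}\W,\partial^{n-1}R)$.

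\textbf{Step 3 (Cancellation of the quadratic sources).} Applying \eqref{elin3-dinhom} with $(G,K)=(G_n,K_n)$ yields
\[
\frac{d}{dt}\Elint(\partial^{n-1}\W,\partial^{n-1}R) = 2\Re\int \mathcal I(w,r)\,G_n + \mathcal J(w,r)\,(K_n)_\alpha\,d\alpha + O_A(AB)\,\Ent,
\]
where $\mathcal I$ and $\mathcal J$ stand for the multipliers $(1+a)\bar w - i\bar R r_\alpha - 2\bar \W w$ and $i(\bar r - \bar R w)$. The integrand is a priori quartic but contains a genuinely cubic resonant core obtained by keeping only the linear parts of $\mathcal I,\mathcal J$ and the quadratic parts of $G_n,K_n$. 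Decomposing each such resonant interaction as $\int(\text{linear})\cdot(\text{quadratic})\,d\alpha$, we use \eqref{ww2d-diff} (and, for $n\geq 2$, \eqref{WR-diff}) to rewrite it as $\frac{d}{dt}\Phi_n^j$ plus lower-order remainders. Summing these interactions determines $\Phi_n$, and the quintic remainders generated when $\partial_t$ falls on $(\W,R)$ and is replaced via the equations are estimated by $O_A(AB)\,\Ent$ using the multilinear and commutator bounds of Appendix~\ref{s:multilinear}, exactly as in the $err_2$ analysis from the proof of Proposition~\ref{plin-long}.

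\textbf{Main obstacle.} The principal difficulty is the sharp control of the quintic remainders arising from $\frac{d}{dt}\Phi_n$. Since we are working at the low regularity endpoint measured by $A$ and $B$, Sobolev embeddings cannot afford any derivative loss; all estimates must be routed through the $BMO$-based multilinear inequalities \eqref{bmo-bmo}, \eqref{bmo-infty}, \eqref{bmo>infty}, \eqref{bmo-alg} and the Coifman--Meyer-type commutator bound \eqref{first-com}. The structural reason this closes is that the control norm $B$ already captures the full $\dH_1$ strength of $(\W,R)$, so the single ``hard'' derivative in any quintic term can be peeled off into $B$, while the three remaining factors are absorbed into $A$ after an appropriate redistribution of derivatives; for $n\geq 2$ this redistribution must be carried out with care, since a naive Leibniz expansion of $G_n,K_n$ would place too many derivatives on a single factor.
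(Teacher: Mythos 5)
Your overall philosophy (a modified energy built on top of the cubic linearized energy of Proposition~\ref{plin-long}) is indeed the paper's, and the reductions for $n=1$ and, in spirit, $n=2$ are fine. But there is an internal inconsistency and, for $n\geq 3$, a genuine gap. The inconsistency: the correction $\Phi_n$ cannot be quartic. The uncancelled contributions in \eqref{elin3-dinhom}, obtained by pairing the linear parts of the multipliers with the quadratic parts of the sources $G_n,K_n$, are cubic in the solution; the time derivative of a quartic functional under the linear flow is quartic and can never cancel them. The correction must be trilinear -- exactly what the paper adds, namely the term $2n\Im\int R_\alpha \bar w\,\bar r\,d\alpha$ inside $\Enthigh$ (Lemma~\ref{l:hf-cubic}) and the trilinear integrals making up $\Ennflow$ -- and it is accordingly of size $O(A)\Ez$, not $O(A^2)\Ez$ (which is also all that the claimed equivalence $(1+O(A))\Ez$ permits).

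The gap: you apply \eqref{elin3-dinhom} directly to $(\partial^{n-1}\W,\partial^{n-1}R)$ with the raw sources $G_n,K_n$ and assert that each cubic resonant interaction can be ``rewritten as $\frac{d}{dt}\Phi_n^j$ plus lower-order remainders using \eqref{ww2d-diff}''. This is precisely where the quasilinear difficulty sits, and your plan does not address it. The top-order quadratic pieces of $G_n,K_n$ (terms of the type $n\frac{\bar R_\alpha}{1+\W}\W^{(n-1)}$, $(n+1)\frac{R_\alpha}{1+\bar\W}\R$, $P[R\bar R^{(n-1)}_\alpha]$) generate cubic terms that are not bounded by $AB\,\norm_n^2$, and the naive normal-form correction for them loses derivatives -- this is defect (3) of $\Ennfz$ that the paper points out explicitly. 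The paper does not perform a term-by-term homological construction at top order; instead it (i) renormalizes the unknowns first ($\R=(1+\W)R^{(n-1)}$, then $\tR=\R-R_\alpha\W^{(n-2)}+(2n-1)\W_\alpha R^{(n-2)}$, conjugation by $e^{n\phi}$ with $\phi=-2\Re\log(1+\W)$, and projection), so that the system takes the form \eqref{energy(n,3)} whose only surviving quadratic sources are $nP[R_\alpha\bar P\bar w-\W_\alpha\bar P\bar r]$ and $-nP[R_\alpha\bar P\bar r]$; (ii) absorbs these via the explicit cubic term of $\Enthigh$; and (iii) imports the remaining lower-order cubic corrections wholesale from the normal form transformation of Proposition~\ref{p:normal}, expanding $\Ennfz$ as $\Ennf=\Ennfhigh+\Ennflow$ modulo quartic terms and setting $\Ent=\Enthigh+\Ennflow$, so that the cancellation of all cubic terms in $\frac{d}{dt}\Ent$ is structural and only quartic remainders, controlled through the $\Trunc^4$ truncation in \eqref{quad-low}--\eqref{quad-hi}, remain to be estimated. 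Without step (i) and a substitute for step (iii), your assertion that the remainders are $O_A(AB)\Ent$ at the $A,B$ regularity level is unsupported, and the solvability of your implicit homological equations at top order is exactly what fails.
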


We remark that the case $n=1$ corresponds to bounds for
$(\W,R)$. But these solve the linearized system \eqref{lin(wr)},
so the above results are consequences of Proposition~\ref{plin-short}
and Proposition~\ref{plin-long}. In the sequel we consider separately the cases
$n = 2$ and $n \geq 3$.

\subsection{The case \texorpdfstring{$n=2$}.}
We  use the system \eqref{WR-diff} for
$(\W_\alpha, \R:=R_{\alpha}(1+\W))$, which for convenience we recall here:
\[
\left\{
\begin{aligned}
&  \W_{ \alpha t} + b \W_{ \alpha \alpha}  + \frac{\R_\alpha}{1+\bar \W} + \frac{R_\alpha}{1+\bar \W} \W_\alpha  = \R \bar Y_\alpha - \frac{\bar R_\alpha}{1+\W} \W_\alpha
+ 2M   \W_\alpha  +  (1+\W)  M_\alpha ,
\\
&  \R_{t} + b\R_{\alpha} -   i\frac{(1+a)\W_\alpha}{1+\W}  =-2\left(\frac{\bar R_\alpha}{1+\W}+ \frac{R_\alpha}{1+\bar \W}\right)  \R  + 2 M \R + ( R_\alpha \bar R_\alpha -i a_\alpha).
\end{aligned}
\right.
\]
Here we have isolated on the left the leading part of the linearized
equation.  We want to interpret the terms on the right as mostly
perturbative, but also pay attention to the quadratic part.  Thus, for
bookkeeping purposes, we introduce two types of error terms, denoted
$\errw$ and $\errr$, which correspond to the two equations. The
bounds for these errors are in terms of the control variables $A,B$, as well
as the $L^2$ type norm
\begin{equation*}
\norm_2 = \| (\W_\alpha,R_\alpha)\|_{L^2 \times \dot H^\frac12}.
\end{equation*}

By $\errw$ we denote terms $G$, which  satisfy the estimates
\begin{equation*}
\|P G\|_{L^2} \lesssim_A AB \norm_2,
\end{equation*}
and
\begin{equation*}
\text{either} \quad \|\bar P G\|_{L^2} \lesssim_A B \norm_2 \quad \text{or} \quad
 \|\bar P G\|_{\dot H^{-\frac12}} \lesssim_A A \norm_2.
\end{equation*}
By $\errr$ we denote terms $K$, which are at least cubic and which satisfy the estimates
\begin{equation*}
\|P K\|_{\dot H^\frac12} \lesssim_A AB  \norm_2, \qquad \| P K\|_{L^2} \lesssim_A A^2  \norm_2,
\end{equation*}
and
\begin{equation*}
 \|\bar P K\|_{L^2}  \lesssim_A A  \norm_2.
\end{equation*}

The use of the more relaxed quadratic control on the antiholomorphic terms, as opposed to the cubic control on the holomorphic terms, is motivated by
the fact that the equations will eventually get projected on the
holomorphic space, so the antiholomorphic components will have less of
an impact. A key property of the space of errors is contained in the
following
\begin{lemma}\label{l:err}
Let $\Phi$ be a function which satisfies
\begin{equation}\label{Phi-est}
\| \Phi\|_{L^\infty} \lesssim A, \qquad \||D|^\frac12 \Phi\|_{BMO} \lesssim B.
\end{equation}
Then, we have the multiplicative bounds
\begin{equation}
 \Phi \cdot  \errw = \errw, \qquad \Phi \cdot \errr = \errr,
\end{equation}
\begin{equation}
 \Phi \cdot  P \errw = A\, \errw, \qquad \Phi \cdot P \errr = A\, \errr.
\end{equation}
\end{lemma}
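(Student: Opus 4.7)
The four assertions in Lemma~\ref{l:err} are paraproduct/commutator facts: the hypotheses \eqref{Phi-est} are exactly the ones the coefficients $\W$, $Y$, $R$, and $b$ were shown to satisfy in the previous subsections, so the toolkit of Appendix~\ref{s:multilinear} (in particular Lemma~\ref{l:com} and the $BMO$ calculus of Proposition~\ref{p:bmo}) applies verbatim. My strategy is to split each factor into its holomorphic and antiholomorphic parts via $I = P+\bar P$ and bound each resulting bilinear piece directly.

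\textbf{Step 1: $\Phi\cdot\errw = \errw$.} Given $G\in\errw$, write $G = PG + \bar P G$. In the subcase where $\|\bar P G\|_{L^2}\lesssim_A B\,\norm_2$, the crude bound $\|\Phi G\|_{L^2}\leq\|\Phi\|_{L^\infty}(\|PG\|_{L^2}+\|\bar P G\|_{L^2})\lesssim_A AB\,\norm_2$ already controls both $P(\Phi G)$ and $\bar P(\Phi G)$. In the $\dot H^{-1/2}$ subcase, the delicate term is $P(\Phi\bar P G)$; using $P\bar P = 0$ modulo constants, it reduces to the commutator $[P,\Phi]\bar P G$, which by Lemma~\ref{l:com} is bounded by $\||D|^{1/2}\Phi\|_{BMO}\|\bar P G\|_{\dot H^{-1/2}}\lesssim AB\,\norm_2$. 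The antiholomorphic projection $\bar P(\Phi\bar P G)$ is then handled directly by a paraproduct estimate, since multiplication by $\Phi$ is bounded on $\dot H^{-1/2}$ thanks to \eqref{Phi-est}.

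\textbf{Step 2: $\Phi\cdot\errr = \errr$.} The same scheme applies: for $K\in\errr$ we decompose $K = PK+\bar P K$. The $\dot H^{1/2}$ and $L^2$ bounds on $P(\Phi PK)$ come from the fractional Leibniz rule $\|\Phi PK\|_{\dot H^{1/2}}\lesssim\|\Phi\|_{L^\infty}\|PK\|_{\dot H^{1/2}}+\||D|^{1/2}\Phi\|_{BMO}\|PK\|_{L^\infty}$, where the $L^\infty$ norm of $PK$ is controlled by interpolating the $L^2$ and $\dot H^{1/2}$ thresholds of the $\errr$ class. The term $P(\Phi\bar P K)$ is treated by the same commutator trick as in Step 1, and $\bar P(\Phi K)$ is immediate by $L^\infty\cdot L^2\to L^2$.

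\textbf{Step 3: the gain $\Phi\cdot P\errw = A\,\errw$ and its $\errr$ analogue.} This is the essential improvement: when the error is already holomorphic, \emph{every} estimate picks up an extra factor of $\|\Phi\|_{L^\infty}\lesssim A$. Indeed, $\bar P(\Phi\cdot PG) = [\bar P,\Phi]PG$ (since $\bar P P = 0$ modulo constants), and the commutator bound from Lemma~\ref{l:com} now acts on $PG$, which already sits at the $\errw$ level, yielding an extra $A$ factor. The holomorphic part $P(\Phi PG) = \Phi PG - [\bar P,\Phi]PG$ picks up the same $A$ factor via $\|\Phi PG\|_{L^2}\leq A\|PG\|_{L^2}\lesssim A\cdot AB\,\norm_2$. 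An identical argument handles $\Phi\cdot P\errr$.

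\textbf{Main obstacle.} The only real friction is the bookkeeping between the two subcases in the definition of $\errw$: at each step one must verify which of the $L^2$ or $\dot H^{-1/2}$ alternatives propagates, and choose the corresponding paraproduct/commutator estimate accordingly. Once Lemma~\ref{l:com} and the $BMO$ calculus are available, each individual bound is a one-line computation and no new analytic ideas are required.
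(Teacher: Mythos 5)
Your overall architecture --- splitting with $I=P+\bar P$, using $P\bar P=\bar P P=0$ to turn the mixed terms into commutators, and invoking Lemma~\ref{l:com} --- is the intended route (the paper only says the proof is "based on Lemma~\ref{l:com}"), and your treatment of $P(\Phi\,\bar PG)=[P,\Phi]\bar PG$ and of the $A$-gain for already holomorphic errors is correct. However, two of your intermediate claims are false, and they occur exactly at the delicate points. First, in Step 1 you dispose of $\bar P(\Phi\,\bar PG)$ in the $\dot H^{-\frac12}$ subcase by asserting that multiplication by $\Phi$ is bounded on $\dot H^{-\frac12}$ under \eqref{Phi-est}. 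It is not: take $\Phi$ to be an amplitude-$A$ oscillation at frequency $\sim 1$ supported on an interval of length $L$ and test against a unit bump of width $L$; then $\|\Phi\|_{L^\infty}+\||D|^\frac12\Phi\|_{BMO}\lesssim A$ uniformly in $L$, while the multiplier norm on $\dot H^{\frac12}$ (hence, by duality, on $\dot H^{-\frac12}$) grows like $\sqrt L$. The obstruction is precisely the paraproduct piece in which $\Phi$ carries the higher frequency. The correct argument exploits the either/or structure of $\errw$, i.e.\ the output must be distributed between the two allowed slots: the piece of $\Phi\,\bar PG$ in which $\Phi$ has the lower frequency stays in $\dot H^{-\frac12}$ with the factor $\|\Phi\|_{L^\infty}\lesssim A$ (the $A\,\norm_2$ slot), while the pieces in which $\Phi$ has comparable or higher frequency are bounded in $L^2$ by $\||D|^\frac12\Phi\|_{BMO}\,\|\bar PG\|_{\dot H^{-\frac12}}\lesssim AB\,\norm_2$ --- this is the same estimate as the $s=0$, $\sigma=\frac12$ case of Lemma~\ref{l:com}, since $[P,\Phi]$ retains exactly these interactions --- and go into the $B\,\norm_2$ slot.

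Second, in Step 2 the "fractional Leibniz rule" $\|\Phi\,PK\|_{\dot H^{\frac12}}\lesssim\|\Phi\|_{L^\infty}\|PK\|_{\dot H^{\frac12}}+\||D|^\frac12\Phi\|_{BMO}\|PK\|_{L^\infty}$ is not a valid inequality (for $PK$ a nonzero bounded function the right side is finite while the left need not be: the paraproduct with $\Phi$ at high frequency is only a BMO object, not an $L^2$ one), and the supporting claim that $\|PK\|_{L^\infty}$ is controlled by interpolating the $L^2$ and $\dot H^{\frac12}$ bounds is also false in one dimension, since $H^{\frac12}\not\hookrightarrow L^\infty$. The conclusion is nevertheless reachable, and this is exactly why the $\errr$ class carries an $L^2$ bound: the piece with $\Phi$ at high frequency obeys $\|\sum_k (PK)_{<k}(|D|^\frac12\Phi)_k\|_{L^2}\lesssim\|PK\|_{L^2}\,\||D|^\frac12\Phi\|_{BMO}\lesssim_A A^2B\,\norm_2\lesssim_A AB\,\norm_2$ by the BMO paraproduct bound \eqref{CM-BMO}, while the low-high and balanced pieces carry $\|\Phi\|_{L^\infty}\lesssim A$. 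With these two corrections (and the corresponding repair in Step 3, whose $\dot H^{\frac12}$ component relies on the Step 2 machinery), your proof goes through.
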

The proof of the lemma, based on Lemma~\ref{l:com}, is relatively straightforward
 and is left for the reader.
We will apply this lemma for $\Phi$ which are arbitrary smooth
functions of $\W$ and $\bar \W$. Then the estimates \eqref{Phi-est}
are consequences of our Moser estimates in \eqref{bmo-moser}.

We now expand some of the terms in the above system. For this we
will use the following bounds for $M$, see \eqref{M-infty} and \eqref{M-L2}:
\begin{equation}\label{M-bd}
\| M\|_{L^\infty} \lesssim AB, \qquad \|M\|_{\dot H^\frac12} \lesssim A \norm_2.
\end{equation}
First we note that
\begin{equation}\label{Mwr}
M\W_\alpha = \errw, \qquad M \R = \errr.
\end{equation}
The first is straightforward in view of pointwise bound for $M$.  For
the second, by Lemma~\ref{l:err} we can replace $M\R$ by $M R_\alpha$.
After a Littlewood-Paley decomposition, the $\dot H^\frac12$ estimate
for $M R_\alpha$ is a consequence of the pointwise bound in
\eqref{M-bd} for low-high and balanced interactions, and of the $\dot
H^\frac12$ bound in \eqref{M-bd} combined with Lemma~\ref{l:com}
for the high-low interactions.

It remains to estimate $M R_\alpha$ in $L^2$. If the frequency of $M$ is larger than or equal to the frequency of $R_\alpha$,
then we can use the $\dot H^\frac12$ bound for $M$. We are left with
\[
\sum_k R_{k,\alpha} M_{<k} = \sum_k R_{k,\alpha} M(R_{<k}, Y_{<k}) +
\sum_k \sum_{j \geq  k} R_{k,\alpha} P_{<k} M(R_j,Y_j).
\]
For the first sum we use
\[
\| M(R_{<k}, Y_{<k})\|_{L^\infty} \lesssim 2^{\frac{k}2} A^2.
\]
For the second we bound
\[
\| \sum_k \sum_{j \geq  k} R_{k,\alpha} P_{<k} M(R_j,Y_j) \|^2_{L^2}
\lesssim \sum_{j \geq k} 2^{k-j} \||D|^\frac12 R\|_{L^\infty}^4 \|Y_{j,\alpha}\|^2 \lesssim A^2 \norm_2.
\]

Next we consider $(1+\W) M_\alpha$, for which we claim that
\begin{equation}\label{Malpha}
\begin{split}
& M_\alpha =
 R_\alpha \bar Y_\alpha - \bar R_\alpha Y_\alpha +
P[ R \bar \W_{\alpha \alpha}- \bar R_{\alpha \alpha} \W] + \errw,
\\ &  P[ R \bar \W_{\alpha \alpha}- \bar R_{\alpha \alpha} \W]  = A^{-1} \errw.
\end{split}
\end{equation}
By Lemma~\ref{l:err}, this shows that
\[
(1+\W) M_\alpha =  R  \bar Y_\alpha - \frac{\bar R_\alpha}{1+\W} W_\alpha +
P[ R \bar \W_{\alpha \alpha}- \bar R_{\alpha \alpha} \W] + \errw .
\]
To prove \eqref{Malpha} we write
\[
\begin{split}
M_\alpha = & \ R_\alpha \bar Y_\alpha - \bar R_\alpha Y_\alpha
+ P[ R \bar Y_{\alpha \alpha}- \bar R_{\alpha \alpha} Y]  + \bar P (g_1+2 g_2)
\\
= & \  R_\alpha \bar Y_\alpha - \bar R_\alpha Y_\alpha +
P[ R \bar \W_{\alpha \alpha}- \bar R_{\alpha \alpha} \W] - P f +\bar P (g_1+2 g_2),
\end{split}
\]
where
\[
f = R (\bar \W \bar Y)_{\alpha \alpha}- \bar R_{\alpha \alpha} ( \W  Y),
\quad g_1 =
\bar R Y_{\alpha\alpha}- R_{\alpha\alpha} \bar Y, \quad   g_2 = \bar R_\alpha
Y_\alpha- R_\alpha \bar Y_\alpha.
\]
For $f$ and $g_1$ we have $L^2$ bounds
\[
\| Pf \|_{L^2} \lesssim AB \norm_2, \qquad \|\bar Pg_1 \|_{L^2} \lesssim B \norm_2,
\]
which follow from  commutator type bounds 
\begin{equation}\label{com2w}
\| P[R \bar \Phi_{\alpha \alpha}]\|_{L^2} \lesssim \|R_\alpha\|_{BMO} \|\Phi_\alpha\|_{L^2},
\quad
\| P[\bar R_{\alpha \alpha}  \Phi] \|_{L^2}
\lesssim \| R_\alpha\|_{\dot H^\frac12} \||D|^\frac12 \Phi\|_{BMO},
\end{equation}
derived from Lemma~\ref{l:com}. For the first term in $g_2$ we have a  similar
$L^2$ bound, but for the second we split
\[
\bar P[R_\alpha \bar Y_\alpha] = \bar P[\sum_k R_{k,\alpha} \bar Y_{k,\alpha}]
+  \sum_k R_{<k,\alpha} \bar Y_{k,\alpha}.
\]
The first sum is bounded in $L^2$ using Lemma~\ref{l:com}, but for the second
we only get a $\dot H^{-\frac12}$ bound,
\[
\|\sum_k R_{<k,\alpha} \bar Y_{k,\alpha} \|_{\dot H^{-\frac12}} \lesssim A \norm_2.
\]

Finally, we also claim that
\[
i a_\alpha = R_\alpha \bar R_\alpha + P[R \bar R_{\alpha \alpha}] + \errr,
\qquad P[R \bar R_{\alpha \alpha}] = A^{-1} \errr,
\]
which is again a consequence of commutator type estimates for holomorphic $V$:
\begin{equation}
\label{com2r}
\| P[R \bar V_{\alpha}]\|_{\dot H^\frac12} \lesssim \|R_\alpha\|_{BMO}
\|V\|_{\dot H^\frac12}, \qquad \| P[R \bar V_{\alpha}]\|_{L^2} \lesssim \||D|^\frac12 R\|_{L^{\infty}} \|V\|_{\dot H^\frac12}.
\end{equation}

Taking into account all of the above expansions,
it follows that our system can be rewritten in the form 
\[
\left\{
\begin{aligned}
& (\partial_{t}+b\partial_{\alpha}) \W_{ \alpha } + \frac{\R_\alpha}{1+\bar \W} + \frac{R_\alpha \W_\alpha }{1+\bar \W}  = 2 \R \bar Y_\alpha - \frac{2 \bar R_\alpha\W_\alpha}{1+\W} 
 + P[ R \bar \W_{\alpha \alpha}- \bar R_{\alpha \alpha}\W]+ \errw,
\\
&   (\partial_{t}+b\partial_{\alpha}) \R -   i\frac{(1+a)\W_\alpha}{1+\W}  =-2\left(\frac{\bar R_\alpha}{1+\W}+ \frac{R_\alpha}{1+\bar \W}\right)  \R  - P[\bar R_{\alpha \alpha} R]  + \errr.
\end{aligned}
\right.
\]
One might wish to compare this system with the linearized system which
was studied before. However, the terms on the right cannot be all bounded
in $L^2 \times \dot H^\frac12$, even after applying the projection operator $P$.
Precisely,   the terms on the right which
cannot be bounded directly in $L^2 \times \dot H^\frac12$ are $
\displaystyle - 2\frac{\bar R_\alpha}{1+\W} \W_\alpha$, respectively $\displaystyle -2
\left(\frac{\bar R_\alpha}{1+\W}+ \frac{R_\alpha}{1+\bar \W}\right)
\R$.

But these terms can be eliminated by conjugation with respect to
a real exponential weight $e^{2\phi}$,  where $\phi = - 2\Re \log(1+\W)$.  Then
\[
\phi_\alpha = - 2 \Re \frac{\W_\alpha}{1+\W}, \qquad
(\partial_t + b \partial_\alpha) \phi =  2 \Re \frac{R_\alpha}{1+\bar \W} - 2 M.
 \]
We denote the weighted variables by
\[
w = e^{2\phi} \W_\alpha, \qquad r = e^{2\phi} \R.
\]
Using \eqref{Mwr} and Lemma~\ref{l:err} it follows that
 $Mw = \errw$, $Mr = \errr$. Then  we get the equations
\[
\left\{
\begin{aligned}
&  w_{ t} + b w_{\alpha}  + \frac{r_\alpha}{1+\bar \W}  + \frac{R_\alpha}{1+\bar \W} w =
   P[ R \bar \W_{\alpha \alpha}- \bar R_{\alpha \alpha} \W]+  \errw,
\\
&  r_{t} + br_{\alpha}  - i \frac{(1+a) w}{1+\W} =  -  P[\bar R_{\alpha \alpha} R]  + \errr.
\end{aligned}
\right.
\]
We are not yet in a position to use our bounds for the linearized
equation since $w$ and $r$ are not exactly holomorphic. We project
onto the holomorphic space to write a system for the variables $(Pw,Pr)$.
At this point one may legitimately be concerned that restricting to the holomorphic part
might remove a good portion of our variables. However, this is not the case:
\begin{lemma}\label{en:n=2}
 The energy of $(Pw,Pr)$ above is equivalent to the energy of $(\W_\alpha,R_\alpha)$
\begin{equation}
\| (Pw,Pr)\|_{L^2 \times \dot H^\frac12} \approx_A\| (w,r)\|_{L^2 \times \dot H^\frac12}
\approx_A \|(\W_\alpha,R_\alpha)\|_{L^2 \times \dot H^\frac12}= \norm_2.
\end{equation}
\end{lemma}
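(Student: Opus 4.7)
My plan is to break the chain of equivalences into two parts: the nonlinear multiplication comparison and the holomorphic projection comparison.

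For the inner equivalence $\|(w,r)\|_{L^2 \times \dot H^\frac12} \approx_A \|(\W_\alpha, R_\alpha)\|_{L^2 \times \dot H^\frac12}$, I would exploit the explicit formulas $w = e^{2\phi}\W_\alpha$ and $r = e^{2\phi}(1+\W)R_\alpha$. Since $\phi = -2\Re\log(1+\W)$, both multipliers are smooth algebraic expressions in $(\W,\bar\W)$ (in fact $e^{2\phi} = |1+\W|^{-4}$), bounded above and below by constants depending on $A$ thanks to the assumption $|1+\W| > c > 0$. The $L^2$ bound for $w$ is then immediate; for the $\dot H^\frac12$ bound for $r$ one invokes a bilinear Moser-type product estimate, with the multiplier norms controlled via the $BMO$-type bounds collected in Proposition~\ref{p:bmo}.

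The substantive step is the outer equivalence $\|(Pw, Pr)\|_{L^2 \times \dot H^\frac12} \approx_A \|(w,r)\|_{L^2 \times \dot H^\frac12}$. The $\lesssim_A$ direction is trivial by boundedness of $P$. For the reverse direction, the key observation is that $\W_\alpha$ and $\R = (1+\W)R_\alpha$ are both holomorphic (as the derivative of, respectively a product of, holomorphic functions), so $\bar P \W_\alpha = 0$ and $\bar P \R = 0$. Consequently
\[
\bar P w = \bar P[(e^{2\phi}-1)\W_\alpha], \qquad \bar P r = \bar P[(e^{2\phi}-1)\R],
\]
where the multiplier $e^{2\phi}-1$ vanishes at $\W = 0$ and, via Moser estimates in $BMO^\frac12 \cap L^\infty$, is controlled by $\|\W\|_{L^\infty} + \||D|^\frac12 \W\|_{BMO} \lesssim A$.

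To close the argument I would combine a direct $L^\infty \cdot L^2$ product bound for $\bar P w$, yielding $\|\bar P w\|_{L^2} \lesssim_A \|\W\|_{L^\infty}\|\W_\alpha\|_{L^2}$, with a Littlewood--Paley paraproduct decomposition for $\bar P r$ in $\dot H^\frac12$, invoking the bilinear commutator bounds from Lemma~\ref{l:com} and the $BMO$ product estimates from the appendix to obtain $\|\bar P r\|_{\dot H^\frac12} \lesssim_A A\|R_\alpha\|_{\dot H^\frac12}$. Combined with the inner comparison (which gives $\|\W_\alpha\|_{L^2} \approx_A \|w\|_{L^2}$ and $\|R_\alpha\|_{\dot H^\frac12} \approx_A \|r\|_{\dot H^\frac12}$), these close the equivalence, with all constants depending on $A$. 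The main technical obstacle is the $\dot H^\frac12$ paraproduct analysis for the $r$ component, where the interaction between a $BMO^\frac12$ multiplier and an $\dot H^\frac12$ factor must be handled without losing derivatives on $R_\alpha$; this is accomplished by splitting the product into low-high, high-low, and balanced pieces, using the holomorphic frequency-support of $\R$ to align the output frequency with the high-frequency factor, and invoking \eqref{bmo-bmo} and \eqref{bmo>infty}.
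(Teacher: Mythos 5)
Your inner equivalence $\|(w,r)\|_{L^2\times \dot H^\frac12}\approx_A \|(\W_\alpha,R_\alpha)\|_{L^2\times \dot H^\frac12}$ is fine, as is the trivial direction $\|(Pw,Pr)\|\lesssim\|(w,r)\|$. The genuine gap is in how you recover the full variables from their holomorphic projections. You write $\bar P w=\bar P[(e^{2\phi}-1)\W_\alpha]$, $\bar P r=\bar P[(e^{2\phi}-1)\R]$ and estimate these by $C(A)\,A\,\norm_2$; to conclude $\norm_2\lesssim_A\|(Pw,Pr)\|_{L^2\times\dot H^\frac12}$ you must then absorb a term of size $C(A)A\,\norm_2$ into the left-hand side, which is only possible when $C(A)A<1$, i.e. in a small-data regime. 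The multiplier $e^{2\phi}-1$ is in no sense small for general data: it is merely bounded in terms of $A$ and the lower bound on $|1+\W|$. But the lemma is stated, and needed, with constants depending only on $A$ for arbitrary $A$: it is exactly what makes $\End(\W,R)=\Elind(Pw,Pr)$ a legitimate energy in the large-data Proposition~\ref{t:en=large}, where no smallness of $A$ is assumed. So, as written, your argument proves a strictly weaker statement. (A secondary, minor point: your claimed bound $\|\bar P r\|_{\dot H^\frac12}\lesssim_A A\|R_\alpha\|_{\dot H^\frac12}$ is also not quite right — in the interactions where the multiplier carries the high frequency the half derivative must fall on it, producing a term of size $A\|\W_\alpha\|_{L^2}$; this would not break your scheme, but the absorption issue does.)

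The paper avoids absorption altogether by exploiting the positivity of the weight together with holomorphy. For the $w$ component one pairs against $\bar\W_\alpha$: since $e^{2\phi}$ is real and bounded below by $c_A>0$, and since $\int\bar\W_\alpha\,\bar P w\,d\alpha=0$ (both factors are antiholomorphic), one has
\[
\|\W_\alpha\|_{L^2}^2\lesssim_A\int e^{2\phi}|\W_\alpha|^2\,d\alpha=\int\bar\W_\alpha\,Pw\,d\alpha\le\|\W_\alpha\|_{L^2}\|Pw\|_{L^2},
\]
giving $\|\W_\alpha\|_{L^2}\lesssim_A\|Pw\|_{L^2}$ with no smallness whatsoever. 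For the $r$ component one uses the identity $|D|^\frac12 P(\Phi R_\alpha)=\Phi|D|^\frac12 R_\alpha+[P|D|^\frac12,\Phi]R_\alpha$ with $\Phi=e^{2\phi}(1+\W)$; since $|\Phi|$ is bounded above and below in terms of $A$ and the commutator is controlled by $\||D|\Phi\|_{L^2}\,\||D|^\frac12 R\|_{BMO}\lesssim_A\|\W_\alpha\|_{L^2}$, this yields $\|R_\alpha\|_{\dot H^\frac12}\lesssim_A\|Pr\|_{\dot H^\frac12}+\|Pw\|_{L^2}$, again without absorbing any $A$-sized multiple of $\norm_2$. If you replace your absorption step for the $w$ component by this duality/positivity argument (and feed the resulting $\W_\alpha$ control into the $r$ estimate), the rest of your outline goes through.
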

\begin{proof}
 The estimate for $w$ is easy. We trivially have $\|w\|_{L^2} \lesssim_A \|\W_\alpha\|_{L^2}$,
while for the converse we write
\[
\| \W_\alpha\|_{L^2}^2 \lesssim \int \bar \W_\alpha e^\phi \W_\alpha\  d\alpha
=  \int \bar \W_\alpha P w \  d\alpha \lesssim \| \W_\alpha\|_{L^2} \|P w\|_{L^2}.
\]
To obtain the  estimate for $r$ we write
\[
|D|^\frac12 P (e^\phi (1+\W) R_\alpha) = e^\phi(1+\W) |D|^\frac12 R_\alpha
+ [P|D|^\frac12,e^\phi(1+\W) ] R_\alpha.
\]
We bound all terms in $L^2$. The one on the left is $\|r\|_{\dot
  H^\frac12}$, while the first one on the right is $\approx \|
R_\alpha \|_{\dot H^\frac12}$. It remains to bound the commutator on the right,
for which we have,  with $\Phi =  e^\phi(1+\W)$,
\[
 \|[P|D|^\frac12,\Phi] R_\alpha\|_{L^2} \lesssim \| |D| \Phi\|_{L^2} \||D|^{\frac12} R\|_{BMO} \lesssim_A \|\W_\alpha\|_{L^2}.
\]
\end{proof}

Now, we write the system for $(Pw,Pr)$:
\[
\left\{
\begin{aligned}
& P w_{ t} + \M_b P w_{\alpha}  + P\left[\frac{P r_\alpha}{1+\bar \W}\right]  +
P\left[\frac{R_\alpha}{1+\bar \W} Pw \right] =   P[ R \bar \W_{\alpha \alpha}- \bar R_{\alpha \alpha} \W]+ G_2 + \errw,
\\
&  Pr_{t} + \M_bP r_{\alpha}  - i P \left[ \frac{(1+a) Pw}{1+\W}\right] =
 -  P[\bar R_{\alpha \alpha} R]+ K_2 + \errr,
\end{aligned}
\right.
\]
where
\[
\begin{split}
G_2 = - P [b \bar P w_\alpha] -   P\left[\frac{R_\alpha}{1+\bar \W} \bar Pw \right], \qquad K_2 =  \  P [b \bar P r_\alpha] + i P \left[ \frac{(1+a) \bar Pw}{1+\W}\right].
\end{split}
\]
We claim that $G_2 = \errw$ and $K_2 = \errr$. As in Lemma~\ref{en:n=2}
we have
\[
\|\bar P
w\|_{L^2} + \|\bar P r\|_{\dot H^\frac12} \lesssim_A A \norm_2.
\]
Then, using the commutator bounds in Lemma~\ref{l:com},
we estimate $G_2$ by
\[
\|G_2\|_{L^2} \lesssim_A (\|b_\alpha\|_{BMO}+ \|R_\alpha\|_{BMO}) \|\bar P
w\|_{L^2} \lesssim_A AB \norm_2.
\]
Similarly, we bound $K_2$ in $\dot H^\frac12$ by
\[
\|K_2\|_{\dot H^\frac12} \lesssim_A \|b_\alpha\|_{BMO} \|\bar P r\|_{\dot H^\frac12}
+ \left \| |D|^\frac12 \left( \frac{a-\W}{1+\W}\right) \right \|_{BMO}  \|\bar P w\|_{L^2} \lesssim_A AB \norm_2,
\]
and in $L^2$ by
\[
\|K_2\|_{L^2} \lesssim \||D|^\frac12 b\|_{BMO} \|\bar P r\|_{\dot H^\frac12}
+ \left \| \frac{a-\W}{1+\W}\right\|_{L^\infty}  \|\bar P w \|_{L^2} \lesssim_A A^2 \norm_2.
\]

Finally, in view of the bilinear estimates \eqref{com2w}, \eqref{com2r},
we can replace $ P[ R \bar \W_{\alpha \alpha}- \bar R_{\alpha \alpha} \W]$ and
$P[\bar R_{\alpha \alpha} R]$ by $ P[ R \bar P \bar w_{ \alpha}- \W \bar P \bar r_{ \alpha}]$, respectively
$P[R \bar P \bar r_{\alpha} ]$ modulo acceptable error terms.

Taking into account the discussion above, we obtain a system for $(Pw,Pr)$
which is very much like the linearized system in the previous section:
\[
\left\{
\begin{aligned}
& P w_{ t} + \M_b P w_{\alpha}  + P\left[\frac{P r_\alpha}{1+\bar \W}\right]  +
P\left[\frac{R_\alpha}{1+\bar \W} Pw \right] =   P[ R \bar P \bar w_{ \alpha}- \W \bar P \bar r_{ \alpha}]
  + \errw,
\\
&  Pr_{t} + \M_bP r_{\alpha}  - i P \left[ \frac{(1+a) Pw}{1+\W}\right] =
 -P[R \bar P \bar r_{\alpha} ] + \errr.
\end{aligned}
\right.
\]

The results of Proposition~\ref{t:en=large} and Proposition~\ref{t:en=small}
follow from the  energy estimates for the linearized equation,
namely part (a) of Propositions~\ref{plin-short}~\ref{plin-long}; further, if $n=2$ then  we can take
\[
\End(\W,R) = \Elind(Pw,Pr), \qquad \Ent(\W,R) = \Elint(Pw,Pr).
\]

\subsection{The case \texorpdfstring{$n \geq 3$}\ , large data}

We follow the same strategy as in the case $n=2$ and
 derive the equations for $(\W^{(n-1)},  R^{(n-1)})$.  We start again with the equations
\eqref{ww2d-diff} and differentiate $n-1$ times. Compared with the case $n=2$,
we obtain many more terms. To separate them into leading order and lower order,
we call lower order terms any terms which do not involve $\W^{(n-1)}$,
$ R^{(n-1)} $ or derivatives thereof.   In the computation
below we take care to separate all the leading order
 terms, as well as all the  quadratic terms which are lower order.
Toward that end we define again the notion of \emph{error term}. Unlike in the case
$n=2$, here we also include lower order quadratic terms into the error.
As before, we describe the error bounds in terms of the parameters $A$, $B$
and
\begin{equation}
\norm_n = \| (\W^{(n-1)},R^{(n-1)})\|_{L^2 \times \dot H^\frac12}.
\end{equation}

The acceptable errors in the $\W^{(n-1)}$ equation are denoted by
$\errw$ and are of two types, $\errw^{[2]}$ and
$\errw^{[3]}$. $\errw^{[2]}$ consists of holomorphic quadratic lower
order terms of the form
\[
P[\W^{(j)} \R^{(n-j)}], \quad P[\bar \W^{(j)} \R^{(n-j)}], \quad  P[\W^{(j)} \bar \R^{(n-j)}], \qquad
2 \leq j \leq n-2.
\]
By interpolation and H\"older's inequality,  terms $G$ in $\errw^{[2]}$ satisfy the bound
\begin{equation*}
\| G\|_{L^2} \lesssim B \norm_n.
\end{equation*}

By $\errw^{[3]}$ we denote terms $G$ which  satisfy the estimates
\begin{equation*}
\|P G\|_{L^2} \lesssim_A AB \norm_n,
\qquad
\end{equation*}
and
\begin{equation*}
\text{either} \quad \|\bar P G\|_{L^2} \lesssim_A B \norm_n \quad \text{or} \quad
 \|\bar P G\|_{\dot H^{-\frac12}} \lesssim_A A \norm_n.
\end{equation*}

The acceptable errors in the $R^{(n-1)}$ equation are denoted by
$\errr$ and are also of two types, $\errr^{[2]}$ and
$\errr^{[3]}$. $\errr^{[2]}$ consists of holomorphic quadratic lower
order terms of the form
\[
P[\R^{(j)} \R^{(n-j)}], \quad P[\bar \R^{(j)} \R^{(n-j)}],   \qquad
2 \leq j \leq n-2,
\]
and
\[
P[\W^{(j)} \W^{(n-j)}], \quad P[\bar \W^{(j)} \W^{(n-j-1)}], \qquad
1 \leq j \leq n-1.
\]
By interpolation and H\"older's inequality,  terms $K$ in $\errr^{[2]}$ satisfy the bound
\begin{equation*}
\| K\|_{\dot H^\frac12} \lesssim B \norm_n, \qquad \| K\|_{L^2} \lesssim A \norm_n.
\end{equation*}
By $\errr^{[3]}$ we denote terms $K$ which  satisfy the estimates
\begin{equation*}
\|P K\|_{\dot H^\frac12} \lesssim_A AB  \norm_n, \qquad \| P G\|_{L^2} \lesssim_A A^2  \norm_n,
\qquad \|\bar P G\|_{L^2}  \lesssim_A A  \norm_n.
\end{equation*}

We begin by differentiating the terms in the $\W$ equation, where we
expand using Leibnitz rule.
For the $b$ term we have
\[
\begin{split}
\partial^{n-1} (b\W_\alpha) = & \ b \W^{(n-1)}_\alpha  + (n-1) b_\alpha \W^{(n-1)}
+ b^{(n-1)} \W_\alpha + err_1,
\\
= & \ \ b \W^{(n-1)}_\alpha  + (n-1) \left(\frac{R_\alpha}{1+\bar \W}
+\frac{\bar R_\alpha}{1+\W} \right) \W^{(n-1)}
+ 2  \W_\alpha \Re R^{(n-1)}  + err_2.
\end{split}
\]
Here  $err_1$ only contains lower order  terms, so by interpolation and H\"older's
inequality we get\footnote{Here we remark that all terms in the $\W^{(n-1)}$ equation
have the same scaling; thus, whenever all the Sobolev exponents are within
the lower order range, we are guaranteed to get the correct $L^2$ estimate
after interpolation and H\"older's inequality. The same applies to all the terms in the $R^{(n-1)}$
equation. }
$err_1 = \errw$. The difference $err_2 - err_1$ is cubic,
\[
err_2 = err_1 + (n-1) M \W^{(n-1)} + \W_\alpha ( P[R^{(n-1)} \bar Y]+ \bar P[\bar R^{(n-1)}  Y]).
\]
Using the $L^\infty$ bound for $M$ in \eqref{M-bd}, Sobolev embeddings and
interpolation it is easily seen that $err_2 = \errw$.

A similar analysis leads to
\[
\begin{split}
\partial^{n-1} \frac{(1+\W) R_\alpha}{1+\bar \W} = & \  \frac{[(1+\W) R^{(n-1)}]_\alpha}{1+\bar \W}
 + \frac{R_\alpha}{1+\bar \W} \W^{(n-1)} - R_\alpha \bar \W^{(n-1)}
\\ & \ +
R^{(n-1)}((n-2)\W_\alpha - (n-1)\bar \W_\alpha) + \errw.
\end{split}
\]
In the $M$ term we also bound lower order terms by H\"older's inequality
 and interpolation to obtain
\[
\begin{split}
 \partial^{n-1} [(1+\W)M] = & \ \errw +R^{(n-1)} \bar \W_\alpha - \frac{\bar R_\alpha}{1+\W} \W^{(n-1)} \\ &+  P\left[
 R \bar \W^{(n-1)}_\alpha - \bar R^{(n-1)}_\alpha \W
+ (n-1)(R_\alpha \bar \W^{(n-1)} - \bar R^{(n-1)} \W_\alpha)\right] \\
& +
\bar P [- R^{(n-1)} \bar \W_\alpha + \frac{\bar R_\alpha}{1+\W} \W^{(n-1)} +\bar R^{(n-1)}  \W_\alpha - \frac{R_\alpha(1+\W)}{(1+\bar \W)^2} \bar \W^{(n-1)} 
\\ & \ \ \ \ \ \ \ \
+ \bar R  \W^{(n-1)}_\alpha -  R^{(n-1)}_\alpha \bar \W
+ (n-1)(\bar R_\alpha  \W^{(n-1)} -  R^{(n-1)} \bar \W_\alpha)].
\end{split}
\]
Estimating also the quadratic $\bar P$ terms,
the above relation takes the simpler form
\[
\begin{split}
\partial^{n-1} [(1+\W)M] = &\ R^{(n-1)} \bar \W_\alpha - \frac{\bar R_\alpha}{1+\W} \W^{(n-1)}
+  P\left[R \bar \W^{(n-1)}_\alpha - \bar R^{(n-1)}_\alpha \W \right] \\ &
+ (n-1)(R_\alpha \bar \W^{(n-1)} - \bar R^{(n-1)} \W_\alpha)+
 \errw .
\end{split}
\]

Now we turn our attention to the $R$ equation. We begin with
\[
\begin{split}
\partial^{n-1} (bR_\alpha) = & \ b R^{(n-1)}_\alpha  + (n-1) b_\alpha R^{(n-1)}
+ b^{(n-1)} R_\alpha + err_3
\\
= & \ \ b R^{(n-1)}_\alpha  +  (n-1)\left(\frac{R_\alpha}{1+\bar \W}
+\frac{\bar R_\alpha}{1+\W} \right) R^{(n-1)}
+ \frac{R_\alpha}{1+\bar \W}  R^{(n-1)} + \frac{R_\alpha}{1+\W} \bar R^{(n-1)}  \\ &  + err_4,
\end{split}
\]
where we trivially have $err_3 = \errr$ as it contains only
lower order terms, both quadratic and higher order. In addition, the difference
is cubic, and is given by
\[
err_4 - err_3 = (n-1) M R^{(n-1)}  + R_\alpha\left(b^{(n-1)} - \frac{R^{(n-1)}}{1+\bar \W}
- \frac{\bar R^{(n-1)}}{1+ \W} \right).
\]
We claim that this is also $\errr$. The $L^2$ bound follows trivially
by interpolation and H\"older's inequality. 

The $\dot H^\frac12$ bound
is also easy to obtain for the second term, where the unfavorable $R^{(n-1)}$
factors only appear with a convenient frequency balance as $R_\alpha(
\bar P[ R^{(n-1)}\bar Y] - P [R^{(n-1)}Y])$. Consider now the $\dot
H^\frac12$ bound for the first term.  Since $M= O_{L^\infty}(AB)$, the
nontrivial case is when $M$ has the high frequency, where we need to
estimate
\[
\| |D|^\frac12 \sum_k  M_k     R^{(n-1)}_{<k}\|_{L^2} \lesssim \||D|^{n-\frac32} M\|_{L^2}
\| DR\|_{BMO} \lesssim AB  \norm_n.
\]
Here, we have used the bound \eqref{M-L2}.

For the remaining term in the $R$ equation we write
\[
\begin{split}
\partial^{n-1} \frac{\W - a}{1+\W} = & \ \frac{(1+ a)\W^{(n-1)}}{(1+\W)^2} +
 \frac{a^{(n-1)}}{1+\W} + err_7 \\
=  & \ \frac{(1+ a)\W^{(n-1)}}{(1+\W)^2} + \frac{i}{1+ \W}\left( P[R\bar R_\alpha^{(n-1)}+(n-1) R_\alpha
\bar R^{(n-1)} + \bar R_\alpha R^{(n-1)} ] \right. \\ & \left.- \bar P[\bar R R_\alpha^{(n-1)}+(n-1) \bar R_\alpha
 R^{(n-1)} + R_\alpha \bar R^{(n-1)}] \right))
 + err_8.
\end{split}
\]
Here $err_7$ contains lower order quadratic terms in $\W$ (without $a$)
as well as cubic terms which can be easily estimated, so $err_7= \errr$.
The difference $err_8 - err_7$ only contains lower order terms so it also
can be placed in $\errr$. Just as in the case of the $\W^{(n-1)}$ equation,
quadratic $\bar P$ terms can also be placed in the error.
Then the above relation  becomes
\[
\begin{split}
\partial^{n-1} \frac{\W - a}{1+\W}
=  \frac{(1+ a)\W^{(n-1)}}{(1+\W)^2} + i \left( P[R\bar R_\alpha^{(n-1)}]+(n-1) R_\alpha
\bar R^{(n-1)} + \frac{\bar R_\alpha R^{(n-1)}}{1+\W}\right)
 + \errr.
\end{split}
\]

Combining the above computations we obtain the differentiated system
\[
\left\{
\begin{aligned}
 &  \W^{(n-1)}_{ t} + b \W^{(n-1)}_{ \alpha}  + \frac{((1+\W) R^{(n-1)})_\alpha}{1+\bar \W}
 + \frac{R_\alpha}{1+\W} \W^{(n-1)}  =  G,
\\ &
 R^{(n-1)}_t + bR^{(n-1)}_\alpha  -  i\left(\frac{(1+ a)\W^{(n-1)}}{(1+\W)^2}\right) =
K,
\end{aligned}
\right.
\]
where
\[
\begin{split}
G = & -  n \frac{\bar R_\alpha}{1+\W}  \W^{(n-1)}
-  (n-1) \frac{ R_\alpha}{1+\bar \W}  \W^{(n-1)}
 + P[ R \bar \W^{(n-1)}_\alpha - \W \bar R^{(n-1)}_\alpha]\\ &
+ R^{(n-1)}(n \bar \W_\alpha - (n-1)\W_\alpha)
+  n(R_\alpha \bar \W^{(n-1)} - \W_\alpha\bar R^{(n-1)})  + \errw,
\\
K = & - n\left( \frac{ R_\alpha}{1+\bar \W}+ \frac{\bar R_\alpha}{1+\W}\right)  R^{(n-1)}
 -  \left(P[R \bar R^{(n-1)}_\alpha] - nR_\alpha  \bar R^{(n-1)}\right)+\errr.
\end{split}
\]
After the usual substitution  $\R = (1+\W)R^{(n-1)}$, we get
\[
\left\{
\begin{aligned}
 & \W^{(n-1)}_{ t} + b \W^{(n-1)}_{ \alpha} + \frac{ \R_\alpha}{1+\bar \W}
 + \frac{R_\alpha}{1+\W} \W^{(n-1)}  =  G,
\\
& \R_t + b\R_\alpha -  i\left(\frac{(1+ a)\W^{(n-1)}}{1+\W}\right) = K_1,
\end{aligned}
\right.
\]
where
\[
K_1=
- (n+1)  \frac{ R_\alpha\R}{1+\bar \W} - n \frac{\bar R_\alpha\R }{1+\W}  - P[R \bar \R_\alpha] -
nR_\alpha  \bar \R + \errr.
\]
The more delicate terms here are the ones on the right where the
leading order terms appear unconjugated.  We would like to eliminate
those with an exponential factor as in the $n=2$ case, but their
coefficients on the right are not properly matched.  To remedy that
we take the additional step of the holomorphic substitution
\[
\tR = \R - R_\alpha \W^{(n-2)} +(2n-1) \W_\alpha R^{(n-2)}.
\]
With the exception of exactly three terms, the contribution
of the added quadratic correction is cubic and lower order, so  we
obtain
\[
\left\{
\begin{aligned}
 & \W^{(n-1)}_{ t} + b \W^{(n-1)}_{ \alpha} + \frac{ \tR_\alpha}{1+\bar \W}
 + \frac{R_\alpha}{1+\W} \W^{(n-1)}  =  -  n \left(\frac{\bar R_\alpha}{1+\W}
+   \frac{ R_\alpha}{1+\bar \W}  \right)\W^{(n-1)}
\\ & \ \ \ \  + P[ R \bar \W^{(n-1)}_\alpha - \W \bar R^{(n-1)}_\alpha] + n\tR(\bar \W_\alpha + \W_\alpha)
+  n (R_\alpha  \bar \W^{(n-1)} -  \W_\alpha\bar R^{(n-1)}) + \errw,
\\
& \tR_t + b\tR_\alpha -  i\frac{(1+ a)\W^{(n-1)}}{1+\W} =
- n  \left(\! \frac{ R_\alpha}{1+\bar \W} + \frac{\bar R_\alpha}{1+\W} \! \right) \tR-
P[R \bar \tR_\alpha] -  nR_\alpha  \bar \tR + \errr.
\end{aligned}
\right.
\]
Now we can multiply by $e^{n\phi}$ where, as before, $\phi = -2 \Re \log(1+\W)$, in order
to eliminate the unbounded terms on the right.  We get
an equation for $(w := e^{\phi} \W^{(n-1)}, r :=  e^{\phi} \tR)$:
\[
\left\{
\begin{aligned}
 & w_{ t} + b w_{ \alpha} + \frac{ r_\alpha}{1+\bar \W}
 + \frac{R_\alpha}{1+\W} w  =    P[ R  \bar \W^{(n-1)}_\alpha - \W  \bar R^{(n-1)}_\alpha]
+  n (R_\alpha  \bar w -  \W_\alpha\bar r)  + \errw,
\\
& r_t + br_\alpha -  i\left(\frac{(1+ a) w}{1+\W}\right) =
-  P[ R  \bar R^{(n-1)}_\alpha]  -  nR_\alpha  \bar r + \errr.
\end{aligned}
\right.
\]
As $(w,r)$ are no longer holomorphic, we project and work with the projected variables.
After some additional commutator estimates which are identical to those in the
$n=2$ case we obtain
\begin{equation}\label{energy(n,3)}
\left\{
\begin{aligned}
 & Pw_{ t} + \M_b P w_{ \alpha} + P \left[\frac{ P r_\alpha}{1+\bar \W}\right]
 + P\left[\frac{R_\alpha Pw}{1+\W} \right]  =   P[ R \bar P \bar w_\alpha - \W \bar P \bar r_\alpha]
\\ &\hspace{3in}\, \ \ \ \ +  n P [R_\alpha  \bar P \bar w -  \W_\alpha\bar P\bar r]
 + \err(L^2),
\\
& Pr_t + \M_bPr_\alpha -  iP\left[\frac{(1+ a) P w}{1+\W}\right] =
-  P[ R \bar P \bar r_\alpha]  -  nP [R_\alpha  \bar P\bar r] + \err(\dot H^\frac12).
\end{aligned}
\right.
\end{equation}

Compared to the linearized equation in the previous section, here we have two additional
terms that need to be estimated. We have

\begin{lemma}
\label{l:erori}
a) The energy of $(Pw,Pr)$ above is equivalent to that of $(\W^{(n-1)},R^{(n-1)})$,
\begin{equation}
\| (Pw,Pr)\|_{L^2 \times \dot H^\frac12} \approx_A\| (w,r)\|_{L^2 \times \dot H^\frac12} \approx_A
 \norm_n,
\end{equation}
b) The additional error terms above are bounded,
\begin{equation}\label{bierror-n}
\| ( P [R_\alpha  \bar P \bar w -  \W_\alpha\bar P\bar r]  , P [R_\alpha  \bar P\bar r])\|_{L^2 \times \dot H^\frac12} \lesssim_A B
\norm_n,
\end{equation}
\end{lemma}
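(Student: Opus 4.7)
I plan to mirror the argument of Lemma~\ref{en:n=2}. Since $\|\W\|_{L^\infty} \lesssim A$, Moser estimates of the form \eqref{bmo-moser} give $e^{n\phi}\approx_A 1$ pointwise, so $\|w\|_{L^2}\approx_A\|\W^{(n-1)}\|_{L^2}$. The bound $\|Pw\|_{L^2}\le\|w\|_{L^2}$ is immediate; for the converse I exploit the holomorphy of $\W^{(n-1)}$ so that pairing against its antiholomorphic conjugate selects $Pw$:
\[
\|\W^{(n-1)}\|_{L^2}^2 \lesssim_A \int \bar\W^{(n-1)}\,w\,d\alpha = \int \bar\W^{(n-1)}\,Pw\,d\alpha \le \|\W^{(n-1)}\|_{L^2}\|Pw\|_{L^2}.
\]
For $r$ the extra step is to replace $\tR$ by $R^{(n-1)}$ modulo lower-order terms, using
\[
\tR - R^{(n-1)} = \W R^{(n-1)} - R_\alpha\W^{(n-2)} + (2n-1)\W_\alpha R^{(n-2)},
\]
whose $\dot H^{1/2}$-norm is bounded by $A\,\norm_n$ via the paraproduct bounds of Lemma~\ref{l:com} and interpolation; absorbing the resulting error using the smallness of $A$ yields $\|\tR\|_{\dot H^{1/2}}\approx_A\|R^{(n-1)}\|_{\dot H^{1/2}}$. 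The remaining step is identical to the $n=2$ argument: decompose
\[
|D|^{1/2}Pr = e^{n\phi}|D|^{1/2}\tR + [|D|^{1/2}P, e^{n\phi}]\tR,
\]
and estimate the commutator via Lemma~\ref{l:com} using the Moser bound for $e^{n\phi}-1$.

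\textbf{Part (b).} Part (a) already supplies the crude bounds $\|\bar P\bar w\|_{L^2}\le\|w\|_{L^2}\lesssim_A\norm_n$ and $\|\bar P\bar r\|_{\dot H^{1/2}}\le\|r\|_{\dot H^{1/2}}\lesssim_A\norm_n$. Each of the three expressions in the statement is then a holomorphic factor ($R_\alpha$ or $\W_\alpha$) times one of these antiholomorphic quantities, $P$-projected, which is exactly the setting handled by Lemma~\ref{l:com}. This yields, for instance,
\[
\|P[R_\alpha\bar P\bar w]\|_{L^2}\lesssim \|R_\alpha\|_{BMO}\|\bar P\bar w\|_{L^2}\lesssim_A B\,\norm_n,
\]
\[
\|P[\W_\alpha\bar P\bar r]\|_{L^2}\lesssim \||D|^{1/2}\W\|_{BMO}\|\bar P\bar r\|_{\dot H^{1/2}}\lesssim_A B\,\norm_n,
\]
while the $\dot H^{1/2}$ bound on $P[R_\alpha\bar P\bar r]$ follows from \eqref{first-com} taken with $s=\sigma=\tfrac12$. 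Note that no further factor of $A$ is available, precisely because we are using the trivial $\lesssim_A\norm_n$ bound on the antiholomorphic tails rather than a refined one.

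The main technical point, beyond bookkeeping, is the $\dot H^{1/2}$ estimate on $P[R_\alpha\bar P\bar r]$, where balanced-frequency interactions require care in distributing the half-derivative between the two factors, just as in the derivation of \eqref{GK2}. However, this is the regime covered by the commutator inequalities already recorded in the appendix, so no new harmonic-analytic input is needed.
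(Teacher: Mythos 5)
The main gap is in part (b), in your treatment of the second and third error terms. You invoke Lemma~\ref{l:com} as if it gave $\|P[\W_\alpha\bar P\bar r]\|_{L^2}\lesssim \||D|^{1/2}\W\|_{BMO}\|\bar P\bar r\|_{\dot H^{1/2}}$ and $\||D|^{1/2}P[R_\alpha\bar P\bar r]\|_{L^2}\lesssim \|R_\alpha\|_{BMO}\|r\|_{\dot H^{1/2}}$, but in \eqref{first-com}--\eqref{second-com} the derivative being traded sits on the second, undifferentiated factor, whereas here the differentiated factor is the holomorphic one and $\bar P\bar r$ carries no derivative. Since the output of $P[\text{holomorphic}\cdot\text{antiholomorphic}]$ forces the antiholomorphic factor to have frequency at most comparable to that of the holomorphic one, the dangerous regime is the high--low one: $\W_\alpha$ (resp.\ $R_\alpha$) at frequency $2^k$ against $\bar P\bar r$ at frequency $2^j$, $j\ll k$. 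There the dyadic pieces are of size $2^{(k-j)/2}\,\||D|^{1/2}\W\|_{BMO}\,\|(|D|^{1/2}\bar P\bar r)_j\|_{L^2}$, which grows with $k-j$, so neither claimed bilinear bound holds: $B$ gives no $L^2$-type control of the high frequencies of $\W_\alpha$ or $|D|^{1/2}R_\alpha$, and $\|r\|_{\dot H^{1/2}}$ gives no uniform control of the low frequencies of $\bar P\bar r$ in the norms you would need. (Your closing remark that the balanced interactions are the delicate case points the wrong way: balanced frequencies are exactly where Coifman--Meyer applies; it is the high--low case that fails.) The paper handles precisely these interactions differently, exploiting that the factors are of much lower order than $n-1$ derivatives: it bounds $\W_\alpha$, respectively $|D|^{1/2}R_\alpha$, in $L^{4n-6}$ and $r$ in $L^{(2n-3)/(n-2)}$ by interpolation against $\norm_n$ (this is also where $n\ge 3$ is used), and concludes by H\"older; this is why the constant is only $\lesssim_A$. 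Your treatment of the first term $P[R_\alpha\bar P\bar w]$ agrees with the paper and is fine.

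There is also a secondary issue in part (a). Your reduction is essentially the paper's, but the absorption step relies on smallness of $A$: you bound $\|\W R^{(n-1)}\|_{\dot H^{1/2}}\lesssim A\,\norm_n$ and subtract, which recovers $\|R^{(n-1)}\|_{\dot H^{1/2}}$ only when $A\ll 1$. The lemma, however, asserts an $\approx_A$ equivalence with no smallness hypothesis and feeds into the large-data estimates of Proposition~\ref{t:en=large}. The paper avoids this by keeping the factor $1+\W$ inside the weight $\Phi=e^\phi(1+\W)$ and estimating the difference $|D|^{1/2}Pr-\Phi|D|^{1/2}R^{(n-1)}$ by
\[
\|\W^{(n-1)}\|_{L^2}+\|\W^{(n-1)}\|_{L^2}^{\frac{1}{n-1}}\|R^{(n-1)}\|_{\dot H^{1/2}}^{\frac{n-2}{n-1}},
\]
so that $R^{(n-1)}$ enters only with a fractional power and can be absorbed via Young's inequality for arbitrary $A$.
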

\begin{proof}
a) For $w$ we  argue as in the proof of Lemma~\ref{en:n=2} to get
\[
\| P w\|_{L^2} \approx_A \|w\|_{L^2} \approx_A \| \W^{(n-1)}\|_{L^2}.
\]
For $r$ we need, again as in the proof of Lemma~\ref{en:n=2},
with $\Phi =  e^\phi(1+\W)$, to bound in $L^2$ the difference
\[
\begin{split}
K = & \ |D|^\frac12 P r - \Phi |D|^\frac12 R^{(n-1)},
\\ = & \
[ |D|^\frac12 P, \Phi]R^{(n-1)} + |D|^\frac12 P[ e^\phi( - R_\alpha
W^{(n-2)} +(2n-1) \W_\alpha R^{(n-2)})],
\end{split}
\]
as well as the similar difference  but with all $P$ omitted.
It suffices to prove the estimate
\[
\|K\|_{L^2} \lesssim_A \|\W^{(n-1)}\|_{L^2} +  \|\W^{(n-1)}\|_{L^2}^{\frac{1}{n-1}}
\|R^{(n-1)}\|_{\dot H^\frac12}^{\frac{n-2}{n-1}},
\]
which follows by standard multiplicative and commutator estimates.

b) For the first term $P[R_\alpha \bar P \bar w]$ we directly use the
Coifman-Meyer type estimates in Lemma~\ref{l:com}. For the second we
bound $\W_\alpha$ in $L^{4n-6}$ and $r$ in $L^{\frac{2n-3}{n-2}}$ by
H\"older's inequality and interpolation.  For the third we have to
bound $\| |D|^\frac12 P [R_\alpha \bar P\bar r]\|_{L^2}$.  For the
balanced frequency interactions, by Coifman-Meyer it suffices to bound
$R_\alpha$ in $BMO$ and $r$ in $\dot H^\frac12$. For the high-low
interactions, on the other hand, the half-derivative goes to
$R_\alpha$, and we need to bound $|D|^\frac12 R_\alpha$, and $r$ in
$L^{4n-6}$, respectively $L^{\frac{2n-3}{n-2}}$.

\end{proof}

Given the above Lemma~\ref{l:erori}, the $n \geq 3$ case of the result in Proposition~\ref{t:en=large} is a
direct consequence of our quadratic estimates for the linearized
equation in Proposition~\ref{plin-short}(a).

The small data cubic energy estimates in Proposition~\ref{t:en=small} are
proved in the next section.  The key is to produce a modified cubic
energy, whose leading part is given by
\[
\Enthigh  ( w,r) =
\int (1+a)  |w|^2 + \Im (\bar r r_\alpha)
 + 2n \Im (R_{\alpha} \bar w  \bar r) +
 2(  \Im[\bar{R} w r_\alpha] - \Re[\bar {W}_\alpha w^2])
\, d\alpha.
\]
We claim that the evolution of this energy is governed by the following

\begin{lemma}\label{l:hf-cubic}
Let $(w,r)$ be defined as above. Then

a) Assuming that $A \ll 1$,  we have
\begin{equation}\label{en3-equiv}
 \Enthigh ( Pw,P r) \approx \Ez  ( Pw,P r)
\approx  \norm_n,
\end{equation}

b) The solutions $(Pw,Pr)$ of \eqref{energy(n,3)} satisfy
\begin{equation}\label{en3-evolve}
\begin{split}
\frac{d}{dt} \Enthigh( Pw,P r) =  & \
2 \int  \Re (\bar w \cdot \err(L^2)^{\left[ 2\right] } ) - \Im(\bar r_\alpha \cdot \err(\dot H^\frac12)^{\left[ 2\right] })
\, d\alpha \\
& \ \ \  +  O_A(AB \norm_n).
\end{split}
\end{equation}
Further, the same relation holds if $(\bar w, \bar r)$ on the right are replaced
by $(\bar \W^{(n-1)},\bar R^{(n-1)})$.

\end{lemma}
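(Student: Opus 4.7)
My plan is to mirror the cubic energy argument of Proposition~\ref{plin-long}, applied now to the inhomogeneous system \eqref{energy(n,3)} for $(Pw, Pr)$. The specific cubic corrections in $\Enthigh$ are engineered precisely so that, upon time differentiation, they cancel the four non-perturbative terms on the right-hand side of \eqref{energy(n,3)},
\[
P[R\bar P\bar w_\alpha - W\bar P\bar r_\alpha],\ \ nP[R_\alpha\bar P\bar w - W_\alpha\bar P\bar r],\ \ P[R\bar P\bar r_\alpha],\ \ nP[R_\alpha\bar P\bar r],
\]
which cannot be controlled directly in $L^2\times\dot H^{\frac12}$. Two of these (the first and third) are present already in the linearized setting and will be handled exactly as in Proposition~\ref{plin-long}; the two $n$-dependent terms are new, and the new cubic correction $2n\Im(R_\alpha \overline{w}\,\overline{r})$ is tailored to cancel them.

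For part (a), the quadratic part of $\Enthigh$ coincides with $\Elind(Pw,Pr)$, which is $\approx \norm_n^2$ by Proposition~\ref{plin-short} together with Lemma~\ref{l:erori}(a). The two corrections $2\Im[\bar R Pw\,\partial_\alpha Pr]$ and $2\Re[\bar W_\alpha (Pw)^2]$ are estimated as in the opening of the proof of Proposition~\ref{plin-long}, each contributing a factor of $O(A)$. For the new correction $2n\Im(R_\alpha \overline{Pw}\,\overline{Pr})$, holomorphy of $Pw, Pr$ lets me rewrite it via duality as $2n\Im\int \overline{Pr}\,\bar P[R_\alpha\overline{Pw}]\,d\alpha$; Lemma~\ref{l:com} with $s=\sigma=\tfrac12$ yields $\|\bar P[R_\alpha\overline{Pw}]\|_{\dot H^{-1/2}} \lesssim \||D|^{1/2}R\|_{BMO}\|Pw\|_{L^2}$, again an $O(A)$ contribution. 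Summed, this gives \eqref{en3-equiv}.

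For part (b), differentiating the quadratic part of $\Enthigh$ is governed by Proposition~\ref{plin-short}(a) applied to \eqref{energy(n,3)} with right-hand side as inhomogeneous source. The $\err^{[3]}$ components of this source are $O_A(AB\norm_n^2)$ directly from their definition, while the $\err^{[2]}$ components produce exactly the boundary integrals on the right of \eqref{en3-evolve}. The three cubic corrections are then differentiated using the unprojected equations for $w, r$ (stated just before \eqref{energy(n,3)}) and \eqref{ww2d-diff} for $W_\alpha, R$. Following the bookkeeping of \eqref{err3}, the corrections $2\Im(\bar R w r_\alpha) - 2\Re(\bar W_\alpha w^2)$ reproduce the cancellation scheme of Proposition~\ref{plin-long} and eliminate the first and third obstructive terms, with all residues falling into Groups A--D of the proof of \eqref{elin3-dinhom} and hence bounded by $AB\norm_n^2$. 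The novel correction $2n\Im(R_\alpha \bar w \bar r)$, differentiated in time, generates from the leading parts of $\partial_t w$ and $\partial_t r$ precisely the pairings required to cancel $nP[R_\alpha\bar P\bar w - W_\alpha\bar P\bar r]$ and $nP[R_\alpha\bar P\bar r]$ after redistributing $P+\bar P=I$ and one integration by parts; the residues are quartic and are absorbed into $O_A(AB\norm_n^2)$ by the estimates \eqref{bmo-bmo}, \eqref{bmo-infty} and the bounds on $A, B$.

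The main obstacle is the combinatorial bookkeeping: matching each of the four obstructive terms with the time derivative of the correct cubic correction and verifying that each of the many residual multilinear expressions produced is either an $\err^{[2]}$-source contributing to the stated boundary integrals, or a term bounded by $AB\norm_n^2$ via the multilinear and commutator estimates of Appendix~\ref{s:multilinear}. For the concluding assertion of the lemma, the replacement of $(\bar w, \bar r)$ by $(\bar W^{(n-1)}, \bar R^{(n-1)})$ in the boundary integrals is justified because $w = e^\phi W^{(n-1)}$ and $r = e^\phi \tR$ differ from $W^{(n-1)}, R^{(n-1)}$ only by holomorphic quadratic corrections; substituting these into a pairing with an $\err^{[2]}$ source produces only cubic remainders that again fit in $O_A(AB\norm_n^2)$.
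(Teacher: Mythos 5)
Your overall strategy is the same as the paper's: treat \eqref{energy(n,3)} as an inhomogeneous system, feed it into the cubic linearized estimate \eqref{elin3-dinhom} of Proposition~\ref{plin-long}, let the correction $2n\Im(R_\alpha\bar w\bar r)$ absorb the two $n$-dependent source terms, and let the $\err^{[2]}$ sources produce the boundary integrals in \eqref{en3-evolve}. The gap is in how you control the new correction term. In part (a) you claim, via Lemma~\ref{l:com} with $s=\sigma=\tfrac12$, that $\|\bar P[R_\alpha\overline{Pw}]\|_{\dot H^{-1/2}}\lesssim \||D|^{1/2}R\|_{BMO}\|Pw\|_{L^2}$. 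Lemma~\ref{l:com} gives no such bound (with $s=\sigma=\tfrac12$ it controls $\||D|^{1/2}[P,R]|D|^{1/2}w\|_{L^2}$ by $\||D|R\|_{BMO}\|w\|_{L^2}$, a different object with a full derivative on $R$), and in fact the estimate you want is false for general holomorphic $w,r$ with only $A$-type control on $R$: by frequency conservation in $\int R_\alpha\bar w\,\bar r\,d\alpha$ the factor $R$ always carries the dominant frequency, and in the regime where $R_\alpha$ and $w$ are balanced at frequency $2^k$ while $r$ sits at frequency $O(1)$, the integral can be of size $2^{k/2}A\|w\|_{L^2}\|r\|_{\dot H^{1/2}}$ (take unit-width Fourier bumps), so no bound of the form $A\,\Ez(Pw,Pr)$ holds abstractly. (Also, in the duality step the relevant projection of $R_\alpha\overline{Pw}$ when paired against the antiholomorphic $\overline{Pr}$ is $P$, not $\bar P$.)

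What rescues this term in the paper is precisely what your argument discards: $w=e^{\phi}\W^{(n-1)}$ and $r=e^{\phi}\tR$ are $(n-1)$-st order derivatives, so their intermediate $L^p$ norms are controlled by interpolation between the $A$-controlled low-order quantities and $\norm_n$; the paper bounds $\int R_\alpha\bar w\bar r$ by placing $R_\alpha$, $w$, $r$ in $L^{4n-6}$, $L^2$, $L^{\frac{2n-3}{n-2}}$ (this is where $n\ge 3$ is used). The same interpolation-based H\"older bounds are needed at several places in part (b) -- for the pairings of $R_\alpha\bar P\bar w\,\bar P\bar r$ against the sources $G_n,K_n$ of \eqref{energy(n,3)} and for the quartic residues coming from the transport and curvature terms -- and these are not covered by \eqref{bmo-bmo}, \eqref{bmo-infty} alone, which is the only mechanism you invoke. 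So the cancellation scheme you describe is the right one, but the quantitative closure for the $n$-dependent correction requires the lower-order/interpolation structure of $(w,r)$, not abstract $L^2\times\dot H^{1/2}$ commutator bounds.
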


\begin{proof}
  a) Given the bounds already proved in Proposition~\ref{plin-long} for
  the linearized equation, it suffices to estimate the additional term,
\[
\left| \int R_\alpha \bar w \bar r \ d\alpha \right| \lesssim A  \norm_n.
\]
For this we use interpolation to bound $R_\alpha$, $w$ and $r$ in $L^{4n-6}$, $L^2$,
respectively  $L^{\frac{2n-3}{n-2}}$ in terms of $A$ and $\norm_n$.

b) Here, we begin with the cubic linearized energy, $\Elint$. According to
the bound \eqref{elin3-dinhom} in Proposition~\ref{plin-long}, we have
\[
\begin{split}
\frac{d}{dt} \Elint( Pw,P r) = & \
 \int  2\Re \left( (n P [R_\alpha  \bar P \bar w -  \W_\alpha\bar P\bar r] + P\err(L^2))
\cdot ( \bar w - \bar P[\bar R r_\alpha] - \bar P[\bar W_\alpha w])  \right)
\\ & \ \  - 2 \Im\left( (-  nP [R_\alpha  \bar P\bar r] + P \err(\dot H^\frac12)) \cdot (\bar r_\alpha
+ \bar P[\bar R w]_\alpha)\right)
\, d\alpha \\
& \ \ +  O_A\left( AB \|(Pw,Pr)\|_{L^2 \times \dot H^\frac12}^2\right) .
\end{split}
\]
By the Coifman-Meyer type estimates in Lemma~\ref{l:com} the following bounds hold:
\begin{equation}
\|  \bar P[\bar R r_\alpha]\|_{L^2} + \|   \bar P[\bar W_\alpha w]\|_{L^2}
+\|  \bar P[\bar R w]\|_{\dot H^\frac12}
   \lesssim A \|(w,r)\|_{L^2 \times \dot H^\frac12}.
\end{equation}
Combining this with \eqref{bierror-n} and with the bounds for the error terms
we get
\[
\begin{split}
\frac{d}{dt} \Elint( Pw,P r) \leq & \
 \int  2\Re \left( (n P [R_\alpha  \bar P \bar w -  \W_\alpha\bar P\bar r] + P\err(L^2)^{\left[ 2\right] })
\cdot \bar w \right)
\\ & \ \ - 2\Im\left( (-  nP [R_\alpha  \bar P\bar r] + P \err(\dot H^\frac12)^{\left[ 2\right] })
\cdot  \bar r_\alpha \right) \, d\alpha \\
& \ \ +  O_A\left( AB \|(Pw,Pr)\|_{L^2 \times \dot H^\frac12}^2\right) ,
\end{split}
\]
where the output from all error terms which are cubic and higher error terms
is all included in the last RHS term.

It remains to consider the contribution of the extra term
in $\Enthigh$ and show that
\begin{equation}\label{extra-diff}
\begin{split}
\frac{d}{dt} \int  \Im (R_{\alpha} \bar P \bar w  \bar P \bar r) \, d\alpha = & \
 \int  \Re \left((R_\alpha  \bar P \bar w -  \W_\alpha\bar P\bar r) \bar P \bar w \right)
  + \Im\left(   R_\alpha  \bar P\bar r
  \bar P\bar r_\alpha \right) \, d\alpha
\\ & \ \ \,+  O_A\left( AB \|(Pw,Pr)\|_{L^2 \times \dot H^\frac12}^2\right) .
\end{split}
\end{equation}
Denote by $G_n$, respectively $K_n$ the two right hand sides in
\eqref{energy(n,3)}. By the definition of error terms and by
\eqref{bierror-n} they satisfy the bounds
\[
\|(G_n,K_n)\|_{L^2 \times \dot H^{\frac12}} \lesssim_A B  \norm_n, \qquad \|K_n\|_{L^2} \lesssim_A A  \norm_n.
\]
Then their contribution in the above time derivative is estimated
\[
\left| \int  \Im (R_{\alpha} \bar P \bar G_n  \bar P \bar r + R_{\alpha} \bar P \bar w  \bar P \bar K_n)\ d\alpha\right| = \left| \int  \Im (R_{\alpha} \bar P \bar F_n  \bar P \bar r + P[R_{\alpha} \bar P \bar w]  \bar P \bar K_n)\ d\alpha\right| \lesssim_A AB \norm_2,
 \]
by using H\"older's inequality for the first term and the Coifman-Meyer commutator
estimate in Lemma~\ref{l:com} for the second.

The contributions of the $b$ terms  are
collected together in the imaginary part of the expression
\[
\begin{split}
I = & \  \int  \partial_\alpha (b R_{\alpha}) \bar P \bar w  \bar P \bar r
+ R_\alpha  \bar P(b\bar P \bar w_\alpha)  \bar P \bar r  +  R_\alpha
 \bar P \bar w \bar P(b\bar P \bar r_\alpha )\, d\alpha \\  = & \
 \int
 R_\alpha   \left([b,P](\bar P \bar w_\alpha)  \bar P \bar r  +
 \bar P \bar w [b,P] (\bar P \bar r_\alpha )\right)\, d\alpha.
\end{split}
\]
Since $\|b_\alpha\|_{BMO} \lesssim B$, we can bound using Lemma~\ref{l:com}, and then use H\"older's inequality for all terms.

Next, we consider the remaining contribution of the time
derivative of $R_\alpha$, for which we use the equation \eqref{ww2d-diff}.
This is
\[
\Im \int  \bar P \bar w  \bar P \bar r \partial_\alpha \left( \frac{\W-a}{1+\W}\right) \, d\alpha
=  \Re \int \bar P \bar w  \bar P \bar r \W_\alpha \, d\alpha -
\Re \int \bar P \bar w  \bar P \bar r  \partial_\alpha \left( \frac{\W^2+ a}{1+\W}\right)  \, d\alpha.
\]
The first term  on the right yields the second term on the
right of \eqref{extra-diff}, while the rest of the terms are directly
bounded using H\"older's inequality.

It remains to consider the contribution of the remaining left hand side terms in
\eqref{energy(n,3)}. The expression   $\dfrac{ P r_\alpha}{1+\bar \W}$ in the $r$ equation
yields the third term  on the right of \eqref{extra-diff},
plus the quartic term
\[
\int  \Im R_{\alpha} \bar P (\bar P \bar r_\alpha Y)   \bar P \bar r \, d\alpha =
\int  \Im( R_{\alpha} [\bar P,Y] (\bar P \bar r_\alpha)  \bar P \bar r
+   R_{\alpha} Y  \bar P \bar r_\alpha   \bar P \bar r)
\, d\alpha .
\]
In the first term we apply a commutator estimate and then H\"older's inequality, and
in the second we use H\"older inequality directly.

The contribution of $ P\left[\dfrac{R_\alpha}{1+\W} Pw\right]$ is
purely a H\"older term. Finally, the contribution of
$P\left[\dfrac{1+ a }{1+\W}P w\right]$ yields the first
term on the right of \eqref{extra-diff}, plus a H\"older quartic term.

\end{proof}

\subsection{Normal form  energy estimates: 
\texorpdfstring{$n \geq 3$}\ \ , small data}
\label{s:ee3+}

In this section, we construct an $n$-th order energy with cubic
estimates. One ingredient for this is the high frequency cubic energy
$\Enthigh$ in Lemma~\ref{l:hf-cubic}.  However, this does not
suffice, as the right hand side of the energy relation
\eqref{en3-evolve} still contains lower order cubic terms.
Here we use  normal forms in order to add a lower order correction
to $\Enthigh$, which removes the above mentioned cubic terms.
We recall that the normal form variables $(\tilde{W}, \tilde{Q})$ are given by
\begin{equation}
\left\{
\begin{aligned}
\tilde W &= W - 2 \M_{\Re W} W_\alpha,
\\
\tilde Q &= Q - 2 \M_{\Re W} R,
\end{aligned}
\right.
\label{nft}
\end{equation}
where $\M_u F = P [uF]$. 
They solve an equation where all nonlinearities are cubic and higher,
\begin{equation}
\left\{
\begin{aligned}
&\tW_t + \tQ_\alpha = \tG,
\\
&\tQ_t - i \tW =  \tK,
\end{aligned}
\right.
\label{nft2eq}
\end{equation}
see Proposition~\ref{p:normal}.

The obvious energy functional associated to the normal form equations (\ref{nft1eq}) is
\[
\Ennfz = \int \left(|\tW^{(n)}|^2
+ \Im [ \tQ^{(n)} \bar\tQ^{(n)}_{\alpha} ]\right)\, d\alpha.
\]
In view of Proposition~\ref{p:normal}, this functional satisfies an energy equation of the form
\begin{equation}
\frac{d}{dt} \Ennfz = quartic + higher,
\label{cubic_energy_estimate}
\end{equation}
but it has several defects:
\begin{enumerate}
\item It is expressed in terms
of $Q^{(n)}$ rather than the natural variable $R^{(n-1)}$,
\item It is not equivalent to the linear energy $\Elind(\W^{(n-1)},R^{(n-1)})$,
\item Its energy estimate has a loss of derivatives.
\end{enumerate}
However, the last two issues concerning $\Ennfz$ arise at the
level of quartic and higher order terms, and they are specific to the water wave problem.  This motivates our strategy,
which is modify $\Ennfz$ by quartic and higher terms to obtain a
``good'' energy $\Ent$ without spoiling the cubic energy estimate
(\ref{cubic_energy_estimate}).

We carry out this procedure in two steps: \textbf{(i)} we construct a modified normal form energy $\Ennf$ that depends
on $(\W^{(n-1)}, R^{(n-1)})$ and is equivalent to the linearized energy
$\Elind(\W^{(n-1)},R^{(n-1)})$;  this addresses the issues (1) and (2) above, but not (3);
\textbf{(ii)} we separate the leading order part $\Ennfhigh$ and modify
that to the correct high frequency expression $\Enthigh$ defined in
the previous section, which was inspired from the analysis of the linearized
equation. This modification is needed due to the quasilinear nature of
our problem.  Thus, we obtain an energy $\Ent$ with good, cubic
estimates.

The first step described above is implemented in the following proposition:

\begin{proposition}
There exists a modified normal form energy $\Ennf$ of  the form
\begin{align}
\begin{split}
\Ennf = & \ \Ennfhigh + \Ennflow,
\\
\Ennfhigh = & \ \int  (1- 4n \Re \W)   \left(|\W^{(n)}|^2
+  \Im[\tR \bar \tR_\alpha]\right) + 2n \Im[R_{\alpha} \bar \W^{(n-1)} \bar \tR]\, d\alpha,
\\ &   \quad + 2 \int \Im[\bar{R}{\W}^{(n-1)} \tR_\alpha] - \Re[\bar {\W} (\W^{(n-1)})^2] \, d\alpha,
\\
\Ennflow = & \ \Re \int \left(\sum_{j+k+l=2n-2} c_{jkl}\W^{(j)}  \W^{(k)} \bar \W^{(l)}
 + \sum_{j+k+l=2n-1}  d_{jk_1l_1} \W^{(j)}  R^{(k)} \bar R^{(l)}\right) \,  d\alpha,
\\
&\
\end{split}
\label{def_En_hl}
\end{align}
such that
\begin{equation}\label{enlow-diff}
\Ennf = \Ennfz + \text{(quartic and higher terms)},
\end{equation}
and
\begin{equation}\label{enlow-equiv}
\Ennfhigh = [1+ O(A)] \Ez (\W^{(n-1)}, R^{(n-1)}), \qquad \Ennflow =
O(A) \Ez (\W^{(n-1)}, R^{(n-1)}).
\end{equation}
Moreover, the sums in (\ref{def_En_hl}) for $\Ennflow$ contain
only indices $(j,k, l)$ with  $1\le j,k,l \le n-1$.
\end{proposition}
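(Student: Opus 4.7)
The strategy is a direct expansion: starting from the normal form energy $\Ennfz$ for $(\tW,\tQ)$, substitute the normal form transformation and the relation $Q_\alpha = R(1+\W)$ to rewrite everything in terms of $(\W^{(n-1)}, R^{(n-1)})$, expand to cubic order modulo quartic (and higher) corrections, and then reorganize the resulting cubic terms by separating those that carry derivatives of top order ($n-1$ or $n$ on a single factor) from those that distribute derivatives more evenly. The former are absorbed into $\Ennfhigh$, and the latter define $\Ennflow$.

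Concretely, first I would compute
\[
\tW^{(n)} = \W^{(n-1)} - 2 P\bigl[((\Re W)\, W_\alpha)^{(n)}\bigr], \qquad \tQ^{(n)} = \partial^{n-1}\bigl[R(1+\W)\bigr] - 2 P\bigl[((\Re W)\, R)^{(n)}\bigr],
\]
expand by Leibniz, and collect cubic contributions to $|\tW^{(n)}|^2$ and to $\Im[\tQ^{(n)}\bar\tQ^{(n)}_\alpha]$. In the top-derivative terms I would then use the identities $\R = R^{(n-1)}(1+\W)$ and $\tR = \R - R_\alpha W^{(n-2)} + (2n-1)\W_\alpha R^{(n-2)}$; the differences between $\R$, $R^{(n-1)}$ and $\tR$ are themselves quadratic, so when multiplied by one further leading factor they become quartic and can be absorbed. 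Standard integration by parts lets one distribute $\alpha$ derivatives so that the cubic terms with a highest-order factor take the four canonical forms appearing in the stated $\Ennfhigh$: $(\Re\W)|\W^{(n)}|^2$, $(\Re\W)\,\Im[\tR\bar\tR_\alpha]$, $\Im[R_\alpha\bar\W^{(n-1)}\bar\tR]$, $\Im[\bar R\W^{(n-1)}\tR_\alpha]$, and $\Re[\bar\W(\W^{(n-1)})^2]$. Matching the coefficients $-4n$, $2n$, $2$, $-2$ is a direct combinatorial count from the Leibniz expansion of $((\Re W)W_\alpha)^{(n)}$ and $((\Re W) R)^{(n)}$, together with the $(R\W)^{(n-1)}$ cross-terms coming from $Q^{(n)} - R^{(n-1)}$.

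The remaining cubic terms are precisely those whose derivative indices $(j,k,l)$ satisfy $1 \le j,k,l \le n-1$; they form $\Ennflow$, and their coefficients $c_{jkl}, d_{jkl}$ are read off the combinatorics. By construction \eqref{enlow-diff} holds. For the equivalence \eqref{enlow-equiv}, the high-frequency part $\Ennfhigh$ differs from $\Elint(\W^{(n-1)},R^{(n-1)})$ by the single extra term $2n\int \Im[R_\alpha \bar\W^{(n-1)}\bar\tR]\,d\alpha$, which is handled exactly as in the proof of Lemma~\ref{l:hf-cubic}(a): an interpolation estimate bounding $R_\alpha$ in $L^{4n-6}$, $\W^{(n-1)}$ in $L^2$, and $\tR$ in $L^{\frac{2n-3}{n-2}}$ gives an $A$-small correction, so $\Ennfhigh = (1+O(A))\Ez(\W^{(n-1)},R^{(n-1)})$. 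For $\Ennflow$, the interior-index restriction $1 \le j,k,l \le n-1$ allows placing two factors in $L^\infty$ or $\mathrm{BMO}$ (controlled by $A$ via the Moser/Sobolev embeddings already used in Section~\ref{s:ee}) and the third in $L^2$ or $\dot H^{1/2}$ (controlled by $\norm_n$), giving the desired $O(A)\Ez$ bound.

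The main obstacle is the organizational one: ensuring that after the many integrations by parts and the replacement of $\R$ by $\tR$, the cubic leftovers not already present in $\Ennfhigh$ really do obey the index restriction $j,k,l \le n-1$, and that the quartic remainders thrown away at each step can legitimately be discarded without spoiling \eqref{enlow-diff}. The coefficient $(2n-1)$ in the definition of $\tR$ and the coefficient $2n$ in the $R_\alpha\bar\W^{(n-1)}\bar\tR$ term are the two compatibility constants that must be verified directly; once they match, the rest of the construction is a mechanical bookkeeping argument.
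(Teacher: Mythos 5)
Your overall strategy is the same as the paper's: expand $\Ennfz$ directly via the normal form \eqref{nft}, integrate by parts to remove the $(n+1)$-st derivatives from the cubic terms, pass from $Q^{(n)}$ to $R^{(n-1)}$ and then to $\tR$ via $\partial_\alpha^n Q - R\partial_\alpha^n W = (1+W_\alpha)R^{(n-1)} + \dots$, and sort the cubic leftovers into $\Ennfhigh$ (top-order factor present) and $\Ennflow$ (all indices between $1$ and $n-1$), with \eqref{enlow-diff} holding by construction. Deferring the Leibniz/coefficient bookkeeping is fine at this level, and your identification of $(2n-1)$ and $2n$ as the compatibility constants matches what the computation produces.

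There is, however, a concrete gap in your treatment of the equivalence \eqref{enlow-equiv} for $\Ennfhigh$. It is not true that $\Ennfhigh$ differs from $\Elint(\W^{(n-1)},R^{(n-1)})$ only by the term $2n\int\Im[R_\alpha\bar\W^{(n-1)}\bar\tR]\,d\alpha$: the weight on the quadratic part is $(1-4n\Re\W)$ rather than $(1+a)$, and the quadratic part is written in $\tR$ rather than $R^{(n-1)}$. The resulting cubic discrepancies contain the factor $\tR_\alpha\sim R^{(n-1)}_\alpha$, which carries a full derivative beyond the $\dot H^{1/2}$ control furnished by $\Ez(\W^{(n-1)},R^{(n-1)})$; consequently your proposed scheme of ``two factors in $L^\infty$ or $BMO$, one in $L^2$ or $\dot H^{1/2}$'' does not close for them. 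The paper isolates exactly two such terms: $\int \bar R\,\W^{(n-1)} R^{(n-1)}_\alpha\,d\alpha$, which is rewritten as $\int \W^{(n-1)}\bar P[\bar R R^{(n-1)}_\alpha]\,d\alpha$ and bounded by the Coifman--Meyer commutator estimate of Lemma~\ref{l:com} (your appeal to \eqref{elin3-eq} implicitly covers this one), and $\int \Re\W\,\Im[R^{(n-1)}\bar R^{(n-1)}_\alpha]\,d\alpha$, which requires a Littlewood--Paley decomposition splitting the half-derivative between the two $R$ factors in the balanced-frequency case and using H\"older plus interpolation otherwise. Your sketch does not address this second term at all, and without that (or an equivalent paraproduct argument) the claim $\Ennfhigh=[1+O(A)]\Ez(\W^{(n-1)},R^{(n-1)})$ is not established. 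The bound for $\Ennflow$ by H\"older and interpolation is fine, since there the index restriction $1\le j,k,l\le n-1$ really does keep every factor inside the controlled range.
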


\begin{remark}
  The normal form transformation is expressed at the level of $(W,Q)$
  variables, and cannot be easily switched to the level of
  $(\W,R)$. For this reason, initially the computation of the normal
  form energy is done in terms of the original variables $(W,Q)$.  The
  interesting fact in the above proposition is that in the end we are able 
  express the energies in the convenient variables $(\W, R)$. 
\end{remark}

\begin{proof}
  We start from the normal form energy $\Ennfz$ and express it in
  terms of $(\W,R)$ and their derivatives.  First, consider the term
  involving $\tilde{W}^{(n)}$. Using (\ref{nft}), we get that
\[
\int |\tW^{(n)}|^2 \, d\alpha
= \int |W^{(n)}|^2 - 4 \Re \left[ \bar W^{(n)} \partial_\alpha^n(\M_{\Re W} W_\alpha)\right] + 4\left|\partial_\alpha^n(\M_{\Re W} W_\alpha)\right|^2 \, d\alpha.
\]
The higher-order derivatives $W^{(n+1)}$ cannot be removed from the last term, but it is quartic and therefore harmless.
The cubic term also contain derivatives of order $n+1$, but as we show next they integrate out; as a result, the cubic energy is equivalent
to the linear energy.

Moving the projection $P$ across the inner product, we have
\begin{equation*}
\int \bar W^{(n)} \partial_\alpha^n(\M_{\Re W} W_\alpha) \, d\alpha  =\int \bar W^{(n)} \partial_\alpha^n( P[ W_\alpha \Re W])\, d\alpha
= \int \bar W^{(n)} \partial_\alpha^n( W_\alpha \Re W)\, d\alpha,
\end{equation*}
which shows that
\[
\int |\tW^{(n)}|^2 \, d\alpha
=
\int |W^{(n)}|^2 - 4 \Re \left[ \bar W^{(n)} \partial_\alpha^n(  W_\alpha \Re W)\right] \, d\alpha + \text{quartic}.
\]
Thus, expanding derivatives, we get
\begin{align}
\begin{split}
\int |\tW^{(n)}|^2 \, d\alpha
&=\int |W^{(n)}|^2 - 4\Re [\bar W^{(n)} W^{(n)}_\alpha] {\Re W}
-4 n  |W^{(n)}|^2\Re W_\alpha
\\
&\quad
- 4\sum_{j = 2}^{n-1}\binom{n}{j}\Re \left[\bar W^{(n)} W^{(n-j+1)}\right] \Re W^{(j)}
 - 4 \Re [W_\alpha \bar W^{(n)}]\Re W^{(n)}
 \, d\alpha
\\
&\quad
+ \text{quartic}.
\end{split}
\label{twn_eq}
\end{align}
Integrating by parts in the cubic term that contain derivatives of $W$ of the order $n+1$, we get
\begin{equation*}
\int \Re [\bar W^{(n)} W^{(n)}_\alpha]{\Re W}\, d\alpha
= - \frac{1}{2} \int (\Re W_\alpha) |W^{(n)}|^2\, d\alpha.
\end{equation*}
In addition,
\[
\int \Re [W_\alpha \bar W^{(n)}] \Re W^{(n)}\, d\alpha
= \frac{1}{2}\int (\Re W_\alpha) |W^{(n)}|^2 + \Re [W_\alpha (\bar W^{(n)})^2] \, d\alpha.
\]
It follows that
\begin{align*}
\int |\tW^{(n)}|^2 \, d\alpha
&=\int \left(1 - 4n\Re W_\alpha\right) |W^{(n)}|^2
 - 4 \sum_{j = 2}^{n-1}\binom{n}{j}  \Re \left[\bar W^{(n)}W^{(n-j+1)}\right] \Re W^{(j)}
\\
&\qquad- 2\Re[{W}_\alpha (\bar W^{(n)})^2]\, d\alpha + \text{quartic}.
\end{align*}

A similar, but longer, computation for the terms involving $\tQ$ yields
\begin{align*}
\int\Im [\tQ^{(n)}\bar \tQ^{(n)}_{\alpha}] \, d\alpha &=
\Im \int \left(1 - 4n\Re W_\alpha\right) \left(Q^{(n)} - Q_\alpha W^{(n)}\right)
\left(\bar Q^{(n)} - \bar Q_\alpha \bar W^{(n)}\right)_\alpha
\\
&\quad + 2n \left({Q}_{\alpha\alpha} W^{(n)} \bar Q^{(n)}
+ {Q}_{\alpha\alpha} \bar W^{(n)} \bar Q^{(n)}\right) + 2\bar{Q}_\alpha {W}^{(n)} Q^{(n)}_\alpha \, d\alpha
\\
&\quad + 4 \int \sum_{j=3}^{n-1} \binom{n+1}{j}  \Im[\bar Q^{(n)} {Q}^{(n-j+2)} ] \Re W^{(j)}\, d\alpha
+ \text{quartic}.
\end{align*}
Up to quartic corrections, we may replace $Q^{(n)}$ by $R^{(n-1)}$ and
$Q^{(j)}$ by $R^{(j-1)}$ for $j \leq n$ in the cubic terms on the
right-hand side of this equation. Further, we have
\[
\begin{split}
 \partial_\alpha^n Q - R \partial_\alpha^n W = & \  (1+W_\alpha)R^{(n-1)} + \sum_{j = 1}^{n-2}  \binom{n-1}{j} R^{(j)} W^{(n-j)}
\\ = &\  \tR   + n(R_\alpha W^{(n-1)} -  W_{\alpha\alpha} R^{(n-2)}) +
\sum_{j = 2}^{n-3}  \binom{n-1}{j} R^{(j)} W^{(n-j)}.
\end{split}
\]
Thus, we obtain
\begin{align}
\begin{split}
\int\Im [\tQ^{(n)}\bar \tQ^{(n)}_{\alpha}] \, d\alpha = & \
\Im \int \left(1 - 4n\Re W_\alpha\right) \tR\bar \tR_\alpha
+ 2n   {R}_\alpha \bar W^{(n)} \bar \tR  +
2 \bar{R}_\alpha{W}^{(n)} R^{[n]}_\alpha
d\alpha
\\
&\ + 2 \Im \!\! \int n \bar \tR(  W^{(3)} R^{(n-2)} - R^{(2)} W^{(n-1)})
 + \bar \tR_\alpha
\sum_{j = 2}^{n-3}  \binom{n-1}{j} R^{(j)} W^{(n-j)})
d\alpha
\\
&\ + 4 \int \sum_{j=3}^{n-1} \binom{n+1}{j}  \Im[\bar R^{(n-1)} {R}^{(n-j+1)} ] \Re W^{(j)}\,
 + \text{quartic}.
\end{split}
\label{tqn_eq}
\end{align}

Adding (\ref{twn_eq}) and (\ref{tqn_eq}), we find that
\[
\Ennfz = \Ennf + \text{quartic terms},
\]
where $\Ennf$ is given by (\ref{def_En_hl}) with
\begin{align*}
\Ennflow &= - 4  \int \sum_{j = 2}^{n-1}\binom{n}{j} \Re \left[\bar W^{(n)}W^{(n-j+1)}\right] \Re W^{(j)}\, d\alpha
\\
&\quad + 4 \int \sum_{j=3}^{n-1} \binom{n+1}{j}  \Im[\bar R^{(n-1)} {R}^{n-j+1} ] \Re W^{(j)}\, d\alpha
\\
&\quad + 2 \Im \int n \bar \tR(  W^{(3)} R^{(n-2)} - R^{(2)} W^{(n-1)})+  \bar \tR_\alpha
\sum_{j = 2}^{n-3}  \binom{n-1}{j} R^{(j)} W^{(n-j)}\, d\alpha,
\end{align*}
which, after we substitute $W_{\alpha}$ by $\W$, gives us an energy of
the form stated in the proposition.

It remains to establish \eqref{enlow-diff}.  The second estimate
follows immediately from  H\"older's inequality and
interpolation. So does most of the first, except for two terms.
By the Coifman-Meyer estimate in Lemma~\ref{l:com} we have
\[
\int \bar R \W^{(n-1)} R^{(n-1)}_\alpha d\alpha  = \int \W^{(n-1)} \bar P[\bar R R^{(n-1)}_\alpha ] d\alpha
= O(A) \| \W^{(n-1)}\|_{L^2} \|R^{(n-1)}\|_{\dot H^\frac12}.
\]
On the other hand, for the integral
\[
\int \Re \W \Im [R^{(n-1)} \bar R^{(n-1)}_\alpha] d \alpha,
\]
we do a Littlewood-Paley decomposition, using the $\dot H^\frac12$ norm
of $R^{(n-1)} $ if the two $R$ frequencies are high, and interpolation and H\"older's inequality
otherwise.

\end{proof}

To get our final energy functionals $\Ent$, we replace $\Ennfhigh$ in
$\Ennf$ by its nonlinear version, $\Enthigh := \Enthigh(Pw,Pr)$.
That is, we define
\begin{equation}
\Ent =   \Ennf - \Ennfhigh + \Enthigh=  \Ennflow + \Enthigh.
\end{equation}
Note that $\Ent$ differs from $\Ennf$ only by a quartic term.

Now we proceed to prove Proposition~\ref{t:en=small}. The norm equivalence
is already known from \eqref{en3-equiv} and \eqref{enlow-equiv}, so we
still need the energy estimate. First, we write
\[
\frac{d}{dt} \Ent  = \frac{d}{dt}  \Ennf + \frac{d}{dt}  \left(\Enthigh - \Ennfhigh \right).
\]
This equation shows that there are no cubic terms on the right-hand side, since
the derivatives of  $\Ennf$ and
$\Enthigh - \Ennfhigh$ contain only terms that are quartic or higher order.

Next, we write
\[
\frac{d}{dt} \Ent = \frac{d}{dt}  \Ennflow + \frac{d}{dt}  \Enthigh.
\]
Both expressions have cubic terms, but these cancel due to the prior computation.
To make this cancellation precise, at this point we make the convention that all
multilinear expansions are in terms of $\W$ and $R$.
To make this cancellation explicit, we introduce a truncation operator $\Trunc^4$
that removes the cubic terms and retains everything
which is quartic and higher.

Hence, we obtain
\begin{equation}
\frac{d}{dt} \Ent = \Trunc^4\left(\frac{d}{dt} \Ennflow\right) +  \Trunc^4\left( \frac{d}{dt}  \Enthigh\right).
\end{equation}
It remains to prove the following estimates:
\begin{align}\label{quad-low}
&\left|\Trunc^4\left(\frac{d}{dt}  \Ennflow \right)\right| \lesssim_A AB \norm_n^2,
\\
\label{quad-hi}
&\left|\Trunc^4\left(\frac{d}{dt} \Enthigh \right)\right| \lesssim_A AB \norm_n^2.
\end{align}
The second bound follows directly from \eqref{en3-evolve}, so it
remains to prove \eqref{quad-low}.

\subsection{ Estimates for lower order terms: proof of
 (\protect\ref{quad-low})}

We have two main types of energy terms (or their complex conjugates) to consider,
\begin{align*}
I_1 &= \int  \W^{(j)}  \W^{(k)}
\bar \W^{(l)} d\alpha, \qquad j+k+l = 2n-2,\quad  1 \leq j,k,l \leq n,
\\
I_2 &= \int  \W^{(j)}  R^{(k)}
\bar R^{(l)} d\alpha, \qquad \ \  \ j+k+l = 2n-1, \quad  1 \leq j, k, l \leq n.
\end{align*}

To estimate their time derivatives it is easiest to use the
unprojected form \eqref{ww2d-diff} of the equations for $\W$ and $R$, which for
our purposes here we write in the form
\begin{equation}
\left\{
\begin{aligned}
&(\partial_{t}+ b\partial_{\alpha}) \W =  \ - b_\alpha(1+\W) - \bar R_\alpha := G,
\\
&(\partial_{t}+ b\partial_{\alpha}) R= \ i \frac{\W - a}{1+\W}:= K.
\end{aligned}
\right.
\end{equation}
Of $G$ and $K$ we will only need their quadratic parts and higher,
\[
G^{2+} = - b_\alpha \W + P(R\bar Y) + \bar P(\bar R Y), \qquad K^{2+} =
 - i \frac{\W^2 + a}{1+\W}.
\]
Then, we have
\[
\begin{split}
\Trunc^4\left(\frac{d}{dt} I_1\right) = & \ \int  \partial^{j-1} (- b \W_{\alpha}+G^{2+} )  \W^{(k)}
\bar \W^{(l)} +  \W^{(j)}   \partial^{k-1} (- b \W_{ \alpha}+G^{2+} )
\bar \W^{(l)}  \\ & +
\W^{(j)}  \W^{(k)} \partial^{l-1} (- b \bar \W_{ \alpha}+\bar G^{2+} ) \,
d\alpha
\end{split}
\]
Distributing derivatives, we separate the terms with undifferentiated $b$ as
\[
\int -b \partial_\alpha( \W^{(j)}  \W^{(k)}
\bar \W^{(l)}) \, d\alpha = \int b_\alpha \W^{(j)}  \W^{(k)}
\bar \W^{(l)}\, d\alpha,
\]
therefore all terms involving $b$ have the form
\[
 \int b^{(m)} \W^{(j)}  \W^{(k)}
\bar \W^{(l)}\, d\alpha, \qquad m+j+k+l = 2n-1, \quad 1 \leq m \leq n-1,\quad  1 \leq j,k,l \leq n-1,
\]
which we can estimate by H\"older's inequality and interpolation, using Lemma~\ref{l:b},
to get the $b$ bounds
\[
\| |D|^\frac12 b\|_{BMO} \lesssim_A A, \qquad \||D|^{n-\frac12} b\|_{L^2} \lesssim_A \norm_n.
\]

The remaining terms have the form
\[
\int \partial^{j-1} P(R \bar Y) W^{(k)} \bar W^{(l)} d\alpha.
\]
These are again estimated by H\"older's inequality and interpolation, using the bounds proved
in Lemma~\ref{l:b}, which show that
\[
\| |D|^\frac12 P(R \bar Y)\|_{BMO} \lesssim_A A^2, \qquad \||D|^{n-\frac12}P(R \bar Y) \|_{L^2}
\lesssim_A A \norm_2.
\]
.

The argument for $I_2$ is similar, using the algebra property
of $L^\infty \cap \dot H^s$, together with the $L^\infty$ and $\dot{H}^{n-1}$
bound  for $a$ in Proposition~\ref{regularity for a} in order to show that
\[
\| K^{2+}\|_{BMO} \lesssim_A A^2, \qquad \||D|^{n-1} K^{2+}\|_{L^2}
\lesssim_A A \norm_2.
\].

\section{ Local well-posedness}
\label{s:lwp}

As the water wave equations \eqref{ww2d1} are fully nonlinear, the
standard strategy to prove well-posedness would be to differentiate
the equations to turn them into a system of  quasilinear equations for
$(w,q):=(W_\alpha,Q_\alpha)$, and then apply an iteration scheme. The problem
with a direct implementation of this idea is that the quasilinear
problem is degenerate, and diagonalizing it requires using the exact
equations; thus the diagonalization would fail in an approximation
scheme.

To remedy this, we use  the form \eqref{ww2d-diff} of the equations in terms of the diagonal
variables  $(\W,R)$ directly. Projecting those on the holomorphic space we obtain
\begin{equation} \label{ww(WaR)}
\left\{ \begin{aligned}
& (\partial_t + \M_b \partial_\alpha) \W + P \left[ \frac{1+\W}
{1+\bar \W} R_\alpha\right]  =K(\W, R) ,
\\
 &(\partial_t + \M_b \partial_\alpha)  R  - i P\left[ \frac{(1+a)\W }{1+\W} \right]
= K(\W, R),
\end{aligned}\right.
\end{equation}
where $K(\W, R):=P\left[ (1+\W)M \right]$ and $K(\W, R):= P\left[ a\right]$.

We now turn to the business of solving the system \eqref{ww(WaR)}.
The state space for this will be the space $\dH_n$ endowed with the  norm
\[
\| (\W,R) \|_{\dH_n} := \sum_{k=0}^n \| \partial^k_\alpha (\W,R)\|_{ L^2 \times \dot H^\frac12},
\]
where $n \geq 1$. As a preliminary step, we will also consider
solutions in the smaller space
\[
\H_n = H^n \times H^{n+\frac12}, \quad \mbox{with }n \geq 2.
\]
We remark that, given a solution in $\dH_n$ for the above
equation, we already know how to obtain uniform energy estimates for it for $n \geq 1$.
The issue at hand is to convert those estimates into a well-posedness statement.
We also remark that our energy estimates are expressed in terms of the control norms
$A$ and $B$. These are in turn mostly controlled using the $\dH_1$ norm of 
$  (\W,R)$. The  exception is  the $L^\infty$ bound for $Y$, which, as it turns out, can be bounded
in terms of its initial data and the $\dH_1$ norm of $  (\W,R)$.

To better understand the evolution of the $\dH_1$ norm of the solution
it is convenient to use the language of frequency envelopes. We say
that a sequence $c_k \in \ell^2$ is a $\dH_1$ frequency envelope for
$(\W,R) \in \dH^1$ if (i) it is slowly varying, $c_j/c_k \leq
2^{-\delta|j-k|}$ with a small universal constant $\delta$, and (ii)
it bounds the dyadic norms of $ (\W,R) $, namely $\|P_k (\W,R)
\|_{\dH_1} \leq c_k$.

Our main result here is:
\begin{proposition}
  a) Let $n \geq 1$. Then the problem \eqref{ww2d1} is locally
  well-posed in for initial data $(\W,R)$ in $\dH_n$.

b) (lifespan) There exists $T= T(\|(\W,R)\|_{\dH_1}, \|Y\|_{L^\infty})$
so that the above solutions are well defined in $[0,T]$, with uniform bounds.

c) (frequency envelopes) Given a frequency envelope $c_k$ for the
initial data in $\dH_1$, a similar frequency envelope $C(\|(\W,R)\|_{\dH_1}, \|Y\|_{L^\infty}) c_k$   
applies for the solutions in $[0,T]$.
\end{proposition}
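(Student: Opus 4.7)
The plan is to follow the standard three-step quasilinear strategy: (i) produce regular solutions in the auxiliary space $\H_n$ via a regularization scheme, (ii) upgrade to rough $\dH_n$ data by approximating the data and taking a limit, and (iii) extract frequency envelope bounds and continuous dependence as corollaries of the linearized estimate. The two main analytic inputs, namely the quadratic $\dH_n$ energy estimate of Proposition~\ref{t:en=large} and the quadratic linearized estimate of Proposition~\ref{plin-short}(a), are already in hand, so the task is to assemble them correctly.

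For the first step I would fix $n \geq 2$ and regularize \eqref{ww(WaR)} by projecting the right-hand side to frequencies $|\xi|\leq N$ via $P_{\leq N}$, so that the system becomes an ODE on $H^n \times H^{n+\frac12}$ for which local existence is immediate from Picard iteration. Since the commutators $[P_{\leq N},\cdot]$ produce perturbative terms at the same regularity level, the energy estimates of Proposition~\ref{t:en=large} go through uniformly in $N$, yielding a common lifespan $T=T(\|(\W,R)\|_{\H_n},\|Y\|_{L^\infty})$ on which the solutions $(\W^N, R^N)$ are uniformly bounded in $\H_n$. The bound on $\|Y\|_{L^\infty}$ is propagated from data by integrating the pointwise evolution of $1+\W$ along the transport field $b$, where the needed $L^\infty$ control on $b$ and $F$ follows from the Sobolev-type bounds in Appendix~\ref{s:multilinear} and the uniform $\dH_1$ control. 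Standard compactness (Ascoli in time, weak-$*$ in $H^n$) extracts an $\H_n$ limit that solves \eqref{ww(WaR)}.

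For the second step I would approximate given $\dH_n$ data by $(\W_0^k,R_0^k):=P_{\leq 2^k}(\W_0,R_0)\in \cap_m \H_m$, yielding a sequence of $\H_n$ solutions $(\W^k,R^k)$ that exist on a common interval $[0,T]$ depending only on $\|(\W,R)\|_{\dH_1}$ and $\|Y\|_{L^\infty}$, with uniform $\dH_n$ bounds coming from Proposition~\ref{t:en=large}. The difference $(\delta\W,\delta R):=(\W^{k+1}-\W^k, R^{k+1}-R^k)$ satisfies an inhomogeneous system of the form \eqref{lin(wr)inhom} about $(\W^k,R^k)$, with source terms that are quadratic in the difference and have one factor estimated in $\dH_n$; Proposition~\ref{plin-short}(a) then gives
\[
\frac{d}{dt}\|(\delta\W,\delta R)\|_{\dH_0}^2 \lesssim_A B\,\|(\delta\W,\delta R)\|_{\dH_0}^2,
\]
and Gr\"onwall yields Cauchy convergence in $C([0,T];\dH_0)$. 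Interpolating with the uniform $\dH_n$ bounds gives convergence in $C([0,T];\dH_{n'})$ for every $n'<n$, and weak-$*$ compactness combined with the energy estimate applied to the limit upgrades this to strong convergence in $C([0,T];\dH_n)$ and to the correct energy norm at each time. The same linearized argument, applied to pairs of solutions with nearby data, gives continuous dependence. For part (c), I would run the same linearized comparison but between the full solution and the solutions launched from the dyadically truncated data $(\W_0^k,R_0^k)$: since the initial error at level $2^k$ is $O(c_k)$, Proposition~\ref{plin-short}(a) transports it with a factor depending only on the $\dH_1$ norm, giving $\|P_k(\W,R)(t)\|_{\dH_1}\lesssim c_k$ with a slowly varying constant as required.

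The main obstacle is the delicate interplay between the two control norms $A$ and $B$ in the lifespan: $B$ and most of $A$ are controlled by $\dH_1$, but $\|Y\|_{L^\infty}$ is not, so a separate pointwise propagation argument for $1+\W$ (using the transport structure of the $\W$-equation and Lemma~\ref{l:b} for $b$) is needed to guarantee that the chord-arc condition $|1+\W|>c/2$ persists on the asserted time interval. A secondary technical point is that the difference of two exact solutions does not exactly solve the linearized equation \eqref{lin(wr)}, but rather the inhomogeneous version \eqref{lin(wr)inhom} with a quadratic source; verifying that this source is controlled by $A\cdot\|(\delta\W,\delta R)\|_{\dH_0}\cdot\|(\W,R)\|_{\dH_2}$ in a way compatible with Gr\"onwall is what makes the scheme close, and this follows from the same bilinear and commutator estimates of Appendix~\ref{s:multilinear} already used in Section~\ref{s:linearized}.
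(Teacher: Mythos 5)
Your scheme reproduces the paper's construction of regular solutions (mollified system, uniform $\H_n$ energy bounds from Proposition~\ref{t:en=large}, compactness), but it has a genuine gap exactly where the theorem is delicate: the rough case $n=1$ and the frequency envelope bounds of part (c). You propose to compare two exact solutions (consecutive regularizations, or the solution versus its truncations) by writing their difference as a solution of \eqref{lin(wr)inhom} with a quadratic source and running Proposition~\ref{plin-short}(a) plus Gr\"onwall in $\dH_0$. But the source of the difference system (cf.\ \eqref{ww(WaR)diff}) contains terms such as $\M_{b_1-b_2}\partial_\alpha \W_2$, $\M_{b_1-b_2}\partial_\alpha R_2$ and differences of $a$, $Y$ multiplying $\partial_\alpha \W_2$, $\partial_\alpha R_2$: it loses one derivative on the background solution, and — as you yourself note — is only controlled by $\|(\delta\W,\delta R)\|_{\dH_0}\,\|(\W,R)\|_{\dH_2}$. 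This closes for $n\geq 2$ (it is precisely the paper's uniqueness argument for $\dH_2$ solutions) but not at the $\dH_1$ level, so it neither constructs $\dH_1$ solutions nor yields envelope bounds depending only on the $\dH_1$ size of the data. The paper avoids this loss by treating the regularization parameter $k$ as continuous and differentiating \emph{in} $k$: the pair $(\partial_k W^k, \partial_k Q^k - R^k\partial_k W^k)$ solves the linearized equation exactly, with no lossy inhomogeneity; one then has to do the nontrivial bookkeeping of returning to the differentiated variables (defining $(w^k,r^k)$ through $\partial_\alpha w^k=\partial_k\W^k$, $\partial_\alpha r^k=(1+\W^k)\partial_k R^k-R^k_\alpha w^k$, exploiting that products of holomorphic factors live at frequencies $\gtrsim 2^k$) to get the $c_k2^{-2k}$ bound in $\dH_0$, which combined with the $c_k2^{(n-1)k}$ bounds in $\H_n$ and telescoping summation gives $\dH_1$ convergence, the envelope bound, and continuous dependence. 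Without this (or an equivalent device), your part (c) and the $n=1$ case of part (a) do not follow.

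A second, smaller but real, gap concerns part (b): you propose to propagate $\|Y\|_{L^\infty}$ ``by integrating the pointwise evolution of $1+\W$ along the transport field $b$.'' At $\dH_1$ regularity this does not work: in the equation $(\partial_t+b\partial_\alpha)\W=-b_\alpha(1+\W)-\bar R_\alpha$ the forcing involves $b_\alpha$ and $R_\alpha$, which are only $BMO$/$L^2$, so no pointwise ODE bound along characteristics is available. The paper instead proves the $L^2$ transport estimate \eqref{dany}, applies it to $\W(1+\W)^{-3}$, and recovers $\|Y\|_{L^\infty}^2\lesssim\|\W_\alpha\|_{L^2}\|\W(1+\W)^{-3}\|_{L^2}$; moreover the bootstrap in Section~\ref{s:lwp} has to be arranged carefully (e.g.\ the two-sided bounds relating $\|R_\alpha\|_{\dot H^{1/2}}$ and $\|(1+\W)R_\alpha\|_{\dot H^{1/2}}$, and the intermediate bound for $\||D|^{1/2}R\|_{L^\infty}$) so that the constants produced when inverting $1+\W$ depend only on $\cK$ and not on $\cM$, otherwise the bootstrap does not close. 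Finally, when you pass to limits of $\dH_n$ approximations you should address the fact that $\dot H^{1/2}$ does not see the constant in $R$ (Galilean ambiguity); the paper imposes locally uniform convergence of $R_k(0)$ and uses a characteristics argument to get the $\log(2+|\alpha|)$ bound needed for local uniform convergence of the solutions.
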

Thereom ~\ref{baiatul} is a consequence of the above proposition. The statement about the persistence of solutions for as long as $A, B$ remain bounded is a consequence of the energy estimates in Proposition~\ref{plin-short} and Proposition~ \ref{t:en=large}, where the constants depend only on $A$ and $B$.

We remark that the well-posedness result in part (a) carries different meanings
depending on $n$. If $n \geq 2$, then we obtain existence and uniqueness
in $C(\dH_n)$ together with continuous dependence on the data with respect to the
stronger $\H_n$ topology. On the other hand if $n =1$ then we produce rough
solutions $C(\dH_1)$  as the unique strong limit of smooth solutions, with continuous
dependence  on the data with respect to the
stronger $\H_1$ topology. The $\H_1$ continuous
dependence is a standard consequence of the strong $\H_n$ continuous dependence
on data together with the frequency envelope bounds.
However, for $n=1$, we do not establish a direct uniqueness
result.

The proof  proceeds in several steps:

\subsection{Existence of regular solutions}
Here we consider data $(\W,R)(0) \in \H_n$ with $n \geq 2$,
and prove the existence of solutions in the same space.
Our strategy is to obtain approximate solutions by solving the mollified system
\begin{equation} \label{ww(WaR)-N}
\left\{\begin{aligned}
& (\partial_t + P_{<N} \M_{b_{N}} \partial_\alpha P_{<N} ) \W + P_{<N} P \left[ \frac{1+P_{<N} \W}
{1+P_{<N} \bar \W} P_{<N} R_\alpha\right]  = P_{<N} G(P_{<N} \W,P_{<N}R),
\\
 &(\partial_t + P_{<N}\M_{b_N}  \partial_\alpha P_{<N})  R  - i P_{<N}
P\left[ \frac{(1+a_N)P_{<N}\W }{1+P_{<N}\W} \right]
=  P_{<N} K(P_{<N} \W,P_{<N} R),
\end{aligned}\right.
\end{equation}
where $P_{<N}$ is a multiplier which selects frequencies less than $N$, and
\[
b_N = b(P_{<N} \W,P_{<N}R), \qquad a_N= a(P_{<N}R).
\]

For fixed $N$ these equations form a system of ordinary differential equations in
$\H_n$, which admits a local solution.  We can consider it with a single
data, or with a one parameter family of data.  The latter will help
with the dependence of data for our original equation.

We will prove uniform estimates for this evolution in $\H_n$, $n \geq 1$,
and then obtain our solution (or one parameter family of solutions)
as a weak limit on a subsequence as $N \to \infty$.

The $(G,K)$  terms are Lipschitz, indeed $C^1$ from $\H_n$ to $\H_n$,
therefore harmless. The $\H_{n-1}$ norm of $(\W,R)$ is estimated directly by
time integration,
\begin{equation}
\label{bubu}
\frac{d}{dt}\Vert (\W, R)\Vert^2 _{\H_{n-1}}\lesssim c(\|(\W,R)\|_{\H_n}^2) \|(\W,R)\|_{\H_n}^2.
\end{equation} 
It remains to estimate the $\H_0$ norm of $\partial^n_\alpha(\W,R)$.
We differentiate the equations \eqref{ww(WaR)-N} $n$ times. This yields
\begin{equation}\label{ww(WaR)Nd}
\left\{
\begin{aligned}
& (\partial_t + P_{<N} \M_{b_{N}} \partial_\alpha  P_{<N} ) \W^{(n)}   +  P_{<N} P \left[ \frac{(1+ P_{<N} \W) }
{1+ P_{<N} \bar \W}\partial_\alpha  P_{<N} R^{(n)} \right]
= G_n,
\\
 &(\partial_t +  P_{<N} \M_{b_N}  \partial_\alpha  P_{<N})  R^{(n)}
- i  P_{<N}
P\left[ \frac{(1+a_N) P_{<N}\W^{(n)} }{(1+ P_{<N}\W)^2} \right]
=  K_n,
\end{aligned}\right.
\end{equation}
where all other terms, included in $G_n$ and $K_n$, are estimated
directly in $\H_0$ in terms of the $\H^n$ norm of $(\W,R)$.  We
observe that the fact that we work in $\H_n$ with $n \geq 2$ allows us
to use pointwise bounds for $R$, $R_\alpha$, $b$, $b_\alpha$, and
thus deal with a larger number of terms in this fashion.

To bring this to the standard form, where we can apply energy estimates previously obtained  in Section~\ref{s:linearized}, we make the substitution
\[
\R^{(n)} := R^{(n)}(1+ P_{<N} \W).
\]
Multiplying in the second equation by $(1+ P_{<N} \W)$, all of the commutator terms
are also perturbative, and we obtain the system
\[
\left\{
\begin{aligned}
& (\partial_t + P_{<N} \M_{b_{N}} \partial_\alpha  P_{<N} ) \W^{(n)} +  P_{<N} P \left[ \frac{1}
{1+ P_{<N} \bar \W}  P_{<N} \R_{\alpha}^{(n)}\right]  = \mathbf{G}_n,
\\
 &(\partial_t +  P_{<N} \M_{b_N}   \partial_\alpha  P_{<N})  \R^{(n)}  - i  P_{<N}
P\left[ \frac{(1+a_N) P_{<N}\W^{(n)} }{(1+ P_{<N}\W)} \right]
=  \mathbf{K}_n,
\end{aligned}\right.
\]
where $\mathbf{G}_n$ and $\mathbf{K}_n$ are appropriate replacements of the (perturbative) terms in \eqref{ww(WaR)Nd},  $G_n$ and $K_n$ respectively.

For this system we do energy estimates as before, with the energy functional
\[
E^{n} = \int (1+a_N) |\W^{(n)}|^2 + Im (\R^{(n)} \partial_\alpha \R^{(n)}) + |\R^{(n)}|^2\, d\alpha.
\]
We obtain
\[
\frac{dE^n}{dt} \lesssim c(\|(\W,R)\|_{\H_n}^2) \|(\W,R)\|_{\H_n}^2.
\]
We combine this with \eqref{bubu}. Since
\[ 
\Vert R\Vert_{\dot{H}^{\frac{1}{2}}}\lesssim \Vert \R\Vert_{\dot{H}^{\frac{1}{2}}}\Vert Y\Vert_{L^{\infty}}+\Vert \R \Vert_{L^2}\Vert |D|^{\frac{1}{2}}Y\Vert_{BMO},
\]
we have that
\[
\Vert(\W, R)\Vert^2_{\H_n} \lesssim c(\|(\W,R)\|_{\H_{n-1}}^2)\left( E^n +\|(\W,R)\|_{\H_{n-1}}^2\right),
\]
which  leads to a bound for our approximate system which is uniform in $N$,
\begin{equation}
\| (\W,R)(t)\|_{\H_n} \lesssim \| (\W,R)(0)\|_{\H_n}, \quad 0 \leq t \leq
T( \| (\W,R)(0)\|_{\H_n},\|Y(0)\|_{L^\infty}).
\end{equation}

Similarly, one can consider a smooth family of data $(\W_h,R_h)$ in $\H_n$
for $h \in [0,1]$. Then the solutions depend smoothly on $h$, with a  lifespan
uniformly bounded from below. We consider the $h$ derivatives $(\tw,\tr)
= \partial_h (\W_h,R_h)$. These solve the linearized equation, which
when considered in $\H_{n-1}$, can be written in the same form as
\eqref{ww(WaR)Nd}, with perturbative terms on the right. Thus, we obtain
\begin{equation}
\| (\tw,\tr)(t)\|_{\H_{n-1}} \lesssim \| (\tw,\tr)(0)\|_{\H_{n-1}}, \quad 0 \leq t \leq
T( \| (\W,R)(0)\|_{\H_n},\|Y(0)\|_{L^\infty}).
\end{equation}
In the same manner one can  obtain estimates for the second order derivatives with respect to $h$
in $\H_{n-2}$, \emph{etc}. Passing to a weak limit on a subsequence as $N \to \infty$ we obtain a family of solutions
$(\W_h,R_h)$  which is uniformly bounded in $\H_n$, with $h$ derivatives
uniformly bounded in $\H_{n-1}$, \emph{etc}.

\subsection{ Uniqueness of regular solutions}

In the previous subsection we have constructed $\H_{n}$ solutions for
$n \geq 2$. Here we prove that these solutions are unique.
For later use, we show that uniqueness holds in the larger class
of $\dH_n$ solutions for $n \geq 2$.

Suppose we have two $\dH_2$ solutions $(\W_1,R_1)$ and
$(\W_2,R_2)$ to \eqref{ww(WaR)}. Subtracting the two sets of
equations we obtain a system for the difference $(\tw,\tr)$, namely
\begin{equation} \label{ww(WaR)diff}
\left\{\begin{aligned}
& (\partial_t + \M_{b_1} \partial_\alpha) \tw + P \left[ \frac{1+\W_1}
{1+\bar \W_1} \tr_\alpha\right]  =\tilde G,
\\
 &(\partial_t + \M_{b_1} \partial_\alpha)  \tr  - i P\left[ \frac{(1+a_1)\tw }{(1+\W_1)^2} \right]
= \tilde K,
\end{aligned}\right.
\end{equation}
where
\[
\left\{\begin{aligned}
\tilde G = & G(\W_1,R_1) - G(\W_2,R_2) +\M_{b_1-b_2}   \partial_\alpha \W_2 + P \left[ \left(\frac{1+\W_1}{1+\bar \W_1} - \frac{1+\W_2}{1+\bar \W_2} \right) \partial_{\alpha}R_{2}\right],
\\
\tilde K = &  K(\W_1,R_1)-   K(\W_2,R_2)+
\M_{b_1-b_2}   \partial_\alpha R_2 +  i P\left[ \frac{(1+a_1)\tw^2 }{(1+\W_1)^2(1+\W_2)} +
 \frac{(a_1-a_2)\W_2 }{1+\W_2} \right]
 \end{aligned}\right.
\]
With implicit constants depending on the $\dH_2$ solutions $(\W_1,R_1)$ and $(\W_2,R_2)$, we have
\[
\| (\tilde G,\tilde K) \|_{\H_0} \lesssim \|(\tw,\tr)\|_{\H_0}.
\]
Then we simultaneously do energy estimates for $(\tw,\tr(1+\W_1))$ in
$\H^{\frac{1}{2}}=L^2 \times \dot H^\frac12$ and for $R$ in $L^2$, and then apply Gronwall's inequality to get $(\tw,\tr)=(0,0)$.

\subsection{\texorpdfstring{$\mathcal \dH_1 $}\ \  bounds}

The solutions produced above have a lifespan which depends on the $\H_n$
size of data. Here we prove that in effect the lifespan depends only on the
$\dH_1$ size of data, and that we have uniform bounds for as long as
the $\dH_1$ size of the solutions is controlled.

Precisely, suppose we have an   $\H_n$ solution $(\W,R)$ which satisfies the bounds
\begin{equation*}
\| (\W,R)(0)\|_{\dH_1} < \cM_0, \qquad \|Y(0)\|_{L^\infty} < \mathcal \cK_0.
\end{equation*}
Then we claim that there exists $T = T(\cM_0,\cK_0)$ so that the solution
exists in $[0,T]$ and satisfies the bounds
\begin{equation}\label{h1bd}
\| (\W,R)\|_{L^\infty(0,T; \dH_1)} < \cM(\cM_0,\cK_0),
\qquad
\|Y\|_{L^\infty([0,T]\times \R)} < \cK(\cM_0,\cK_0),
\end{equation}
as well as the $\H_n$ and $\dH_n $ bounds
\begin{equation*}
\| (\W,R)\|_{L^\infty(0,T; \H_n)} \leq  C (\cM_0,\cK_0)\| (\W,R)(0)\|_{ \H_n},
\end{equation*}
\begin{equation*}
\| (\W,R)\|_{L^\infty(0,T; \dH_n)} \leq  C (\cM_0,\cK_0)\| (\W,R)(0)\|_{ \dH_n}.
\end{equation*}

To prove this, we begin by making the bootstrap assumption
\begin{equation*}
\| (\W,R)\|_{L^\infty(0,T;\dH_1)} < 2\cM,
\qquad
\|Y\|_{L^\infty([0,T]\times \R)} < 2\cK.
\end{equation*}
We will show that for a suitable choice  $\cM(\cM_0,\cK_0)$ and $\cK(\cM_0,\cK_0)$, depending only on $\cM_0$ and $\cK_0$, we can improve 
this to \eqref{h1bd}, provided that $T < T(\cM_0,\cK_0)$.

We begin by applying the linearized energy estimates obtained in Proposition~\ref{plin-short}
to  $(\W,R)$
\begin{equation}\label{h1-dh0}
\| (\W,R)(t)\|_{\dH_0} \lesssim  e^{C  t } \| (\W,R)(0)\|_{\dH_0}
, \qquad C = C(\cM,\cK).
\end{equation}

 Applying the energy estimates proven in Proposition~ \ref{t:en=large} $(ii)$ for the  pair
 $(\W_\alpha,(1+\W)R_\alpha)$ we get
\begin{equation}\label{h1-dh1}
\| (\W_\alpha,(1+\W)R_\alpha)(t)\|_{\H_0} \lesssim  e^{C  t },
\| (\W_\alpha,(1+\W)R_\alpha)(0)\|_{\dH_0}.
\end{equation}
 To combine \eqref{h1-dh0} and  \eqref{h1-dh1} we need to invert $1+\W$.
However a brute force argument introduces a constant which depends
on both $\cK$ and $\cM$, which wreaks havoc with our bootstrap.
Instead we do a more careful argument, using
the pair of bounds
\begin{equation}
\begin{split}
&\|(1+\W)R_\alpha\|_{\dot H^\frac12} \lesssim_{\cK}  \|R_\alpha\|_{\dot H^\frac12} +
 \|  \W_\alpha\|_{L^2} \||D|^\frac12 R\|_{L^\infty},
\\
& \|R_\alpha\|_{\dot H^\frac12}   \lesssim_{\cK}  \|(1+\W)R_\alpha\|_{\dot H^\frac12}+
\|  \W_\alpha\|_{L^2} \||D|^\frac12 R\|_{L^\infty}.
\end{split}
\end{equation}
Since
\[
\| |D|^\frac12 R\|_{L^\infty}^2 \lesssim_{\cK}   \|R\|_{\dot H^\frac12}
 \|R_\alpha\|_{\dot H^\frac12},
\]
we obtain
\[
 \||D|^\frac12 R\|_{L^\infty}^2  \lesssim_{\cK}  \cM_0^2 e^{2 C t } (1+ \||D|^\frac12 R\|_{L^\infty}),
\]
so
\begin{equation*}
 \||D|^\frac12 R\|_{L^\infty}  \leq C_0  \cM_0^2 e^{2C t }, \qquad C_0 = C_0(\cK).
\end{equation*}
Then  it follows that
\begin{equation}\label{H1cont}
\| (\W,R)(t)\|_{\dH_1} \leq C_0 \cM_0^3 e^{3 C t }.
\end{equation}

Since $\cM$ appears only in the exponent where it is controlled by choosing
$t$ small, the bound \eqref{H1cont}  suffices in order to bootstrap $\cM$.
It remains to recover the bootstrap assumption on $\|Y\|_{L^\infty}$.
For this we use an estimate of the form
\[
\|Y\|_{L^\infty}^2 \lesssim \|\W_\alpha\|_{L^2} \|\W(1+ \W)^{-3}\|_{L^2}.
\]
The bound for the first factor is independent of $\cK$. For the
second we write the transport equation
\[
(\partial_t + b \partial_\alpha) \frac{\W}{(1+ \W)^{3}} =
\frac{3-2\W}{(1+\W)^3} \left ([P,W_\alpha] \frac{\bar R}{(1+\W)^2}
- P \left[ \frac{R}{1+ \bar \W}\right] _\alpha \right).
\]
We can estimate the right hand side in $L^2$ with constants depending on $\cK$.
To bound $\dfrac{\W}{(1+ \W)^{3}}$ in $L^2$ we use an estimate of the form
\begin{equation*}
\begin{aligned}
\frac{d}{dt}\Vert u\Vert_{L^2}^2=\int_{\mathbb{R}}b_{\alpha}\vert u\vert^2 + 2\Re (\partial_t+b\partial_{\alpha})u \bar{u}\, d\alpha.
\end{aligned}
\end{equation*}
For the second term on the right we use the Cauchy-Schwarz inequality
and for the first term we use a Littlewood-Paley trichotomy.  When the
frequency of $b_{\alpha}$ is strictly less than the frequencies of $u$
and $\bar{u}$ then we can move half of derivative on either of $u$ or
$\bar{u}$; otherwise Coiman-Meyer type estimates apply, and we obtain
\[
| \int _{R}b_{\alpha}\vert u\vert^2\, d\alpha |\lesssim \Vert
b_{\alpha}\Vert_{BMO}\Vert u\Vert^2_{L^2}+\Vert
|D|^{\frac{1}{2}}b\Vert_{BMO}\Vert u\Vert_{\dot{H}^{\frac{1}{2}}}\Vert
u\Vert_{L^2}.
\] 
We conclude that
\begin{equation}
\label{dany}
\begin{aligned}
\frac{d}{dt}\Vert u\Vert_{L^2}\lesssim \Vert
b_{\alpha}\Vert_{BMO}\Vert u\Vert^2_{L^2}+\Vert
|D|^{\frac{1}{2}}b\Vert_{BMO}\Vert u\Vert_{\dot{H}^{\frac{1}{2}}}\Vert
u\Vert_{L^2}+ \Vert u\Vert_{L^2}\Vert (\partial_t+b\partial_{\alpha})u\Vert_{L^2}.
\end{aligned}
\end{equation}

We apply this estimate to $\dfrac{\W}{(1+ \W)^{3}}(t)$ to obtain
\[
\|\frac{\W}{(1+ \W)^{3}}(t) \|_{L^2} \leq \|\frac{\W}{(1+ \W)^{3}}(0) \|_{L^2}
+ t C(\cK,\cM).
\]
This leads to
\[
\|Y\|_{L^\infty}^2 \lesssim \cM_0 \cK_0^3  + t C(\cK,\cM).
\]
Hence in order for our bootstrap argument to succeed we need to find $\cK,\cM$ and $T$
so that
\[
 \cM > 2 C_0(\cK) \cM_0^3 e^{C(\cK,\cM) T}, \qquad \cK^2 > 2(  \cM_0 \cK_0^3  + t C(\cK,\cM)).
\]
This is easily achieved by succesively choosing
\[
\cK^2 = 10  \cM_0 \cK_0^3, \qquad    \cM =  10 C_0(\cK) \cM_0^3 , \qquad T < C(\cK,\cM)^{-1}.
\]
Thus, the bootstrap  is  complete.

The next step is to show that we can propagate the full $\H_{n} $ norm
given control of $\dH_1$ norm of the solution $(\W, R)$. For higher
derivatives we can use Proposition~\ref{t:en=large} to obtain
\begin{equation}\label{Hncont}
\| (\W,R)(t)\|_{\dH_n} \leq C e^{C t } \| (\W,R)(t)\|_{\dH_n}, \qquad C=C(\cK,\cM).
\end{equation}

We also need to control the growth of the $L^2$ norm of $R$; for this we use equation \ref{ww2d-diff} for which we
  can easily obtain  $L^2$ bounds of the RHS. Applying \eqref{dany} we obtain 
\[
\|R(t) \|_{L^2} \leq \|R(0) \|_{L^2}
+ t C(\cK,\cM).
\]
The $\H_n$ bound shows that the solution can be continued for as
long as it stays bounded in $\dH_1$, i.e., at least until time $T(\cK_0,
\cM_0)$.

\subsection{ \texorpdfstring{$\dH_n$}\ \  solutions for 
 \texorpdfstring{$n \geq 2$}{}\ \ }

Our goal here is to obtain solutions for $\dH_n$ data. We already know that
such solutions, if they exist, are unique. The idea is to approximate
a $\dH_n$ data set $(\W,R)(0)$ with $\H_n$ data in the $\dH_n$ topology.
As the uniform $\dH_n$ bounds hold uniformly for the approximating sequence,
we would like to conclude that on a subsequence these approximate solutions
converge  weakly to the desired solution. The only difficulty with this plan
is that the $\dH_n$ convergence does not guarantee uniform pointwise
convergence for $R$. This is because the lowest Sobolev norm we control for $R$
is the $\dot H^\frac12$ norm, and that does not see constants.

To address the above difficulty, we take an approximating sequence of data $(\W_k,R_k)(0)$ which has the following two properties:

(i) $(\W_k,R_k)(0) \to (\W,R)(0)$ in $\dH_n$,

(ii) $R_k(0) \to R(0)$ uniformly on compact sets.

The second requirement effectively removes the Galilean invariance. It suffices
to ask for pointwise convergence at a single point; in view of the
known average growth rates for BMO functions, this implies the weighted
uniform convergence
\[
\| \log(2+|\alpha|)^{-1}  R_k(0) - R(0)\|_{L^\infty} \to 0.
\]

We will use the second requirement (ii) to produce weighted uniform
bounds for the $R_k$ part of the solution. Starting from the uniform bound
\[
\| (\W_k,R_k)\|_{\dH_n} \lesssim 1,
\]
we estimate uniformly most of the terms in the $R_k$ equation to obtain
\[
\| (\partial_t + 2 \Re R_k \partial_\alpha) R_k\|_{ L^\infty} \lesssim 1.
\]
This yields a uniform bound for $R_k$ along the corresponding characteristic
\[
\dot \alpha(t) = 2 \Re R_k (\alpha), \qquad \alpha(0) = 0,
\]
namely
\[
|R_k(t,\alpha(t))| \lesssim 1.
\]
This in turn shows that locally in time we have
\[
|\alpha(t)| \lesssim 1,
\]
which leads to the uniform bound
\[
|R_k(t,0)| \lesssim 1,
\]
and further to the global bound
\[
|R_k| \lesssim \log(2+|\alpha|).
\]
This in turn yields a similar bound for $\partial_t \W_k$ and
$\partial_t R_k$, and suffices in order to insure local uniform
convergence of $(\W_k,R_k)$ on a subsequence. Thus, the desired
solution $(\W,R)$ is obtained in the limit.

\subsection{Rough solutions}

Here we construct solutions for data in $\dH_1$ as unique limits of
smooth solutions. Given a $\dH_1$ initial data $(\W_0,R_0)$ as above
we regularize it to produce smooth approximate data $(\W_0^k,R_0^k) =
P_{< k} (\W_0,R_0) $.  We denote the corresponding solutions by
$(\W^k,R^k)$. By the previous analysis, these solutions  exist on a
$k$-independent time interval $[0,T]$ and  satisfy uniform $ \dH_1$
bounds. Further, they are smooth and have a smooth dependence on $k$.

Consider a frequency envelope $c_k$ for the initial data $(\W_0,R_0)$  in $\dH_1$.
Then for the regularized data we have
\[
\| (\W^k_0,R^k_0)\|_{\mathcal H_n} \lesssim c_k 2^{(n-1)k}, \qquad n \geq 2.
\]
Hence, in the time interval $[0,T]$  we also have the estimates
\begin{equation}\label{high(W,K)}
\| (\W^k,R^k)\|_{\mathcal H_n} \lesssim c_k 2^{(n-1)k}, \qquad n \geq 2.
\end{equation}
We will use these for the high frequency part of the regularized solutions.

For the low frequency part, on the other hand, we 
view $k$ as a continuous rather than a discrete parameter,
differentiate $(\W^k,R^k)$ with respect to $k$ and use the estimates
for the linearized equation. One minor difficulty is that the linearized 
equation \eqref{lin(wr)0} arises from the linearization of 
the $(W,Q)$ system in \eqref{ww2d1} rather than the differentiated
$(\W,R)$ system in \eqref{ww2d-diff}. 
Assuming that $(W^k,Q^k)$ were also defined, we formally  
denote  
\[
(w^k,r^k) = (\partial_k
W^k, \partial_k Q^k - R \partial_k W^k).
\]
These would solve the linearized equation around the $(\W^k,R^k)$ solution. 
For our analysis we want to refer only to the differentiated variables, so we 
we compute
\[
\begin{split}
\partial_\alpha w^k = & \  \partial_k W^k, \\
\partial_\alpha r^k = & \ (1+\W^k) \partial_k R^k - R_\alpha^k w^k.
\end{split}
\]
We take these formulas as the definition of $(w^k,r^k)$, and observe 
that inverting the $\partial_\alpha$ operator is straightforward 
since the above multiplications involve only holomorphic factors 
therefore the products are at frequency $2^k$ and higher.
To take advantage of the bounds in Proposition~\ref{plin-short} for
the linearized equation, we need a $\dH_0$ bound for
$(w^k(0),r^k(0))$, namely
\begin{equation}
\| (w^k(0),r^k(0)) \|_{\dH_0} \lesssim c_k 2^{-2k}.
\end{equation}
The bound for $w^k(0)$ is straightforward, but some work is required
for $r^k(0)$.  This follows via the usual Littlewood-Paley trichotomy
and Bernstein's inequality for the low frequency factor, with the
twist that, since both factors are holomorphic, no high-high to low
interactions occur.

In view of the uniform $\dH_1$ bound for $(W^k,Q^k)$,
Proposition~\ref{plin-short} shows that in $[0,T]$ we have the uniform
estimate
\begin{equation}
\| (w^k,r^k) \|_{\dH_0} \lesssim  c_k 2^{-2k}.
\end{equation}
Now, we return to $(\W^k, R^k)$ and claim the bound
\begin{equation}\label{diff(W,K)}
\| P_{\leq k} (\partial_k \W^k, \partial_k R^k) \|_{\dH_0} \lesssim  c_k 2^{-k}.
\end{equation}
Again the $\W^k$ bound is straightforward. For $\partial_k R^k$ we write
\[
\partial_k R^k = (1- Y^k)(\partial_\alpha r^k + R_\alpha^k \partial_k W^k),
\]
where again all factors are holomorphic. Then applying $P_{\leq k}$ restricts all frequencies
to $\lesssim 2^k$, and the  Littlewood-Paley trichotomy and Bernstein's
inequality again apply.

Now we integrate \eqref{diff(W,K)} over unit $k$ intervals and use it
to estimate the differences. Combining the result  with \eqref{high(W,K)} we obtain
\begin{equation}
\begin{split}
\| (\W_{k+1} - \W_k, R_{k+1}-R_k)\|_{\dH_0} \lesssim  & \ c_k 2^{-k},
\\
\| \partial_\alpha^2(\W_{k+1} - \W_k, R_{k+1}-R_k)\|_{\dH_0} \lesssim  & \ c_k 2^{k}.
\end{split}
\end{equation}

Summing up with respect to $k$ it follows that the sequence
 $(\W^k,R^k)$ converges uniformly in $\dH_1$ to a solution
$(\W,R)$, which also inherits the frequency envelope bounds from the data.

The frequency envelope bounds allow us to prove continuous dependence
on the initial data in the $\H^1$ topology. This is standard, but we
briefly outline the argument. Suppose that $(\W_j,R_j)(0) \in \dH_1$ and
$(\W_j,R_j)(0)-(\W,R)(0) \to 0 $ in $\H_1$. We consider the approximate
solutions $((\W_j^k,R_j^k)$, respectively $(\W^k,R^k)$. According to
our result for more regular solutions, we have 
\begin{equation}\label{reg-lim}
((\W_j^k,R_j^k) - (\W^k,R^k) \to 0 \qquad \text{in} \ \ \H_n.
\end{equation}
On the other hand, from the $\H_1$ data convergence we get
\[
(\W_j^k,R_j^k)(0) - (\W_j,R_j)(0) \to 0  \qquad \text{in} \ \ \H_1 \ \ \text{uniformly in} \ \ j.
\]
Then the above frequency envelope analysis,  shows that
\[
(\W_j^k,R_j^k) - (\W_j,R_j) \to 0 \qquad \text{in} \ \ \H_1 \ \ \ \text{uniformly in} \ \ j.
\]
Hence we can let $k$ go to infinity in \eqref{reg-lim}
and conclude that
\[
((\W_j,R_j) - (\W,R) \to 0 \qquad \text{in} \ \ \H_1.
\]

\section{ Enhanced cubic lifespan bounds}
\label{s:cubic}
In this section we prove Theorem~\ref{t:cubic}. Given initial data $(\W,R)$ for \eqref{ww2d-diff} 
satisfying 
\[
\| (\W,R)(0)\|_{\dH_1}  \leq \epsilon,
\]
we consider the solutions on a time interval $\left[0, T \right] $ and seek to prove the estimate
\begin{equation}
\label{miki}
\| (\W,R)(t)\|_{\dH_1}  \leq C\epsilon,\quad t\in \left[0, T \right],
\end{equation}
provided that  $T\ll e^{-2}$.  In view of our local well-posedness result this shows that the solutions can be extended up to time $T_{\epsilon}=ce^{-2}$ concluding the proof of the theorem.

In order to prove \eqref{miki} we can harmlessly make the bootstrap assumption
\begin{equation}
\label{miki2}
\| (\W,R)(t)\|_{\dH_1}  \leq 2C\epsilon,\quad t\in \left[0, T \right].
\end{equation}

From \eqref{miki2} we obtain the bounds
\begin{equation*}
A,B\lesssim C\epsilon.
\end{equation*}
Hence, by the energy estimates in Proposition~\ref{plin-long} applied to $(\W, R)$, and those in Proposition~\ref{t:en=small}, with $n=2$, applied to $(\W_{\alpha}, R_{\alpha})$ we obtain
\[
\Vert (\W, R)\Vert_{L^{\infty}(0,T;\dH_1)}\lesssim \Vert (\W, R)(0)\Vert_{\dH_1}+TAB\Vert (\W, R)\Vert_{L^{\infty}(0,T;\dH_1)}\lesssim \epsilon +TC^3\epsilon^3.
\]
Hence, the desired estimate \eqref{miki} follows provided that $T\ll (C\epsilon)^{-2}$.

\section{ Pointwise decay and long time solutions}
\label{s:decay}

In this section we prove the almost global existence result in
Theorem~\ref{t:almost}.  This is achieved via a bootstrap argument for
the energy norm $\| (W,Q)(t)\|_{\WH}$ defined in \eqref{WH} as well as
the control norms $A(t)$ and $B(t)$ in \eqref{A-def},\eqref{B-def}. 
In order to have a more robust argument  we 
will work with   a stronger norm $\| (W,R)\|_{\XX} \gtrsim A(t)+B(t)$, namely
\[
\| (W,R)\|_{\XX} =  \| W\|_{L^\infty} + \| R \|_{L^\infty}
+  \| |D|^\frac12 W_{\alpha}\|_{L^\infty} + \|R_\alpha \|_{L^\infty}
\]

 Then we will establish the
energy estimates
\begin{equation}
 \sup_{|t|  \leq T_\epsilon}\| (W,Q)(t)\|_{\WH}  \lesssim \epsilon,
\label{energy}\end{equation}
as well as the pointwise bounds
\begin{equation} \label{point}
 \| (W,R)\|_{\XX}   \lesssim \epsilon \langle t  \rangle^{-\frac12}, \qquad |t| \leq T_\epsilon,
\end{equation}
for times $T_\epsilon$ satisfying
\begin{equation}\label{which-T}
 T_\epsilon \leq e^{c \epsilon^{-2}}, \qquad c \ll 1.
\end{equation}

A continuity argument based on our local well-posedness results shows that 
 it suffices to prove that \eqref{point} and \eqref{energy}  hold for all $T_\epsilon$ 
as in \eqref{which-T}, given the bootstrap assumptions
\begin{equation}\label{energyboot}
 \sup_{|t|  \leq T_\epsilon}\| (W,Q)(t)\|_{\WH}  \leq C \epsilon,
\end{equation}
\begin{equation}\label{pointboot}
 \| (W,R)\|_{\XX}  \leq C \epsilon \langle t  \rangle^{-\frac12}, \qquad 0 \leq t \leq T_\epsilon,
\end{equation}
with a large constant $C$ (independent of $\epsilon$).

\subsection{The energy estimates in \protect(\ref{energy}).}
Here we use the bootstrap assumption \eqref{pointboot} in order to
establish \eqref{energyboot}. The only role of \eqref{energyboot} is to insure
that a solution with appropriate regularity exists up to time $T_\epsilon$.
We summarize the result in the following
\begin{proposition}
  Assume that in a time interval $[-T,T]$ we have a solution $(W,Q)$ to
  \eqref{ww2d1} which satisfies \eqref{data} and \eqref{pointboot}.
  Then we also have the energy estimate
\begin{equation}\label{e}
\|(W,Q)(t)\|_{\WH}^2  \lesssim \epsilon \langle t \rangle^{C_1 \epsilon^2} , \qquad t \in [-T,T] 
\end{equation}
for some $C_1 \gg C$.
\end{proposition}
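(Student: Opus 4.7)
The plan is to estimate the three pieces of $\|(W,Q)\|_{\WH}^2$ separately, with the common input being the scale-invariant decay
\[
A(t) + B(t) \lesssim C\epsilon\,\langle t\rangle^{-\frac12}, \qquad A(t)B(t) \lesssim C^2\epsilon^2\,\langle t\rangle^{-1},
\]
which is a direct consequence of the bootstrap assumption \eqref{pointboot} and the dominance $\|(W,R)\|_\XX \gtrsim A+B$ asserted in the excerpt. This is exactly the scale-invariant decay rate needed to close Gronwall with only a logarithmic exponent.

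For the $\dH_0$ component of $(W,Q)$, I would exploit the conserved Hamiltonian $\E(W,Q)$ in \eqref{ww-energy}, which differs from $\Ez(W,Q) = \|(W,Q)\|_{\dH_0}^2$ only by cubic terms of size $O(\|W\|_{L^\infty}\|(W,Q)\|_{\dH_0}^2)$. Since $\|W\|_{L^\infty} \lesssim C\epsilon \ll 1$ by \eqref{pointboot}, conservation of $\E$ gives $\|(W,Q)(t)\|_{\dH_0}^2 \lesssim \epsilon$ with no time growth at all.

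For the $\dH_5$ component of $(\W,R)$, I would apply the small-data cubic energy estimate of Proposition~\ref{t:en=small} for each $n = 1,\dots,6$. Each modified energy $\Ent$ is equivalent to $\Ez(\partial^{n-1}\W,\partial^{n-1}R)$ under the smallness condition $A \ll 1$, and satisfies
\[
\frac{d}{dt} \Ent(\W,R) \lesssim_A AB\,\Ent(\W,R) \lesssim C^2\epsilon^2\,\langle t\rangle^{-1}\,\Ent(\W,R).
\]
Gronwall then produces $\Ent(t) \lesssim \Ent(0)\,\langle t\rangle^{C'\epsilon^2}$, and summing over $n$ yields $\|(\W,R)(t)\|_{\dH_5}^2 \lesssim \epsilon\,\langle t\rangle^{C'\epsilon^2}$. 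The $\dH_0$ component of the scaling variable $(w,r) = \AA\mathcal{S}(W,Q)$ is handled analogously: since \eqref{ww2d1} is scale invariant, differentiating the one-parameter family of rescaled solutions at $\lambda = 1$ shows that $\mathcal{S}(W,Q) = ((S-2)W,(S-3)Q)$ solves the linearization of \eqref{ww2d1} around $(W,Q)$ \emph{exactly}, and its diagonalization $(w,r)$ therefore satisfies \eqref{lin(wr)00}. Applying Proposition~\ref{plin-long}(b), one gets $\frac{d}{dt}\Elint(w,r) \lesssim C^2\epsilon^2\langle t\rangle^{-1}\,\Elint(w,r)$, so by Gronwall and the equivalence $\Elint \approx \Ez$ one concludes $\|(w,r)(t)\|_{\dH_0}^2 \lesssim \epsilon\,\langle t\rangle^{C''\epsilon^2}$. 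Summing the three contributions gives \eqref{e} with $C_1 = \max(C',C'')$.

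There is no real conceptual obstacle beyond bookkeeping: the implicit constants in the cubic estimates depend on $A$ and hence on $C$, which forces $C_1 \gg C$. One needs only to verify that $A(t) \ll 1$ throughout $[0,T]$, which is automatic since $A(t) \lesssim C\epsilon$ and $\epsilon \ll 1$. In particular, no further dispersive or normal-form input is needed for this step—the pointwise decay \eqref{pointboot} passes through mechanically via the product $AB$, and the cubic energy machinery of Sections~\ref{s:linearized} and \ref{s:ee} does the rest.
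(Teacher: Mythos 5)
Your proposal is correct and follows essentially the same route as the paper: the $\dH_0$ piece of $(W,Q)$ comes from the conserved energy \eqref{ww-energy}, while the $(\W,R)$ derivatives and $(w,r)=\AA\S(W,Q)$ are handled by the cubic energy estimates of Propositions~\ref{plin-long} and \ref{t:en=small} together with Gronwall, using $A(t)B(t)\lesssim C^2\epsilon^2\langle t\rangle^{-1}$ from \eqref{pointboot} to get only the $\langle t\rangle^{C_1\epsilon^2}$ growth. The only cosmetic differences are that you invoke Proposition~\ref{t:en=small} also at $n=1$ (which, as the paper notes, is the same as Proposition~\ref{plin-long} applied to $(\W,R)$) and you spell out via scaling invariance why $(w,r)$ solves the linearized system, a fact the paper simply asserts.
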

Then the bound \eqref{energy} holds with a constant independent of $C$ for times 
as in \eqref{which-T} if we choose $c = C_1^{-1}$.
 
\begin{proof}
 The energy bound for $(W,Q)$ is a
consequence of the conserved energy \eqref{ww-energy}. The energy
bounds for $(\W,R)$ and $(w,r):= \AA\S(W,Q) $ follow by Gronwall's inequality from
the cubic energy estimates for the linearized equation in
Proposition~\ref{plin-long}; indeed, by our bootstrap assumption
\eqref{pointboot} we have $A(t),B(t) \leq C \epsilon \langle t
\rangle^{-\frac12} $, therefore,
\[
 \| (w,r)(t)\|_{\dH_0} \lesssim  e^{\int_{0}^{t} 
C^2 \epsilon^2  \langle s \rangle^{-1} ds}  \| (w,r)(0)\|_{\dH_0} \lesssim 
\epsilon e^{C^2 \epsilon^2 \log t},
\]
which suffices for $T_\epsilon$ as in \eqref{which-T}.  Finally, the
bound for $\partial^k(\W,R)$ with $1 \leq k \leq 5$ follows also by
Gr\"onwall's inequality from the cubic energy estimates in
Proposition~\ref{t:en=small}.
\end{proof}

\subsection{The pointwise estimates} 
Here we use the bootstrap assumption \eqref{energyboot} in order to establish 
\eqref{point}.  To state the main result here we introduce the notation
\begin{equation}\label{omega}
\omega(t,\alpha) = \frac{1}{\langle t \rangle^{\frac{1}{22}}} + \frac{1}{(\langle \alpha\rangle/\langle t \rangle  + \langle t\rangle /\langle \alpha\rangle)^\frac12} \lesssim 1. 
\end{equation}
Then we have:
\begin{proposition} \label{p:point}
Assume that \eqref{e} and \eqref{pointboot} hold in some interval $[-T,T]$.
Then we also have
\begin{equation}\label{strong-point}
  |W| + |R| + ||D|^\frac12 W_{\alpha}| + | R_\alpha|  \lesssim  
\epsilon \langle t\rangle^{-\frac12} \langle t \rangle^{C_1 \epsilon^2}\omega(t,\alpha)
\end{equation}
\end{proposition}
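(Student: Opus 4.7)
The plan is a Klainerman--Sobolev type argument. The weighted energy norm $\WH$ gives, via \eqref{e}, simultaneous $L^2$-based control of $(\W, R)$ in $\dH_5$ and of the diagonalized scaling variable $(w,r):=\AA\S(W,Q)$ in $\dH_0$, all of size $\epsilon\langle t\rangle^{C_1 \epsilon^2}$. The goal is to convert these two pieces of information into the pointwise envelope \eqref{strong-point}.

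First I would invert the algebraic relation $(w,r)=\AA\S(W,Q)$: from $w=(S-2)W$ one has $SW = w+2W$, while from $r = (S-3)Q-Rw$ together with $R(1+W_\alpha)=Q_\alpha$ one expresses $SR$ in terms of $(w,r,W,R)$ and their spatial derivatives; the nonlinear corrections are quadratic and hence negligible under the bootstrap \eqref{pointboot}. This supplies $L^2$ bounds of size $\epsilon\langle t\rangle^{C_1\epsilon^2}$ on $SW$, $SR$ and their first spatial derivatives, to be used alongside the direct $\dH_5$ regularity.

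Next, perform a Littlewood--Paley decomposition $u=\sum_k P_k u$ for each of the quantities $W$, $R$, $|D|^{1/2}W_\alpha$, $R_\alpha$, and obtain two complementary bounds on each block. Bernstein combined with the $\dH_5$ control yields
\[
\|P_k u\|_{L^\infty} \lesssim 2^{\sigma k}\,\epsilon\langle t\rangle^{C_1\epsilon^2}
\]
with a favorable negative exponent $\sigma$ at high frequencies (roughly five derivatives of room for each quantity considered). On the other hand, writing $S=t\partial_t + 2\alpha\partial_\alpha$ and using the linearized equations \eqref{ww2d-0} to replace $t\partial_t$ by $\pm t|D|^{1/2}$ modulo quadratic errors, one obtains a phase-space localization at the group-velocity ray $2\alpha \sim t\cdot |\xi|^{-1/2}$; integration by parts in $\xi$ then yields, for $|\xi|\sim 2^{2k}$,
\[
|P_k u(t,\alpha)| \lesssim \frac{2^{k/2}}{\langle t\rangle^{1/2}}\bigl\langle 2\alpha - t\cdot 2^{-k}\bigr\rangle^{-N}\,\epsilon\langle t\rangle^{C_1\epsilon^2},
\]
giving rapid off-ray decay. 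Summing over $k$ with an optimal cutoff $2^{k_0}$ determined by $(t,\alpha)$ completes the argument: in the resonant region $\langle \alpha\rangle \sim \langle t\rangle$ the dominant frequencies are of unit order, and balancing the $\dH_5$ high-frequency bound against the scaling-weighted low-frequency bound yields a surplus decay factor $\langle t\rangle^{-1/22}$, the exponent being fixed by the arithmetic of the interpolation (11 derivatives of separation between the two norms for the worst quantity); in the inner region $\langle\alpha\rangle \ll \langle t\rangle$ and the outer region $\langle\alpha\rangle\gg \langle t\rangle$, the off-ray rapid decay yields the second contribution $(\langle\alpha\rangle/\langle t\rangle + \langle t\rangle/\langle\alpha\rangle)^{-1/2}$ of $\omega$. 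Taking the envelope of these two regimes reproduces $\omega(t,\alpha)$ from \eqref{omega}.

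The main obstacle is the careful accounting in the three-regime split and the verification that all nonlinear errors that arise---both when inverting $\AA\S$ to recover $SW$, $SR$ and when exchanging $t\partial_t$ for $\pm t|D|^{1/2}$---are absorbable under the bootstrap \eqref{pointboot}. Each such error carries a factor of $(W, R, \W, R_\alpha)$ pointwise bounded by $C\epsilon\langle t\rangle^{-1/2}$, so their contribution is at the $O(\epsilon^2\langle t\rangle^{-1})$ level, smaller than the target $\epsilon\langle t\rangle^{-1/2}\omega$ by a full extra factor of $\epsilon$, avoiding circularity. Once this bookkeeping is in place the embedding is standard and \eqref{strong-point} follows.
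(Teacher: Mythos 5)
Your overall skeleton — convert the $\dH_5$ and scaling-vector-field $L^2$ bounds into pointwise decay by a fixed-time Littlewood--Paley analysis with an inner region, a ray region, and an outer region — is indeed the shape of the paper's argument. But there is a genuine gap at the first step: you run the scheme on the original variables, whose equation has \emph{quadratic} nonlinearities, and your error bookkeeping for those is not admissible. When you trade $t\partial_t$ for $\pm t|D|^{1/2}$ using the equation (equivalently, when you pass from $S$ to a fixed-time system), the error is $t$ times the nonlinearity, and since the whole point is to convert $L^2$ information into $L^\infty$ information, that error must be measured in $L^2$, not pointwise. A quadratic term places one factor in $L^2$ (size $\epsilon$) and at best one factor in $L^\infty$ (size $C\epsilon\langle t\rangle^{-1/2}$ from \eqref{pointboot}), hence is only $O(\epsilon^2\langle t\rangle^{-1/2})$ in $L^2$; multiplied by $t$ this is $O(\epsilon^2\langle t\rangle^{1/2})$, which grows and destroys the $\langle t\rangle^{-1/2}$ decay — in the critical region $\langle\alpha\rangle\sim\langle t\rangle$ it yields only $O(\epsilon^2)$ with no decay, so the bootstrap cannot close. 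Your claim that the errors are $O(\epsilon^2\langle t\rangle^{-1})$ estimates both factors pointwise, which is exactly the information you are trying to produce. This is why the paper first passes to the normal form variables $(\tW,\tQ)$ of \eqref{nft1}: their equation \eqref{nft1eq} has cubic right-hand side, and Lemma~\ref{cubic-RHS} gives $\| (\tG,\tK)\|_{\dH_0}\lesssim \epsilon^3\langle t\rangle^{-1}\langle t\rangle^{C_1\epsilon^2}$, so that $S\tW-\tG$ and $S\tQ-\tK$ are bounded in $\dH_0+\dH_{-1}$ and the fixed-time system \eqref{fixed-t}--\eqref{fixed-t-i} has admissible sources; companion lemmas then transfer the energy bounds to $(\tW,\tQ)$ and the pointwise bounds back from $(\tW,\tQ_\alpha)$ to $(W,R)$. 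Without this normal form reduction (or an equivalent device removing the quadratic terms), your scheme does not close.

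A secondary overreach: with a single application of the scaling vector field you cannot obtain the rapid off-ray decay $\langle 2\alpha - t\,2^{-k}\rangle^{-N}$ you invoke; one power of $S$ buys only an algebraic gain. In the paper's fixed-time analysis of \eqref{fixed-t-i}, each dyadic block obeys an elliptic bound $\sim 2^{-\ell/2}|\alpha|^{-1}$ in the outer region, $\sim |t|^{-1}$ in the inner region, and a propagation bound $\sim |\alpha|^{-1/2}$ near the ray, and the exponent $\tfrac{1}{22}$ in \eqref{omega} arises from balancing Bernstein and the $H^{2k+2}$ norms (with $k\le \tfrac32$) against these algebraic gains — not from a stationary-phase type localization. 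So even granting the normal form step, the dyadic summation has to be organized around these weaker estimates rather than the rapid decay you assume.
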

Then our pointwise bound \eqref{point} follows for times as in
\eqref{which-T}, and the proof of Theorem~\ref{t:almost} is concluded.

We remark that the result we prove here is somewhat stronger than what we
need. However, on one hand this is what follows from our analysis, and  on the other hand
this stronger result will come in handy when we prove the global result
in a follow-up paper.

The rest of this section is devoted to the proof of the above proposition. 
We note that  \eqref{which-T} plays no role in this argument.

In order to obtain pointwise bounds it is convenient to work with the
normal form variables $(\tW,\tQ)$, given by \eqref{nft1}.  Then we
prove several very simple Lemmas. The first one shows that we can
harmlessly replace $(W,R)$ by $(\tilde W, \tilde Q_\alpha)$ in the
pointwise estimates.

\begin{lemma}
Assume that \eqref{e} and \eqref{pointboot} hold in some interval $[-T,T]$.
Then
\begin{equation}\label{point-eq}
\|(W-\tW,R-\tQ_\alpha)\|_{\XX} \lesssim \langle t \rangle^{-\frac18} \|(\tW,\tQ_\alpha)\|_{\XX}
\end{equation}
\end{lemma}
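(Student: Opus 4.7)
The plan is to derive explicit quadratic formulas for the corrections $W-\tW$ and $R-\tQ_\alpha$, estimate them in $\XX$, and then compare to $\|(\tW,\tQ_\alpha)\|_\XX$ through a triangle-inequality absorption. Starting from \eqref{nft1} together with the identity $Q_\alpha = R(1+W_\alpha)$, and using that $P$ acts as the identity on products of two holomorphic factors (e.g.\ $WW_\alpha = \tfrac12\partial_\alpha(W^2)$ and $WR$), a direct calculation yields
\begin{equation*}
W-\tW = WW_\alpha + P[\bar W\, W_\alpha], \qquad R-\tQ_\alpha = WR_\alpha + \partial_\alpha P[\bar W\, R].
\end{equation*}
Both right-hand sides are bilinear in $(W,\bar W,R,\bar R)$, so my target is the quadratic bound $\|(W-\tW,R-\tQ_\alpha)\|_\XX \lesssim \|(W,R)\|_\XX^2 \cdot \langle t\rangle^{O(\epsilon^2)}$.

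For the holomorphic-holomorphic pieces $WW_\alpha$ and $WR_\alpha$, together with their $|D|^{1/2}\partial_\alpha$ and $\partial_\alpha$ derivatives required by $\XX$, I would use H\"older directly in $L^\infty$. The pointwise bootstrap \eqref{pointboot} supplies $\|W\|_{L^\infty},\|R\|_{L^\infty},\||D|^{1/2}W_\alpha\|_{L^\infty},\|R_\alpha\|_{L^\infty}\lesssim\epsilon\langle t\rangle^{-1/2}$, and intermediate quantities like $\|W_\alpha\|_{L^\infty}$, $\||D|^{1/2}W\|_{L^\infty}$ are recovered by Gagliardo-Nirenberg interpolation between $\|W\|_{L^\infty}$ and $\||D|^{3/2}W\|_{L^\infty} = \||D|^{1/2}W_\alpha\|_{L^\infty}$ at the same rate. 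Whenever a second derivative is forced onto one factor, it is traded for high Sobolev regularity coming from $\|(W,R)\|_{\WH}\lesssim\epsilon$ in \eqref{e}, via Sobolev embedding $H^{1/2+\delta}\hookrightarrow L^\infty$; this costs only a negligible $\langle t\rangle^{O(\epsilon^2)}$ loss.

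For the projection pieces $P[\bar W W_\alpha]$ and $\partial_\alpha P[\bar W R]$, the main obstacle is that $P$ is not bounded on $L^\infty$. I would bypass this by working in $L^p$ with $p$ large. Since $P$ is bounded on $L^p$ for $1<p<\infty$, and the pointwise interpolation $\|W\|_{L^p}\lesssim\|W\|_{L^\infty}^{1-2/p}\|W\|_{L^2}^{2/p}\lesssim\epsilon\langle t\rangle^{-1/2+1/p}$ combined with H\"older yields $\|\bar W W_\alpha\|_{L^p}\lesssim\epsilon^2\langle t\rangle^{-1+1/p}$, the one-dimensional Gagliardo-Nirenberg inequality
\[
\|f\|_{L^\infty}\lesssim\|f\|_{L^p}^{1-1/p}\|f_\alpha\|_{L^p}^{1/p}
\]
applied to $f = P[\bar W W_\alpha]$ produces $\|P[\bar W W_\alpha]\|_{L^\infty}\lesssim\epsilon^2\langle t\rangle^{-1+O(1/p)+O(\epsilon^2)}$, where $\|f_\alpha\|_{L^p}$ is controlled by Leibniz plus one extra derivative drawn from \eqref{e}. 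Taking $p$ large makes the $O(1/p)$ loss arbitrarily small; the same scheme with one or two more derivatives handles $\||D|^{1/2}\partial_\alpha(W-\tW)\|_{L^\infty}$ and $\|\partial_\alpha(R-\tQ_\alpha)\|_{L^\infty}$, the extra derivatives being absorbed into the $\dH_5$-regularity available in $\WH$.

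Combining the two cases yields $\|(W-\tW,R-\tQ_\alpha)\|_\XX\lesssim\|(W,R)\|_\XX^2\cdot\langle t\rangle^{O(\epsilon^2)}$. Inserting the bootstrap $\|(W,R)\|_\XX\lesssim\epsilon\langle t\rangle^{-1/2}$ from \eqref{pointboot} gives a right-hand side of size $\epsilon\langle t\rangle^{-1/2+O(\epsilon^2)}\cdot\|(W,R)\|_\XX$, which is bounded by $\tfrac12\|(W,R)\|_\XX$ once $\epsilon$ is sufficiently small. The triangle inequality $\|(W,R)\|_\XX \le \|(\tW,\tQ_\alpha)\|_\XX + \|(W-\tW,R-\tQ_\alpha)\|_\XX$ can then be absorbed to yield $\|(W,R)\|_\XX\lesssim\|(\tW,\tQ_\alpha)\|_\XX$, after which
\[
\|(W-\tW,R-\tQ_\alpha)\|_\XX \lesssim \epsilon\langle t\rangle^{-1/2+O(\epsilon^2)}\|(\tW,\tQ_\alpha)\|_\XX \lesssim \langle t\rangle^{-1/8}\|(\tW,\tQ_\alpha)\|_\XX,
\]
the last inequality being automatic from $\epsilon\ll 1$ since it reduces to $\epsilon\lesssim\langle t\rangle^{3/8+O(\epsilon^2)}$, which holds for all $t\ge 0$. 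The principal technical obstacle throughout is the $L^\infty$-unboundedness of $P$, overcome by the $L^p$-detour just described.
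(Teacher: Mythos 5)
Your overall route is the same as the paper's: write the corrections explicitly (your identities $W-\tW = WW_\alpha + P[\bar W W_\alpha]$ and $R-\tQ_\alpha = WR_\alpha+\partial_\alpha P[\bar W R]$ are correct and equivalent to the paper's $2\M_{\Re W}W_\alpha$ and $W_\alpha R-2\partial_\alpha \M_{\Re W}R$), beat the $L^\infty$-unboundedness of $P$ by estimating in a finite-$p$ Lebesgue space and returning to $L^\infty$ by embedding/Gagliardo--Nirenberg (the paper uses $L^4$ plus Sobolev embedding, you use large $p$ plus GN), keep one factor symbolic, and absorb by the triangle inequality to pass from $\|(W,R)\|_{\XX}$ to $\|(\tW,\tQ_\alpha)\|_{\XX}$. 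All of that matches.

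The gap is in your treatment of the top components of $\XX$, i.e.\ $\||D|^{1/2}\partial_\alpha(W-\tW)\|_{L^\infty}$ and $\|\partial_\alpha(R-\tQ_\alpha)\|_{L^\infty}$, where the Leibniz/paraproduct expansion necessarily produces terms with $5/2$ derivatives on $W$ (e.g.\ $W\cdot|D|^{1/2}W_{\alpha\alpha}$) or $2$ derivatives on $R$ (e.g.\ $W\,R_{\alpha\alpha}$). These quantities are \emph{not} controlled by $\XX$, so your claimed intermediate bound $\|(W-\tW,R-\tQ_\alpha)\|_{\XX}\lesssim \|(W,R)\|_{\XX}^2\langle t\rangle^{O(\epsilon^2)}$ cannot hold as stated. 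Your proposed remedy -- bounding the over-threshold factor by Sobolev embedding straight from the $\dH_5$ energy in \eqref{e}, ``at only a negligible $\langle t\rangle^{O(\epsilon^2)}$ loss'' -- is exactly where the decay bookkeeping breaks: that factor then carries no time decay at all, and since the remaining factor must be kept as $\|(W,R)\|_{\XX}$ (the inequality is relative, and no lower bound on $\|(\tW,\tQ_\alpha)\|_{\XX}$ is available to convert an absolute bound $\epsilon^2\langle t\rangle^{-1/2+C_1\epsilon^2}$ into the stated form), you are left with $\epsilon\langle t\rangle^{C_1\epsilon^2}\|(W,R)\|_{\XX}$. That is not $\lesssim\langle t\rangle^{-1/8}\|(W,R)\|_{\XX}$ once $\langle t\rangle^{1/8}\gg\epsilon^{-1}$, i.e.\ it fails well inside the almost-global window $|t|\le e^{c\epsilon^{-2}}$ on which the lemma is used. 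Note also that your ``weight $1/p$'' argument only protects the one extra derivative created by the GN step; here the extra derivative sits already in the $\|f\|_{L^p}$ factor, with weight $1-1/p$.

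The missing ingredient is an interpolation of the over-threshold factor between the decaying pointwise norms and the energy, rather than a pure energy bound: this is precisely the paper's step $\lesssim(\|W\|_{L^\infty}+\|W_\alpha\|_{L^\infty})\,A(t)^{1/2}\|W\|_{H^5}^{1/2}\lesssim \epsilon\langle t\rangle^{-1/4+C_1\epsilon^2/2}\|(W,R)\|_{\XX}$, or equivalently a Gagliardo--Nirenberg bound of the type $\||D|^{5/2}W\|_{L^\infty}\lesssim\||D|^{3/2}W\|_{L^\infty}^{3/4}\,\|\W\|_{H^5}^{1/4}\lesssim\epsilon\langle t\rangle^{-3/8+C_1\epsilon^2}$, and similarly for $R_{\alpha\alpha}$. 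With this replacement the partial decay $\langle t\rangle^{-1/4+}$ (not your claimed $\langle t\rangle^{-1/2+}$, but more than enough for $\langle t\rangle^{-1/8}$) is restored, and the rest of your plan, including the absorption step, goes through.
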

\begin{proof}
It  suffices to show that
\[
\|(\tilde W-W, \tilde Q_\alpha-R)\|_{\XX} \lesssim \langle t \rangle^{-\frac18} \|(W, R)\|_{\XX} 
\]
For the $\tilde W$ bound we have $W-\tW = \M_{\Re W} W_\alpha$ so we 
use Sobolev embeddings, product Sobolev bounds and interpolation to estimate
\[
\begin{split}
\| \M_{\Re W} W_\alpha\|_{L^\infty}  + \| |D|^\frac32 ( \M_{\Re W} W_\alpha)\|_{L^\infty} 
\lesssim & \ \| \Re W W_\alpha\|_{L^4} + \| D^2  (\Re W W_\alpha)\|_{L^4} 
\\
 \lesssim & \  
\|  D^{2}   W\|_{L^3}  \| W_\alpha\|_{L^\infty} +   \|W\|_{L^\infty} (\|W_\alpha\|_{L^4} + \|D^{3}   W\|_{L^4})
\\
\lesssim & \
 (\|W\|_{L^\infty}  + \|W_\alpha\|_{L^\infty}) A(t)^\frac12 \|W\|^{\frac12}_{H^5}
\\ 
\lesssim & \  C^{\frac12}  \epsilon^2 
\langle t \rangle^{-\frac14+C_1 \epsilon^2}\|(W, R)\|_{\XX}  .
\end{split}
\]
which suffices since $\epsilon$ is small. 

For the $R$ bound we write
\[
\tQ_\alpha-R  = W_\alpha R - 2 \partial_\alpha(\M_{\Re W} R)
\]
and  a similar argument as above applies.

\end{proof}

Our second lemma translates the energy bounds to $(\tW,\tQ)$:
\begin{lemma}
Assume that \eqref{e} and \eqref{pointboot} hold in some time interval $[-T,T]$.
Then
\begin{equation}
\|( \tW,\tQ)\|_{\dH_5} + \|\S( \tW,\tQ)\|_{\dH_0+\dH_{-1}} \lesssim \epsilon \langle t \rangle^{C_1 \epsilon^2}.
\end{equation}
\end{lemma}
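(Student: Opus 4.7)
The plan is to decompose both $\tW$ and $\tQ$ as their linear parts plus the quadratic normal form corrections from \eqref{nft1}, namely $\tW - W = -2P[\Re W \cdot W_\alpha]$ and $\tQ - Q = -2P[\Re W \cdot R]$, and bound each piece against the weighted energy \eqref{e} already established in the preceding lemma.

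For the $\dH_5$ estimate, the linear contribution is controlled by $\|(W,Q)\|_{\dH_0}$ at the base level and by $\|(\W,R)\|_{\dH_5}$ for the derivative levels, using $Q_\alpha = (1+\W)R$ together with the pointwise bounds in \eqref{pointboot} to pass between $\partial^k Q$ and $\partial^k R$. For the quadratic corrections, distributing $\partial^k$ for $k\le 5$ via Leibniz produces sums of projected products carrying at most six total derivatives; in each such term I will place the lower-order factor in $L^\infty$ (gaining a $\langle t\rangle^{-\frac12}$ decay from \eqref{pointboot}) and the higher-order factor in $L^2$ or $\dot H^{\frac12}$ using \eqref{e}, with Sobolev interpolation for balanced-frequency interactions. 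The resulting bound for the quadratic piece is $\lesssim \epsilon^2 \langle t\rangle^{-\frac12 + C_1\epsilon^2}$, strictly better than needed.

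The vector field bound relies on the structural fact that $P$ commutes with $S = t\partial_t + 2\alpha\partial_\alpha$, because the Hilbert transform has a degree-zero homogeneous symbol and therefore commutes with the dilation part of $S$, and trivially with $\partial_t$. Assigning the natural scaling weights $(2,3,0,1)$ to $(W,Q,W_\alpha,R)$, applying the Leibniz rule for $\S$, and using the linearization identity $(S-1)R = (r_\alpha + R_\alpha w)/(1+\W)$ (which follows by differentiating $r = (S-3)Q - Rw$ in $\alpha$ and substituting $Q_\alpha = (1+\W)R$), a direct computation gives
\[
\S \tW = w - \partial_\alpha(Ww) - P[\bar w W_\alpha + \bar W w_\alpha],
\]
\[
\S \tQ = r - 2P\bigl[\Re W \cdot (r_\alpha + R_\alpha w)/(1+\W)\bigr] - P[\bar w R],
\]
after the algebraic cancellation $Rw - P[(w+\bar w)R] + P[\bar w R] = 0$ that exploits the holomorphy of $w$ and $R$. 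The leading terms $w$ and $r$ lie in $L^2$ and $\dot H^{\frac12}$ respectively and are controlled by the $(w,r)\in \dH_0$ piece of the $\WH$ norm; these constitute the $\dH_0$ contribution. Every remaining term is quadratic and carries one ``extra'' derivative (either an explicit $\partial_\alpha$, or $r_\alpha$, or $R_\alpha$ paired with $w$), so after a single integration by parts where needed, the pointwise bounds from \eqref{pointboot} on $(W, \W, R, R_\alpha)$ and the $L^2$, $\dot H^{\frac12}$ bounds from \eqref{e} on $(w, r)$ place them in the weaker space $\dH_{-1}$.

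The main obstacle, which is also what forces the range space in the lemma to be the sum $\dH_0 + \dH_{-1}$ rather than $\dH_0$ alone, is that $(S-3)Q = r + Rw$ itself fails to lie in $\dot H^{\frac12}$ because $Rw$ is only in $L^2$; it is precisely the normal form correction that removes this obstruction from $\S \tQ$ at leading order, leaving only one-derivative-to-spare quadratic remainders that are absorbed into $\dH_{-1}$ via the product bounds of Appendix~\ref{s:multilinear}.
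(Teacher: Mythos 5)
Your proposal is correct and follows essentially the same route as the paper: the $\dH_5$ bound by estimating the quadratic corrections in \eqref{nft1} with one factor placed in $L^\infty$ via \eqref{pointboot}, and the vector field bound by commuting $S$ with $P$, using $(S-1)R = (r_\alpha+R_\alpha w)/(1+\W)$, exploiting the holomorphic cancellation of $Rw$, and splitting $\S(\tW,\tQ)$ into the leading $(w,r)$ part in $\dH_0$ plus quadratic remainders in $\dH_{-1}$ — your explicit formulas agree with the paper's after simplification. The only quibble is your closing remark: the sum space $\dH_0+\dH_{-1}$ is forced by the surviving terms $\partial_\alpha(Ww)$ and $P\left[\Re W\, r_\alpha/(1+\W)\right]$ (which live only in $\dot H^{-1}$, respectively $\dot H^{-1/2}$), not by the cancelled $Rw$ term (note $L^2\subset \dot H^{1/2}+\dot H^{-1/2}$ anyway); this does not affect the validity of your estimates.
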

\begin{proof}
For $\tilde W$ we estimate the quadratic terms
\[
\begin{split}
\| \Re W W_\alpha\|_{L^2} + \|\partial^5 (\Re W W_\alpha)\|_{L^2} 
\lesssim & \ \|W\|_{L^\infty} (\| W_\alpha\|_{L^2} +  \| \partial^5 W_\alpha\|_{L^2})
+ \| W_\alpha\|_{L^\infty} \|  \partial^5 W\|_{L^2}
\\ \lesssim & \ \epsilon (\| W_\alpha\|_{L^2}+ \|\partial^6 W\|_{L^2}),
\end{split}
\]

By interpolation and Sobolev embeddings we can combine \eqref{e} and 
\eqref{pointboot} to obtain the rough bound
\[
\|W\|_{L^\infty} + \|R\|_{L^\infty} \lesssim \epsilon,
\]
which we will use to supplement \eqref{pointboot}.
For $\tilde Q$ we first bound the quadratic term in $\dot H^\frac12$,
\[
\| \Re W R\|_{\dot H^\frac12} \lesssim \|W\|_{L^\infty} \|R\|_{\dot H^\frac12}
+ \| W\|_{\dot H^\frac12} \| R\|_{L^\infty} \lesssim \epsilon (\|R\|_{\dot H^\frac12}
+ \| W\|_{\dot H^\frac12}).
\]
For higher derivatives we write
\[
\tilde Q_\alpha = R(1+W_\alpha) - 2 \partial_\alpha (\M_{\Re W} R)
\]
and apply the same method.

The goal of the reminder of the proof is to prove that
\begin{equation} \label{stwq}
\|S(\tW,\tQ)\|_{\dH_0 + \dH_{-1}} \lesssim \epsilon \|S(W,R)\|_{\dH_0}
\end{equation}
Recalling the notation $(w,r) = (SW,SQ-RSW)$,
we first write  $S\tW$  as
\[
S \tW = w + 2P[ \Re w W_\alpha  - \Re W_\alpha  w
- 2\Re W W_\alpha + 2 P \partial_\alpha [ \M_{2\Re W} w],
\]
and use the $L^2$ bound on $SW$ to estimate all but the last term in
$L^2$, and the last term in $\dot H^{-1}$.
Finally, for $S \tQ$ we have
\[
S\tilde Q = S Q - \M_{ 2\Re SW} R - \M_{ 2\Re W} S R = r + R w 
- 2P (\Re w R)  - 2 P \left[ \frac{\Re W (r_\alpha + R_\alpha w)}{1+W_\alpha}\right] 
\]
Here it suffices to estimate the contribution of $w$ in $L^2$, while 
the contribution of $r_\alpha$ is estimated by 
\[
\| r_\alpha H\|_{H^{-\frac12}} \lesssim \|r\|_{\dot H^\frac12} (\|H\|_{L^\infty} +
 \||D|^\frac12 H\|_{BMO}), \qquad H := \frac{\Re W}{1+W_\alpha},
\]
where $\||D|^\frac12 H\|_{BMO}$ is estimated using \eqref{bmo-alg}, \eqref{bmo-moser}.
The proof of \eqref{stwq} is concluded.
\end{proof}

The advantage of working with $(\tW,\tQ)$ is that they solve an
equation with a cubic nonlinear term, namely \eqref{nft1eq}, where the nonlinearities
$\tilde G$ and $\tilde K$ are given by \eqref{gk-tilde}.
They involve second order derivatives of $W$ and $Q$, which is why one cannot
simply use the above equations as the main evolution.

\begin{lemma}\label{cubic-RHS}
Assume that \eqref{energy} and \eqref{pointboot} hold in some time interval $[-T,T]$. Then
\begin{equation}
\| ( \tilde G,\tilde K)\|_{\dH_0}  \lesssim \frac{\epsilon^3}{\langle t \rangle} \langle t \rangle^{C_1 \epsilon^2}.
\label{energy-rhs}\end{equation}
\end{lemma}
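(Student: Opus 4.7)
The plan is to verify term-by-term that every summand in the formulas \eqref{gk-tilde} for $\tilde G$ and $\tilde K$ factors as a product of at least three ``small'' quantities. Two of these will be placed in $L^\infty$ and bounded by the pointwise bootstrap \eqref{pointboot}, each contributing $\epsilon \langle t \rangle^{-1/2}$; the third will be placed in $L^2$ (for $\tilde G$) or $\dot H^{\frac12}$ (for $\tilde K$) and bounded by the energy estimate \eqref{e}, contributing at worst $\epsilon \langle t \rangle^{C_1 \epsilon^2}$. Multiplying these three gives $\epsilon^3 \langle t \rangle^{-1 + C_1 \epsilon^2}$ as desired. The outer projection $P$ in \eqref{gk-tilde} is irrelevant at the level of $\dH_0$ norms because $P$ is bounded on $L^2$ and on $\dot H^{\frac12}$.

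First I would expand all auxiliary quantities in \eqref{gk-tilde} as polynomials in the ``primitive'' variables $W, \W, R, Y$ and their derivatives. One has
\[
F - R = P[\bar R Y - R \bar Y], \quad b = 2\Re R - 2\Re P[R \bar Y], \quad a = 2\Re P[R \bar R_\alpha],
\]
together with $Y - \W = -\W Y$. Substituting these into the expressions for $\tilde G$ and $\tilde K$, every summand becomes a sum of multilinear (homogeneity $\geq 3$) expressions in $W, \W, R, Y$, some of them under derivatives and nested projections $P, \bar P$.

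Then, for each cubic monomial I would apply H\"older's inequality after distributing the factors as follows: two of them, taken from the low-regularity set $\{W, R, |D|^{\frac12}\W, R_\alpha\}$, go in $L^\infty$ and benefit from the pointwise decay $\epsilon \langle t \rangle^{-1/2}$ of \eqref{pointboot}; the remaining one, typically a high-regularity derivative like $\W_\alpha$, $R_{\alpha \alpha}$ or a derivative of a quadratic, goes in $L^2$ or $\dot H^{\frac12}$ and is controlled by $\|(\W,R)\|_{\dH_5} \lesssim \epsilon \langle t \rangle^{C_1 \epsilon^2}$ from \eqref{e}. For $\tilde K$, the $\dot H^{\frac12}$ bound is obtained via fractional Leibniz: the half-derivative either lands on a low-regularity factor, whose $|D|^{\frac12}$ is still in $L^\infty$ with decay by \eqref{pointboot}, or on the high-regularity factor, in which case it is absorbed by the $\dH_5$ energy. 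Inner projections such as $P[\bar R Y]$, $P[R \bar R_\alpha]$, $P[bR_\alpha]$ are handled by the Coifman--Meyer bilinear and commutator bounds from Appendix~\ref{s:multilinear}, which control both the $L^2$ and the $\dot H^{\frac12}$ norm of a paraproduct in terms of $BMO\cdot L^2$ or $L^\infty\cdot \dot H^{\frac12}$ type pairings.

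The main obstacle will be the terms in $\tilde K$ where nested projections appear inside another product, namely
\[
P\!\left[P\!\left[\tfrac{\W^2+a}{1+\W}\right]\cdot \Re W\right] \quad\text{and}\quad P\!\left[P[bR_\alpha]\cdot\Re W\right].
\]
Here one cannot simply place the inner $P[\cdot]$ in $L^\infty$, since $P$ is unbounded on $L^\infty$. Instead, I would estimate the outer product via fractional Leibniz with $\Re W$ in $L^\infty$ (contributing $\epsilon/\sqrt{\langle t\rangle}$) and the inner $P[\cdot]$ in $\dot H^{\frac12}$. Expanding $\W^2 + a$ and $bR_\alpha$ using the formulas above, each is a bilinear expression with one low-regularity factor (in $L^\infty$, with pointwise decay) and one high-regularity factor (in $\dot H^{\frac12}$ or $L^2$, controlled by energy). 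The Coifman--Meyer bilinear bounds from Appendix~\ref{s:multilinear} then give
\[
\left\|P\!\left[\tfrac{\W^2+a}{1+\W}\right]\right\|_{\dot H^{\frac12}} + \|P[bR_\alpha]\|_{\dot H^{\frac12}} \lesssim (\epsilon \langle t \rangle^{-\frac12})(\epsilon \langle t \rangle^{C_1 \epsilon^2}),
\]
and multiplying by $\|\Re W\|_{L^\infty} \lesssim \epsilon \langle t \rangle^{-\frac12}$ yields \eqref{energy-rhs}. All remaining terms in $\tilde G$ and $\tilde K$ are structurally simpler and are treated by the same template.
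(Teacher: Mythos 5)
Your proposal is correct and follows essentially the same route as the paper, whose (very brief) proof consists precisely of bounding each factor of each cubic term in suitable $L^p$ norms obtained from the energy bound \eqref{e} and the pointwise bootstrap \eqref{pointboot}, and invoking Lemma~\ref{l:multi} to distribute the half derivative for $\tilde K$. The only caveat is that pointwise control of intermediate quantities such as $W_\alpha$, $|D|^{\frac12}R$ or $|D|^{\frac12}W$ is not literally contained in \eqref{pointboot} but follows by interpolation between the listed $L^\infty$ norms (all decaying like $\epsilon\langle t\rangle^{-1/2}$), which is exactly the interpolation the paper alludes to and which your $L^\infty\cdot L^\infty\cdot L^2$ (resp.\ $\dot H^{\frac12}$) splitting implicitly uses.
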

\begin{proof} Given the expression above for $\tilde G$, it suffices
  to bound each factor in each term in suitable $L^p$ norms,
  interpolating between the $L^2$ norms in \eqref{energy} and the
  $L^\infty$ norms in \eqref{pointboot}. For $\tilde K$ the argument
  is similar, but we also need to use Lemma~\ref{l:multi} in the
  Appendix~\ref{s:multilinear} in order to distribute the half
  derivative.
\end{proof}

Taking into account the correspondence, established in the last three
lemmas, between the original variables $(W,Q)$ and the normal form
variables $(\tilde W,\tilde Q)$, it follows that we can restate
Proposition~\ref{p:point} in the following linear form:

\begin{proposition} \label{p:point1} Suppose $(\tW,\tQ)$ solve
  \eqref{nft1eq} and that the following bounds hold at some time
  $t$:
\[
 \| \S (\tW,\tQ)\|_{\dH_0 + \dH_{-1}} +   \| (\tW, \tQ) \|_{\dH_{5}} \lesssim 1,
\]
 \[
 \| ( \tilde G,\tilde K)\|_{\dH_0} \lesssim \langle t \rangle^{-1}.
\]
Then
\begin{equation}
|\tW|+    | |D|^\frac12 \tQ| +
| D^2 \tW| +     ||D|^\frac52 \tQ|
 \lesssim \langle t \rangle^{-\frac12} \omega(t,\alpha).
\end{equation}
\end{proposition}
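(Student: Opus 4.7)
The plan is to reduce the system \eqref{nft1eq} to a pair of scalar dispersive equations by diagonalization, and then apply a Klainerman--Sobolev type argument driven by the scaling vector field to obtain the pointwise decay with the weight $\omega$. First I would introduce the variables
\[
u_\pm:=\tW\pm|D|^{1/2}\tQ.
\]
Since $\tW,\tQ$ are holomorphic, $\partial_\alpha\tQ=-i|D|\tQ$, and a direct computation from \eqref{nft1eq} gives the decoupled dispersive equations
\[
(\partial_t\mp i|D|^{1/2})u_\pm=\tG\pm|D|^{1/2}\tK=:F_\pm.
\]
The hypotheses translate to: $\|F_\pm\|_{L^2}\lesssim\langle t\rangle^{-1}$; $\|u_\pm\|_{H^5}\lesssim 1$; and, using the commutator relation $(S-2)|D|^{1/2}=|D|^{1/2}(S-3)$, also $\|(S-2)u_\pm\|_{L^2+\dot H^{-1/2}}\lesssim 1$. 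Since $\tW=(u_++u_-)/2$ and $|D|^{1/2}\tQ=(u_+-u_-)/2$, the conclusion reduces to proving $|u_\pm|+|D^2 u_\pm|\lesssim\langle t\rangle^{-1/2}\omega(t,\alpha)$.

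Next, I would exploit the identity
\[
(S-2)u_\pm=2\alpha\,\partial_\alpha u_\pm\pm it|D|^{1/2}u_\pm+tF_\pm-2u_\pm,
\]
obtained by substituting $u_{\pm,t}=\pm i|D|^{1/2}u_\pm+F_\pm$ into the expansion $(S-2)u_\pm=tu_{\pm,t}+2\alpha\,\partial_\alpha u_\pm-2u_\pm$. After a Littlewood--Paley decomposition $u_\pm=\sum_k u_{\pm,k}$, at frequency $2^k$ one has $\pm i|D|^{1/2}u_{\pm,k}\approx\mp 2c_k\,\partial_\alpha u_{\pm,k}$ with $c_k:=2^{-k/2}/2$ the corresponding group velocity, so the identity becomes
\[
(S-2)u_{\pm,k}\approx 2(\alpha\mp c_kt)\,\partial_\alpha u_{\pm,k}+tF_{\pm,k}-2u_{\pm,k}.
\]
This exhibits $L_{\pm,k}:=\alpha\mp c_kt$ as an elliptic multiplier away from the wave-packet region $|\alpha\mp c_kt|\lesssim c_kt$, and as the characteristic length of stationary phase inside it.

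The Klainerman--Sobolev step then proceeds in two regimes. Outside the wave-packet region, iterated application of the elliptic identity $L_{\pm,k}^{-1}$ combined with the $H^5$ bound gives fast off-diagonal decay, and the contribution to $|u_{\pm,k}|$ is negligible. Inside the region, the identity controls $\partial_\alpha u_{\pm,k}$ in $L^2$, and Sobolev embedding on an interval of length $\sim c_kt$ yields
\[
\|u_{\pm,k}\|_{L^\infty}\lesssim(c_kt)^{-1/2}\bigl(\|u_{\pm,k}\|_{L^2}+\|(S-2)u_{\pm,k}\|_{L^2}\bigr)+(c_kt)^{1/2}\|F_{\pm,k}\|_{L^2}.
\]
Since $c_kt\sim t\cdot 2^{-k/2}$, each frequency contributes of order $t^{-1/2}$ times powers of $2^k$. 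The spatial weight $\omega$ emerges from summing over $k$: its second term $(\langle\alpha\rangle/\langle t\rangle+\langle t\rangle/\langle\alpha\rangle)^{-1/2}$ reflects restriction to the single frequency $k$ with $\alpha\approx\pm c_kt$, while its first term $\langle t\rangle^{-1/22}$ absorbs the loss in summing all dyadic scales via interpolation between the $(S-2)u_\pm$ bound and the $\dH_5$ bound. The bound for $D^2 u_\pm$ is obtained by inserting the frequency weight $2^{2k}$ throughout and using $(S-2)D^2 u_\pm=D^2(S-4)u_\pm$ to translate the $(S-2)$ control.

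The main obstacle is the dyadic summation: balancing the frequency weights so that $\omega$ emerges with the stated exponents requires careful interpolation between the $L^2$-type scaling bound and the $\dH_5$ high-regularity bound, and the specific exponent $1/22$ is dictated by this optimization. A secondary technical point is properly handling the holomorphic structure when replacing $\pm i|D|^{1/2}u_{\pm,k}$ by $\mp 2c_k\,\partial_\alpha u_{\pm,k}$, which is only approximate at frequency $2^k$ and produces commutator errors that must be absorbed into $F_{\pm,k}$ or into the $H^5$ tail via standard paradifferential estimates.
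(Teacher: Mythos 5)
Your overall skeleton coincides with the paper's: pass to the fixed-time identity generated by the scaling vector field, localize in frequency, treat the regions where $2\alpha\partial_\alpha$ or $it|D|^{1/2}$ dominates as elliptic, use Bernstein, and recover $\omega$ by dyadic summation against the $\dH_5$ norm (your diagonalization $u_\pm=\tW\pm|D|^{1/2}\tQ$ is just the decoupled form of the paper's system \eqref{fixed-t-i}, obtained by adding and subtracting the two equations). The gap is in the characteristic region $|\alpha|\approx c_k t$, which is exactly where the $t^{-1/2}$ decay must be produced. First, the replacement $it|D|^{1/2}u_{\pm,k}\approx\mp 2c_kt\,\partial_\alpha u_{\pm,k}$ is not perturbative: on a dyadic block the symbol error $t\,\bigl||\xi|^{1/2}-2^{-k/2}|\xi|\bigr|$ is of size $t2^{k/2}$, i.e.\ comparable to the term being approximated, so the error is $O(t2^{k/2}\|u_{\pm,k}\|_{L^2})$ in $L^2$ and cannot be absorbed into $tF_{\pm,k}=O_{L^2}(1)$, into $(S-2)u_{\pm,k}=O(1)$, or into an $H^5$ tail; refining the frequency localization only trades this loss for an unsummable number of blocks. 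Second, and more fundamentally, even with a frozen velocity your interior step does not close: inside the packet region the coefficient $\alpha\mp c_kt$ degenerates, so the identity controls $(\alpha\mp c_kt)\partial_\alpha u_{\pm,k}$ rather than $\partial_\alpha u_{\pm,k}$ (which is of size $2^{k}\|u_{\pm,k}\|_{L^2}$ and is never small), and Sobolev embedding then only reproduces Bernstein, with no gain in $t$. A constant-velocity half-wave is a pure translation and does not disperse; the $(c_kt)^{-1/2}$ gain comes precisely from the curvature of $|\xi|^{1/2}$ across the block, i.e.\ from the very variation of the group velocity that the frozen-coefficient step discards.

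This missing mechanism is what the paper's intermediate-region argument supplies: keeping $|D|^{1/2}$ and using the system twice, one derives the scalar relation $\|(4\alpha^2\partial_\alpha-it^2)\chi^{\ell}_{med}u_\ell\|_{L^2}\lesssim |t|2^{-\ell/2}(\|(g_\ell,k_\ell)\|_{L^2}+\|(w_\ell,r_\ell)\|_{L^2})$, whose integrating factor $e^{-it^2/(4\alpha)}$ is exactly the stationary phase; integrating $\partial_\alpha|u|^2=2\Re[(\partial_\alpha-it^2/(4\alpha^2))u\cdot\bar u]$ from infinity and applying Cauchy--Schwarz yields the pointwise bound \eqref{point-med}, of size $|\alpha|^{-1/2}\approx (c_\ell t)^{-1/2}$ in that region. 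To repair your argument you would need either this ODE/integrating-factor step or a genuine wave-packet analysis retaining the quadratic term of the phase; your elliptic-region estimates and the dyadic summation (where the $\langle t\rangle^{-1/22}$ term of $\omega$ does indeed arise from trading against the $\dH_5$ bound, cf.\ \eqref{pointk} with $k\le\frac32$) can then proceed essentially as in the paper. A minor correction: $(S-2)u_\pm\in L^2+\dot H^{-1}$, not $L^2+\dot H^{-1/2}$, since $|D|^{1/2}$ maps $\dot H^{-1/2}$ into $\dot H^{-1}$; this does not affect the structure of the argument.
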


Combining the scaling bound with the equation \eqref{nft1eq} we are led to a system 
of the form
\begin{equation} \label{fixed-t}
\left\{ 
\begin{aligned}
& 2 \alpha \partial_\alpha \tilde W +  t \partial_\alpha \tilde Q =  \tilde G_1:= S\tilde W - \tilde G ,
\\
& 2\alpha \partial_\alpha \tilde Q - it \tilde W   = \tilde K_1 := S \tilde Q - \tilde K.
\end{aligned}
\right.
\end{equation}
where 
\begin{equation}
\| (\tilde G_1,\tilde K_1)\|_{\dH_0 + \dH_{-1}} \lesssim 1.
\end{equation}
From here on, all our analysis is at fixed $t$.

After the substitution
\[
(w,r) = (\tilde W, |D|^\frac12 \tilde Q), \qquad (g,k) =(\tilde G_1, |D|^\frac12 \tilde K_1 + |D|^\frac12 \tilde Q),
\]
the above system is written in a more symmetric form as 
\begin{equation} \label{fixed-t-i}
\left\{ 
\begin{aligned}
& 2 \alpha \partial_\alpha w - i t |D|^\frac12  r = g,
\\
& 2\alpha \partial_\alpha r  - it |D|^\frac12 w   = k.
\end{aligned}
\right.
\end{equation}
For this it suffices to establish the following result:

\begin{lemma}
The following pointwise bounds hold for solutions to \eqref{fixed-t-i}:
\begin{equation}\label{point0}
|w| + |r| \lesssim |\alpha|^{-\frac12} ( \| (w,r)\|_{L^2} +
\| (g,k)\|_{L^2}),
\end{equation}
\begin{equation}\label{point1}
|w| + |r| \lesssim \langle t \rangle^{-\frac12}  \left(\frac{1}{\langle t \rangle^\frac14} + \frac{1}{(\langle \alpha\rangle /\langle t \rangle  + \langle t\rangle/ \langle \alpha \rangle)^\frac12}\right)  ( \| (w,r)\|_{H^2} +
\| (g,k)\|_{H^{-1}}),
\end{equation}
\begin{equation}\label{pointk}
|\partial^k w| + |\partial^k r| \lesssim \langle t \rangle^{-\frac12}  \left( \! \frac{1}{\langle t \rangle^{\frac{1}{2(4k+5)}}} + \frac{1}{(\langle \alpha\rangle /\langle t \rangle  + \langle t\rangle/ \langle \alpha \rangle)^\frac12} \! \right)
   ( \| (w,r)\|_{H^{2k+2}} +
\| (g,k)\|_{H^{-1}}).
\end{equation}
\end{lemma}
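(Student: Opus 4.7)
The system \eqref{fixed-t-i} diagonalizes under $u_\pm = w \pm r$ into the decoupled scalar equations
\[
2\alpha\partial_\alpha u_\pm \mp it|D|^{\frac{1}{2}} u_\pm = g \pm k.
\]
Taking the Fourier transform in $\alpha$ produces a first-order linear ODE in $\xi$, namely
\[
2\xi \hat u_\pm'(\xi) + (2 \mp it|\xi|^{\frac{1}{2}})\hat u_\pm(\xi) = \widehat{(g\pm k)}(\xi),
\]
which is solved via the integrating factor $|\xi|\,e^{\mp it|\xi|^{\frac{1}{2}}\sgn\xi}$. The resulting explicit representation reduces every pointwise bound on $(w,r)$ to the analysis of an oscillatory Fourier integral whose phase $\alpha\xi \pm t|\xi|^{\frac{1}{2}}\sgn\xi$ has stationary point at $|\xi_\ast| = t^2/(16\alpha^2)$, reflecting the water wave dispersion relation.

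For estimate \eqref{point0}, observe that the system is invariant under the rescaling $\alpha \mapsto t^2 \alpha$, under which the bound scales consistently, so it suffices to treat $t = 1$. In that case I apply the weighted Sobolev embedding $|\alpha||u(\alpha)|^2 \lesssim \|u\|_{L^2}^2 + \|u\|_{L^2}\|\alpha\partial_\alpha u\|_{L^2}$ to $u = u_\pm$, and close the resulting estimate using the equation together with interpolation between $L^2$ and the scaling norm. For the dispersive bound \eqref{point1}, I decompose $\hat u_\pm$ dyadically in frequency and perform stationary phase on Fourier inversion of the integrating-factor formula. The profile $(\langle\alpha\rangle/\langle t\rangle + \langle t\rangle/\langle\alpha\rangle)^{-\frac{1}{2}}$ emerges from the sharp stationary phase bound at $|\xi_\ast| \sim t^2/\alpha^2$, producing the $\langle t\rangle^{-\frac{1}{2}}$ decay; the additional $\langle t\rangle^{-\frac{1}{4}}$ summand handles off-stationary contributions, where repeated integration by parts in $\xi$ gives rapid decay at each dyadic frequency level and the Littlewood--Paley summation across all frequencies costs only a small power of $\langle t\rangle$.

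For the higher-derivative bound \eqref{pointk} I commute $\partial_\alpha^k$ through the system, using $[\partial_\alpha^k, 2\alpha\partial_\alpha] = 2k\partial_\alpha^k$ and $[\partial_\alpha^k, |D|^{\frac{1}{2}}] = 0$, so that $(\partial^k w, \partial^k r)$ satisfies a system of the form \eqref{fixed-t-i} with forcing in $\partial^k(g,k)$ plus lower-order terms controlled by the original data. Applying \eqref{point1} together with an optimized Bernstein-type split between high frequencies (controlled by $H^{2k+2}$) and low frequencies (controlled by Bernstein from $L^2$) yields the exponent $\frac{1}{2(4k+5)}$. The main technical obstacle is the stationary phase analysis in Step 2, particularly the uniform handling of the three regimes -- near-stationary, far-from-stationary, and near-zero frequency. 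The near-zero contribution requires exploiting the vanishing of $\int_0^\xi \widehat{(g\pm k)}(\eta)\,d\eta$ at $\xi = 0$ to resolve the apparent $|\xi|^{-1}$ singularity of the integrating factor, and the precise balance across these regimes is what produces the stated profile $\omega(t,\alpha)$.
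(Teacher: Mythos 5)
Your diagonalization $u_\pm = w\pm r$ and the resulting Fourier-side ODE are correct as algebra, but the route through an explicit integrating-factor formula plus stationary phase has genuine gaps, and it is quite different from the paper's argument (which never solves the system: it localizes at dyadic frequency $2^\ell$, and for $2^\ell\gtrsim t^{-2}$ splits $\alpha$-space into an outer region where $\alpha\partial_\alpha$ dominates elliptically, an inner region where $t|D|^\frac12$ dominates, and an intermediate region handled by integrating a first-order propagation identity from infinity; pointwise bounds then come from Bernstein and dyadic summation). The most serious gap is in your treatment of \eqref{point1}: your oscillatory integral has an amplitude built only from $\widehat{g\pm k}$, about which you know only an $H^{-1}$ bound, so ``repeated integration by parts in $\xi$'' at off-stationary frequencies is not available (there is no pointwise derivative control of the amplitude), and the dyadic sum over high frequencies cannot close on the forcing alone. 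Indeed, a bound of the form \eqref{point1} with only $\|(g,k)\|_{H^{-1}}$ on the right is false: take $\hat u$ a unit-width bump of height $h$ at frequency $N\gg t^{2}$; then $\|(g,k)\|_{H^{-1}}\lesssim h(1+t N^{-\frac12})$ while $\sup|u|\sim h$, violating the would-be estimate for large $N$. The term $\|(w,r)\|_{H^2}$ (resp.\ $H^{2k+2}$ in \eqref{pointk}) is not decorative -- it is exactly what controls the high-frequency blocks -- and your argument for \eqref{point1} never brings it in.

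Two further points. The singularity of the integrating factor at $\xi=0$ is not resolved by a vanishing of $\int_0^\xi\widehat{(g\pm k)}(\eta)\,d\eta$: no such moment condition is available from the hypotheses; what selects the particular solution is simply that $\hat u_\pm\in L^2$, and a quantitative low-frequency estimate still has to be produced (the paper does this directly for $2^\ell\le t^{-2}$ by discarding the $it|D|^\frac12$ term and integrating $\partial_\alpha|w_\ell|^2$ from infinity with Cauchy--Schwarz, supplemented by Bernstein). For \eqref{pointk}, the exponent $\frac{1}{2(4k+5)}$ is asserted rather than derived: commuting $\partial^k$ through \eqref{fixed-t-i} produces the zeroth-order term $2k\,\partial^k w$ which must be absorbed, and the exponent arises from an explicit optimization, at each dyadic frequency, between an elliptic gain of the form $2^{(k+1)\ell}|t|^{-1}$ measured against the $H^{-1}$ data and the Bernstein bound $2^{-(k+\frac32)\ell}\|(w,r)\|_{\dot H^{2k+2}}$; merely ``applying \eqref{point1} to the commuted system'' does not produce that balance. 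To repair the proposal you would have to reintroduce the frequency-localized, region-by-region analysis to handle these issues, at which point you are essentially reconstructing the paper's proof.
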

The last part is applied with $k \leq \frac32$, which justifies the exponent
$\frac{1}{22}$ in the definition \eqref{omega} of $\omega(t,\alpha)$.

\begin{proof} Without any loss in generality we assume that $|t| \geq 1$.
It is convenient to work with frequency localized versions of \eqref{fixed-t-i}, at frequency 
$- 2^\ell$, with $\ell \in \Z$.
The localized dyadic portions $(w_{\ell},r_\ell)$ solve similar equations
with frequency localized right hand sides $(g_\ell,k_\ell)$.
Further, a straightforward commutator estimate shows that
\begin{equation}\label{fixed-diad}
\sum_{\ell \in \Z} \| (g_\ell,k_\ell)\|_{H^s}^2 \lesssim \|(w,r)\|_{H^s}^2 + \|(g,k)\|_{H^s}^2.
\end{equation}

To prove \eqref{point0} we observe that the system \eqref{fixed-t-i} is elliptic
away from frequency  $2^{\ell} \approx t^2
\alpha^{-2}$ and degenerate at frequency zero.  At frequencies less
than $\alpha^{-1}$ our source for the pointwise estimate is
Bernstein's inequality. Comparing the two frequencies yields the
threshold $\alpha = t^2$, $2^{\ell} = t^{-2}$.   Thus, we distinguish the
following regions:

{\bf Case A: $2^\ell \leq t^{-2}$}.  We group all such frequencies
together. We can harmlessly discard the $i t |D|^\frac12$ term from the
equations and compute
\[
 \frac{d}{dt} |w(\alpha)|^2 = 2\Re ( \bar w  w_\alpha),
\]
and similarly for $r$.
Depending on the sign of $\alpha$ we integrate from either $+\infty$ or $-\infty$ 
and apply the Cauchy-Schwartz inequality to obtain 
\[
|w_{<t^{-2}} |^2 \lesssim |\alpha|^{-1} \|w_{<t^{-2}}\|_{L^2} \| \alpha w_{<t^{-2},\alpha}\|_{L^2}.
\]
We remark that by Bernstein's inequality we also get 
\[
| w_{<t^{-2}}| \lesssim |t|^{-1} \|w_{<t^{-2}}\|_{L^2}.
\]
It follows that 
\[
| w_{<t^{-2}}| \lesssim (t+|\alpha|)^{-1}.
\]

{\bf Case B.} $t^{-2} \lesssim 2^{\ell}$.  Here we have three regions to consider:

\begin{enumerate}
\item[B1.] The outer region $|\alpha| \gg |t| 2^{-\frac{\ell}2}$ where the problem is elliptic, with 
$\alpha \partial_\alpha$ as the dominant term.

\item[B2.] The inner region $|\alpha| \ll |t| 2^{-\frac{\ell}2}$ where the problem is elliptic, with 
$i t |D|^\frac12 $ as the dominant term.

\item[B3.] The intermediate region $|\alpha| \approx |t|2^{-\frac{\ell}2}$
  where the problem is hyperbolic.
\end{enumerate}

We consider three overlapping smooth positive cutoff functions
$\chi^{\ell}_{out}$, $\chi^{\ell}_{med}$ and $\chi^{\ell}_{in}$
associated with the three regions. In order to keep the frequency
localization we assume that all three cutoffs are localized at
frequency $\ll 2^\ell$, at the expense of having tails which decay
rapidly on the $2^{-\ell}$ scale. We remark that the three cutoffs
begin to separate exactly at $2^\ell = t^{-2}$.

For the regions B1 and B3 we use elliptic estimates, while for B2 we
use a propagation bound.

Using the frequency localized form of \eqref{fixed-t-i} we can bound 
\[
\| \chi^{\ell}_{out} \alpha \partial_\alpha (w_{\ell},r_{\ell}) \|_{L^2} 
\lesssim      |t| \|\chi^{\ell}_{out} |D|^\frac12  (r_{\ell},w_{\ell})\|_{L^2} + \|(g_\ell,k_\ell)\|_{L^2}..
\]
After some commutations this gives 
\[
2^{\ell} \| \alpha  \chi^{\ell}_{out}  (w_{\ell},r_{\ell}) \|_{L^2} \lesssim 2^{\frac{\ell}2} |t| 
\|\chi^{\ell}_{out}   (r_{\ell},w_{\ell})\|_{L^2} +  \|(g_\ell,k_\ell)\|_{L^2} +  \|(w_{\ell},r_{\ell})\|_{L^2}.
\]
Taking into account the localization of $\chi^{\ell}_{out}$, this yields 
\begin{equation}
2^{\ell} \| \alpha  \chi^{\ell}_{out}  (w_{\ell},r_{\ell}) \|_{L^2} \lesssim   
\|(g_\ell,k_\ell)\|_{L^2} +  \|(w_{\ell},r_{\ell})\|_{L^2}.
\end{equation}
By Bernstein's inequality this gives the pointwise bound
\begin{equation}\label{point-out}
 \chi^{\ell}_{out}  |(w_{\ell},r_{\ell})| \lesssim 2^{-\frac{\ell}2} |\alpha|^{-1}\left(\|(g_\ell,k_\ell)\|_{L^2} +  \|(w_{\ell},r_{\ell})\|_{L^2}\right).
\end{equation}

A similar computation, but with the roles of the two terms on the left
in \eqref{fixed-t-i} reversed gives
\begin{equation}
  |t| 2^{\frac{\ell}2}  \|   \chi^{\ell}_{in}  (w_{\ell},r_{\ell}) \|_{L^2} \lesssim   \|(g_\ell,k_\ell)\|_{L^2} 
+  \|(w_{\ell},r_{\ell})\|_{L^2}.
\end{equation}
By Bernstein's inequality this gives the pointwise bound
\begin{equation}\label{point-in}
 \chi^{\ell}_{in}  |(w_{\ell},r_{\ell})| \lesssim |t|^{-1}\left(\|(g_\ell,k_\ell)\|_{L^2} +  \|(w_{\ell},r_{\ell})\|_{L^2}\right).
\end{equation}

It remains to consider the intermediate region, where we produce instead 
a propagation estimate. Precisely, for $\chi^{\ell}_{med} (w_{\ell},r_{\ell})$ we estimate
\[
\begin{split}
\| (4 \alpha^2 \partial_\alpha - i t^2)\chi^{\ell}_{med} w_{\ell}\|_{L^2}
\lesssim &\ \| 2 \alpha (2\alpha \partial_\alpha \chi^{\ell}_{med} w_{\ell} - i t |D|^\frac12 \chi^{\ell}_{med} r_{\ell}) \|_{L^2} \\
& \ + \|   t  (\alpha  |D|^\frac12 \chi^{\ell}_{med} r_{\ell} - it \chi^{\ell}_{med} w_{\ell})\|_{L^2}
\\ \lesssim &\  |t| 2^{-\frac{\ell}2} (\|(g_\ell,k_\ell)\|_{L^2} + \|(r_{\ell},w_{\ell})\|_{L^2}),
\end{split}
\] 
and similarly for $r_{\ell}$. Applying 
\[
\frac{d}{dt} |u|^2 = 2 \Re \left[( \partial_\alpha - i \frac{t^2}{\alpha^2}) u \cdot \bar u \right]
\]
for $u = \chi^{\ell}_{med} w_{\ell} $ and $u = \chi^{\ell}_{med} r_{\ell} $, integrating from infinity and using 
the Cauchy-Schwarz inequality yields
\[
 \chi^{\ell}_{med} |(w_{\ell}, r_{\ell})|^2 \lesssim \alpha^{-2}  |t|  2^{-\frac{\ell}2}
(\|(g_\ell,k_\ell)\|_{L^2} + \|(r_{\ell},w_{\ell})\|_{L^2})\|(r_{\ell},w_{\ell})\|_{L^2}.
\]
Using this in the interesting region $|\alpha| \approx |t| {\ell}^\frac12$
and the inner and outer estimates away from it we obtain
\begin{equation}\label{point-med}
 \chi^{\ell}_{med} |(w_{\ell}, r_{\ell})| \lesssim |\alpha|^{-\frac12}   
(\|(g_\ell,k_\ell)\|_{L^2}^\frac12  \|(r,w_{\ell})\|_{L^2}^\frac12 +\|(r_{\ell},w_{\ell})\|_{L^2}).
\end{equation}

Now, we prove the bounds in the Lemma by dyadic summation.
There are several cases to consider:

\bigskip

{\bf Case 1:  $t^{-2} < 2^{\ell} < 1$}, where all three bounds coincide, and it suffices 
to prove \eqref{point1}. Assume that the two norms in the right hand side of \eqref{point1} 
are $\leq 1$. For  $\alpha$ we have three cases:

(a) $|\alpha| > t^{2}$, where we are in case B1 for all ${\ell}$. 
There we need only \eqref{point-out} to conclude that
\[
|(w_{[t^{-2},1]} ,r_{[t^{-2},1]})(\alpha)| 
\lesssim \sum_{2^\ell = t^{-2}}^1 2^{-\frac{\ell}2} |\alpha|^{-1} \approx |t| |\alpha|^{-1}.
\]

(b) $|t| < |\alpha| <  t^2$, where we are successively in case B1, B2 and B3.
There we use \eqref{point-out} \eqref{point-med} and \eqref{point-in} to  
conclude that
\[
\begin{split}
|(w_{[t^{-2},1]},r_{[t^{-2},1]})(\alpha)| \lesssim & \ \sum_{2^\ell = t^{-2}}^{\alpha^{-2} t^2}   |t|^{-1}
+ |\alpha|^{-\frac12} + \sum_{2^\ell=\alpha^2 t^{-2}}^12^{-\frac{\ell}2} |\alpha|^{-1} 
\\ \approx & \ |t|^{-1} |\log (1+t^2 |\alpha|^{-1})| + |\alpha|^{-\frac12}+  |\alpha|^{-1} \lesssim |\alpha|^{-\frac12}.
\end{split}
\]

(c)  $ |\alpha| < |t|$, where we are in case B3 for all ${\ell}$. 
There we need only \eqref{point-in} to conclude that
\[
|(w_{[t^{-2},1]},r_{[t^{-2},1]})(\alpha)| \lesssim \sum_{2^\ell = t^{-2}}^1 |t|^{-1} \lesssim t^{-1} |\log (|t|+2)|.
\]

\bigskip

{\bf Case 2: $1 < 2^{\ell}$.} Here  we have two subcases:

(i) $|\alpha| >| t|$, where we are in case B1 for all ${\ell}$.  
There we need only \eqref{point-out} to conclude that
\[
|(w_{>1} ,r_{> 1})(\alpha)| \lesssim \sum_{2^\ell=1}^\infty  2^{-\frac{\ell}2} |\alpha|^{-1}  
(\|(w,r)\|_{L^2} + \|(g,k)\|_{L^2}) \approx  |\alpha|^{-1}(\|(w,r)\|_{L^2} + \|(g,k)\|_{L^2}) ,
\]
which suffices for \eqref{point0}. In order to also obtain 
\eqref{pointk} we also use the Bernstein bound
\begin{equation}\label{bernstein}
| (w_{\ell},r_{\ell})| \lesssim 2^{\frac{\ell}2} \|(w_\ell,r_\ell)\|_{L^2} .
\end{equation}
Then we obtain
\[
\begin{split}
|\partial^k (w_{>1} ,r_{> 1})(\alpha)| \lesssim & \ \sum_{2^\ell=1}^\infty  \min\{ 2^{(k+\frac12)\ell} |\alpha|^{-1}  
 (\|(g,k)\|_{H^{-1}}+\|(w,r)\|_{H^{-1}}),    2^{-(k-\frac32)\ell} \|(w,r)\|_{\dot H^{2k+2}}\}
\\
 \lesssim &  \  |\alpha|^{-\frac12}  
 (\|(g,k)\|_{H^{-1}}+\|(w,r)\|_{H^{-1}})^\frac12 \|(w,r)\|_{\dot H^{2k+2}}^\frac12.
\end{split}
\]

(ii) $ |\alpha| < |t|$, where we are successively in cases  B1, B2 and B3.
There we use \eqref{point-out} \eqref{point-med} and \eqref{point-in} to  
conclude that
\[
\begin{split}
|(w_{>1},r_{>1})(\alpha)| \lesssim &\  \left( \sum_{2^\ell = 1}^{\alpha^{-2} t^{2}}   |t|^{-1} 
+ |\alpha|^{-\frac12} + \sum_{2^\ell = \alpha^{-2} t^2}^\infty 2^{-\frac{\ell}2} |\alpha|^{-1}\right) 
(\|(w,r)\|_{L^2} + \|(g,k)\|_{L^2})
\\ 
= & \ \left( |t|^{-1} |\log (1+ |t|/|\alpha|) | + |\alpha|^{-\frac12}+ |t|^{-1}\right)
   |\alpha|^{-\frac12} (\|(w,r)\|_{L^2} + \|(g,k)\|_{L^2})
\\ 
\lesssim  & \ |\alpha|^{-\frac12} (\|(w,r)\|_{L^2} + \|(g,k)\|_{L^2}),
\end{split}
\]
which suffices for \eqref{point0}. 

Finally, the bound \eqref{pointk} is obtained exactly as in (i) by
combining the last computation with the trivial pointwise bound for
$\partial^k(w_\ell,r_\ell)$ obtained from Bernstein's inequality
\eqref{bernstein}. Separating the contributions from cases B1, B2 and
B3 we obtain
\[
|\partial^k (w_{>1},r_{>1})(\alpha)| \lesssim I +II+III,
\]
where by \eqref{point-out} and \eqref{bernstein} we have
\[
\begin{split}
I = & \   \sum_{2^\ell = 1}^{\alpha^{-2} t^{2}}   \min \{ 2^{(k+1) \ell} |t|^{-1} (\|(w,r)\|_{H^{-1}}+ \|(g,k)\|_{H^{-1}}),
2^{-(k+\frac32) \ell}  \|(w,r)\|_{\dot H^{2k+2}}\} \\ \lesssim & \ 
|t|^{-\frac12}  |t|^{-\frac{1}{8k+10}} (\|(w,r)\|_{H^{-1}}+ \|(g,k)\|_{H^{-1}}+
 \|(w,r)\|_{\dot H^{2k+2}})
\end{split}
\]
by \eqref{point-med} we have 
\[
II =\alpha^\frac12 |t|^{-1}  (\|(w,r)\|_{H^{-1}} + \|(g,k)\|_{H^{-1}})  \|(w,r)\|_{\dot H^{2k+2}}^\frac12,
\]
and by \eqref{point-out} and \eqref{bernstein} we have
\[
\begin{split}
III = & \ \sum_{2^\ell = \alpha^{-2} t^2}^\infty \min\{ 2^{(k+\frac{1}2)\ell} |\alpha|^{-1} (\|(w,r)\|_{H^{-1}} + \|(g,k)\|_{H^{-1}}),
2^{-(k+\frac32) \ell}  \|(w,r)\|_{\dot H^{2k+2}}\} 
\\ 
\lesssim & \ |\alpha|^\frac12 |t|^{-1}  (\|(w,r)\|_{H^{-1}} + \|(g,k)\|_{H^{-1}})^\frac12 \|(w,r)\|_{\dot H^{2k+2}}^\frac12
\end{split}
\]

\end{proof}

\appendix

\section{Holomorphic equations}
\label{holom-eq}

 In this section we give an alternative derivation  for the  evolution equations for
water waves in conformal coordinates. They were first obtained in \cite{ov}, and also later in \cite{zakharov, zakharov2} but using a different set up. We use a holomorphic form of
the equations, as in \cite{zakharov, zakharov2}, but we compactify the equations even more, as we will show below. We also express the normal
derivative of the pressure on the boundary in terms of our variables.

We consider two-dimensional, irrotational gravity water waves in an
inviscid, incompressible fluid of infinite depth.  First, we discuss
the localized case on $\mathbb{R}$ in which the waves decay at infinity. The
spatially periodic case is almost identical, and we describe the
appropriate modifications afterwards.

\subsection{Holomorphic coordinates}
Suppose that at time $t$ the fluid occupies a spatial region $\Omega(t)\subset
\mathbb{R}^2$ whose simple nondegenerate boundary
$\Gamma(t)= \partial\Omega(t)$ approaches $y=0$ at infinity. 
 Then there  is a unique conformal map 
$\con(t) : \Ha \to \Omega(t)$ from the lower half-plane 
\[\Ha =
\left\{\alpha+i\beta : \beta < 0\right\}
\]
 onto $\Omega(t)$, with $x =
x(t,\alpha,\beta)$ and $y = y(t,\alpha,\beta)$, such that $z = x + i
y$ satisfies
\[
 z -  (\alpha + i \beta) \to 0 \qquad \text{as} \ \ \alpha + i\beta \to \infty.
\]

Since $\con(t)$ is conformal, we have
$x_\alpha = y_\beta$, $x_\beta = - y_\alpha$.
If $f(t,\cdot) : \Omega(t) \to \Cx$ is a time-dependent spatial function and
$g(t,\cdot) = f(t,\cdot)\circ \con(t) : \Ha \to \Cx$ is the corresponding conformal function,
then $g_t = f_t + x_t f_x + y_t f_y$, so
\begin{equation}
f_t = g_t - \frac{1}{j}\left(x_\alpha x_t + y_\alpha y_t\right) g_\alpha
-\frac{1}{j}\left( x_\beta x_t + y_\beta y_t\right)g_\beta,
\quad
j =x_\alpha^2 + y_\alpha^2.
\label{tder}
\end{equation}
Also, if $f + ig :\Ha \to \Cx$ is a holomorphic function
with boundary value $F + iG$ on the real axis $\beta=0$ that vanishes at infinity, then
$F = \Hi G$
where the Hilbert transform $\Hi$ is defined by
\[
\Hi f(\alpha) = \frac{1}{\pi} \mathrm{p.v.} \int_{-\infty}^\infty \frac{f(\alpha')}{\alpha-\alpha'} \, d\alpha',
\qquad
\Hi e^{ik\alpha} = -i (\sgn k) e^{ik\alpha}.
\]
We denote by $P = \frac{1}{2}\left(\Id - i\Hi\right)$ the projection onto boundary values of functions that are holomorphic in the lower
half-plane and vanish at infinity. That is, $P$ projects functions onto their negative wavenumber components.

\subsection{Water waves in holomorphic coordinates}
Let $\phi : \Omega(t) \to \mathbb{R}$ be the spatial velocity potential of the fluid, chosen so that it vanishes at infinity,
and $\psi = \phi\circ \con : \Ha \to \mathbb{R}$ the
corresponding conformal velocity potential,
\[
\psi(t,\alpha,\beta) = \phi\left(t, x(t, \alpha,\beta), y(t, \alpha,\beta)\right).
\]
Then $\psi$ is harmonic since $\phi$ is harmonic; we denote the conjugate function of $\psi(t,\alpha,\beta)$
by $\theta(t,\alpha,\beta)$.
The velocity components of the fluid $(u,v) = (\phi_x,\phi_y)$
are given in terms of $\psi$ by
\begin{equation}
u = \frac{1}{j}\left(x_\alpha\psi_\alpha + x_\beta \psi_\beta\right),
\quad
v = \frac{1}{j}\left(y_\alpha\psi_\alpha + y_\beta \psi_\beta\right).
\label{uveq}
\end{equation}

The conformally parametrized
equation of the free surface $\Gamma(t)$ is $x = X(t,\alpha)$, $y = Y(t,\alpha)$, where
$X(t,\alpha) = x(t,\alpha,0)$, $Y(t,\alpha) = y(t,\alpha,0)$.

To avoid any confusions, we emphasize that the variable $Y$ used through this section has a different meaning than elsewhere in the paper; it is the vertical component of the parametrized  free surface $Z(t,\alpha)$.

Since $(x-\alpha)+i(y-\beta)$
is holomorphic in the lower half-plane and vanishes at infinity, we have
\begin{equation}
X = \alpha + \Hi Y,\qquad  Y = - \Hi (X-\alpha).
\label{XYhilb}
\end{equation}
Let $\pot(t,\alpha) = \psi(t,\alpha,0)$ denote
the boundary value of the conformal velocity potential
and $\hpot(t,\alpha) = \theta(t,\alpha,0)$ the conjugate function,
where $\hpot = -\Hi \pot$ and
\begin{equation}
\left.\psi_\beta\right|_{\beta=0} = \Hi \pot_\alpha = -\hpot_\alpha.
\label{psieq}
\end{equation}
After these preliminaries, we transform the spatial boundary conditions for water-waves into conformal
coordinates.

\emph{Kinematic BC.}
A spatial normal to the free surface $\Gamma$ is $(-Y_\alpha,X_\alpha)$. The kinematic BC, that the normal
component of the velocity of the free surface is equal to the normal component of the fluid velocity, is
\[
\left(X_t, Y_t\right)\cdot (-Y_\alpha,X_\alpha) = \left(u, v\right)\cdot (-Y_\alpha,X_\alpha)
\qquad\mbox{on $\Gamma(t)$}.
\]
Using (\ref{uveq}) and (\ref{psieq}) in this equation and simplifying the result, we get
\begin{equation}
X_\alpha Y_t - Y_\alpha X_t = -\hpot_\alpha.
\label{kinBC1}
\end{equation}
In addition, the function $z_t/z_\alpha$
is holomorphic in $\Ha$ and decays at infinity, so the real part of its boundary value on the real axis
is the Hilbert transform of its imaginary part. After the use of (\ref{kinBC1}), this gives
the equation
\begin{equation}
X_\alpha X_t + Y_\alpha Y_t = -J \Hi\left[\frac{\hpot_\alpha}{J}\right].
\label{kinBC2}
\end{equation}
Solving (\ref{kinBC1})--(\ref{kinBC2}) for $X_t$, $Y_t$, we get
an expression for the velocity of a conformal point on the free surface
\begin{equation}
X_t = -\Hi\left[\frac{\hpot_\alpha}{J}\right] X_\alpha + \frac{\hpot_\alpha}{J} Y_\alpha,
\quad
Y_t = -\frac{\hpot_\alpha}{J} X_\alpha - \Hi\left[\frac{\hpot_\alpha}{J}\right] Y_\alpha.
\label{kinBC}
\end{equation}

\emph{Dynamic BC.}
Bernoulli's equation for the pressure $p$ in the fluid, with gravitational acceleration $g=1$, is
\begin{equation}
\phi_t + \frac{1}{2}|\nabla\phi|^2 + y + p = 0.
\label{bernoulli_eq}
\end{equation}
The arbitrary function of $t$ that may appear in this equation is zero since we assume that
$\phi$ vanishes at infinity and $p=0$ on the free surface which approaches $y=0$. 
The spatial form of the dynamic BC, without surface tension, is
\[
\phi_t + \frac{1}{2}|\nabla \phi|^2 + y = 0
\qquad\mbox{on $\Gamma(t)$}.
\]
Using (\ref{tder}) to compute $\phi_t$, evaluating the result at $\beta=0$, and using (\ref{psieq})--(\ref{kinBC2}), we find that
\[
\left.\phi_t\right|_{\beta=0} =  \pot_t + \Hi\left[\frac{\hpot_\alpha}{J}\right] \pot_\alpha
-\frac{1}{J}\hpot_\alpha^2.
\]
We also have
\[
\frac{1}{2}\left.|\nabla \phi|^2\right|_{\beta=0} 
= \frac{1}{2J} \left( \pot_\alpha^2 + \hpot_\alpha^2\right).
\]
Hence, the dynamic BC in conformal variables is
\begin{equation}
\pot_t + \Hi\left[\frac{\hpot_\alpha}{J}\right] \pot_\alpha
+ \frac{1}{2J} \left(\pot_\alpha^2 - \hpot_\alpha^2\right) + Y = 0.
\label{dynBC}
\end{equation}

To put these equations in holomorphic form, we define
\begin{equation}
Z = X + i Y,\quad Q = \pot + i\hpot,\quad F = P\left[\frac{Q_\alpha - \bar{Q}_\alpha}{J}\right],
\quad J = |Z_\alpha|^2.
\label{cvar}
\end{equation}
Then $Z$, $Q$, $F$ are the boundary values of functions that are holomorphic in the lower half-plane, and
$P[Z-\alpha] = Z-\alpha$, $P Q = Q$. The kinematic BC (\ref{kinBC}) is equivalent to
\begin{equation}
Z_t + F Z_\alpha = 0.
\label{heq1}
\end{equation}
Applying the holomorphic projection $P$
to the dynamic BC (\ref{dynBC}), using
Hilbert transform identities, and simplifying the result, we get that
\begin{equation}
Q_t + F Q_\alpha +  P\left[\frac{|Q_\alpha|^2}{J}\right] = i\left( Z-\alpha\right),
\qquad J = |Z_\alpha|^2.
\label{heq2}
\end{equation}
Thus, the holomorphic equations are (\ref{heq1})--(\ref{heq2}).

\subsection{ The normal derivative of the pressure}

In this subsection, for comparison purposes, we compute the normal derivative of the pressure in terms of our variables. This played a role in the subject  as the Taylor
sign condition 
\[
\dfrac{\partial p}{\partial n}_{| \Gamma_t} < 0
\]
was identified as necessary for the well-posedness of the water wave equation, see \cite{taylor}.  In our context this is automatically satisfied, see  the discussion at the end of this section.  

 From Bernoulli's equation (\ref{bernoulli_eq})  we have that
\[
-\frac{\partial p}{\partial n} = - \frac{1}{J} \left.p_\beta\right|_{\beta=0} =
\frac{1}{J}\left.\partial_\beta \left(\phi_t + \frac{1}{2}|\nabla \phi|^2 + y\right)\right|_{\beta=0}.
\]
Converting $\beta$-derivatives to $\alpha$-derivatives and using the evolution equations, we find that
\begin{align*}
\left.\partial_\beta \phi_t\right|_{\beta=0} &=\partial_\alpha\left[-\Theta_t
+\frac{\Psi_\alpha}{J}
\left(X_\alpha Y_t - Y_\alpha X_t\right)
 + \frac{\Theta_\alpha}{J} \left(X_\alpha X_t + Y_\alpha Y_t\right)\right]
\\
&= - \partial_\alpha\left[\Theta_t + \Theta_\alpha\Hi\left(\frac{\Theta_\alpha}{J}\right) + \frac{\Psi_\alpha\Theta_\alpha}{J}\right]
\\
&= - \partial_\alpha\left[\Hi\left(\frac{\Psi_\alpha^2 + \Theta_\alpha^2}{2J}\right) + X - \alpha\right].
\end{align*}
Since $\left.y_\beta\right|_{\beta=0} = X_\alpha$ and
\[
\left.|\nabla\phi|^2 \right|_{\beta=0} = \frac{\Psi_\alpha^2 + \Theta_\alpha^2}{J},
\]
we find that
\[
-J\frac{\partial p}{\partial n} = 1 +\frac{1}{2} \left.\left(\partial_\beta - \Hi\partial_\alpha\right)|\nabla\phi|^2 \right|_{\beta=0}.
\]

To put this equation in holomorphic form, we introduce
\[
r = \frac{q_\alpha}{z_\alpha},\qquad \partial = \frac{1}{2}\left(\partial_\alpha - i\partial_\beta\right),
\quad \bar{\partial} = \frac{1}{2}\left(\partial_\alpha + i\partial_\beta\right),
\]
where $|\nabla\phi|^2 = r \bar{r}$ and $\left.r\right|_{\beta=0} = R$ is defined in (\ref{defR}). Then,
using the fact that $\partial r = r_\alpha$ and $\bar{\partial} r = 0$, we get that
\[
-J\frac{\partial p}{\partial n} = 1 + i\left[\bar{P}(\bar{R} R_\alpha) - P(R \bar{R}_\alpha)\right] = 1 + a,
\]
where $a$ is defined in (\ref{defa}). Comparing this result with Wu
\cite{wu}, we see that up to Jacobian factors, our $1+a$ is
proportional to her $\mathfrak{a}$. Moreover, as shown in \cite{wu2} under the assumption of non-self intersecting boundary,
we have $a \ge 0$.  A shorter alternate proof of this fact is provided 
in our Lemma~\ref{regularity for a}; further we impose no condition on the self intersections of the curve $Z(t, \alpha)$.

\subsection{The periodic case}
In the spatially periodic case, the map $\con(t)$ is uniquely
determined by the requirement that the holomorphic function
$z(t,\alpha,\beta) - (\alpha + i\beta)$ is a periodic function of
$\alpha$, whose real part approaches zero\footnote{The imaginary part
  need not have zero mean even if the average height stays equal to
  zero.} as $\beta \to -\infty$.  It follows that $\Re Z(t,\alpha) -
\alpha$ has zero mean with respect to $\alpha$, otherwise the
holomorphic function would have nonzero limit.

In the original coordinates, the velocity field $u+iv$ is holomorphic,
periodic and bounded. Thus it has a limit $u_0+ iv_0$ as $\beta \to
\infty$.  Further, this limit is independent of time. By extension, in the holomorphic coordinates $Q_\alpha$ also
has a limit $u_0+ iv_0$ as $b \to \infty$.  Thus, we can normalize $Q$ 
by setting 
\[
Q = Q_0 + (u_0+iv_0)(\alpha + i\beta)  + c(t),
\]
where $Q_0$ is periodic with average zero, and $c(t)$ is a real
normalization constant needed for Bernoulli's law. One could continue the computations using $u_0$ and $v_0$ as constants of motion, but 
this is not needed because we can factor them out using a Galilean
transformation. From here on we set them equal to zero.

The relation \eqref{kinBC1} rests unchanged,
\begin{equation}
X_\alpha Y_t - Y_\alpha X_t = -\hpot_\alpha.
\label{kinBC1p}
\end{equation}
In integrated form this expresses the conservation of mass.
Consider  now both terms divided by $J$, then this becomes
\[
\Im \left( \frac{z_t}{z_\alpha}\right) = -\frac{\hpot_\alpha}{J} \qquad \text{on} \ \ \beta = 0.
 \]
 The function on the left is holomorphic and its real part has limit zero as $\beta$
 goes to infinity, so we can get its real part using the Hilbert
 transform.  Hence \eqref{kinBC2} also holds, and \eqref{kinBC}
 follows.  Similarly, the derivation of \eqref{dynBC} remainss unchanged.

To put the equations \eqref{kinBC} and \eqref{dynBC} in holomorphic form we 
keep the definition of  the operator $P$ as 
\[
P = \frac12(I-iH)
\]
even though it is no longer a projector, as it selects exactly half of the zero mode.
With the same notations as in \eqref{cvar}, the equations \eqref{heq1} and \eqref{heq2}
remains unchanged. 

Concerning the balance of averages in these two equations, we remark
that in \eqref{heq1} the terms $\Re Z_t$ and $F$ have purely imaginary
averages while $Z_\alpha$ has average $1$. The nontrivial average here
is that of $F$, which contributes to the motion of nonzero
frequencies. In the second equation \eqref{heq2}, the real part of the average of $Q$
is nonzero due to the integrating constant in Bernoulli's law;
however this plays a trivial role, as it does not affect any of the remaining equations.
Further, $R$ has no zero modes.

All equations in the first section of the paper remain unchanged, most
importantly the expressions for the frequency shift $a$, the advection velocity 
$b$ and the auxiliary function $M$.  Further, all estimates in Lemmas~\ref{regularity for a},
~\ref{l:b},~\ref{l:M} remain unchanged; only the zero mode estimates need to be added,
and those are straightforward. The normal form transformation also remains valid.

We next consider the linearized equations. The derivation of \eqref{lin(wr)0} 
is purely algebraic, so it stays unchanged. We remark that 
the average of $w$ is purely imaginary, while the average of $r$ is the same as the 
average of $q$ and is purely real.

There is some choice to be made when writing the projected equations \eqref{lin(wr)}.
The operator $P$ defined as above is no longer a projector, so we can no longer 
use it directly. The new question that arises here is how we treat the zero modes. 
For that we introduce some variants of $P$ which differ in how the zero modes are treated.
Defining $P_0$ as the projection onto the zero modes, we define the projectors 
\[
P^\sharp = P - \frac12 P_0, \qquad P^r = P^\sharp + \Re P_0, \qquad P^i = P^\sharp + i \Im P_0,
\]
and similarly $\bar P^\sharp$, $\bar P^r$ and $\bar P^i$. We have the relations
\[
P = P^i + \bar P^r = P^r + \bar P^i = P^\sharp + \bar P^\sharp + P_0, \qquad 
P^i \bar P^r = P^r \bar P^i = 0, \qquad P^i = -i P^r i.
\]
With these notations, it is natural to project the first equation
using $P^i$, and the second using $P^r$.  Thus instead of \eqref{lin(wr)} we write
\begin{equation}\label{p-lin(wr)}
\left\{
\begin{aligned}
& (\partial_t + P^i b \partial_\alpha) w  + P^i \left[ \frac{1}{1+\bar \W} r_\alpha\right]
+  P^i \left[ \frac{R_{\alpha} }{1+\bar \W} w \right] = P^i \mathcal{G}(  w, r),
 \\
&(\partial_t + P^r \partial_\alpha)  r  - i P^i\left[ \frac{1+a}{1+\W} w\right]  =
 P^r \mathcal{K}( w,r).
\end{aligned}
\right.
\end{equation}
The quadratic part of $\mathcal G$ has real average, and  $\mathcal K$ has 
imaginary average, both of which get projected out. So 
\[
P^i \mathcal G^{(2)} = P^\sharp[ R \bar w_\alpha - \W \bar r_\alpha] , 
\qquad P^i \mathcal K^{(2)} = - P^\sharp[ R \bar r_\alpha] .
\]
After a similar modification in \eqref{lin(wr)inhom},
the statement and the proof of Proposition~\ref{plin-short} remain largely unchanged;
the difference is that $P$ gets replaced by $P^i$ in $err_1$.

Moving on to the cubic estimates, the equation \eqref{lin(wr)inhom3} is replaced by 
\begin{equation}\label{p-lin(wr)inhom3}
\left\{
\begin{aligned}
& (\partial_t + P^i b \partial_\alpha) w  + P^i \left[ \frac{1}{1+\bar \W} r_\alpha\right]
+  P^i \left[ \frac{R_{\alpha} }{1+\bar \W} w \right] =
P^\sharp[ R \bar w_\alpha - \W \bar r_\alpha] +G,
 \\
&(\partial_t + P^r \partial_\alpha)  r  - i P^i\left[ \frac{1+a}{1+\W} w\right]  =
- P^\sharp[ R \bar r_\alpha] +K.
\end{aligned}
\right.
\end{equation}
Also, the cubic energy needs to be modified. Precisely, the zero modes
of $w$ and $r$ do not affect the quadratic terms on the right hand
side above. Hence, the cubic energy correction should not involve these zero modes
either,
\begin{equation}\label{p-elin3}
\Elint(w,r) = \int_{\R} (1+a) |w|^2 + \Im (r \bar  r_\alpha)
+ 2 \Im (\bar R w^\sharp r_\alpha) -2\Re(\bar{\W} (w^\sharp)^2)\
d\alpha.
\end{equation}
With this modification, the result in Proposition~\ref{plin-long} remains valid, and 
the proof applies with minor modifications.

The higher energies in the periodic case are even more similar to the nonperiodic case,
since differentiation eliminates the zero modes, and the frequency localization 
is achieved using only  $P^\sharp$ and $\bar P^\sharp$.

An alternative to the above scheme is to select just the negative wave
numbers in the linearized equation. Then we lose the evolution of the
average of the imaginary part of $w$.  This is not so significant
since we have the conservation of mass relation
\[
 \int  Y X_\alpha d\alpha = const,
\]
where the (time independent) constant on the right can be set
arbitrarily (say to zero) by a vertical translation of the
coordinates,
\[
(Z,Q) \to (Z+ ic, Q-ct).
\]
This gives
\[
\int  Y d\alpha = i \int (\bar Z-\alpha)(Z_\alpha -1) \ d\alpha  + const,
\]
where the average of $Y$ plays no role on the right.  This shows that
also for the linearized equation, the average of $w$ is determined by
the initial data and the negative frequencies of $w$ and $W$,
\[
\int \Im w d\alpha = i  \int \bar w W_\alpha + \bar W w_\alpha \  d\alpha  + const.
\]
Again, the constant can be removed as the pair $(i,t-iR)$ solves the
linearized problem.  It follows that the contribution of the average
of $w$ to the linearized equations can be viewed as cubic and higher.


\section{ Norms and multilinear  estimates} \label{s:multilinear}

Here we prove some of the estimates used in
Section~\ref{s:linearized}, and Section~\ref{s:ee}.  We use a standard 
 Littlewood-Paley decomposition in frequency
\begin{equation*}
1=\sum_{k\in \mathbf{Z}}P_{k},
\end{equation*}
where the multipliers $P_k$ have smooth symbols localized at frequency $2^k$.

A good portion of our analysis happens at the level of homogeneous
Sobolev spaces $\dot{H}^{s}$, whose norm is given by
\begin{equation*}
\Vert f\Vert_{\dot{H}^{s}}\sim \Vert ( \sum_{k}\vert 2^{ks} P_{k}f\vert^2 )^{1/2}  \Vert _{L^2}=
\| 2^{ks} P_k f \|_{L^2_x \ell^2_k}.
\end{equation*}
We will also use  the Littlewood-Paley
square function and its restricted version,
\begin{equation*}
\displaystyle S(f)(x):=\bigg( \sum_{k\in {\mathbf Z}} |P_k(f)(x)|^2\bigg)^\frac{1}{2}, \qquad S_{>k}(u) = ( \sum_{j > k} |P_j u|^2 )^\frac12.
\end{equation*}
The Littlewood-Paley inequality  is recalled below
\begin{equation}\label{lp-square}
   \displaystyle \|S(f)\|_{L^p({\mathbf R})}\simeq_{p} \|f\|_{L^p({\mathbf R})}, \qquad  1<p<\infty.
   \end{equation}
By duality this also yields the estimate
\begin{equation}
\label{useful}
\Vert \sum_{k\in \mathbf{Z}}P_{k}f_{k}\Vert_{L^p}\lesssim
\Vert \sum_{k\in \mathbf{Z}}(\vert f_{k}\vert ^2)^{1/2}\Vert_{L^p}, \qquad 1 < p < \infty.
\end{equation}

The $p= 1$ version of the above estimate for the Hardy space $ H_1$ is
\begin{equation}\label{h1-square}
\Vert f\Vert_{H_1}\simeq \| S(f)  \|_{L^1_x \ell^2_k},
\end{equation}
which by duality implies the BMO bound
\begin{equation}
\label{useful-bmo}
\Vert \sum_{k\in \mathbf{Z}}P_{k}f_{k}\Vert_{BMO}\lesssim
\Vert S(f)\Vert_{L^\infty}.
\end{equation}
The square function characterization of BMO is slightly different,
\begin{equation}\label{bmo-square}
\| u\|_{BMO}^2 \approx \sup_k \sup_{|Q|=2^{-k}} 2^k \int_Q |S_{>k} (u)|^2 \, dx.
\end{equation}
We will also need the maximal function bound
\begin{equation}
\label{useful-max}
\Vert P_{< k}f \Vert_{L^2_x L^\infty_k}\lesssim  \|f \|_{L^2} , \qquad 1 < p < \infty.
\end{equation}

\subsection{Coifman-Meyer and and Moser type estimates.}

In the context of bilinear estimates a standard tool is to consider a Littlewood-Paley 
paraproduct type decomposition of the product of two functions,
\[
 f g = \sum_{k \in \Z} f_{<k-4} g_k +  \sum_{k \in \Z} f_{k} g_{<k-4} + \sum_{|k-l| \leq 4}
f_k g_l := T_f g + T_g f + \Pi(f,g).
\]
Here and below we use the notation $f_k = P_k f$, $f_{<k} = P_{<k} f$, etc.
By a slight abuse of notation, in the sequel we will omit the frequency separation 
from our notations in bilinear Littlewood-Paley decomposition; for instance instead of the 
above formula we will use the shorter expression
\[
  f g = \sum_{k \in \Z} f_{<k} g_k +  \sum_{k \in \Z} f_{k} g_{<k} + \sum_{k \in \Z} f_k g_k.
\]

Away from the exponents $1$ and $\infty$ one has a full set of estimates
\begin{equation}\label{CM}
\| T_f g\|_{L^r} + \| \Pi(f,g)\|_{L^r} \lesssim \|f\|_{L^p} \|g\|_{L^q}, \qquad \frac{1}r = \frac{1}{p} +
\frac{1}{q}, \qquad 1 < p,q,r < \infty.
\end{equation}
Corresponding to $q = \infty$ one also  has a BMO estimate 
\begin{equation}\label{CM-BMO}
\| T_f g\|_{L^p} + \| \Pi(f,g)\|_{L^p} \lesssim \|f\|_{L^p} \|g\|_{BMO},  \qquad 1 < p < \infty,
\end{equation}
which in turn leads to the commutator bound
\begin{equation}\label{CM-com}
\|[P, g] f \|_{L^p}  \lesssim \|f\|_{L^p} \|g\|_{BMO},  \qquad 1 < p < \infty.
\end{equation}
For $p = 2$ we also need an extension of this, namely

\begin{lemma}\label{l:com}
 The following commutator estimates hold:
\begin{equation}  \label{first-com}
\Vert |D|^s \left[ P,R\right] |D|^\sigma w \Vert _{L^2}\lesssim \Vert
  |D|^{\sigma+s} R\Vert_{BMO} \Vert w\Vert_{L^2}, \qquad \sigma \geq 0, \ \ s \geq 0,
\end{equation}
\begin{equation}\label{second-com}
\Vert |D|^s  \left[ P,R\right] |D|^\sigma w \Vert _{L^2}\lesssim \Vert
  |D|^{\sigma+s}  R\Vert_{L^2} \Vert w\Vert_{BMO}, \qquad  \sigma > 0, \ \ \, s \geq 0.
\end{equation}
\end{lemma}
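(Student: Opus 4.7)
The plan is to decompose the commutator using a Littlewood--Paley paraproduct of $R \cdot |D|^\sigma w$ into three pieces organized by the relative magnitudes of the frequencies of $R$ and $w$: low-high ($R$ low frequency), balanced, and high-low ($R$ high frequency). The key simplification is that the low-high piece cancels identically: when $R_{<k-5}$ is localized at frequencies much smaller than $2^k$ and $w_k$ at frequency $\sim 2^k$, the product $R_{<k-5}\,w_k$ has Fourier support in the same half-line as $w_k$, so $P$ commutes with multiplication by $R_{<k-5}$ on $w_k$, yielding
\[
[P, T_R]|D|^\sigma w = \sum_k [P, R_{<k-5}](|D|^\sigma w)_k = 0.
\]

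For the surviving balanced and high-low pieces I would apply the algebraic identity $[P, R_j] u = (PR_j)(\bar P u) - (\bar P R_j)(Pu)$, which exposes the fact that the commutator only captures the cross interactions between holomorphic and antiholomorphic frequency components. In both regimes the output is Fourier-concentrated at the larger frequency, which sits on $R$, so writing $|D|^s R_j \sim 2^{sj} R_j$ and $|D|^\sigma w_k \sim 2^{\sigma k} w_k$ and substituting $\tilde R := |D|^{s+\sigma} R$ reduces the problem to controlling bilinear expressions of the form $\sum_j \tilde R_j w_{j-m}$ for $m \geq 0$, weighted by a geometric factor $2^{-\sigma m}$ arising from the mismatch between the derivatives extracted and those actually available. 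For estimate \eqref{first-com}, each such expression is then controlled by the Bony paraproduct theorem $\|T_{\tilde R} w\|_{L^2} \lesssim \|\tilde R\|_{BMO} \|w\|_{L^2}$, which rests on the square-function characterization \eqref{bmo-square} of BMO together with the Littlewood--Paley identity \eqref{lp-square}; the sum in $m$ converges whenever $\sigma > 0$.

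For the borderline case $\sigma = 0$ in \eqref{first-com}, the paraproduct approach loses the geometric decay, and I would instead invoke the classical Coifman--Rochberg--Weiss commutator theorem $\|[H, R] w\|_{L^2} \lesssim \|R\|_{BMO} \|w\|_{L^2}$ as an independent input, combined with a Kato--Ponce-type fractional Leibniz rule to redistribute the outer $|D|^s$. Estimate \eqref{second-com} is proved by the dual pairing: with the BMO norm now carried by $w$ rather than $R$, one uses the companion paraproduct bound $\|T_f g\|_{L^2} \lesssim \|f\|_{L^2}\|g\|_{BMO}$ which applies when the BMO factor sits at low frequency; the strict condition $\sigma > 0$ enters precisely to ensure convergence of $\sum_m 2^{-\sigma m}$, since no analogue of Coifman--Rochberg--Weiss is available in this dual setting.

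The main obstacle will be the careful bookkeeping of where derivatives migrate after the paraproduct decomposition and the verification of summability in $m$ uniformly in $j$, handling the BMO endpoint via the square-function characterization rather than a naive $L^\infty$ bound (BMO $\subsetneq B^0_{\infty,\infty}$ would not suffice). The endpoint $\sigma = 0$ of \eqref{first-com} is genuinely outside the paraproduct framework and depends on the classical Hilbert-transform commutator theorem as a black box.
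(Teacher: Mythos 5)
Your overall architecture is the same as the paper's: a bilinear Littlewood--Paley decomposition, the observation that the piece with $R$ at the lower frequency commutes with $P$ and drops out, and direct estimation of the balanced and high--low pieces using only frequency-localized $L^\infty$/BMO information on $|D|^{\sigma+s}R$, with the genuine endpoint $\sigma=s=0$ of \eqref{first-com} delegated to the classical commutator theorem (the paper cites the Coifman--Meyer bound \eqref{CM-com}; since $P=\frac12(I-iH)$ this is the same input as your Coifman--Rochberg--Weiss citation). Two side remarks are imprecise but harmless: the identity $[P,R_j]u=(PR_j)(\bar P u)-(\bar P R_j)(Pu)$ should carry outer projections, namely $[P,R_j]u=P[(PR_j)(\bar P u)]-\bar P[(\bar P R_j)(Pu)]$; and the balanced piece is \emph{not} concentrated at the high frequency --- $R_k w_k$ spreads over all output frequencies $\lesssim 2^k$ --- though for \eqref{first-com} this only helps, since the outer $|D|^s$ then costs $2^{sj}\le 2^{sk}$ for output frequency $2^j$, $j\le k$ (summable for $s>0$, and for $s=0$ one uses the balanced-frequency bound \eqref{CM-BMO} or the endpoint theorem anyway).

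The one step that would fail as written is your handling of $\sigma=0$, $s>0$ in \eqref{first-com} via CRW plus a ``Kato--Ponce redistribution'' of the outer $|D|^s$: a fractional Leibniz rule applies to products, not to commutators, and any naive redistribution generates terms of the form $[P,R]\,|D|^s w$, i.e.\ requires $\||D|^s w\|_{L^2}$, which is not at your disposal. But the detour is unnecessary: the $2^{-\sigma m}$ loss in your $m$-sum arises only from transferring $\sigma$ derivatives from $w$ onto $R$, and when $\sigma=0$ there is nothing to transfer. The surviving high--low piece is then exactly $\sum_k (|D|^{s}R)_k\, w_{<k}$ (the outer $|D|^s$ acts at output frequency $\sim 2^k$ and lands on $R_k$), a single paraproduct with the BMO symbol in the high-frequency slot, bounded on $L^2$ by the Carleson/square-function argument \eqref{bmo-square} you already invoke --- there is no need to split $w_{<k}$ into dyadic pieces, hence no divergent sum in $m$. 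With that correction your scheme covers the full range $\sigma\ge 0$, $s\ge 0$ of \eqref{first-com}, and your account of \eqref{second-com} --- paying for Besov-type $L^\infty$ bounds on the BMO factor with the $2^{-\sigma m}$ gain, whence the strict condition $\sigma>0$ --- matches the paper's (brief) treatment of that estimate.
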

We remark that later this is applied to functions which are holomorphic/antiholomorphic,
but that no such assumption is made above.
\begin{proof}
  If $\sigma = s = 0$ then \eqref{first-com} is the classical commutator
  estimate of Coifman and Meyer \eqref{CM-com}, so we take $\sigma + s
> 0$. We
  consider the usual paradifferential decomposition, and observe that
  the expression $\left[ P,R\right] |D|^\sigma w$ vanishes if the
  frequency of $w$ is much larger than the frequency of $R$. For the
  remaining frequency balances we discard $P$, and we are left with
  having to estimate the expressions
\[
f_{hh} = \sum_k 2^{(\sigma+s) k} (2^{-k}|D|)^s (R_k w_k), \qquad f_{hl} =   \sum_k 2^{(\sigma+s) k} R_k
2^{-\sigma k} |D|^\sigma  w_{<k}.
\]
In the term $f_{hh}$ the $\sigma$ derivatives are already moved to
$R$, so this is bounded using \eqref{CM-BMO} if $s = 0$, and directly
if $s > 0$. For the remaining part we only need the infinity Besov
norm of $R_k$, as
\[
\begin{split}
\| f_{hl}\|_{L^2}^2 \lesssim & \ \sum_k \| 2^{(\sigma+s) k} R_k \|_{L^\infty}^2
\| 2^{-\sigma k} |D|^\sigma  w_{<k}\|_{L^2}^2
\lesssim  \sup_k \| 2^{(\sigma+s) k} R_k \|_{L^\infty}^2
\sum_k \| 2^{-\sigma k} |D|^\sigma  w_{<k}\|_{L^2}^2
\\ \lesssim & \ \Vert
  |D|^{\sigma+s} R\Vert_{BMO}^2 \Vert w\Vert_{L^2}^2.
\end{split}
\]
The proof of \eqref{second-com} is similar.
\end{proof}

Next we consider some similar product type estimates involving $BMO$ and
$L^\infty$ norms. We define 
\[
\| u\|_{BMO^\frac12}  = \| |D|^\frac12 u\|_{BMO}.
\]
Then
\begin{proposition}\label{p:bmo}
a) The following estimates hold:
\begin{equation}\label{bmo-bmo}
\| \sum_k    u_k v_k \|_{BMO} \lesssim \| u\|_{BMO} \|v\|_{BMO},
\end{equation}
\begin{equation}\label{bmo=infty}
\| \sum_k  (2^{-k}|D|)^\sigma(  u_k v_k)\|_{BMO} \lesssim \| u\|_{BMO} \|v\|_{\dot B_{\infty,\infty}^0}, \qquad
\sigma >  0,
\end{equation}
\begin{equation}\label{bmo-infty}
\| \sum_k  u_{<k} v_k\|_{BMO} \lesssim \| u\|_{L^\infty} \|v\|_{BMO},
\end{equation}
\begin{equation}\label{bmo>infty}
\| \sum_k  (2^{-k}|D|)^\sigma (u_{<k} v_k)\|_{BMO} \lesssim \| u\|_{\dot B_{\infty,\infty}^0 } \|v\|_{BMO},
\qquad \sigma > 0.
\end{equation}

b) The space $L^\infty \cap BMO^{\frac12}$ is an algebra,
\begin{equation}\label{bmo-alg}
\| uv\|_{BMO^{\frac12}} \lesssim \| u\|_{L^\infty} \|v\|_{BMO^\frac12}+
 \| v\|_{L^\infty} \|u\|_{BMO^\frac12},
\end{equation}

c) The following Moser estimate holds for a smooth function $F$:
\begin{equation}\label{bmo-moser}
\| F(u)\|_{BMO^{\frac12}} \lesssim_{\|u\|_{L^\infty}} \|u\|_{BMO^\frac12},
\end{equation}
\end{proposition}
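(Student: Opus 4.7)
The plan is to prove part (a) via Littlewood--Paley analysis using the square function characterizations \eqref{useful-bmo} and \eqref{bmo-square} of $BMO$ together with duality against the Hardy space $H^1$; then deduce the algebra property (b) from Bony's paraproduct decomposition combined with (a), and obtain the Moser estimate (c) by a standard telescoping series argument.

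For (a), the high-high bound \eqref{bmo-bmo} is the most delicate since neither factor is in $L^\infty$: a naive pointwise application of \eqref{useful-bmo} would require $\sup_k |v_k(x)|$ to be finite, which $BMO$ does not provide. Instead I will dualize against $\phi \in H^1$, writing $\langle \sum_k u_k v_k, \phi\rangle = \sum_k \int u_k \cdot v_k \phi_k^\ast$ where $\phi_k^\ast$ denotes the frequency-$\sim 2^k$ piece of $\phi$; applying Cauchy--Schwarz in $k$ and using the Littlewood--Paley identity \eqref{h1-square} in $H^1$, together with the $BMO$ square function bound \eqref{bmo-square} for $v$ on the dyadic cubes entering the atomic decomposition of $\phi$, then produces the required estimate. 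The smoothed bound \eqref{bmo=infty} follows by the same scheme, where the factor $(2^{-k}|D|)^\sigma$ with $\sigma > 0$ provides frequency-localized decay that weakens one $BMO$ norm to $\dot B^0_{\infty,\infty}$. The low-high estimates \eqref{bmo-infty} and \eqref{bmo>infty} are the classical Bony paraproduct bounds, proved by combining the pointwise maximal bound \eqref{useful-max} for $u_{<k}$ with the $BMO$ square function bound for $v$, again tested on dyadic cubes.

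For (b), I apply the trichotomy $uv = T_u v + T_v u + \Pi(u,v)$ and bound each piece after inserting $|D|^{1/2}$. The low-high pieces $T_u v$, after commuting $|D|^{1/2}$ inside (the commutators being of lower order), reduce to $\sum u_{<k} (|D|^{1/2}v)_k$ and are handled by \eqref{bmo-infty}; $T_v u$ gives the symmetric bound. For $|D|^{1/2}\Pi(u,v) = \sum_k 2^{k/2}(2^{-k}|D|)^{1/2}(u_k v_k)$, I absorb the factor $2^{k/2}$ into either $u_k$ or $v_k$, converting it into $(|D|^{1/2}u)_k$ or $(|D|^{1/2}v)_k$, and apply \eqref{bmo=infty} with $\sigma = 1/2$, using that $\|w\|_{\dot B^0_{\infty,\infty}} \lesssim \|w\|_{L^\infty}$. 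For (c), the standard Moser telescoping $F(u) = \sum_k [F(u_{<k+1}) - F(u_{<k})] = \sum_k F'(\tilde u_k) u_k$, with $\|\tilde u_k\|_{L^\infty} \leq \|u\|_{L^\infty}$ and $F$ smooth, yields a low-high paraproduct to which \eqref{bmo-infty} (in a generalized form allowing $k$-dependent low-frequency factors that stay uniformly bounded in $L^\infty$) applies, giving $\|F(u)\|_{BMO^{1/2}} \lesssim_{\|u\|_{L^\infty}} \|u\|_{BMO^{1/2}}$. The main obstacle throughout is \eqref{bmo-bmo}: neither factor has pointwise control, so the proof is not a formal paraproduct manipulation but requires genuine use of $H^1$--$BMO$ duality and the cube-by-cube square function characterization of $BMO$, with careful organization of the double sum in $k$ to preserve summability.
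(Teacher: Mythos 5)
Your part (b) is fine and essentially matches the paper's argument, but there are two genuine gaps in the rest. First, in your duality proof of \eqref{bmo-bmo} you write $\langle\sum_k u_kv_k,\phi\rangle=\sum_k\int u_k v_k\,\phi_k^\ast$ with $\phi_k^\ast$ the frequency-$\sim 2^k$ piece of $\phi$. This identity is false: the high--high product $u_kv_k$ is supported at frequencies $\lesssim 2^k$, not $\sim 2^k$ (the low-frequency output is exactly what makes \eqref{bmo-bmo} nontrivial), so the pairing only localizes $\phi$ to $P_{\lesssim k}\phi$, and the subsequent Cauchy--Schwarz in $k$ against square-function pieces of $\phi$ does not go through as stated; a correct duality argument would require a genuine Carleson-measure step. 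The paper avoids this entirely by direct cube testing: fix a cube $Q$, rescaled to unit size, bound $\sum_{k>0}u_kv_k$ in $L^1(Q)$ by Cauchy--Schwarz in $k$ together with the localized square-function bound \eqref{bmo-square} applied to both $u$ and $v$, and for $k\le 0$ subtract the mean over $Q$ and use $\|\partial_\alpha(u_kv_k)\|_{L^\infty}\lesssim 2^k\|u\|_{BMO}\|v\|_{BMO}$; the same scheme, with a cutoff $\chi_Q$ and almost-orthogonality, yields \eqref{bmo=infty}--\eqref{bmo>infty}.

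Second, in part (c) you invoke ``\eqref{bmo-infty} in a generalized form allowing $k$-dependent low-frequency factors that stay uniformly bounded in $L^\infty$.'' The factors $m_k=\int_0^1F'(u_{<k}+su_k)\,ds$ are not low-frequency, and uniform $L^\infty$ bounds alone do not suffice: taking $v=N^{-1/2}\chi\sum_{1\le k\le N}e^{i2^k\alpha}$ (so $\|v\|_{BMO}\lesssim 1$) and $m_k=e^{-i2^k\alpha}$ gives $\sum_k m_kv_k\approx N^{1/2}\chi$, so no estimate by $\sup_k\|m_k\|_{L^\infty}\|v\|_{BMO}$ can hold. What rescues the telescoping argument --- and what the paper's proof of \eqref{bmo-moser} actually does --- is that $F'(u_{<k})$ has rapidly decaying frequency tails above $2^k$, because $u_{<k}$ is localized at frequencies $\lesssim 2^k$ and $F$ is smooth: one splits $F'(u_{<k})$ into $P_{<k}F'(u_{<k})$, $P_kF'(u_{<k})$, and $\sum_{j>0}P_{k+j}F'(u_{<k})$, treats the first two pieces by the \eqref{bmo-infty}- and \eqref{bmo=infty}-type arguments, and sums the third using the chain-rule bound $\|P_{k+j}F'(u_{<k})\|_{L^\infty}\lesssim 2^{-Nj}$. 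You need to state and exploit this tail decay; as written, the step fails.
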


\begin{proof}
a) For \eqref{bmo-bmo} we fix a cube $Q$, which by scaling can be taken to have size $1$.
Suppose first that $\sigma = 0$.
For $k > 0$ we use the square function estimate,
\[
\begin{split}
\|  \sum_{k > 0}  u_k v_k\|_{L^1(Q)} & \ \lesssim \ \| u_k v_k\|_{\ell^1_k L^1(Q)} \lesssim \| u_k\|_{\ell^2_k L^2(Q)} \| v_k\|_{\ell^2_k L^2(Q)}\lesssim \|S_{>0}(u)\|_{L^2(Q)}  \|S_{>0}(v)\|_{L^2(Q)}
\\ & \ \lesssim \|u\|_{BMO} \|v\|_{BMO}.
\end{split}
\]
For $k > 0$ we subtract the average and estimate the output in $L^\infty$,
\[
\| \sum_{k \leq  0}  u_k v_k  - (u_k v_k)_{Q}   \|_{L^\infty(Q)}
\lesssim \sum_{k< 0} \| \partial_\alpha (u_k v_k)\|_{L^\infty}
\lesssim \sum_{k < 0} 2^{k} \|u\|_{BMO} \|v\|_{BMO}.
\]
Adding the two we get
\[
\| \sum_{k }  u_k v_k  - (u_k v_k)_{Q}   \|_{L^1(Q)} \lesssim   \|u\|_{BMO} \|v\|_{BMO},
\]
and \eqref{bmo-bmo} follows.

The case $k \leq 0$ is similar in the proof of \eqref{bmo=infty}. For $k > 0$
we first eliminate directly the low frequency output,
\[
\| P_{< 0} \sum_{k > 0}   (2^{-k}|D|)^\sigma(  u_k v_k)\|_{BMO}
\lesssim  \sum_{k > 0} 2^{-\sigma k} \|u_k\|_{L^\infty} \|v_k\|_{L^\infty}
\lesssim \| u\|_{BMO} \|v\|_{\dot B_{\infty,\infty}^0}.
\]
For the high frequency output we consider a bump function  $\chi_Q$
adapted to $Q$, which is localized at
frequency less than $1$, and thus does not change the frequency
localization of the factors it multiplies in the sequel.
Then, we have
\[
\begin{split}
\| P_{> 0} \sum_{k > 0}   (2^{-k}|D|)^\sigma(  u_k v_k)\|_{L^2(Q)}
& \ \lesssim  \| \chi_Q P_{> 0} \sum_{k > 0}   (2^{-k}|D|)^\sigma(  u_k v_k)\|_{L^2}
\\ & \hspace{-1in}
\lesssim   \| [\chi_Q, P_{> 0} \sum_{k > 0}   (2^{-k}|D|)^\sigma](  u_k v_k)\|_{L^2}
+ \| P_{> 0} \sum_{k > 0}   (2^{-k}|D|)^\sigma( \chi_Q u_k v_k)\|_{L^2}.
\end{split}
\]
For the commutator term we gain a small power of $2^k$ so $L^\infty$ bounds
suffice. The remaining term is bounded in $L^2$ in terms of the square function
using orthogonality,
\[
\begin{split}
\| P_{> 0} \sum_{k > 0}   (2^{-k}|D|)^\sigma( \chi_Q u_k v_k)\|_{L^2}^2
\lesssim & \
\sum_{k > 0} \|  \chi_Q u_k \|_{L^2}^2 \|v_k\|_{L^\infty}^2
\lesssim \| \chi_Q S_{>0} (u) \|_{L^2}^2  \|v\|_{\dot B_{\infty,\infty}^0}
\\ \lesssim & \ \| u\|_{BMO}^2  \|v\|_{\dot B_{\infty,\infty}^0}.
\end{split}
\]

The argument for \eqref{bmo-infty} is similar, with the following modification in the case
$k >0$, which leads to $L^2$ rather than $L^1$ bounds:
\[
\begin{split}
\|\sum_{k > 0}  u_{<k} v_k\|_{L^2(Q)} \lesssim & \ \|\sum_{k > 0}  \chi_{Q} u_{<k} v_k\|_{L^2(Q)}
\lesssim \|u\|_{L^\infty} ( \sum_k \| \chi_Q v_k\|^2_{L^2(Q)})^\frac12
\\ \lesssim & \ \| u\|_{L^\infty}  \| \chi_Q S_{>0}(v)\|_{L^2(Q)}.
\lesssim \|u\|_{L^\infty} \|v\|_{BMO}.
\end{split}
\]
Finally, the bound \eqref{bmo>infty} is similar since
\[
\| (2^{-k}|D|)^\sigma u_{<k}\|_{L^\infty} \lesssim \| u\|_{\dot B_{\infty,\infty}^0}.
\]

b) With the same paradifferential decomposition as before we need to estimate
the terms
\[
f_{hh} = \sum_k |D|^{\sigma}  u_k v_k, \qquad f_{hl} =   \sum_k 2^{\sigma k} u_k
v_{<k}.
\]
For $f_{hl}$ we use \eqref{bmo-infty}, while for $f_{hh}$ we use \eqref{bmo-bmo}.

c) We write
\[
\begin{split}
F(u) = & \ \int_{-\infty}^\infty u_k F'(u_{<k})\, dk\\  = & \
\int_{-\infty}^\infty u_k P_{<k} F'(u_{<k})\, dk
+ \int_{-\infty}^\infty u_k P_k F'(u_{<k}) dk + \int_{-\infty}^\infty \sum_{j > 0} u_k P_{k+j} F'(u_{<k})\, dk.
\end{split}
\]
For $F'(u_{<k})$ we can use the chain rule to obtain
the bound
\[
\| P_{k+j} F'(u_{<k})\|_{L^\infty} \lesssim 2^{-N j}, \qquad j \geq 0.
\]
With $\sigma = \frac12$ we estimate $|D|^\sigma F(u)$.
The first term in $|D|^{\sigma} F(u) $ is
\[
f_1 = \int_{-\infty}^\infty (2^{\sigma k} u_k) P_{<k} F'(u_{<k}) \, dk,
\]
and is estimated in BMO exactly as in the proof of \eqref{bmo-infty}.

The second term is
\[
f_2 = \int_{-\infty}^\infty (2^{-k}|D|)^\sigma ( u_k P_k F'(u_{<k}))\, dk,
\]
and is estimated as in the proof of \eqref{bmo=infty}.

The last term is $\displaystyle\sum_{j > 0} f_{3,j}$, where
\[
  f_{3,j} = \int_{-\infty}^\infty 2^{\sigma
  k } u_k 2^{\sigma j} P_j F'(u_{<k}) \, dk.
\]
 The $k \leq 0$ case is easy; it follows using pointwise estimates.
For fixed $j$ and $k > 0$ we bound $f_{3,j}$ by
\[
\begin{split}
\|f_{3,j,> 0}\|_{L^2(Q)}^2 \lesssim & \  \| \chi_Qf_{3,j,> 0}\|_{L^2}^2 \lesssim
\int_{-\infty}^\infty \| \chi_Q 2^{\sigma k } u_k 2^{\sigma j}  P_j F'(u_{<k})\|_{L^2}^2 \, dk
\\ \lesssim  & \ 2^{(\sigma-N)j} \| \chi_Q S_{>0}(|D|^\sigma u)\|_{L^2}^2 \lesssim 2^{(\sigma-N)j}
\|u\|_{BMO^\sigma}^2.
\end{split}
\]
\end{proof}

A more standard algebra estimate and the corresponding Moser bound is as follows:
 \begin{lemma}
Let $\sigma > 0$. Then $\dot H^\sigma \cap L^\infty$ is an algebra, and
\begin{equation}
\| fg\|_{\dot H^\sigma} \lesssim \| f\| _{\dot H^\sigma} \|g\|_{L^\infty} +
\|f\|_{L^\infty}  \| g\| _{\dot H^\sigma}.
\end{equation}
In addition, the following Moser estimate holds for a smooth function $F$:
\begin{equation}\label{bmo-hs}
\| F(u)\|_{\dot H^\sigma } \lesssim_{\|u\|_{L^\infty}} \|u\|_{\dot H^\sigma }.
\end{equation}
\end{lemma}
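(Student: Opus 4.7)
The plan is to establish both estimates via a standard Littlewood--Paley paraproduct decomposition. Writing $fg = T_f g + T_g f + \Pi(f,g)$ with $T_f g = \sum_k f_{<k-4}\, g_k$ and $\Pi(f,g) = \sum_{|k-\ell|\leq 4} f_k g_\ell$, the low-high and high-low pieces are immediate: each dyadic summand in $T_f g$ is frequency-localized near $2^k$ with $L^2$-norm bounded by $\|f\|_{L^\infty}\|g_k\|_{L^2}$, so the square function characterization of $\dot H^\sigma$ via \eqref{lp-square} gives $\|T_f g\|_{\dot H^\sigma}\lesssim \|f\|_{L^\infty}\|g\|_{\dot H^\sigma}$, and symmetrically for $T_g f$.

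The main obstacle is the high-high piece $\Pi(f,g)$, whose output frequencies are not localized near $2^k$ but can be anywhere $\lesssim 2^k$. To bound $\|P_j \Pi(f,g)\|_{L^2}$, I would use that $P_j(f_k g_k)$ vanishes unless $k\geq j-C$, and estimate $\|P_j(f_k g_k)\|_{L^2}\lesssim \|f_k\|_{L^2}\|g\|_{L^\infty}$. Multiplying by $2^{\sigma j}$ recasts the sum over $k\geq j$ as a discrete convolution of the $\ell^2$ sequence $2^{\sigma k}\|f_k\|_{L^2}$ against the geometric kernel $2^{\sigma(j-k)}\mathbf{1}_{k\geq j}$; Young's inequality then delivers $\||D|^\sigma \Pi(f,g)\|_{L^2}\lesssim \|f\|_{\dot H^\sigma}\|g\|_{L^\infty}$. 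The hypothesis $\sigma>0$ is used crucially here to make the kernel summable, and this is the only reason the argument fails at $\sigma=0$.

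For the Moser estimate, I would follow the template of part~(c) of Proposition~\ref{p:bmo}. After reducing to $F(0)=0$, write the telescoping identity
\[
F(u) = \int_{-\infty}^\infty u_k\, F'(u_{<k})\, dk,
\]
and further split $F'(u_{<k}) = P_{<k}F'(u_{<k}) + P_k F'(u_{<k}) + \sum_{j>0} P_{k+j} F'(u_{<k})$. Using the chain rule together with $\|u_{<k}\|_{L^\infty}\lesssim \|u\|_{L^\infty}$ and smoothness of $F$, one obtains the rapid decay $\|P_{k+j}F'(u_{<k})\|_{L^\infty}\lesssim_{F,\|u\|_{L^\infty}} 2^{-Nj}$ for any $N$. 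The first two resulting pieces are then controlled exactly as the low-high and diagonal paraproduct terms above, while the third — a high-high interaction — absorbs the $2^{\sigma j}$ loss coming from $|D|^\sigma$ against the $2^{-Nj}$ gain, allowing termwise summation over $j>0$ and yielding the claimed Moser bound. As in the algebra estimate, the positivity of $\sigma$ is what makes the geometric sums in the high-high piece converge.
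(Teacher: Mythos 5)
Your proof is correct: the paper states this lemma without proof, treating it as a standard fact, and your argument supplies exactly the expected standard route. The paraproduct decomposition with the $\sigma>0$ summation for the high-high piece, followed by the telescoping identity $F(u)=\int_{-\infty}^{\infty} u_k F'(u_{<k})\,dk$ with the $2^{-Nj}$ decay of $P_{k+j}F'(u_{<k})$, mirrors the paper's own proof of the $BMO$ analogue in Proposition~\ref{p:bmo}(c), transposed to the $L^2$-based setting.
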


We also need to consider some multilinear estimates. Our starting point is the bound
\[
\| f_1 \cdots f_n\|_{L^r} \lesssim \prod_{j=\overline{1,n}} \|f_j\|_{L^{p_j}}, \qquad \frac{1}{r} = \sum \frac{1}{p_j},
\qquad 1 \leq r, p_j \leq \infty .
\]
Adding derivatives, we need the following generalization:

\begin{lemma}\label{l:multi}
The following estimate holds for $\sigma > 0$ and $ 1 <  r, p_j^{(k)}  \leq \infty $:
\begin{equation}
  \| |D|^\sigma( f_1 \cdots f_n)\|_{L^r} \lesssim \sum_{k = 1}^n 
  \| |D|^\sigma f_k\|_{L^{p_k^{(k)}}}  \prod_{j \neq k} \|f_j\|_{L^{p_j^{(k)}}}, \qquad \frac{1}{r} = \sum \frac{1}{p_j^{(k)}}.
\end{equation}
The same bound holds if for $L^{p_k^{(k)}}$ is replaced by $BMO$ whenever 
$p_k^{(k)}= \infty$.
\end{lemma}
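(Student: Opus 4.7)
The plan is to argue by induction on $n$, with the base case $n=2$ being the classical Kato-Ponce fractional Leibniz inequality. Throughout I would work with the standard paradifferential decomposition
\[
fg = \sum_k f_{<k} g_k + \sum_k f_k g_{<k} + \sum_k f_k g_k,
\]
which is convenient both for the off-diagonal $L^p$ estimate and for the endpoint $BMO$ version.

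For the base case $n=2$, I would split $|D|^\sigma(fg)$ into the three paraproduct pieces. For the low-high piece $\sum_k f_{<k} g_k$, one has (up to a commutator correction handled by Lemma~\ref{l:com}) $|D|^\sigma \sum_k f_{<k} g_k \approx \sum_k f_{<k} (2^{\sigma k} g_k)$; the square function / maximal function bounds \eqref{useful} and \eqref{useful-max} then give the estimate $\|f\|_{L^{p_1}} \||D|^\sigma g\|_{L^{p_2}}$. The high-low piece is symmetric, and the balanced piece $\sum_k f_k g_k$ can be allocated to whichever factor is the derivative target via $|D|^\sigma (f_k g_k) \approx 2^{\sigma k} f_k g_k$. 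When a factor is taken in $L^\infty$ one replaces it, in the endpoint case, by $BMO$ and appeals directly to \eqref{bmo-infty}, \eqref{bmo-bmo}, and \eqref{bmo=infty} from Proposition~\ref{p:bmo}.

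For the inductive step, assuming the bound for $n-1$ factors, I would write $f_1 \cdots f_n = (f_1 \cdots f_{n-1}) \cdot f_n$ and apply the $n=2$ case with exponents satisfying $\tfrac{1}{r} = \tfrac{1}{s} + \tfrac{1}{p_n^{(k)}}$. If the derivative lands on $f_n$, plain Hölder applied to $f_1 \cdots f_{n-1}$ finishes the term. If it lands on the product $f_1 \cdots f_{n-1}$, one further invokes the inductive hypothesis, splitting the $\dot{W}^{\sigma,s}$-type norm across the remaining factors so that the exponents $p_j^{(k)}$ add to $1/r$ as required. A brief combinatorial check confirms that this procedure realizes all $n$ terms of the claimed sum. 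In the $BMO$ endpoint, one keeps the $BMO$ bound through both stages, which is legitimate because $BMO$ is compatible with the paraproduct estimates of Proposition~\ref{p:bmo} in exactly the same way $L^\infty$ is.

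The main technical obstacle is the $BMO$ endpoint in the balanced (high-high) paraproduct: both factors are then at comparable frequency and one of them is only in $BMO$. Here I would reuse the orthogonality argument underlying \eqref{bmo-bmo}, namely multiplying by a bump function $\chi_Q$ adapted to a unit cube $Q$ (which does not disturb the frequency localization at frequencies $\gtrsim 1$), commuting it across the projection at cost of a gain $2^{-k}$, and applying Littlewood-Paley orthogonality in $L^2$ together with the square function characterization \eqref{bmo-square}. Once the two-factor case is secured with $BMO$ at the endpoint, the multilinear statement follows mechanically from the induction above, since in each inductive step the $BMO$ factor is carried along either as a pointwise multiplier (via \eqref{bmo-infty}) or as the high-frequency output (via \eqref{bmo=infty}).
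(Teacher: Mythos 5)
Your proposal is correct and follows essentially the same route as the paper: reduce to the bilinear case by induction, perform a Littlewood--Paley paraproduct decomposition, place the $\sigma$ derivatives on the highest-frequency factor, and conclude with the Coifman--Meyer bounds \eqref{CM}, \eqref{CM-BMO} and the $BMO$ product estimates \eqref{bmo-bmo}, \eqref{bmo-infty}, \eqref{bmo=infty}. The extra detail you supply (commutator corrections via Lemma~\ref{l:com} and the $\chi_Q$ orthogonality argument at the $BMO$ endpoint) is consistent with, and merely expands on, the paper's terse argument.
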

\begin{proof}
  By induction it suffices to consider the case $n=2$.  After a
  Littlewood-Paley decomposition we place the derivatives on the
  highest frequency factor and apply either \eqref{CM} or
  \eqref{CM-BMO}, or \eqref{bmo-bmo},\eqref{bmo-infty}.
\end{proof}

\subsection{Water-wave related bounds.}

Here we consider estimates for objects related to the water wave equations,
primarily the real phase shift $a$ and advection velocity $b$. We recall that these 
are given by
\[
a = 2 \Im P[R \bar R_\alpha], \qquad b = 2 \Re P \left[ \frac{R}{1+\bar \W}\right]
= 2 \Re ( R - P[R \bar Y]).
\]
These are estimated in terms of the control parameters $A$ and $B$ defined
in \eqref{A-def}, \eqref{B-def}, and in terms of the $H^s$ Sobolev norms of $\W$ and $R$.
In all nonlinear bounds the implicit constant is allowed to depend on $A$.

We begin with the auxiliary variable $Y= \dfrac{\W}{1+\W}$, which
inherits its regularity from $\W$ due to \eqref{bmo-moser} and \eqref{bmo-hs}:

\begin{lemma}\label{l:Y}
The function $Y$ satisfies the $BMO$ bound 
\begin{equation}\label{est:Y}
\| |D|^\frac12 Y\|_{BMO} \lesssim_A B,
\end{equation}
and the $\dot H^\sigma$ bound 
\begin{equation}
\|  Y\|_{ \dot H^\sigma} \lesssim_A \| \W\|_{ \dot H^\sigma}.
\end{equation}
\end{lemma}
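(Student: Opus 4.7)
The plan is straightforward: observe that $Y = F(\W)$ where $F(z) = \dfrac{z}{1+z}$, and invoke the Moser estimates already proved in Proposition~\ref{p:bmo} and in the subsequent lemma. Under the standing hypothesis $|1+\W| > c > 0$ of Theorem~\ref{baiatul} (together with $\|\W\|_{L^\infty} \lesssim A$), the function $F$ is smooth on a neighborhood of the range of $\W$; after replacing $F$ by a globally smooth, compactly supported extension $\tilde F$ that agrees with it on $\{|z| \leq 2\|\W\|_{L^\infty}\}$, we have $Y = \tilde F(\W)$ identically, with $\tilde F(0) = 0$.

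For the first bound, I would apply \eqref{bmo-moser} directly to obtain
\[
\| |D|^{1/2} Y\|_{BMO} = \|\tilde F(\W)\|_{BMO^{1/2}} \lesssim_{\|\W\|_{L^\infty}} \||D|^{1/2}\W\|_{BMO} \lesssim_A B,
\]
where in the last inequality I use the definition \eqref{B-def} of $B$. For the second bound, I would apply \eqref{bmo-hs} in exactly the same manner to obtain
\[
\|Y\|_{\dot H^\sigma} = \|\tilde F(\W)\|_{\dot H^\sigma} \lesssim_{\|\W\|_{L^\infty}} \|\W\|_{\dot H^\sigma}.
\]

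There is no real obstacle here; the only point that requires any care is the smooth extension of $F$, which is needed because the Moser estimates \eqref{bmo-moser} and \eqref{bmo-hs} are stated for smooth functions defined on all of $\mathbb{R}$ (or $\mathbb{C}$), while $F(z) = z/(1+z)$ has a pole at $z = -1$. This is harmlessly handled by the non-cavitation condition $|1+\W| > c > 0$, which confines the values of $\W$ to a region where $F$ is smooth and bounded with all derivatives controlled by $c^{-1}$.
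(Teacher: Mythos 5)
Your proposal is correct and is essentially the paper's own argument: the paper proves the lemma in one line by noting that $Y = \W/(1+\W)$ inherits its regularity from $\W$ via the Moser estimates \eqref{bmo-moser} and \eqref{bmo-hs}. Your extra remark about extending $F(z)=z/(1+z)$ smoothly away from the pole, using the non-cavitation bound (equivalently the $\|Y\|_{L^\infty}$, $\|\W\|_{L^\infty}$ components of $A$), just makes explicit the dependence on $A$ that the paper leaves implicit in the notation $\lesssim_A$.
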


We continue with bounds for $a$. In particular the positivity of $a$
is established, providing a short alternate proof to Wu's result in
\cite{wu2}:

\begin{proposition}\label{regularity for a}
  Assume that $R \in \dot H^{\frac12} \cap \dot H^{\frac32}$.  Then the real frequency-shift $a$
 is nonnegative and satisfies the $BMO$ bound
\begin{equation}\label{a-bmo}
\| a\|_{BMO}    \lesssim \Vert R\Vert_{BMO^\frac12}^2,
\end{equation}
and the uniform bound
\begin{equation}\label{a-point}
\| a\|_{L^\infty}    \lesssim \Vert R\Vert_{\dot B^{\frac12}_{\infty,2}}^2.
\end{equation}
 Moreover,
\begin{equation}\label{a-bmo+}
\Vert |D|^{\frac{1}{2}} a\Vert_{BMO}\lesssim \Vert R_\alpha \Vert_{BMO}
\Vert |D|^\frac12 R\Vert_{L^\infty}, \qquad
\Vert  a\Vert_{B^{\frac12,\infty}_2}\lesssim \Vert R_\alpha \Vert_{B^{\frac12,\infty}_2}
\Vert |D|^\frac12 R\Vert_{L^\infty},
\end{equation}
\begin{equation}\label{aflow}
\Vert (\partial_t+b\partial_{\alpha})a \Vert_{L^{\infty}}\lesssim AB,
\end{equation}
and
\begin{equation}\label{a-Hs}
\Vert  a\Vert_{\dot H^s}\lesssim \Vert  R \Vert_{\dot H^{s+\frac12}}
\Vert R\Vert_{BMO^\frac12}, \qquad s > 0.
\end{equation}
\end{proposition}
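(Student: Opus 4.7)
My approach is to first rewrite $a$ in a form useful for both sign analysis and harmonic analysis. Expanding $P = \frac12(I-iH)$ and using the identity $|D| = H\partial_\alpha$, a direct computation yields
\[
a \;=\; 2\Im P[R\bar R_\alpha] \;=\; \Im(R\bar R_\alpha) \;-\; \tfrac12\,|D||R|^2.
\]
The positivity statement $a \geq 0$ I would handle by a subharmonic function argument, which is what accounts for the ``shorter alternate proof'' remark. Let $F$ denote the holomorphic extension of $R$ to the lower half-plane $\{\beta<0\}$; then $|F|^2$ is subharmonic there, since $\Delta|F|^2 = 4|F_\alpha|^2 \geq 0$. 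Let $u$ denote the harmonic extension of the boundary datum $|R|^2$. By the maximum principle $|F|^2 \leq u$ in the lower half-plane, with equality on the real axis. Since $u - |F|^2 \geq 0$ vanishes on the boundary, its one-sided $\beta$-derivative satisfies $\partial_\beta(u-|F|^2)|_{\beta=0} \leq 0$. The Poisson formula gives $\partial_\beta u|_{\beta=0} = |D||R|^2$, while the Cauchy--Riemann relation $F_\beta = iF_\alpha$ yields $\partial_\beta|F|^2|_{\beta=0} = -2\Im(R_\alpha \bar R) = 2\Im(R\bar R_\alpha)$. Combining with the displayed formula gives $a \geq 0$ pointwise, with no assumption on the self-intersections of the parameterized curve.

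For the $BMO$ and $\dot H^s$ bounds I would exploit the identity $P[R\bar R_\alpha] = [P,R]\bar R_\alpha$, which holds because $\bar R_\alpha$ is antiholomorphic so $P\bar R_\alpha = 0$. Equivalently, in a paradifferential decomposition $R\bar R_\alpha = \sum_{j,k} R_j \bar R_{k,\alpha}$, only the configurations $k \lesssim j$ survive the projection, since the product lives at frequency $\sim 2^k - 2^j$, which is nonpositive exactly when $k \leq j$. For \eqref{a-bmo} the balanced interactions are controlled by \eqref{bmo-bmo} applied to the renormalized variable $|D|^{1/2}R \in BMO$, while the high-low interactions use \eqref{bmo-infty}, yielding the quadratic $BMO^\frac12$ bound. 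The pointwise estimate \eqref{a-point} is parallel, with the $\dot B^{1/2}_{\infty,2}$ Besov norm replacing $BMO^\frac12$ and $\ell^2$ summability in frequency replacing the Carleson condition. For \eqref{a-bmo+} I place the extra $|D|^{1/2}$ on the $\bar R_\alpha$ factor, producing the asymmetric $\|R_\alpha\|_{BMO}\||D|^{1/2}R\|_{L^\infty}$ structure via \eqref{bmo=infty}--\eqref{bmo>infty}; the second inequality is parallel. Finally, \eqref{a-Hs} follows by a standard product estimate combining Lemma~\ref{l:com} with the paradifferential $\dot H^s \times BMO^\frac12$ bound.

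The most delicate part is the transport bound \eqref{aflow}. The starting point is the identity
\[
(\partial_t + b\partial_\alpha)\,a \;=\; 2\Im P\bigl((\partial_t + b\partial_\alpha)(R\bar R_\alpha)\bigr) \;+\; 2\Im\,[b,P]\,\partial_\alpha(R\bar R_\alpha),
\]
which follows from $(\partial_t + b\partial_\alpha)P = P(\partial_t + b\partial_\alpha) + [b,P]\partial_\alpha$. Using the $R$ equation $R_t + bR_\alpha = i(\W-a)/(1+\W)$ and its differentiated conjugate, the first term expands into a sum of expressions involving $\bar R_\alpha$, $R$, $b_\alpha$, $\W_\alpha$ and $a_\alpha$ multiplied by bounded rational functions of $\W$. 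The main obstacle is that some of these factors, notably $\bar R_\alpha$ and $a_\alpha$, are only in $BMO$ and not in $L^\infty$, so naive products do not land in $L^\infty$; however, after applying $P$ each antiholomorphic derivative is hit by a commutator of the form $[P,\text{holomorphic}]\bar R_\alpha$, to which Lemma~\ref{l:com} applies and which returns $L^\infty$ output when paired with $|D|^{1/2}$-level holomorphic factors. Composite expressions such as $(\W-a)/(1+\W)$ are controlled via the algebra property \eqref{bmo-alg} and the Moser bound \eqref{bmo-moser}, together with the bounds \eqref{a-point}, \eqref{a-bmo+} for $a$ already established. The remaining commutator piece $[b,P]\partial_\alpha(R\bar R_\alpha)$ is handled by \eqref{CM-com} combined with the $b$ estimates in Lemma~\ref{l:b}. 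Each resulting term is bounded by $AB$, yielding \eqref{aflow}.
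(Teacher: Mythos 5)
Your positivity argument is correct, but it takes a genuinely different route from the paper. The paper proves $a\ge 0$ purely on the Fourier side: since $R$ is holomorphic, $\hat a$ is a bilinear expression with symbol $\min\{\xi,\eta\}\,1_{\{\xi,\eta>0\}}$, and writing $\min\{\xi,\eta\}=\int_{M>0}1_{\{\xi>M\}}1_{\{\eta>M\}}\,dM$ gives the representation $a=\int_{M>0} |1_{|D|>M}R|^2\,dM\ge 0$. You instead verify the pointwise identity $a=\Im(R\bar R_\alpha)-\tfrac12|D||R|^2$ and run a maximum-principle/Hopf-type comparison between the subharmonic function $|F|^2$ and the harmonic extension of $|R|^2$ in the lower half-plane; this is in effect the classical Taylor-sign argument transplanted to the conformal domain, which is indeed why no non-self-intersection hypothesis is needed. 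Both are valid; the paper's argument is a two-line identity once on the frequency side, while yours requires the (available, under the stated hypotheses) boundary regularity and a Phragm\'en--Lindel\"of justification on the unbounded half-plane, but has the merit of exhibiting $a$ as a subharmonicity defect. For the static bounds \eqref{a-bmo}, \eqref{a-point}, \eqref{a-bmo+}, \eqref{a-Hs} your sketch follows essentially the paper's mechanism: holomorphy restricts the paraproduct interactions (the paper's decomposition \eqref{a=lp}), and the Appendix bilinear estimates plus Cauchy--Schwarz in the frequency parameter do the rest; the only point the paper adds is the symbol splitting $\min\{\xi,\eta\}=\tfrac12(\xi+\eta)-\tfrac12|\xi-\eta|$ to organize the balanced-frequency part.

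The genuine gap is in \eqref{aflow}. Your starting identity $(\partial_t+b\partial_\alpha)a=2\Im\bigl(P[(\partial_t+b\partial_\alpha)(R\bar R_\alpha)]+[b,P]\partial_\alpha(R\bar R_\alpha)\bigr)$ is fine, but the tools you invoke cannot close the estimate in $L^\infty$. Lemma~\ref{l:com} and \eqref{CM-com} are $L^2$, respectively $L^p$ ($1<p<\infty$), bounds; with $b_\alpha$ merely in $BMO$ the Calder\'on-type commutator $[b,P]\partial_\alpha$ is \emph{not} bounded on $L^\infty$, and the claim that ``Lemma~\ref{l:com} applies and returns $L^\infty$ output'' is not a step that exists in the cited toolbox. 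What the paper actually does is work frequency by frequency on \eqref{a=lp} and pair, via Cauchy--Schwarz in the dyadic parameter, the $\ell^2$-Besov/$L^\infty$ information encoded in $A$ (e.g.\ $|D|^\frac12 R\in L^\infty\cap B^{0,\infty}_2$, and the bound $\Vert(\bar\W-a)/(1+\bar\W)\Vert_{B^{\frac12,\infty}_2}\lesssim B$, which itself uses \eqref{a-point} and \eqref{a-bmo+}) against the $BMO$ information encoded in $B$.

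Moreover, the balanced-frequency part of $a$ --- in your formulation the term $\tfrac12|D||R|^2$, i.e.\ the contributions $|D|(R_k\bar R_k)$ --- produces under the material derivative the commutator $[b_{<k},|D|_{\le k}]$ acting on $\partial_\alpha(R_k\bar R_k)$. This is exactly the delicate term in the paper's proof, and it is handled by a dedicated kernel estimate, $\Vert[b_{<k},|D|_{\le k}]\Vert_{L^\infty\to L^\infty}\lesssim 2^{k/2}\Vert|D|^\frac12 b\Vert_{BMO}$, combined with the $2^{-k/2}$ gain from $\Vert\partial_\alpha(R_k\bar R_k)\Vert_{L^\infty}$; neither \eqref{CM-com} nor Lemma~\ref{l:b} substitutes for this. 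As written, your scheme for \eqref{aflow} would stall precisely at these two points, so the frequency-localized pairing argument and the $[b_{<k},|D|_{\le k}]$ kernel bound (or an equivalent mechanism) must be supplied.
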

\begin{proof}
We recall that
$a =  i\left(\bar{P} \left[\bar{R} R_\alpha\right]- P\left[R\bar{R}_\alpha\right]\right)$.
Switching to the Fourier space, this leads to the representation
\begin{equation}
\hat{a}(\zeta) = \int_{\xi-\eta = \zeta} \min\{ \xi,\eta\} 1_{\{\xi,\eta  > 0\}} \hat R(\xi) \bar {\hat R}(\eta) d\xi.
\end{equation}
Here $\xi$ and $\eta$ are restricted to the positive real axis due to the fact that $R$
is holomorphic.

To prove the positivity of $a$ we represent the above kernel as
\[
 \min\{ \xi,\eta\}1_{\{\xi,\eta  > 0\}}  = \int_{M > 0} 1_{\{\xi > M\}} 1_{\{\eta > M\}} \, dM.
\]
Inserting this in the previous representation of $\hat a$ and inverting the Fourier transform
we obtain
\[
a = \int | 1_{|D| > M} R|^2 dM,
\]
and the positivity follows.

To prove both the $BMO$ bound and the pointwise bound for $a$ we use a
bilinear Littlewood-Paley decomposition,
\begin{equation}\label{a=lp}
a = \sum_k  i\left( \bar{R}_k R_{\alpha,<k} -  R_k\bar{R}_{\alpha,<k}\right) +
 i\left(\bar{P} \left[\bar{R}_k R_{\alpha,k}\right]- P\left[R_k\bar{R}_{\alpha,k}\right]\right).
\end{equation}
To estimate the first term in BMO we use directly the bound \eqref{bmo>infty} with $\sigma = 1$.
To estimate it in $L^\infty$ we use the Cauchy-Schwarz inequality,
\[
\| \sum_k   \bar{R}_k R_{\alpha,<k} \|_{L^\infty}^2 \lesssim
( \sum_k 2^{k} \|R_k\|_{L^\infty}^2)( \sum_k 2^{-k} \|R_{<k}\|_{L^\infty}^2)
\lesssim \|R\|_{B^{\frac12}_{\infty,2}}^2.
\]
For the second term in $a$ we rewrite the symbol of the bilinear form as
\[
\min\{ \xi,\eta\} = \frac12(\xi+\eta) - \frac12|\xi-\eta|,
\]
which allow us to rewrite it in the form
\[
\frac12 P  \sum_k   i\left( \bar{R}_k R_{\alpha,k} -  R_k\bar{R}_{\alpha,k}\right)
- |D| (\bar R_k R_k).
\]
Now the two  terms  are  estimated in BMO using \eqref{bmo-bmo}, respectively
\eqref{bmo=infty}, and in $L^\infty$ by the Cauchy-Schwarz inequality as above.

The proof of \eqref{a-bmo+} is essentially identical to the proof of \eqref{a-bmo}.

We continue with the proof of \eqref{aflow}, where we begin with the
decomposition in \eqref{a=lp}. For the first term in \eqref{a=lp} we
apply the time derivative to obtain the expression
\[
A_1= [b \partial_\alpha,\bar P_k]  \bar R R_{\alpha,<k}   +
 \bar R_k [ b \partial_\alpha, P_{<k} \partial_\alpha] R + i P_k \left( \frac{\bar \W-a}{1+\bar \W}\right)   R_{\alpha,<k} + i \bar R_k P_{<k} \partial_\alpha \left( \frac{\bar \W-a}{1+\bar \W}\right).
\]
In the first term of $A_1$ we split the commutator according to the
usual Littlewood-Paley trichotomy.  We get several terms:
\[
b_{<k,\alpha}  \bar R_k R_{\alpha,<k} + b_k \bar R_{<k,\alpha} R_{\alpha,<k}  +
2^m P_k (b_m \bar R_m)  R_{\alpha,<k} + b_m\bar{R}_{k,\alpha} R_{\alpha,<k}.
\]
In all cases we use the $B$ norm to estimate the highest frequency term, and the $A$
norm for the other two. The second term is similar; for comparison purposes we list
the ensuing terms:
\[
\bar R_k b_{<k,\alpha} R_{<k,\alpha} + 2^m \bar R_k P_{<k} \partial_\alpha( b_{m} R_{m})
+\bar R_k  b_m R_{<k,\alpha \alpha}.
\]
The third and fourth terms in $A_1$ require the bound
\[
\|\left( \frac{\bar \W-a}{1+\bar \W}\right)\|_{B^{\frac12,\infty}_2} \lesssim B,
\]
which follows  by combining the bounds \eqref{a-point} and \eqref{a-bmo+}  for $a$
with the similar bounds for $\W$ and $Y$.

Now we consider the last term in \eqref{a=lp}. This has two components, one of the form
$2^k R_k \bar R_k$ and the other of the form $|D|(R_k \bar R_k) $. The first component
yields an output
\[
A_2= [b \partial_\alpha,\bar P_k]  \bar R R_{\alpha,k}   +
 \bar R_k [ b \partial_\alpha, P_{k} \partial_\alpha] R + i P_k \left( \frac{\bar \W-a}{1+\bar \W}\right)   R_{\alpha,k} + i \bar R_k P_{k} \partial_\alpha \left( \frac{\bar \W-a}{1+\bar \W}\right),
\]
which is treated in exactly the same way as $A_1$.

The second component yields the slightly more involved output
\[
\begin{split}
A_3= & \ b \partial_\alpha |D|(R_k \bar R_k) - |D| (P_k (b R_{\alpha}) \bar R_k) -
|D| (R_k \overline{P_k (b R_{\alpha})} ) \\ & \ + i |D| \left( P_k \left( \frac{\bar \W-a}{1+\bar \W}\right)   R_{k}\right)  +
 i |D| \left( \bar R_k P_{k} \partial_\alpha \left( \frac{\bar \W-a}{1+\bar \W}\right)\right) .
\end{split}
\]
The last two terms are no different from above, but in the first three there is a
more delicate commutator estimate. We split $b = b_{<k} + b_{\geq k}$ and estimate the output
of $b_{\geq k}$ directly for each term using the $s = \frac12$ case of Lemma~\ref{l:b}.
The output of $b_{<k}$, on the other hand, is expressed as a commutator
\[
 [b_{<k}, |D|_{\leq k}] \partial_\alpha (R_k \bar R_k) + |D| \left( [b_{<k}, P_k]  R_{\alpha} \bar R_k\right)  +
|D| \left( R_k \overline{[b,P_k] R_{\alpha}} \right). 
\]
The last two terms are like $2^k |D|( b_{<k,\alpha} R_k \bar R_k)$ and can be estimated directly.
For the first term we bound $\partial_\alpha (R_k \bar R_k)$ in $L^\infty$ by $2^{-\frac{k}2}$,
and then we need to show that
\[
\|  [b_{<k}, |D|_{\leq k}]\|_{L^\infty \to L^\infty} \lesssim 2^{\frac{k}2} \||D|^\frac12 b\|_{BMO}.
\]
Indeed the kernel of  $[b_{<k}, |D|_{\leq k}]$ is bound by
\[
|b_{<k} (\alpha) - b_{<k}(\beta)| \frac{2^{2k}}{(1+ 2^k |\alpha -\beta|)^2} \lesssim
 \frac{2^{\frac32 k}}{(1+ 2^k |\alpha -\beta|)^\frac32},
\]
 which integrates to $2^{\frac{k}2}$.

Finally, \eqref{a-Hs} is a direct consequence of the commutator estimates in Lemma~\ref{l:com}.
\end{proof}

Next we consider $b$, for which we have the follwoing result
\begin{lemma}\label{l:b}
Let $s > 0$. Then the transport coefficient $b$ satisfies
\begin{equation}\label{b-bounds}
\||D|^s  b\|_{BMO} \lesssim_A \| |D|^s R\|_{BMO}, \qquad
\||D|^s  b\|_{L^2} \lesssim_A \| |D|^s R\|_{L^2}.
\end{equation}
In particular we have
\begin{equation}\label{b-bounds1}
\||D|^\frac12  b\|_{BMO} \lesssim_A A, \qquad
\|b_\alpha\|_{BMO} \lesssim_A B.
\end{equation}
\end{lemma}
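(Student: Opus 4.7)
The starting point is the explicit formula $b = 2\Re P[R/(1+\bar{\W})] = 2\Re R - 2\Re P[R\bar Y]$, where I used $PR = R$ since $R$ is holomorphic. The linear term $2\Re R$ trivially yields a contribution of size $\||D|^s R\|_{BMO}$ (respectively $\||D|^s R\|_{L^2}$), so the entire task reduces to proving the bilinear estimates
\begin{equation*}
\||D|^s P[R\bar Y]\|_{BMO} \lesssim \||D|^s R\|_{BMO} \|Y\|_{L^\infty}, \qquad \||D|^s P[R\bar Y]\|_{L^2} \lesssim \||D|^s R\|_{L^2} \|Y\|_{L^\infty},
\end{equation*}
from which \eqref{b-bounds} follows via the $L^\infty$ bound for $Y$ in Lemma~\ref{l:Y}. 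The specific estimates \eqref{b-bounds1} are then immediate consequences with $s = 1/2$ and $s = 1$, upon invoking the definitions of $A$ and $B$.

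The key algebraic observation is that since $\bar Y$ is antiholomorphic, $P\bar Y$ equals only the constant (zero-mode) part of $\bar Y$, which is $O(\|Y\|_{L^\infty})$. Discarding this harmless constant contribution, we obtain the commutator identity
\[
P[R\bar Y] = [P,R]\bar Y.
\]
A direct Fourier computation gives $\widehat{[P,R]\bar Y}(\zeta) = \int_{\xi < \zeta < 0} \hat R(\xi)\,\widehat{\bar Y}(\zeta-\xi)\,d\xi$, so that the output at frequency $\zeta < 0$ receives contributions only from $R$ at frequency $\xi$ with $|\xi| > |\zeta|$. In Littlewood-Paley language, $P_l[P,R]\bar Y$ decomposes into two interaction types: (a) a high--low piece $P_l(R_l \bar Y_{<l})$, and (b) a high--high to low piece $\sum_{m>l} P_l(R_m \bar Y_m)$, obtained using $P\bar Y_m = 0$ for large $m$.

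For the $L^2$ bound, piece (a) contributes $2^{sl}\|P_l(R_l\bar Y_{<l})\|_{L^2} \lesssim 2^{sl}\|R_l\|_{L^2}\|Y\|_{L^\infty}$, which squares and sums to $\||D|^s R\|_{L^2}^2\|Y\|_{L^\infty}^2$. For piece (b), Cauchy-Schwarz gives $\|P_l(R_m \bar Y_m)\|_{L^2} \lesssim \|R_m\|_{L^2}\|Y\|_{L^\infty}$, and the square-summation reduces to $\sum_l 2^{2sl}(\sum_{m>l}\|R_m\|_{L^2})^2$, which is controlled by $\||D|^s R\|_{L^2}^2$ via Schur's test with the $\ell^1$ kernel $K(l,m) = 2^{s(l-m)}\mathbf{1}_{m>l}$ (this is where the hypothesis $s > 0$ is crucial). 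The $BMO$ bound follows by the same dichotomy, replacing $L^2$ estimates by the square-function characterization \eqref{bmo-square} and invoking the balanced bilinear $BMO$ estimate \eqref{bmo=infty} from Proposition~\ref{p:bmo} for the high--high cancellation, together with the paraproduct bound \eqref{bmo-infty} for the high--low interactions.

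The main obstacle is the high--high-to-low cancellation in (b): frequencies far above the output scale interact to produce low-frequency contributions, and one must gain a factor of $2^{s(l-m)}$ by exploiting the cancellation in the $P$ projection rather than naively differentiating $\bar Y$. The Schur test resolves this cleanly in $L^2$ for $s > 0$, while in $BMO$ the analogous mechanism is encoded in \eqref{bmo=infty}, where the projection $(2^{-k}|D|)^\sigma$ on the output frequency band encodes exactly the required frequency loss.
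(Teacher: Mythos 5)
Your argument is correct and is essentially the paper's own proof: reduce to $P[R\bar Y]$, observe that holomorphy of $R$ and antiholomorphy of $\bar Y$ force the projected interactions into a high--low piece (estimated by \eqref{bmo-infty}) and a balanced high--high-to-low piece (estimated by \eqref{bmo=infty}, where $s>0$ is used), with the $L^2$ case handled directly. Your Cauchy--Schwarz/Schur-test treatment of the $L^2$ high--high sum simply fills in what the paper dismisses as ``a direct argument,'' and the commutator reformulation $P[R\bar Y]=[P,R]\bar Y$ is a harmless repackaging of the same frequency restriction.
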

\begin{proof}
Recall that
\[
b = \Re P[R(1-\bar Y)] = R- P(R \bar Y).
\]
Hence, it remains to estimate $\partial_\alpha P( R \bar Y)$.
Consider first the $BMO$ bound.
As before, the role of $P$ is to restrict the bilinear
frequency interactions to high - low, in which case we can use
the bound \eqref{bmo-infty}, and the high-high case, where \eqref{bmo=infty}
applies.

A direct argument, taking into account the same two cases, yields the $L^2$ bound.
\end{proof}

Next we consider the auxiliary expression $M$:


\begin{lemma}\label{l:M}
The function $M$ satisfies the pointwise bound
\begin{equation}\label{M-infty}
\| M\|_{L^\infty} \lesssim_A AB,
\end{equation}
as well as the Sobolev bounds
\begin{equation}\label{M-L2}
\| M\|_{\dot H^{n-\frac32}} \lesssim_A A \norm_n.
\end{equation}
\end{lemma}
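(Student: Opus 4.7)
The plan will be to exploit the alternative representation
\[
M = P[R \bar Y_\alpha - \bar R_\alpha Y] + \bar P[\bar R Y_\alpha - R_\alpha \bar Y],
\]
which exhibits $M$ as twice the real part of a single holomorphic projection, so that it suffices to bound $P[R\bar Y_\alpha - \bar R_\alpha Y]$ in both norms, the $\bar P$ contribution being handled by conjugation. The key algebraic observation will be that $\bar Y_\alpha$ and $\bar R_\alpha$ are antiholomorphic and have no zero modes (they are derivatives), so $P\bar Y_\alpha = P\bar R_\alpha = 0$, and hence
\[
P[R\bar Y_\alpha - \bar R_\alpha Y] = [P,R]\bar Y_\alpha - [P,Y]\bar R_\alpha.
\]
This recasts both contributions as Coifman-Meyer commutators, to which the estimates of Lemma~\ref{l:com} apply.

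For the Sobolev bound \eqref{M-L2}, the proof will proceed by direct application of \eqref{second-com} with $s = n-\tfrac32$ and $\sigma = 1$. Writing $\bar Y_\alpha = i|D|\bar Y$, valid since $\bar Y$ is antiholomorphic, we obtain
\[
\|[P,R]\bar Y_\alpha\|_{\dot H^{n-3/2}} \lesssim \||D|^{n-1/2} R\|_{L^2} \|Y\|_{BMO} \lesssim A\,\norm_n,
\]
where $\|Y\|_{BMO}\le \|Y\|_{L^\infty}\le A$ and $\||D|^{n-1/2}R\|_{L^2}\lesssim \|R^{(n-1)}\|_{\dot H^{1/2}}\le \norm_n$. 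The symmetric term $[P,Y]\bar R_\alpha$ will be handled analogously after swapping the roles of $R$ and $Y$: an application of \eqref{first-com}, combined with the Moser-type bound $\||D|^{n-1/2} Y\|_{L^2} \lesssim_A \norm_n$ supplied by Lemma~\ref{l:Y} and \eqref{bmo-hs}, together with $\||D|^{1/2}R\|_{BMO}\lesssim A$, gives the same cubic control.

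For the pointwise estimate \eqref{M-infty}, the strategy will be a bilinear Littlewood-Paley analysis of the two commutators. The holomorphic/antiholomorphic frequency pairing together with the commutator structure kills the low-high regime (in which the holomorphic factor has the lower frequency), leaving only the balanced high-high diagonal and the high-low piece, with each surviving product essentially frequency-localized at the higher of the two scales. In each such piece I will arrange to have one factor multiplied by $|D|^{1/2}$ so that it can be absorbed into $\|(\sum_k 2^k|R_k|^2)^{1/2}\|_{L^\infty} \lesssim A$, which is precisely the $B^{0,\infty}_2$ component of the $A$-norm, while the complementary $|D|^{-1/2}$ factor will be controlled dyadically through $\|P_k |D|^{1/2}Y\|_{L^\infty} \lesssim \||D|^{1/2}Y\|_{BMO} \lesssim_A B$ (and the analogous dyadic bound $\|P_k R_\alpha\|_{L^\infty}\lesssim \|R_\alpha\|_{BMO}\lesssim B$ for the other term). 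A pointwise Cauchy-Schwarz in the dyadic index will then produce the product $AB$.

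The main obstacle will be this last $L^\infty$ step: a naive pointwise bound $\|R_j\|_{L^\infty}\|\bar Y_{k,\alpha}\|_{L^\infty} \lesssim 2^{(k-j)/2} AB$ fails to sum over both $k$ and $j>k$. The resolution will be to aggregate the sum in one index first, using the spectral localization of each commutator output at frequency $\sim 2^{\max(j,k)}$ so that only one dyadic pair contributes at each output scale; cross-scale summation is then powered by the square-function structure of the $B^{0,\infty}_2$ part of $A$, which is exactly why this component is included in the definition \eqref{A-def}. Without that inclusion, only a $BMO$ bound on $M$ would be available, not the desired $L^\infty$ bound.
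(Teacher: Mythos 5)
Your algebraic reductions are fine --- $M$ is twice the real part of $P[R\bar Y_\alpha-\bar R_\alpha Y]$, and since $P\bar Y_\alpha=P\bar R_\alpha=0$ this is a sum of two commutators --- and your treatment of the first commutator in \eqref{M-L2} via \eqref{second-com} is correct. However, there are two genuine problems.

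First, in the Sobolev bound the term $[P,Y]\bar R_\alpha$ is estimated through the claim $\||D|^{n-\frac12}Y\|_{L^2}\lesssim_A\norm_n$, which is false: $\norm_n$ controls $\W^{(n-1)}$ only in $L^2$, hence $Y$ only in $\dot H^{n-1}$, half a derivative below what you assert. Relatedly, the product $\||D|^{n-\frac12}Y\|_{L^2}\,\||D|^{\frac12}R\|_{BMO}$ carries $n$ derivatives in total, while $\|[P,Y]\bar R_\alpha\|_{\dot H^{n-\frac32}}$ carries only $n-\frac12$, so the bilinear estimate you would need is not scale invariant and cannot hold. The repair is simple: write $\bar R_\alpha=i|D|^{\frac12}\bigl(|D|^{\frac12}\bar R\bigr)$ and apply \eqref{second-com} (not \eqref{first-com}) with $s=n-\frac32$, $\sigma=\frac12$, obtaining $\|[P,Y]\bar R_\alpha\|_{\dot H^{n-\frac32}}\lesssim \||D|^{n-1}Y\|_{L^2}\,\||D|^{\frac12}R\|_{BMO}\lesssim_A A\,\norm_n$, using Lemma~\ref{l:Y} for the $Y$ factor.

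Second, and more seriously, the $L^\infty$ bound \eqref{M-infty} is not closed. The Cauchy--Schwarz you propose requires square-summability in $k$ of both dyadic sequences. The $B^{0,\infty}_{2}$ component of $A$ supplies it only for the $R$-factor, and only in the pieces where $R$ carries the high frequency; there is no $\ell^2$-in-$k$ component for $\W$ or $Y$ in \eqref{A-def}, and the $B$-controlled factor is merely bounded uniformly in $k$ ($BMO$ controls $\|P_k\cdot\|_{L^\infty}$ termwise, not the $L^\infty$ norm of the square function), so the pairing is $\ell^2\times\ell^\infty$ and the sum does not close. Your proposed resolution does not repair this: in the high--low regime every low frequency $j<k$ contributes to the output at scale $2^k$, so it is not true that only one dyadic pair contributes per output scale, and in any case frequency-disjointness of the outputs yields no gain in $L^\infty$ --- which is exactly why a naive argument only produces a $BMO$ bound, via \eqref{useful-bmo}. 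The missing idea is the one the paper uses: interpolate the two $\ell^\infty$-in-$k$ bounds coming from $A$ and $B$ to intermediate Besov norms, $\|R\|_{B^{\frac34,\infty}_2}\lesssim\sqrt{AB}$ and $\|Y\|_{B^{\frac14,\infty}_2}\lesssim_A\sqrt{AB}$ (the latter via Lemma~\ref{l:Y}); this produces $\ell^2$ summability for both factors together with off-diagonal decay $2^{\frac34(j-k)}$, after which the high--low double sum and the balanced piece (written with the output multiplier $\partial_\alpha P_{<k+4}P$, of size $O(2^k)$) converge. Incidentally, this also shows your closing remark is off: the $L^\infty$ bound does not hinge on the $B^{0,\infty}_2$ component of $A$, since the interpolation already works with $\||D|^{\frac12}R\|_{L^\infty}$, $\|Y\|_{L^\infty}$ and $B$.
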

\begin{proof}
For the pointwise bound we claim that 
\begin{equation}\label{M-infty1}
\| M\|_{L^\infty} \lesssim \|R\|_{B^{\frac34,\infty}_2} \|Y\|_{B^{\frac14,\infty}_2}.
\end{equation}
This suffices since each of the the right hand side factors is bounded
by $\sqrt{AB}$ by interpolation.  To achieve this we write $M$ in two different ways,
\[
M = \bar P[\bar R  Y_\alpha - R_\alpha \bar Y]+ P[R \bar Y_\alpha - \bar R_\alpha Y] = 
\partial_\alpha P_{<k+4} (\bar P [ \bar R Y] + P [R \bar Y])
- (\bar R_\alpha Y + R_\alpha \bar Y) .
\]
We apply a bilinear Littlewood-Paley decomposition and use the first expression 
above for the high-low interactions, and the second for the high-high interactions,
to write $M = M_1+M_2$ where
\[
\begin{split}
M_1 = & \ \sum_k [\bar R_k  Y_{<k,\alpha} - R_{<k,\alpha} \bar Y_k ]+ 
[R_k \bar Y_{<k,\alpha} - \bar R_{<k,\alpha} Y_k], 
\\
M_2 = & \ \sum_k
\partial_\alpha(\bar P [ \bar R_k Y_k] + P [R_k \bar Y_k])
- (\bar R_{k,\alpha} Y_k + R_{k,\alpha} \bar Y_k).
\end{split}
\]
We estimate the terms in $M_1$ separately; we show the argument for the first:
\[
\|   \sum_k \bar R_k  Y_{<k,\alpha}\|_{L^\infty} \lesssim 
\sum_{j \leq k} 2^{\frac34(j-k)} \|R_k\|_{B^{\frac34,\infty}_2} \|Y_j\|_{B^{\frac14,\infty}_2}
\lesssim \|R\|_{B^{\frac34,\infty}_2} \|Y\|_{B^{\frac14,\infty}_2}.
\]
For the first term in $M_2$ we note that the multiplier $\partial_\alpha P_{<k+4}  P$ 
has an $O(2^k)$ $L^\infty$ bound. Hence, we can estimate
\[
\|M_2\|_{L^\infty} \lesssim \sum_k 2^k \|R_k\|_{L^\infty} \|Y_k\|_{L^\infty} 
\lesssim \|R\|_{B^{\frac34,\infty}_2} \|Y\|_{B^{\frac14,\infty}_2}.
\]

For the $L^2$ bound we consider again all terms in $M_1$ and $M_2$ separately.
For an $M_1$ term we compute
\[
\|   \sum_k \bar R_k  Y_{<k,\alpha}\|_{\dot H^{n-\frac32}}^2 
\lesssim \sup_{k} 2^{-2k} \|  Y_{<k,\alpha}\|_{L^\infty}^2 \cdot 
\sum_{k} 2^{(2n-1)k} \|R_k\|_{L^2}^2 \lesssim \|Y\|_{L^\infty}^2 \|R\|_{\dot H^{\frac{n-1}2}}^2.
\] 
For  $M_2$  we compute
\[
\| M_2\|_{\dot H^{n-\frac32}}^2 \lesssim \sum_k 2^{(2n-1)k} \| Y_k R_k\|_{L^2}^2
\lesssim  \|Y\|_{L^\infty}^2 \|R\|_{\dot H^{\frac{n-1}2}}^2.
\]
\end{proof}

Finally, we also need a quadrilinear bound related to the energy estimates:

\begin{lemma}
The following estimate holds for holomorphic functions $R$, $w$ and $r$ :
\begin{equation}\label{i1}
\left| \! \int \!\! \bar{R}r_{\alpha}\M_bw_{\alpha} - \bar{R}w_{\alpha}\M_br_{\alpha} \,  d\alpha \right| \!
\lesssim\! (\||D|^\frac12 R\|_{BMO} \|b_\alpha\|_{BMO} + \|R_\alpha\|_{BMO}
\||D|^\frac12 b\|_{BMO}) \|w\|_{L^2} \|r\|_{\dot H^\frac12} \!
\end{equation}
\end{lemma}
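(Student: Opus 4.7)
My plan is to expose the commutator structure hidden in the integrand via the identity $\M_b f = P(bf) = bf - \bar P(bf)$. The crucial observation is that the pointwise cancellation $\bar R r_\alpha \cdot bw_\alpha - \bar R w_\alpha \cdot br_\alpha = 0$ annihilates the ``large'' parts, leaving
\[
I := \int \bar R r_\alpha \M_b w_\alpha - \bar R w_\alpha \M_b r_\alpha\, d\alpha = \int \bar R\bigl[w_\alpha \bar P(br_\alpha) - r_\alpha \bar P(bw_\alpha)\bigr] d\alpha.
\]
Since $w$ and $r$ are holomorphic, so that $Pw_\alpha = w_\alpha$ and $Pr_\alpha = r_\alpha$, the factors $\bar P(bw_\alpha) = -[P,b]w_\alpha$ and $\bar P(br_\alpha) = -[P,b]r_\alpha$ are antiholomorphic commutators. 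Applying Lemma~\ref{l:com} with $s=0$ and $\sigma \in \{\tfrac12,1\}$ yields the small bounds
\[
\|\bar P(bw_\alpha)\|_{L^2} \lesssim \|b_\alpha\|_{BMO}\|w\|_{L^2},\quad \|\bar P(br_\alpha)\|_{L^2} \lesssim \||D|^{\tfrac12} b\|_{BMO}\|r\|_{\dot H^{\tfrac12}},
\]
together with the half-derivative refinement $\|\bar P(br_\alpha)\|_{\dot H^{\tfrac12}} \lesssim \|b_\alpha\|_{BMO}\|r\|_{\dot H^{\tfrac12}}$.

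Next I would estimate the two pieces $J_1 := \int \bar R w_\alpha \bar P(br_\alpha)\,d\alpha$ and $J_2 := \int \bar R r_\alpha \bar P(bw_\alpha)\,d\alpha$ separately. For $J_1$, integrating by parts on $w_\alpha$ gives $J_1 = -\int w \cdot \partial_\alpha[\bar R\,\bar P(br_\alpha)]\,d\alpha$, so by Cauchy-Schwarz with $w \in L^2$ the task reduces to bounding $\|\partial_\alpha[\bar R \bar P(br_\alpha)]\|_{L^2} \le \|\bar R_\alpha \bar P(br_\alpha)\|_{L^2} + \|\bar R \cdot \partial_\alpha \bar P(br_\alpha)\|_{L^2}$. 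The first summand is handled by a paraproduct split of the product of two antiholomorphic functions: in the piece $\sum_k \bar R_{\alpha,k}\bar P(br_\alpha)_{<k}$ where $\bar R_\alpha$ is at the higher frequency, the Coifman-Meyer paraproduct bound ($BMO \times L^2 \to L^2$) gives $\|R_\alpha\|_{BMO}\|\bar P(br_\alpha)\|_{L^2} \lesssim \|R_\alpha\|_{BMO}\||D|^{\tfrac12}b\|_{BMO}\|r\|_{\dot H^{\tfrac12}}$, which is the second term on the right of \eqref{i1}; in the piece $\sum_k \bar R_{\alpha,<k}\bar P(br_\alpha)_k$ where $\bar R_\alpha$ is at the lower frequency, I use $\|\bar R_{\alpha,<k}\|_{L^\infty} \lesssim 2^{k/2}\||D|^{\tfrac12}R\|_{BMO}$ together with frequency orthogonality to absorb the factor $2^{k/2}$ into $\|\bar P(br_\alpha)\|_{\dot H^{\tfrac12}}$, producing $\||D|^{\tfrac12}R\|_{BMO}\|b_\alpha\|_{BMO}\|r\|_{\dot H^{\tfrac12}}$, the first term on the right of \eqref{i1}.

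The main obstacle is the second summand $\bar R \cdot \partial_\alpha \bar P(br_\alpha)$, since the extra derivative on an already small commutator cannot be absorbed by any of the bounds above. To bypass this I would avoid the crude integration by parts in $w_\alpha$ and instead split $\bar R w_\alpha = (\bar R w)_\alpha - \bar R_\alpha w$ from the start, writing $J_1 = \int (\bar R w)_\alpha \bar P(br_\alpha)\,d\alpha - \int \bar R_\alpha w \bar P(br_\alpha)\,d\alpha$. In the first integral the $\partial_\alpha$ is integrated by parts onto $\bar P(br_\alpha)$, but using the identity $\partial_\alpha \bar P(br_\alpha) = \partial_\alpha^2 \bar P(br) - \partial_\alpha \bar P(b_\alpha r)$ and a further integration by parts, both derivatives can be redistributed so that only first-order derivatives of $w$ and $r$ appear, yielding quartic integrals bounded by the right-hand side of \eqref{i1} through Lemma~\ref{l:com} and the BMO-product estimates of Proposition~\ref{p:bmo}; the second integral is treated directly as an antiholomorphic paraproduct $BMO \cdot L^2 \cdot L^2$ form. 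Finally, $J_2$ is estimated by exactly the same method, but using the $L^2$-version of the commutator bound $\|\bar P(bw_\alpha)\|_{L^2} \lesssim \|b_\alpha\|_{BMO}\|w\|_{L^2}$ directly, no $\dot H^{\tfrac12}$-refinement being needed since $w$ already lives in $L^2$.
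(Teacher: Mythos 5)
Your opening reduction is sound and is in the same spirit as the paper's first step: writing $\M_b f = bf - \bar P[bf]$ and exploiting the pointwise cancellation does isolate the commutator-type factors $\bar P[bw_\alpha]$, $\bar P[br_\alpha]$, the bounds you quote for them from Lemma~\ref{l:com} are correct, and your paraproduct estimate of $\bar R_\alpha\,\bar P(br_\alpha)$ in $L^2$ is fine. The argument breaks down, however, exactly at the term you single out as the main obstacle. Your proposed fix for $\int(\bar R w)_\alpha\,\bar P(br_\alpha)\,d\alpha$ is circular: since $\partial_\alpha\bar P(br)=\bar P(b_\alpha r)+\bar P(br_\alpha)$, inserting the identity $\partial_\alpha\bar P(br_\alpha)=\partial_\alpha^2\bar P(br)-\partial_\alpha\bar P(b_\alpha r)$ and integrating by parts again simply reproduces $-\int\bar R_\alpha w\,\bar P(br_\alpha)\,d\alpha-J_1$, so no derivative has actually been transferred; and an outcome in which ``only first-order derivatives of $w$ appear'' cannot suffice in any case, because $w$ is controlled only in $L^2$. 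The untreated configuration is the one where $w$ and $b$ sit at a common high frequency while $R$ and $r$ sit at a common, much lower frequency: every admissible placement of the norms ($\|b_\alpha\|_{BMO}$ on $b$, $\||D|^{1/2}R\|_{BMO}$ on $R$) then yields the correct product of norms but no gain in the ratio of the two scales, so the resulting double dyadic sum of $\ell^2$ data does not close by Cauchy--Schwarz alone, and no amount of Leibniz rearrangement or integration by parts changes this.

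The same issue invalidates the one-line treatment of $J_2$: since $\bar P(bw_\alpha)$ is antiholomorphic, the pairing selects $P[\bar R r_\alpha]$, and this is \emph{not} bounded in $L^2$ by $\||D|^{1/2}R\|_{BMO}\|r\|_{\dot H^{1/2}}$ --- the low-high piece $\sum_k \bar R_{<k}r_{k,\alpha}$ survives the projection $P$ and costs an extra factor $2^{k/2}$; only $\bar P[\bar R r_\alpha]$ has the commutator structure, and it is the wrong component here. What is missing in both $J_1$ and $J_2$ is a device that decouples the two separated frequency scales. The paper supplies it by first replacing $\M_b$ with the paraproduct $T_{\bar P b}$ (the difference being an honest $L^2\times L^2$ pairing of commutator-type factors), then cancelling the symmetric region between the two terms so that the frequency of $b$ is trapped between those of $w$ and $r$, and finally, in the remaining hard region, recombining the sum into a genuine product of two bilinear expressions each bounded in $L^2$ by Lemma~\ref{l:com} (the step $I_3=I'_3-I''_3$ in the appendix). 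Your scheme needs an analogous recombination --- for instance, at fixed frequency gap between the $b$--$w$ pair and the $R$--$r$ pair, factoring the sum into a product of two balanced paraproducts, each $L^2$-bounded, and then summing the gap --- before any of the dyadic sums converge; as written, this essential step is absent.
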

\begin{proof}

We denote by $I_1$ the integral on the left.
In a first step we replace the holomorphic multiplication operator
$\M_{\bar P b}$ by the corresponding paraproduct operator
\[
T_{\bar P b} f = \sum_k \bar P b_{<k}  f_k.
\]
Thus, $I_1$ is replaced by
\[
I'_1 = \int -\bar{R}r_{\alpha}T_{\bar P b} w_{\alpha}
+\bar{R}w_{\alpha}T_{\bar P b} r_{\alpha} \, d\alpha .
\]
To estimate the difference $I'_1-I_1$ we observe that
 for holomorphic $f$ we have
\[
\M_{\bar P b} f = T_{\bar P b} f +  P(\sum_k \bar P b_k  f_k ).
\]
We use this for $f = w_\alpha$, respectively $f = r_\alpha$. Then
\[
I_1-I'_1 = \int  -\bar P[\bar{R}r_{\alpha}]  P\left[\sum_k \bar P b_k w_{k,\alpha}\right] +\bar P[\bar{R}w_{\alpha}] P\left[\sum_k\bar P b_k r_{k,\alpha}\right] \, d\alpha.
\]
Applying the bounds in Lemma~\ref{l:com} to estimate each of the four factors in $L^2$
we obtain
\[
|I_1-I'_1| \lesssim (\||D|^\frac12 R\|_{BMO} \|b_\alpha\|_{BMO} + \|R_\alpha\|_{BMO}
\||D|^\frac12 b\|_{BMO}) \|w\|_{L^2} \|r\|_{\dot H^\frac12}
\]
as needed.

It remains to estimate the integral $I'_1$. We take a Littlewood-Paley
decomposition and denote by $k,j,l$ the frequencies of $w$, $r$,
respectively $b$.  After  canceling the common terms we are left with
\[
I'_1 = \int \sum_{k \leq l < j} \bar{R}_j  r_{j,\alpha} b_{l} w_{k,\alpha} \, d\alpha
- \int \sum_{j \leq l < k} \bar{R}_k  r_{j,\alpha} b_{l} w_{k,\alpha} \, d\alpha := I_2 - I_3.
\]
The first sum $I_2$ can be estimated using only the infinity Besov norms for
$R_\alpha$ and $|D|^\frac12 b$,
\[
\begin{split}
|I_2| \lesssim & \ \| R_\alpha\|_{BMO} \||D|^\frac12 b\|_{BMO} \sum_{k  < j}
(j-k)  2^{\frac{k-j}2} \| r_j\|_{\dot H^\frac12} \|w_k\|_{L^2} \\ \lesssim & \ 
 \| R_\alpha\|_{BMO} \||D|^\frac12 b\|_{BMO}
\| r\|_{\dot H^\frac12} \|w\|_{L^2}.
\end{split}
\]
The argument for $I_3$ is slightly more involved since we cannot gain
rapid decay in $k-j$. Instead, we rewrite it as
\[
I_3 = \int \sum_{ k} \bar{R}_k   w_{k,\alpha}  \sum_{j \leq  l}
b_{l}  r_{j,\alpha} \, d\alpha -
\int \sum_{j,k \leq l} \bar{R}_k   w_{k,\alpha}
b_{l}  r_{j,\alpha} \, d\alpha
:= I'_3 - I''_3.
\]
The first term has a product structure, and we can bound each factor
in $L^2$ using Lemma~\ref{l:com} to obtain
\[
|I'_3| \lesssim \| R_\alpha\|_{BMO} \||D|^\frac12 b\|_{BMO}
\| r\|_{\dot H^\frac12} \|w\|_{L^2}.
\]
The second term is bounded in the same manner as $I_2$,
\[
\begin{split}
|I''_3| \lesssim & \  \| |D|^\frac12 R\|_{BMO} \|b_\alpha\|_{BMO} \sum_{j,k \leq l}
 2^{\frac{j-l}2+\frac{k-l}2 } \| r_j\|_{\dot H^\frac12} \|w_k\|_{L^2} \\ \lesssim & \
  \| |D|^\frac12 R\|_{BMO} \|b_\alpha\|_{BMO}
\| r\|_{\dot H^\frac12} \|w\|_{L^2}.
\end{split}
\]
Thus, the proof of \eqref{i1} is concluded.

\end{proof}

\end{document}